\setlist[itemize]{leftmargin=*}
\newcommand{\boxedred}[1]{\textcolor{red}{\fbox{\normalcolor\m@th$#1$}}}
\newcommand{\eqlabel}[1]{\refstepcounter{equation}\textup{\tagform@{\theequation}}\label{#1}}
\newcommand{\N}{\mathbb{N}}
\newcommand{\Z}{\mathbb{Z}}
\newcommand{\R}{\mathbb{R}}
\newcommand{\C}{\mathbb{C}}
\newcommand{\B}{\mathcal{B}}
\newcommand{\floor}[1]{\left\lfloor #1\right\rfloor}
\newcommand\restr[2]{{
		\left.\kern-\nulldelimiterspace
		#1
		\vphantom{\big|}
		\right|_{#2}
}}
\newcommand{\de}{\partial}
\newcommand{\mz}{\frac{1}{2}}
\newcommand{\ang}[1]{\left\langle#1\right\rangle}
\newcommand{\uno}{\bm{1}}
\newcommand{\supp}[1]{\operatorname{supp}\left(#1\right)}
\newcommand{\nin}{\not\in}
\newcommand{\weakto}{\rightharpoonup}
\newcommand{\weakstarto}{\stackrel{*}{\rightharpoonup}}
\newcommand{\fantasma}[1]{\leavevmode\phantom{#1}}
\newcommand{\cptsub}{\subset\hspace{-1pt}\subset}
\DeclareMathOperator{\sgn}{sgn}
\DeclareMathOperator{\diam}{diam}
\DeclareMathOperator{\dist}{dist}
\newtheoremstyle{plainstyle}
{\glueexpr\parskip*4\relax}                    % Space above
{\glueexpr\parskip*2\relax}                    % Space below
{}                   % Body font
{}                           % Indent amount
{\bfseries}                   % Theorem head font
{.}                          % Punctuation after theorem head
{.2em}                       % Space after theorem head
{\thmname{#1}\thmnumber{ #2}\thmnote{ (#3)}}  % Theorem head spec (can be left empty, meaning ‘normal’)
\theoremstyle{plainstyle}
\newtheorem{definition}{Definition}[section]
\newtheorem{rmk}[definition]{Remark}
\newtheoremstyle{itstyle}
{\glueexpr\parskip*4\relax}                    % Space above
{\glueexpr\parskip*2\relax}                    % Space below
{\itshape}                   % Body font
{}                           % Indent amount
{\bfseries}                   % Theorem head font
{.}                          % Punctuation after theorem head
{.2em}                       % Space after theorem head
{\thmname{#1}\thmnumber{ #2}\thmnote{ (#3)}}  % Theorem head spec (can be left empty, meaning ‘normal’)
\theoremstyle{itstyle}
\newtheorem{thm}[definition]{Theorem}
\newtheorem{lemmaen}[definition]{Lemma}
\newtheorem{corollary}[definition]{Corollary}
\newtheorem{proposition}[definition]{Proposition}
\newtheorem*{thm*}{Theorem}
\newtheorem*{corollary*}{Corollary}
\newcommand{\obar}[1]{\overline{#1}}
\newcommand{\set}[1]{\left\{#1\right\}}
\newcommand{\pa}[1]{\left(#1\right)}
\newcommand{\abs}[1]{\left|#1\right|}
\newcommand{\norm}[1]{\left\|#1\right\|}
\newcommand{\brapa}[1]{\left[#1\right)}
\newcommand{\pabra}[1]{\left(#1\right]}
\newcommand{\vfd}{\mathbf{v}}
\newcommand{\envdim}{q}
\newcommand{\subman}{{\mathcal{M}^m}}
\newcommand{\ball}{\mathbb{B}}
\newcommand{\good}{\mathcal{G}}
\newcommand{\goodrk}{{\mathcal{G}^f}}
\newcommand{\adm}{\mathcal{A}}
\newcommand{\enrat}{C'}
\newcommand{\massbd}{C''}
\renewcommand{\bar}{\obar}
\renewcommand{\tilde}{\widetilde}
\renewcommand{\epsilon}{\varepsilon}
\def\media{%\let\mathchoice\xxmathchoice
	\,\ThisStyle{\ensurestackMath{%
			\stackinset{c}{.2\LMpt}{c}{.5\LMpt}{\SavedStyle-}{\SavedStyle\phantom{\int}}}%
		\setbox0=\hbox{$\SavedStyle\int\,$}\kern-\wd0}\int
}
\subjclass[2010]{49N60, 49Q05, 49Q15, 58E20}
\keywords{Parametrized varifolds, stationary varifolds, regularity theory, minimal surfaces, min-max, conductivity equation}
\begin{document}
	
	\raggedbottom

	\author[A.~Pigati]{Alessandro Pigati}
	%\email{alessandro.pigati@math.ethz.ch}

	\author[T.~Rivi{\`e}re]{Tristan Rivi{\`e}re}
	%\email{tristan.riviere@math.ethz.ch}

	\thanks{(Pigati and Rivi{\`e}re) {\sc ETH Z\"urich, Department of Mathematics,
		R\"amistrasse 101, 8092 Z\"urich, Switzerland}. E-mail addresses: \href{mailto:alessandro.pigati@math.ethz.ch}{alessandro.pigati@math.ethz.ch} and \href{mailto:tristan.riviere@math.ethz.ch}{tristan.riviere@math.ethz.ch}.}

	\title[Regularity of parametrized stationary varifolds]{The regularity of parametrized integer \\ stationary varifolds in two dimensions}

	\begin{abstract}
		We establish an optimal regularity result %for a class of two-dimensional \emph{parametrized varifolds} satisfying a natural stationarity condition which is local in the domain of the parametrization.
		for parametrized two-dimensional stationary varifolds. Namely, we show that the parametrization map is a smooth minimal branched immersion and that the multiplicity function is constant.
		
		We provide some applications of this regularity result, especially in the calculus of variations for the area functional.
		%Together with the outcomes of \cite{rivminmax}, this implies that a min-max value in the space of immersions is always achieved by a branched smooth immersed minimal surface, with a locally constant multiplicity.
		%
		%We also give an application of our regularity result to the conductivity equation.
	\end{abstract}

	\maketitle

	\section{Introduction}

	Geometric measure theory was born with the ambition of giving a general satisfactory solution to Plateau's problem. After the work of Douglas and Rad\'o in the two-dimensional case, the need was felt for an unparametrized approach, in order to overcome the difficulties arising in higher dimensions, seeking a weak notion of submanifold compatible with the calculus of variations. %: namely, a notion weak enough to ensure compactness of the set of competitors but rich enough to have meaningful definitions for area and boundary
	A successful theory was proposed only in 1960 by Federer and Fleming, in \cite{fedfle}. Their \emph{theory of currents} merged the abstract homological framework of general de Rham's currents with the seminal analytic ideas of generalized hypersurface (thought as a boundary of a finite-perimeter set), proposed by Caccioppoli and De Giorgi, and of rectifiable set, introduced by Besicovitch and his school. Integral currents satisfy a suitable weak compactness property and their area, called \emph{mass}, is lower semicontinuous in the natural weak topology. These two crucial properties allow to use the so-called direct method to find a minimizer for the mass. The interior regularity for area minimizing currents in codimension one is due to the combined contributions of De Giorgi, Fleming, Almgren, Simons and Federer (see e.g. \cite[Chapter~7]{simon}), while in arbitrary codimension the optimal interior regularity has been achieved by the successive efforts from Almgren, De Lellis and Spadaro (see \cite{almbig,dls1,dls2,dls3}).
	
	However, this weak notion of submanifold allows for \emph{cancellation of mass} under weak limits. This phenomenon is not desirable in many variational problems, such as the min-max procedures to construct unstable critical points of the area, since in these instances one usually infers the nontriviality of the critical point from the value of its mass (one cannot rule out a trivial critical point by homotopical or homological means, as opposed to Plateau's problem). To overcome this difficulty, in his unpublished notes \cite{almmim} Almgren introduced the idea of \emph{varifold}, which is simply a Radon measure on the Grassmannian bundle of a manifold. Having a measure on this bundle, rather than just on the base space, is fundamental in order to define the correct notion of pushforward under a diffeomorphism, %in such a way that the total mass changes consistently with what happens for smooth embedded submanifolds,
	thus obtaining a meaningful definition of stationarity extending the notion of minimal submanifold. As varifolds are just measures, their mass is retained under weak limits.
	
	An important class of varifolds is formed by the integer rectifiable ones, namely those varifolds which can be represented as a countable superposition, with positive integer coefficients, of $k$-rectifiable sets. This is the kind of varifolds which is most commonly used and studied, since it is more concrete than the general definition but still enjoys compactness properties. The fundamental reference on general varifolds is \cite{allard}, where the compactness of integer stationary varifolds (or more generally of integer varifolds with locally bounded first variation) is proved, along with their regularity on a dense open set, rectifiability criteria for general varifolds and other important estimates. We mention that the existence and regularity theory for varifolds arising from min-max problems, started by Almgren, was later taken over by Pitts, Rubinstein and Smith (see \cite{pitts,prexist,prappl,smith}).
	
	%intro vera e propria
	In this work we will consider integer stationary varifolds, in arbitrary codimension, admitting a parametrization in the following sense: they are induced by a weakly conformal map $\Phi\in W^{1,2}(\Sigma,\subman)$, where $\Sigma$ is a closed Riemann surface and $\subman\subseteq\R^\envdim$ is either a closed Riemannian manifold or $\R^\envdim$ itself, together with a multiplicity function $N\in L^\infty(\Sigma,\N\setminus\set{0})$ \emph{on the domain}. They are required to satisfy a natural stationarity property which is local in the domain: namely, we assume that, for almost all domains $\omega\subseteq\Sigma$, the varifold induced by the map $\restr{\Phi}{\omega}$ with the multiplicity function $\restr{N}{\omega}$ is stationary in the complement of the compact set $\Phi(\de\omega)$ (see Definition \ref{pardef} and Remark \ref{rectvar} for the precise definitions). Throughout the paper, such objects will be called \emph{parametrized stationary varifolds}.
	
	%Such parametrized stationary varifolds arise naturally in the construction of immersed minimal surfaces $\Sigma\to\subman$ by means of viscosity methods: see the paper \cite{rivminmax} by the second author. In particular, they arise as a varifold limit of a sequence of smooth immersed surfaces. The presence of a measurable multiplicity function $N$ is due to the fact that, as far as we know, this sequence could fail to converge in $W^{1,2}$ (even away from a bubbling set).
	
	Our main result is the following, which appears in Corollary \ref{globgenreg} and Remark \ref{converse} in the body of the paper.

	\begin{thm*}
		The triple $(\Sigma,\Phi,N)$ is a parametrized stationary varifold, in a compact manifold $\subman\subseteq\R^\envdim$ or in $\R^\envdim$ itself, if and only if $\Phi$ is a smooth conformal harmonic map and $N$ is a.e. constant. In this case, $\Phi$ is a minimal branched immersion.
	\end{thm*}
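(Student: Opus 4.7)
The ``if'' direction is immediate from the classical theory: a smooth conformal harmonic map $\Phi\colon\Sigma\to\subman$ is a branched minimal immersion by Gulliver--Osserman--Royden, so coupling it with any constant integer multiplicity produces a stationary integer varifold on every subdomain. The content of the statement is in the converse, which I attack in three stages: derive an elliptic PDE for the pair $(\Phi,N)$ from the stationarity condition; show that $N$ must be constant; and then appeal to the classical regularity theory for harmonic maps of surfaces.

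\textbf{Step 1: weighted elliptic system.} Writing the first variation of the pushforward varifold against a target vector field $X\in\smoothc(\R^\envdim;T\subman)$, and using the weak conformality $|\de_x\Phi|=|\de_y\Phi|$ with $\de_x\Phi\cdot\de_y\Phi=0$, one computes
\[
\delta V(X)=\int_\omega N\,\ang{\nabla\Phi,\nabla(X\circ\Phi)}\,dp,
\]
so stationarity rewrites as $\diverg(N\,\nabla\Phi)\perp T_\Phi\subman$ in the sense of distributions on $\Sigma$, and as the pure conductivity equation $\diverg(N\,\nabla\Phi)=0$ when $\subman=\R^\envdim$. Since $1\le N\le\|N\|_\infty$, this is a uniformly elliptic divergence-form system with a quadratic right-hand side controlled by the second fundamental form of $\subman$. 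A Hardy/BMO argument of the Rivi\`ere--H\'elein type, adapted to the presence of the bounded measurable weight $N$, should upgrade $\Phi$ to a continuous (in fact $C^{0,\alpha}$) map; this alone cannot yield smoothness because the weight is still only $L^\infty$.

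\textbf{Step 2: constancy of $N$.} This is the crux. Continuity of $\Phi$ together with the equation of Step 1 implies that the open set $\good\subseteq\Sigma$ of points where $\Phi$ is a local topological immersion has full measure. On each connected component $U\subseteq\good$, localizing in the target to a small ball that meets only a single sheet of $\Phi(U)$ renders the restricted varifold a stationary integer varifold supported in a smooth two-dimensional surface, so Allard's constancy theorem forces $N$ to be locally constant on the corresponding preimage patch. The domain-local stationarity hypothesis is indispensable at precisely this point: it is what allows one to separate preimage patches of $\Phi$ which are mapped onto the same sheet or onto crossing sheets in the target, by applying the stationarity to subdomains $\omega$ that isolate a single patch. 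Constancy of $N$ on each patch then propagates across $\Sigma$ by connectedness, using the continuity equation $\diverg(N\,\nabla\Phi)=0$ to rule out any jump of $N$ across the (measure-zero) complement of $\good$.

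\textbf{Step 3: smoothness and branch points.} Once $N\equiv N_0$ a.e., the system of Step 1 becomes the weakly conformal harmonic map equation $\lapl\Phi\perp T_\Phi\subman$ for $\Phi\in W^{1,2}\cap C^{0,\alpha}$, and H\'elein's theorem gives $\Phi\in\smooth(\Sigma,\subman)$. Smoothness plus conformality plus harmonicity then reduce, via the Gulliver--Osserman--Royden analysis of the zeros of $\de_z\Phi$, to a branched minimal immersion with finitely many branch points of finite integer order. The principal obstacle is clearly Step 2: prior to knowing smoothness of $\Phi$ the decomposition of the pushforward varifold into well-separated sheets is not automatic, and isolating a single preimage patch relies essentially on the local-in-the-domain stationarity, which is the new ingredient distinguishing parametrized varifolds from the classical Allard setting.
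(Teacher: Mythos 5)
The ``if'' direction is fine and matches Remark \ref{converse} in spirit (one only has to check that for almost every domain the continuous boundary values agree with the trace). The converse, however, has a genuine gap concentrated in your Step 2, and it is exactly the gap the whole paper is built to close. You assert that continuity of $\Phi$ plus the equation $\operatorname{div}(N\nabla\Phi)\perp T_\Phi\subman$ imply that the set of points where $\Phi$ is a local topological immersion is open and of full measure, and that near such a point the image varifold decomposes into smooth ``sheets'' inside a small target ball, so that Allard's constancy theorem applies on each sheet. Neither claim is available a priori. Because $N$ is only known to be a bounded integer-valued function, the density of $\vfd_\omega$ need not be close to $1$ anywhere, so Allard's $\epsilon$-regularity cannot be invoked (the introduction of the paper points out that this is precisely why the strategy of \cite{rivtarget} breaks down); consequently there is no a priori smooth two-dimensional surface containing $\supp{\norm{\vfd_\omega}}$, and no sheet structure to localize onto. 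The constancy theorem requires the varifold to live in a smooth connected manifold of the same dimension, which is what must be proved, not assumed. Your closing remark that constancy of $N$ ``propagates by connectedness'' across the measure-zero complement is likewise insufficient: the removable-singularity mechanism actually needed (Lemma \ref{removab}) requires $\mathcal{H}^1(\Phi(S))=0$ in the \emph{target}, together with the quadratic mass bound \eqref{eq:massass}, and a separate argument (Lemma \ref{noblowdimzero}) to show the exceptional set has small image.

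For comparison, the paper's route to the sheet structure is indirect and occupies Sections \ref{conicalregsec}--\ref{genregsec}: continuity comes from the monotonicity formula rather than a Rivi\`ere--H\'elein scheme with an $L^\infty$ weight (which is itself an unsubstantiated step in your Step 1); the case of a varifold contained in a polyhedral cone is handled by a strong induction on the density $\tilde N$ using the topological lemma that $S^2$ contains at most countably many disjoint generalized triods; a blow-up theorem (with a quasiconformal correction of the domain, needed because the limit map is only weakly conformal after reparametrization) shows that tangent objects are parametrized stationary varifolds over cones, hence holomorphic; and uniform energy-decay estimates obtained by compactness make the blow-up available at enough points to run a final induction on pinched levels of $\tilde N$. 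If you want to salvage your outline, Step 2 must be replaced by this (or some other) mechanism that produces the local structure of $\Phi$ \emph{before} any appeal to Allard-type regularity or constancy in the target.
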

	
	In fact we also get a local version of the result, which holds for \emph{local parametrized stationary varifolds}: see Definition \ref{locpardef} and Theorem \ref{genreg}.
	Let us stress the fact that the result holds in \emph{arbitrary codimension} and does not assume any \emph{stability} hypothesis on the image varifold.
	
	On the other hand, everywhere regularity for integer stationary varifolds does not hold without additional assumptions, not even in low dimension (see e.g. the picture below): Pitts was able to obtain the optimal regularity for varifolds arising via min-max in codimension one by introducing the stronger concept of \emph{almost minimizing} varifold. So far, the regularity theory of integer stationary varifolds is still very incomplete and well understood only in special cases. Some notable examples are the structure theorem by Allard--Almgren for one-dimensional varifolds (see \cite{allalm}), Allard's almost everywhere regularity result for the case of constant multiplicity (see \cite[Section~8]{allard}) and, for the stable, codimension one case, Schoen--Simon regularity theorem (see \cite{schoen}) under the assumption that the singular set has locally finite $\mathcal{H}^{n-2}$-measure, which was recently reduced to an optimal assumption in a monumental work by Wickramasekera (see \cite{wick}). Also, the standard strata composing the singular set of a stationary varifold are now known to be rectifiable (see \cite{naber}). The almost everywhere regularity in full generality is still an open problem.
	
	In the present situation, the parametrized structure of the varifold and the natural corresponding stationarity condition turn out to be a good replacement for the stable codimension one assumption (as well as other ad-hoc conditions to exclude singularities arising from self-intersecting minimal hypersurfaces) which was considered in much of the previous literature on the regularity theory. As it will be apparent in the paper, regularity stems from a subtle interaction between stationarity and the topological information of being parametrized.

	The possibility to localize the stationarity with respect to the domain is crucially used in many places in order to get our main regularity result. The weak conformality of $\Phi$ also happens to be important, since it ensures that the map $\Phi$ is holomorphic when the codomain is the plane, a fact which we establish in Section \ref{conicalregsec}: the peculiar properties of nonconstant holomorphic functions, namely that they are branched covering maps and that they cannot vanish to infinite order, turn out to be useful several times.

	Our main result relies on the previous treatment of the simpler situation where $N$ is constant (see \cite{rivtarget}, by the second author; see also Theorem \ref{constn} below). We point out that the starting point of the strategy of \cite{rivtarget}, i.e. the observation that Lebesgue points $z$ for $\Phi$ and $d\Phi$ (with $d\Phi(z)\neq 0$) belong to the regular set of $\Phi$, does not apply when $N$ is a priori not constant. This difficulty is due to the fact that we cannot invoke Allard's $\epsilon$-regularity result in this more general situation.

	We will actually show that $N$ is a.e. constant and $\Phi$ satisfies the harmonic map equation $-\Delta\Phi=A(\Phi)(\nabla\Phi,\nabla\Phi)$ in any local conformal chart. These two facts are essentially equivalent to each other, in view of the preliminary work \cite{rivtarget}, and in some situations it will be easier to establish the former, while in other instances (where Lemma \ref{removab} is invoked) the latter is more convenient. The smoothness of weak solutions to the harmonic map equation was first proved in full generality by H\'elein (see \cite[Section~4.1]{helein}) and then obtained again by the second author in \cite{rivharm}, as a consequence of a general result for linear elliptic systems with an antisymmetric potential. In our situation the smoothness of $\Phi$ is a classical fact, since for parametrized stationary varifolds the map $\Phi$ is easily seen to be continuous (see Proposition \ref{contandsupp}).

	We want to give a simple one-dimensional example illustrating why, in spite of the difficulties surrounding integer stationary varifolds, one should expect to get much more information from the notion of parametrized stationary varifold (and eventually the regularity). The picture depicts a portion of an integer 1-rectifiable stationary varifold in $S^2$ (e.g. the union of three geodesic segments joining the north and south poles, with an angle $\frac{2\pi}{3}$ between any two of them), with multiplicity 2 a.e. This varifold has a nontrivial singular set.

	This varifold can be parametrized by a map $\Phi:S^1\to S^2$, as shown in the picture. However, the parametrized varifold $(S^1,\Phi,1)$ fails to satisfy the local stationarity condition, as witnessed by the highlighted part on the right. Of course, this example is just a heuristic motivation for the hope to obtain the full regularity, as no effective structure theorem is known for two-dimensional stationary varifolds.

	\begin{center}
		\includegraphics[width=11cm]{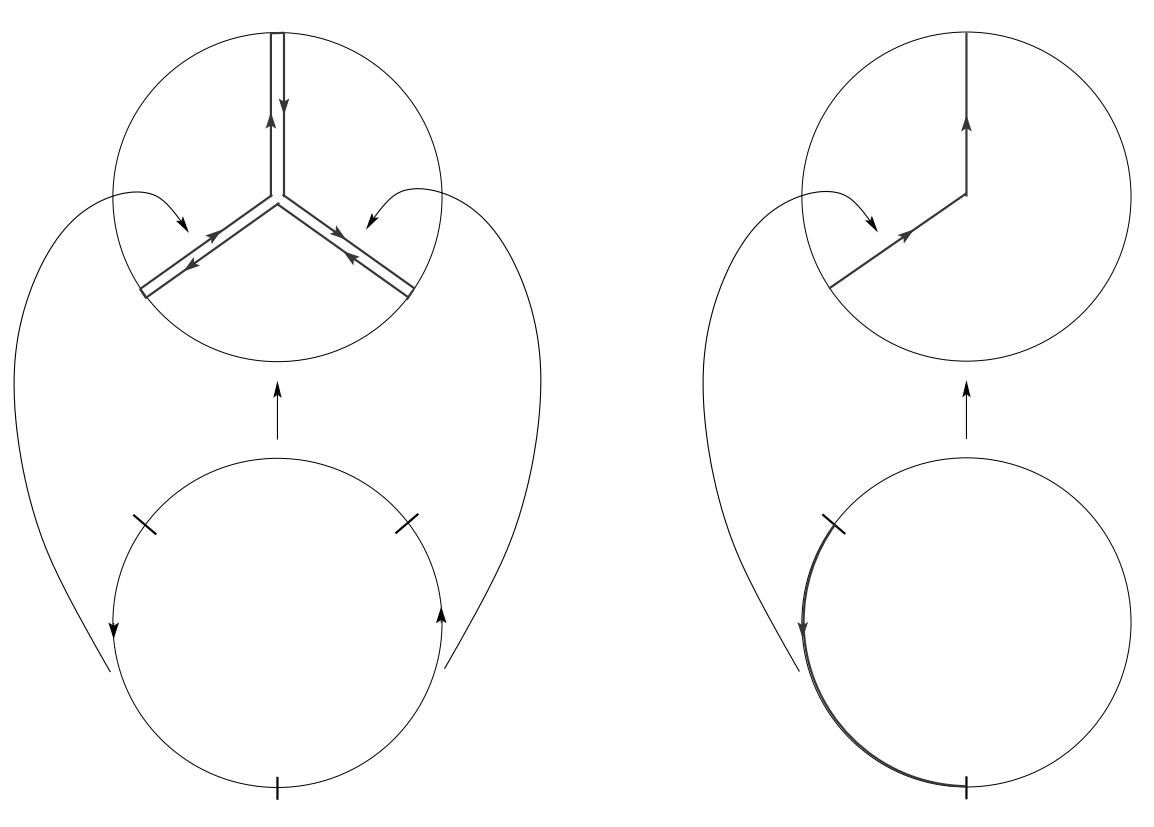}
	\end{center}

	As already said, in view of \cite{rivtarget}, the problem of showing that $N$ is a.e. constant is of the same difficulty. In this sense our result can be seen as a \emph{constancy theorem} in the domain, for the class of parametrized stationary varifolds. The classical constancy theorem asserts that a stationary varifold in a smooth connected manifold of the same dimension has constant density (see \cite[Theorem~41.1]{simon}). Showing that it holds in other settings and under weaker assumptions is a challenging topic in geometric measure theory: we just mention that its truth for stationary varifolds with support contained in a Lipschitz graph (of the same dimension) is a difficult result by Duggan (see \cite[Corollary~4.5]{duggan}).
	
	A natural question is how to produce parametrized stationary varifolds within the calculus of variations for the area functional. Indeed, one motivation for studying this class of varifolds comes from the previous work \cite{rivminmax} by the second author, who developed a new theory for the construction of immersed (possibly branched) minimal surfaces with arbitrary topology, by means of a viscosity method. Namely, one first finds an immersion $\Phi:\Sigma\to M$ which is critical for the perturbed area functional
	\[ A^{\sigma}(\Phi):=\int_\Sigma\,d\text{vol}+\sigma^2\int_\Sigma(1+\abs{A}^2)^p\,d\text{vol},\quad 2<p<\infty\text{ fixed}, \]
	where $\Sigma$ is a Riemann surface (chosen afterwards to make $\Phi$ conformal), the second $A$ is the second fundamental form of the immersion and $d\text{vol}$ is the volume form induced by $\Phi$. The coercivity of this functional ensures that it enjoys a sort of Palais--Smale condition up to diffeomorphisms. The second author shows that, choosing a sequence $\Phi_j$ of critical points of $E^{\sigma_j}$ satisfying a certain entropy condition and making the domains converge in the Deligne--Mumford compactification, then the surfaces $\Phi_j(\Sigma_j)$ converge precisely to a parametrized stationary varifold (up to bubbling and nodal points), with a pointwise multiplicity possibly bigger than 1.

	Bringing the main result of \cite{rivminmax} together with the one of this paper, one gets the following.
	
	\begin{corollary*}
		Let $\mathcal{F}\subseteq\mathcal{P}(\operatorname{Imm}(\Sigma,\subman))$ be a collection of families of $C^1$ immersions and assume that $\mathcal{F}$ is stable under isotopies (through homeomorphisms) of the space $\operatorname{Imm}(\Sigma,\subman)$. Then the min-max value
		\[ \inf_{F\in\mathcal{F}}\max_{\Phi\in F}\abs{\Phi(\Sigma)} \]
		is the area of a branched, possibly disconnected immersed minimal surface, with a locally constant integer multiplicity.
	\end{corollary*}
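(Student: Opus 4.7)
The proof is essentially a combination of two ingredients: the viscosity method of \cite{rivminmax} and the main theorem of the present paper. The plan is to start from a minimizing sequence $(F_j)\subseteq\mathcal{F}$ for the min-max value, and to approximate the area functional $\abs{\Phi(\Sigma)}$ on each $F_j$ by the perturbed functionals $A^\sigma$ with $\sigma\downarrow 0$. By the coercivity/Palais--Smale property discussed above, for each small $\sigma$ one can run the min-max procedure on $A^\sigma$ along the class generated by $F_j$ under isotopies (which lies in $\mathcal{F}$ by assumption, since $\mathcal{F}$ is isotopy-stable), producing an immersion $\Phi_{j,\sigma}$ which is critical for $A^\sigma$ and whose $A^\sigma$-value converges, as $\sigma\to 0$, to the corresponding area min-max value along $F_j$.

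Next I would extract a diagonal subsequence $\Phi_j:=\Phi_{j,\sigma_j}$ with $\sigma_j\downarrow 0$ satisfying the entropy condition from \cite{rivminmax}, and choose the conformal structure on $\Sigma$ that makes $\Phi_j$ weakly conformal. Letting the conformal class degenerate in the Deligne--Mumford compactification and accounting for bubbles, the main theorem of \cite{rivminmax} says that the image varifolds $(\Phi_j)_\ast\abs{\Sigma_j}$ converge to a parametrized stationary varifold $(\Sigma_\infty,\Phi_\infty,N_\infty)$, possibly supported on a disconnected Riemann surface because of bubbling and nodal points, and whose total mass equals the min-max value.

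At this stage the main theorem of the present paper applies on each connected component of $\Sigma_\infty$: it forces $\Phi_\infty$ to be a smooth conformal harmonic map, hence a (possibly branched) minimal immersion, and it forces $N_\infty$ to be constant on each such component, i.e. locally constant on $\Sigma_\infty$. Since the limiting mass equals the min-max value and coincides with the area of the branched immersed minimal surface $\Phi_\infty(\Sigma_\infty)$ counted with the multiplicity $N_\infty$, the conclusion follows.

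The main obstacle is not in the present paper but in verifying that the hypotheses of the quoted result of \cite{rivminmax} are met by an arbitrary isotopy-stable family $\mathcal{F}$: one has to check that the min-max procedure for $A^\sigma$ inside (the isotopy-saturation of) $\mathcal{F}$ indeed produces, after a careful choice of entropy-admissible sequence, a limit which qualifies as a parametrized stationary varifold in the sense of Definition \ref{pardef}, in particular that the \emph{local} stationarity in the domain passes to the limit together with the conformality and the integrability of the multiplicity. Once this is granted, the corollary is just the juxtaposition of \cite{rivminmax} with our regularity theorem.
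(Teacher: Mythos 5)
Your proposal is correct and follows exactly the route the paper intends: the corollary is obtained by juxtaposing the main theorem of \cite{rivminmax} (the viscosity method produces, from an isotopy-stable family, a parametrized stationary varifold realizing the min-max value, up to bubbling and Deligne--Mumford degeneration) with Corollary \ref{globgenreg} of the present paper, which upgrades that varifold to a smooth branched minimal immersion with locally constant integer multiplicity. Your closing remark correctly locates the real work in verifying the hypotheses of \cite{rivminmax}, which is external to this paper and is taken for granted here as well.
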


	Such families $\mathcal{F}$ arise for instance when considering sweep-outs by surfaces realizing some homotopy class: see e.g. \cite{cold} for examples and further discussion.
	The previous corollary should be put in perspective with the existing literature on min-max for surfaces as follows.
	
	\begin{itemize}
		\item For 1-parameter families of spheres, namely for a mountain pass min-max, some results about compactness and absence of energy loss were obtained by Jost: see e.g. \cite{Jos91} (which more generally deals with the min-max of harmonic maps). We also refer to the work by Jost--Struwe in \cite{JS90} for related results in the context of Plateau's problem in the Euclidean space.
		\item For 1-parameter families of spheres, Colding and Minicozzi in \cite{cm} proved the existence of a minimal sphere, using an ingenious harmonic replacement technique.
		\item Again for 1-parameter families, Colding--Minicozzi approach for finding minimal spheres by means of harmonic maps was recently extended by Zhou in \cite{zhou} to arbitrary genus, by adding the conformal class as an additional variable in the variational problem.
		\item In codimension one, the aforementioned Almgren--Pitts theory allows to construct smooth embedded minimal (hyper)surfaces from a min-max in a suitable space of cycles.  This approach was then refined and adapted to other settings (see e.g. \cite{ramic} and \cite{marnev}) and has been responsible for many important applications (see e.g. \cite{marnevsurvey}). For a survey on a simplified approach, which is a combination of the contributions of Almgren, Meeks, Pitts, Simon, Smith and Yau, we refer the reader to the paper \cite{cold} by Colding and De Lellis (which deals for simplicity with 1-parameter sweepouts).
		\item Still in codimension one, another min-max approach based on the link between minimal surfaces and phase transitions was recently proposed by Guaraco in \cite{Gua15}.
		It allows to interpret certain sweepouts in terms of level sets of functions and, in three dimensions, it allows to obtain multiplicity one and the expected Morse index for the resulting minimal surface: this was recently shown by Chodosh and Mantoulidis for generic metrics in \cite{chma}.
	\end{itemize}
	
	%A similar result was previously known for embedded surfaces in \emph{codimension one}: for a survey of a possible proof, which is a combination of the contributions of Almgren, Meeks, Pitts, Simon, Smith and Yau, we refer the reader to \cite{cold} (which deals for simplicity with 1-parameter sweepouts). An interesting alternative approach, based on the link between minimal surfaces and phase transitions, was recently proposed by Guaraco in \cite{Gua15}.

	During the investigation of this problem, as a potential intermediate step towards the regularity, we asked ourselves whether any conformal solution $\Phi\in W^{1,2}(B_1^2(0),\R^\envdim)$ to the so-called \emph{conductivity equation} $-\operatorname{div}(N\nabla\Phi)=0$ (for some bounded measurable $N$ with values in positive integers) is necessarily harmonic. Actually, we can give a positive answer to this question as a consequence of the main theorem.
	\begin{corollary*}
		Assume $\Phi\in W^{1,2}(B_1^2(0),\R^\envdim)$ is weakly conformal, $N\in L^\infty(B_1^2(0),\N\setminus\set{0})$ and
		\[ -\operatorname{div}(N\nabla\Phi)=0\quad\text{in }\mathcal D'(B_1^2(0),\R^\envdim). \]
		Then $\Delta\Phi=0$ and, if $\Phi$ is nonconstant, $N$ is a.e. constant.
	\end{corollary*}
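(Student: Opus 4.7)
The plan is to deduce the corollary from the local version of the main result, Theorem \ref{genreg}, by showing that the triple $(B_1^2(0), \Phi, N)$ is a local parametrized stationary varifold in the sense of Definition \ref{locpardef}. Once this is established, Theorem \ref{genreg} gives at once that $\Phi$ is a smooth conformal harmonic map, so $\Delta \Phi = 0$, and that $N$ is locally (hence a.e.) constant on the connected set $B_1^2(0)$; the case of a constant $\Phi$ is trivial, since then $\nabla \Phi \equiv 0$ and both assertions hold automatically.

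The heart of the argument is the identification of the first variation of the induced varifold with the bilinear form appearing in the conductivity equation. For a weakly conformal $\Phi$ on a subdomain $\omega \subseteq B_1^2(0)$, writing $\lambda := |\Phi_x| = |\Phi_y|$ and evaluating the tangential divergence of an ambient vector field $X$ along the image of $\Phi$ in the orthonormal frame $(\Phi_x/\lambda, \Phi_y/\lambda)$, the conformal Jacobian $\lambda^2$ cancels the factor $\lambda^{-2}$ coming from the tangential divergence, leaving
\[
\delta V_{\Phi, N}(X) \;=\; \int_\omega N\, \langle \nabla \Phi, \nabla(X \circ \Phi) \rangle\, dx\,dy
\]
for every $X \in C_c^\infty(\R^\envdim, \R^\envdim)$. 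Stationarity of the induced varifold in $\R^\envdim \setminus \Phi(\partial\omega)$ is therefore tantamount to the vanishing of the right-hand side for all such $X$ supported away from $\Phi(\partial\omega)$.

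To secure this for a.e. subdomain, one works with concentric balls $B_r^2(p) \cptsub B_1^2(0)$. By the Fubini-type slicing applied to $\Phi \in W^{1,2}(B_1^2(0), \R^\envdim)$, for a.e.\ $r > 0$ the restriction $\Phi|_{\partial B_r^2(p)}$ lies in $W^{1,2}(\partial B_r^2(p), \R^\envdim) \hookrightarrow C^0$, and in particular $\Phi(\partial B_r^2(p))$ is a compact subset of $\R^\envdim$. Given any $X \in C_c^\infty(\R^\envdim \setminus \Phi(\partial B_r^2(p)), \R^\envdim)$, the chain rule for Sobolev functions composed with smooth maps places $X \circ \Phi$ in $W^{1,2}(B_1^2(0), \R^\envdim)$, and by construction its (continuous) trace on $\partial B_r^2(p)$ vanishes; extending it by zero outside $B_r^2(p)$ then produces a legitimate element of $W^{1,2}_0(B_1^2(0), \R^\envdim)$. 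Testing the conductivity equation $-\operatorname{div}(N \nabla \Phi) = 0$ against this extension gives
\[
0 \;=\; \int_{B_r^2(p)} N\, \langle \nabla \Phi, \nabla(X \circ \Phi)\rangle\, dx\,dy \;=\; \delta V_{\Phi|_{B_r^2(p)},\, N|_{B_r^2(p)}}(X),
\]
which is precisely the required stationarity condition in $\R^\envdim \setminus \Phi(\partial B_r^2(p))$. Since this cofinal family of balls is enough to verify the axioms of a local parametrized stationary varifold, Theorem \ref{genreg} applies and the corollary follows. The only mildly delicate step, and hence the main obstacle, is the trace/Fubini verification that $X \circ \Phi$ is a bona fide compactly supported $W^{1,2}$ test function; all the substantive work is absorbed into the main theorem.
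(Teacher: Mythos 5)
Your reduction to Theorem \ref{genreg} is the natural first move and is indeed how the paper's proof of Theorem \ref{condthm} begins, and your verification of the stationarity axiom is correct: for any admissible $\omega$ and any $F\in C^\infty_c(\R^\envdim\setminus\Phi(\de\omega),\R^\envdim)$, the function $F(\Phi)\uno_\omega$ lies in $W^{1,2}_0(B_1^2(0),\R^\envdim)$ and testing the conductivity equation against it gives exactly \eqref{eq:locstat}. (A minor point: Definition \ref{almostdef} requires this for sublevel sets of arbitrary smooth functions, not only for concentric balls, but your trace argument applies verbatim to any such $\omega$, so this is cosmetic.)

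The genuine gap is that Definition \ref{locpardef} contains a further hypothesis which you never verify and which does not follow from the conductivity equation alone: the uniform mass bound \eqref{eq:massass}, i.e.\ $\mz\int_{\Phi^{-1}(B_s^\envdim(p))}N\abs{\nabla\Phi}^2\,d\mathcal{L}^2=O(s^2)$ uniformly in $p\in\R^\envdim$. This condition is not decorative: it is used in the proof of Lemma \ref{removab} (the removable-singularity lemma), which in turn enters the proofs in Sections \ref{conicalregsec} and \ref{genregsec}, so Theorem \ref{genreg} cannot be invoked without it. For a genuine parametrized stationary varifold the bound comes from the monotonicity formula applied to a varifold that is stationary in all of $\subman$; here the induced varifold of the full disk is only known to be stationary away from the (uncontrolled) boundary image, so near points of $\Phi(\de B_1^2(0))$ the density ratio could a priori blow up. The paper explicitly flags this ("except possibly for the technical condition \eqref{eq:massass}") and the bulk of its proof is devoted to circumventing it: one shows that near any point $x$ admitting a neighborhood $\omega$ with $\Phi(x)\nin\Phi(\de\omega)$ the monotonicity formula restores \eqref{eq:massass} locally, so Theorem \ref{genreg} applies on $\omega$; one then proves $\nabla\Phi\neq 0$ a.e.\ via a Beltrami/Hodge construction, shows the residual singular set $S$ is Lebesgue-null, and rules out a non-discrete $S$ by a Fubini and Hopf-maximum-principle argument in a quasiconformal chart. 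Your proposal, as written, skips all of this, so it does not close.
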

	We refer to Theorem \ref{condthm} in the body of the paper. However, we are not aware of any purely analytic proof of this fact and leave it as an open problem to find such a proof. We are able to succeed in this task in the planar case $q=2$: see Theorem \ref{planarcond}.

	We end the introduction with a brief summary of the contents of the paper.
	\begin{itemize}
		\item In Section \ref{basicsec} we establish some basic facts about parametrized stationary varifolds: we show the continuity of the parametrization map $\Phi$ (see Proposition \ref{contandsupp}), we define an upper semicontinuous representative $\tilde N$ of the multiplicity function $N$ satisfying $\tilde N=N$ a.e. with respect to the measure $\abs{\nabla\Phi}^2\mathcal{L}^2$ (see Definition \ref{tildendef} and Proposition \ref{nisusc}) and we introduce a local notion of parametrized stationary varifold.
		\item In Section \ref{conicalregsec} we generalize the topological notion of triod (first introduced by Moore in \cite{moore}) and we show that the plane cannot contain uncountably many such disjoint generalized triods (see Lemma \ref{triods}). We use this topological fact to show the regularity of parametrized stationary varifolds contained in a polyhedral cone (see Theorem \ref{conicalreg}). This special case of the problem turns out to be important in order to study a blow-up or a limiting situation obtained in later compactness arguments.
		\item In Section \ref{blowsec} we provide a general result (see Theorems \ref{blow-up} and \ref{blow-up-c}) which allows to blow-up a varifold at a given point or along a sequence of points, with mild assumptions on the decay of the Dirichlet energy of $\Phi$. We show that in the limit one still gets a parametrized stationary varifold and that the parametrization map is the blow-up of $\Phi$, up to a quasiconformal homeomorphism.
		\item In Section \ref{genregsec}, devoted to the regularity in the general case, we initially show a singularity removability lemma (see Lemma \ref{removab}). Then we introduce the set of admissible points where the blow-up can be performed and we prove that the image of its complement has zero Hausdorff dimension (see Lemma \ref{noblowdimzero}). By means of delicate compactness arguments, using the results of Sections \ref{conicalregsec} and \ref{blowsec}, we show that at any such point the speed of decay of the Dirichlet energy is controlled in a uniform way (see Lemma \ref{softlemma} and Corollary \ref{enwellbeh}). We infer that admissible points are relatively open in a set where $\tilde N$ is suitably pinched and we deduce the full regularity result by means of a final blow-up argument (see Theorem \ref{genreg}).
		\item In Section \ref{condsec} we apply our regularity result to 
		%prove that any weakly conformal $\Phi\in W^{1,2}(B_1^2(0),\R^\envdim)$ solving the conductivity equation $-\operatorname{div}(N\nabla\Phi)=0$ for some $N\in L^\infty(B_1^2(0),\N\setminus\set{0})$ is necessarily harmonic.
		positively answer the aforementioned question on the conductivity equation. We also provide an independent, self-contained proof in the planar case $\envdim=2$.
		\item In the appendix we collect some facts about Sobolev functions on the plane. Lemmas \ref{oneinfty} and \ref{oneinftyaux} are possibly new and could have some interest on their own.
	\end{itemize}

	\section{First properties of parametrized stationary varifolds}\label{basicsec}

	Let $\subman$ be either a closed embedded $C^\infty$-smooth submanifold of $\R^\envdim$ or $\R^\envdim$ itself, where $q\ge m\ge 2$ are arbitrary integers. Let $\Sigma$ be a closed connected Riemann surface.

	A map $\Phi\in W^{1,2}(\Sigma,\R^\envdim)$ is \emph{weakly conformal} if, for a.e. $x\in\Sigma$, $d\Phi(x)$ is either zero or a linear conformal map, with respect to the conformal structure of $\Sigma$. For any such map we call $\good\subseteq\Sigma$ the set of Lebesgue points for both $\Phi$ and $d\Phi$ and we let $\goodrk:=\set{x\in\good:d\Phi(0)\neq 0}$ (hence $d\Phi(x)$ is injective and conformal for $x\in\goodrk$).

	\begin{definition}[almost every domain]\label{almostdef}
		We say that a certain property holds \emph{for almost every domain} $\omega\subseteq\Sigma$ if, for any nonnegative $\rho\in C^\infty(\Sigma)$, the property holds for $\omega=\set{\rho>t}$, for a.e. regular value $t>0$ (so in particular it holds for $\Sigma$, as is seen by choosing $\rho\equiv 1$).
		Similarly, given an open set $\Omega\subseteq\C$, a property holds \emph{for almost every domain} $\omega\cptsub\Omega$ if, for any nonnegative $\rho\in C^\infty_c(\Omega)$, the property holds for $\omega=\set{\rho>t}$, for a.e. regular value $t>0$. \hfill\qedsymbol
	\end{definition}

	In the definition below, we will implicitly restrict to the regular values $t>0$ of $\rho$ such that $\restr{\Phi}{\de\set{\rho>t}}$ has a continuous representative (which are a set of full measure, by Sard's theorem and \cite[Theorem~4.21]{evans}) and, with a slight abuse of notation, $\Phi(\de\omega)$ will denote the image by this continuous representative.

	\begin{definition}[parametrized stationary varifold]\label{pardef}
		A triple $(\Sigma,\Phi,N)$ with $\Phi\in W^{1,2}(\Sigma,\R^\envdim)$, $N\in L^\infty(\Sigma,\N\setminus\set{0})$ and $\Phi(\Sigma)\subseteq\subman$ is called a \emph{parametrized integer 2-rectifiable stationary varifold} (in $\subman$) if $\Phi$ is nonconstant, weakly conformal and if, for almost every domain $\omega\subseteq\Sigma$,
		\begin{equation}\label{eq:locstat} \int_\omega N\Big(\ang{d(F(\Phi));d\Phi}_h-F(\Phi)\cdot A(\Phi)(d\Phi,d\Phi)_h\Big)\,d\text{vol}_h=0 \end{equation}
		for all $F\in C^\infty_c(\subman\setminus\Phi(\de\omega),\R^\envdim)$.
		Here $h$ is an arbitrary Riemannian metric compatible with the conformal structure of $\Sigma$ (its choice does not matter, by conformal invariance), $A(X,Y)=-\nabla_X^{\R^\envdim}Y$ denotes the second fundamental form of $\subman$ in $\R^\envdim$ ($A=0$ if $\subman=\R^\envdim$) and $A(\Phi)(d\Phi,d\Phi)_h$ is defined in local coordinates by $\sum_{i,j}h^{ij}A(\Phi)(\de_i\Phi,\de_j\Phi)$. \hfill\qedsymbol
	\end{definition}

	We will usually just say that $(\Sigma,\Phi,N)$ is a \emph{parametrized stationary varifold}.

	\begin{rmk}[equivalent definition]\label{rectvar}
		The definition is clearly independent of the particular representatives of $\Phi$, $d\Phi$, $N$. Calling $\good$ the set of Lebesgue points for both $\Phi$ and $d\Phi$ and applying Lemma \ref{rectif} to a finite atlas of conformal charts, we see that $\Phi(\good)$ is $\mathcal{H}^2$-rectifiable (and $\mathcal{H}^2$-measurable). Moreover, again by Lemma \ref{rectif}, the area formula applies and \eqref{eq:locstat} amounts to say that for almost every $\omega\subseteq\Sigma$ the 2-rectifiable varifold
		\[ \vfd_\omega:=(\Phi(\good\cap\omega),\theta_\omega),\quad\theta_\omega(p):=\sum_{x\in\good\cap\omega\cap\Phi^{-1}(p)}N(x) \]
		is stationary in $\subman\setminus\Phi(\de\omega)$, as is easily seen by writing $F=\pi_{T\subman}F+\pi_{T^\perp\subman}F$.
		In particular, the generalized mean curvature of $\vfd_\omega$ in $\R^\envdim\setminus\Phi(\de\omega)$ is bounded (in $L^\infty$) by $\sqrt{2}\max_\subman\abs{A}$. \hfill\qedsymbol
	\end{rmk}

	\begin{proposition}[continuity of $\bm{\Phi}$]\label{contandsupp}
		The map $\Phi$ has a continuous representative. This representative (still called $\Phi$) satisfies the stationarity property for every open subset $\omega\subseteq\Sigma$, namely
		\[ \int_\omega N\Big(\ang{d(F\circ\Phi);d\Phi}_h-F(\Phi)\cdot A(\Phi)\pa{d\Phi,d\Phi}_h\Big)\,d\text{vol}_h=0 \]
		for all $F\in C^\infty_c(\subman\setminus\Phi(\de\omega),\R^\envdim)$. Moreover, if $\omega$ is connected, $\Phi(\bar\omega)=\supp{\norm{\vfd_\omega}}$ unless $\Phi$ is constant on $\omega$.
	\end{proposition}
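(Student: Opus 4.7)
My plan is to prove the three claims in sequence: continuity of $\Phi$, stationarity for every open $\omega$, and the support characterization.

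For continuity, fix $x_0\in\Sigma$. If $d\Phi$ vanishes a.e. in some neighborhood of $x_0$, then $\Phi$ is locally constant and continuity is trivial, so I may assume the contrary. The Courant--Lebesgue lemma gives a sequence $r_k\downarrow 0$ such that $\Phi|_{\de B_{r_k}(x_0)}$ admits a continuous representative of oscillation $\omega_k\to 0$; hence $\Phi(\de B_{r_k}(x_0))\subseteq B_{\omega_k}(p_k)$ for some $p_k\in\R^\envdim$. By Remark \ref{rectvar} the varifold $\vfd_{B_{r_k}(x_0)}$ is stationary with bounded generalized mean curvature, $\le\sqrt 2\max_\subman|A|$, in the open set $\R^\envdim\setminus\Phi(\de B_{r_k}(x_0))$, and by conformality plus the area formula its total mass equals $\int_{B_{r_k}(x_0)} N\,\mz|d\Phi|^2\,d\text{vol}_h$, which tends to $0$ as $k\to\infty$. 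Allard's monotonicity formula with bounded mean curvature then gives a density lower bound $\norm{\vfd_{B_{r_k}(x_0)}}(B_s(p))\ge c\,s^2$ at every $p\in\supp{\norm{\vfd_{B_{r_k}(x_0)}}}$ with $\dist(p,\Phi(\de B_{r_k}(x_0)))>s$, so the mass bound forces $\supp{\norm{\vfd_{B_{r_k}(x_0)}}}\subseteq B_{\omega_k+s_k}(p_k)$ for some $s_k\to 0$. Since any Lebesgue point $y\in B_{r_k}(x_0)$ with $d\Phi(y)\neq 0$ satisfies $\Phi(y)\in\supp{\norm{\vfd_{B_{r_k}(x_0)}}}$, and such points exist arbitrarily close to $x_0$ by assumption, the sequence $\{p_k\}$ is Cauchy; its limit, taken as the value of $\Phi$ at $x_0$, provides a continuous representative (the same estimate shows $\Phi\to\Phi(x_0)$ in $L^\infty(B_{r_k}(x_0))$ modulo a null set).

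For the stationarity on every open $\omega\subseteq\Sigma$, choose any nonnegative $\rho\in C^\infty(\Sigma)$ with $\{\rho>0\}=\omega$; by Definition \ref{almostdef}, equation \eqref{eq:locstat} holds for $\omega_t:=\{\rho>t\}$ at a.e. regular $t>0$. For such $t$ one has $\de\omega_t=\{\rho=t\}\subseteq\omega$, and since $\rho$ is bounded below on $\{x\in\omega:\dist(x,\de\omega)\ge\epsilon\}$ by a positive constant, $\de\omega_t$ lies in an arbitrarily small neighborhood of $\de\omega$ for $t$ small. Given $F\in C^\infty_c(\subman\setminus\Phi(\de\omega),\R^\envdim)$, $\supp{F}$ sits at positive distance from $\Phi(\de\omega)$, so uniform continuity of the representative $\Phi$ on the compact $\Sigma$ guarantees $\supp{F}\cap\Phi(\de\omega_t)=\emptyset$ for all small regular $t$, making $F$ an admissible test field for $\omega_t$. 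Applying \eqref{eq:locstat} for $\omega_t$ and letting $t\to 0^+$ by dominated convergence (the integrand is bounded by a multiple of $N|d\Phi|^2\in L^1$) gives \eqref{eq:locstat} for $\omega$.

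For the support claim, the inclusion $\supp{\norm{\vfd_\omega}}\subseteq\Phi(\bar\omega)$ follows from the continuity of $\Phi$ and the fact that $\vfd_\omega$ is carried by $\Phi(\good\cap\omega)$. For the converse, let $U$ be the largest open subset of $\omega$ on which $d\Phi=0$ a.e., so $\Phi$ is locally constant on $U$; since $\Phi$ is nonconstant on the connected $\omega$, $U\neq\omega$. Points $x\in\omega\setminus U$ have $\int_{B_r(x)}N|d\Phi|^2>0$ for every $r>0$, so the area formula yields $\Phi(x)\in\supp{\norm{\vfd_\omega}}$. For $x$ in a connected component $V$ of $U$, the openness and non-closedness of $V$ in the connected $\omega$ (since $V\neq\omega$) force $\bar V\cap(\omega\setminus U)\neq\emptyset$; continuity and the constancy of $\Phi$ on $V$ then give $\Phi(x)=\Phi(y)$ for some $y\in\omega\setminus U$, reducing to the previous case. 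Points of $\de\omega$ are handled by closedness of $\supp{\norm{\vfd_\omega}}$ and approximation by sequences in $\omega$.

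The main difficulty lies in step (1): merging the purely $W^{1,2}$ Courant--Lebesgue oscillation bound with the varifold-theoretic monotonicity to trap the image of small disks in small balls of $\R^\envdim$. What makes this possible is the \emph{local-in-the-domain} nature of the stationarity hypothesis, which ensures that each $\vfd_{B_{r_k}(x_0)}$ is a well-defined bounded-mean-curvature varifold in $\R^\envdim\setminus\Phi(\de B_{r_k}(x_0))$, unlocking Allard's monotonicity.
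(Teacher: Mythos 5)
Your proof of the continuity statement follows the paper's route: Courant--Lebesgue slicing to get boundary circles with small image, the monotonicity formula for the bounded-mean-curvature varifold $\vfd_{B_{r_k}^2(x_0)}$ to confine $\supp{\norm{\vfd_{B_{r_k}^2(x_0)}}}$ to a small ball around the boundary image, and the observation that Lebesgue points $y$ with $d\Phi(y)\neq 0$ satisfy $\Phi(y)\in\supp{\norm{\vfd_{B_{r_k}^2(x_0)}}}$. There is, however, one genuine gap in the parenthetical claim that $\Phi\to\Phi(x_0)$ in $L^\infty(B_{r_k}^2(x_0))$ modulo a null set: your density argument controls $\Phi(y)$ only at Lebesgue points where $d\Phi(y)\neq 0$, and a priori $\Phi$ could take a far-away constant value on a positive-measure subset of $B_{r_k}^2(x_0)$ on which $d\Phi=0$ (the dichotomy ``either $d\Phi\equiv 0$ near $x_0$ or not'' that you set up at the start does not rule out this mixed behaviour inside a single ball). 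This is exactly what Lemma \ref{essim} in the appendix is for: since $d\Phi=0$ a.e. off the set $E$ of Lebesgue points with nonvanishing differential, the essential image of $\restr{\Phi}{B_{r'}^2(x_0)}$ equals $\bar{\Phi(E)}$, which lies in the closed set $\supp{\norm{\vfd_{B_{r'}^2(x_0)}}}$; with that lemma inserted your oscillation estimate, and hence the locally uniform modulus of continuity, is justified.

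The remaining two parts are correct. For the stationarity on an arbitrary open $\omega$ you take $\rho$ with $\set{\rho>0}=\omega$ and pass to the limit $t\to 0^+$ by dominated convergence; the paper instead takes $\rho\in C^\infty_c(\omega)$ equal to $1$ on the compact set $\omega\cap\Phi^{-1}(\supp{F})$ and notes that the integrals over $\set{\rho>t}$ and over $\omega$ coincide exactly, because the integrand vanishes a.e. outside $\Phi^{-1}(\supp{F})$. Both routes work; the paper's avoids the limit. Your direct proof of $\Phi(\bar\omega)=\supp{\norm{\vfd_\omega}}$, via the decomposition of $\omega$ into the maximal open set $U$ where $d\Phi=0$ a.e. and its complement, is sound and spells out the argument the paper only alludes to.
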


	\begin{proof}
		Let $\phi:U\to\Omega$ be a local conformal chart (with $U\subseteq\Sigma$ and $\Omega\subseteq\C$) and let $\Psi:=\Phi\circ\phi^{-1}$, $\tilde\good:=\phi(\good\cap U)$ and $\tilde\goodrk:=\phi(\goodrk\cap U)$. For any $x\in\Omega$ and any $r<\mz\dist(x,\de\Omega)$ we can apply Lemma \ref{slicing} (with $\tau=1$) and obtain a radius $r'\in(r,2r)$ such that \eqref{eq:locstat} applies with $\omega=\phi^{-1}(B_{r'}^2(x))$ and such that
		\begin{equation}\label{eq:goodchoice} \diam\Psi(\de B_{r'}^2(x))\le\sqrt{4\pi}\pa{\int_{B_{2r}^2(x)}\abs{\nabla\Psi}^2\,d\mathcal{L}^2}^{1/2},\quad\mathcal{H}^1(\de B_{r'}^2(x)\setminus\tilde\good)=0. \end{equation}
		Let us assume that $\Psi$ is not (a.e.) constant on $B_{r'}^2(x)$. If $z\in {\bar B}_{r'}^2(x)\cap\tilde\goodrk$ we have $\Psi(z)\in\supp{\norm{\vfd_\omega}}$, as
		\[ \norm{\vfd_\omega}(B_s^\envdim(\Psi(z)))=\mz\int_{\Psi^{-1}(B_s^\envdim(\Psi(z)))\cap B_{r'}^2(x)}(N\circ\phi^{-1})\abs{\nabla\Psi}^2\,d\mathcal{L}^2>0 \]
		for all $s>0$. Hence, since $\supp{\norm{\vfd_\omega}}$ is closed in $\R^\envdim$, by Lemma \ref{essim} the essential image of $\restr{\Psi}{B_{r'}^2(x)}$ is included in $\supp{\norm{\vfd_\omega}}$. The converse inclusion trivially holds, as well, so we conclude that the essential image of $\restr{\Psi}{B_{r'}^2(x)}$ coincides with $\supp{\norm{\vfd_\omega}}$; in particular, the latter includes the compact set $\Gamma:=\Psi(\de B_{r'}^2(x))$ (by the second part of \eqref{eq:goodchoice}).

		Moreover, since in $\R^\envdim\setminus\Gamma$ the varifold $\vfd_\omega$ has generalized mean curvature bounded by $\sqrt{2}\norm{A}_{L^\infty}$, from the monotonicity formula \cite[Theorem~17.6]{simon} and \cite[Remark~17.9(1)]{simon} we deduce
		\[ \mz\int_{B_{2r}^2(x)}(N\circ\phi^{-1})\abs{\nabla\Psi}^2\,d\mathcal{L}^2\ge\norm{\vfd_\omega}(B_s^\envdim(p))\ge e^{-(\sqrt{2}\norm{A}_{L^\infty})s}\cdot\pi s^2 \]
		for all $p\in\supp{\norm{\vfd_\omega}}\setminus\Gamma$ and all $s\le\dist(p,\Gamma)$. If $\subman$ is compact, choosing $s:=\dist(p,\Gamma)\le\diam\subman$ and recalling \eqref{eq:goodchoice}, we conclude that
		\begin{equation}\label{eq:xpdiamest} \begin{split} &\diam\Psi(\tilde\good\cap B_r^2(x))\le\diam\supp{\norm{\vfd_\omega}}\le\diam\Gamma+2\max_{p\in\supp{\norm{\vfd_\omega}}}\dist(p,\Gamma) \\
		&\le 2\pa{\sqrt{\pi}+\pa{e^{(\sqrt{2}\norm{A}_{L^\infty})\diam\subman}\frac{\norm{N}_{L^\infty}}{2\pi}}^{1/2}}\pa{\int_{B_{2r}^2(x)}\abs{\nabla\Psi}^2\,d\mathcal{L}^2}^{1/2}. \end{split} \end{equation}
		If instead $\subman=\R^\envdim$, then we have
		\begin{equation}\label{eq:xpdiamest2} \begin{split} &\diam\Psi(\tilde\good\cap B_r^2(x))\le\diam\supp{\norm{\vfd_\omega}}\le\diam\Gamma+2\sup_{p\in\supp{\norm{\vfd_\omega}}}\dist(p,\Gamma) \\
		&\le 2\pa{\sqrt{\pi}+\pa{\frac{\norm{N}_{L^\infty}}{2\pi}}^{1/2}}\pa{\int_{B_{2r}^2(x)}\abs{\nabla\Psi}^2\,d\mathcal{L}^2}^{1/2}. \end{split} \end{equation}
		This estimate for $\diam\Psi(\tilde\good\cap B_r^2(x))$ is trivially true also when $\Psi$ is a.e. constant on $B_{r'}^2(x)$. The last expressions are infinitesimal as $r\to 0$, locally uniformly in $x$. We infer that $\restr{\Psi}{\tilde\good}$ is locally uniformly continuous on $\Omega$ and thus has a continuous representative. This shows that $\Phi$ has a continuous representative.

		We record here another estimate for $\diam\supp{\norm{\vfd_\omega}}$ independent of $\diam\subman$, which will be useful later. All the points in $\supp{\norm{\vfd_\omega}}$ have distance at most $2D+2$ from $\Gamma$, where $D:=\frac{e^H}{\pi}\norm{\vfd_\omega}(\R^\envdim)$ and $H$ is an upper bound for the generalized mean curvature of $\vfd_\omega$ in $\R^\envdim\setminus\Gamma$: if this were not the case, we would have $\Phi$ nonconstant on $\omega$ and thus $\Gamma\subseteq\Phi(\bar\omega)=\supp{\norm{\vfd_\omega}}$. By connectedness of $\Phi(\bar\omega)$ we could find points $p_j\in\supp{\norm{\vfd_\omega}}$ such that $\dist(p_j,\Gamma)=2j$, for $1\le j\le\floor{D}+1$, and since the balls $B_1^\envdim(p_j)$ are disjoint we would have
		\[ \norm{\vfd_\omega}(\R^\envdim)\ge\sum_j\norm{\vfd_\omega}(B_1^\envdim(p_j))\ge(\floor{D}+1)e^{-H}\pi>De^{-H}\pi, \]
		which is a contradiction. We deduce that
		\begin{equation}\label{eq:suppest} \diam\supp{\norm{\vfd_\omega}}\le\diam\Gamma+\frac{4e^H}{\pi}\norm{\vfd_\omega}(\R^\envdim)+4. \end{equation}

		We now show the statement about the stationarity property. If $F\in C^\infty_c(\subman\setminus\Phi(\de\omega),\R^\envdim)$,
		then we can find a nonnegative $\rho\in C^\infty_c(\omega)$ such that $\rho=1$ on the compact set $\omega\cap\Phi^{-1}(\supp{F})$. For almost every $t\in(0,1)$ the stationarity property \eqref{eq:locstat} holds in $\set{\rho>t}$, so
		\[ \int_{\set{\rho>t}}N\Big(\ang{d(F\circ\Phi);d\Phi}_h-F(\Phi) A(\Phi)\pa{d\Phi,d\Phi}_h\Big)\,d\text{vol}_h=0 \]
		and clearly the left-hand side does not change if we replace $\set{\rho>t}$ with $\omega$.

		\begin{comment}
		The inclusion $\supp{\norm{\vfd}}\subseteq\Phi(\Sigma)$ is clear. Let $\goodrk_d$ be the set of density points for $\goodrk$ (i.e. the set of Lebesgue points for $\uno_\goodrk$). Then, for any $x\in\goodrk_d$ and any $s>0$,
		\[ \norm{\vfd}(B_s^\envdim(\Phi(x)))=\mz\int_{\mathcal{G}\cap\Phi^{-1}(B_s^\envdim(\Phi(x)))}N\abs{d\Phi}_h^2\,d\text{vol}_h>0, \]
		since $\Phi^{-1}(B_s^\envdim(\Phi(x)))$ is an open neighbourhood of $x$. Hence, $\Phi(\goodrk_d)\subseteq\supp{\vfd}$. Since $\mathcal{L}^2(\goodrk\setminus\goodrk_d)=0$ and $d\Phi=0$ a.e. on $\Sigma\setminus\goodrk$, we have $d\Phi=0$ a.e. on $\Sigma\setminus\goodrk_d$.

		From the assumption that $\Phi$ is not constant we deduce $\goodrk_d\neq\emptyset$. Let $x_0\in\goodrk_d$ and pick an arbitrary $x\in\Sigma$. We can find a piecewise smooth curve $\gamma:[0,1]\to\Sigma$ with the following properties: $\gamma(0)=x_0$, $\gamma(1)=x$, $\Phi\circ\gamma$ is absolutely continuous, $\frac{d}{dt}(\Phi\circ\gamma)=(d\Phi\circ\gamma)[\dot\gamma]$ a.e. and, for a.e. $t$, either $d\Phi(\gamma(t))=0$ or $\gamma(t)\in\goodrk_d$. By continuity, calling $\bar t:=\sup\set{t:\gamma(t)\in\goodrk_d}$, we have $\Phi\circ\gamma(\bar t)\in\supp{\norm{\vfd}}$. On the other hand, since $\frac{d}{dt}(\Phi\circ\gamma)=0$ a.e. on $[\bar t,1]$, we have $\Phi\circ\gamma(\bar t)=\Phi\circ\gamma(1)=\Phi(x)$. This proves the other inclusion.
		\end{comment}
		Finally, the last statement is obtained with the same argument used in the first part of the proof.
	\end{proof}

	From now on, we will always assume that the map $\Phi$ is continuous, for any parametrized stationary varifold.

	\begin{proposition}[negligibility of $\bm{\Sigma\setminus\goodrk}$ in the target]\label{zerocontrib}
		We have $\mathcal{H}^2(\Phi(\Sigma\setminus\goodrk))=0$.
	\end{proposition}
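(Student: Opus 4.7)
I would split the set into its Lebesgue and non-Lebesgue parts,
\[ \Sigma\setminus\goodrk=(\good\setminus\goodrk)\,\cup\,(\Sigma\setminus\good), \]
and show $\mathcal{H}^2$-negligibility of the $\Phi$-image of each piece separately.

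\emph{The Lebesgue part.} On $\good\setminus\goodrk$ the differential vanishes, so by weak conformality the Jacobian $\tfrac{1}{2}\abs{d\Phi}_h^2$ is identically zero there. Working in a finite atlas of conformal charts and invoking the area formula made available by Lemma \ref{rectif} (as recorded in Remark \ref{rectvar}), I get
\[ \int_{\R^\envdim}\#\set{x\in\good\setminus\goodrk:\Phi(x)=y}\,d\mathcal{H}^2(y)=\int_{\good\setminus\goodrk}\frac{\abs{d\Phi}_h^2}{2}\,d\text{vol}_h=0, \]
so $\mathcal{H}^2(\Phi(\good\setminus\goodrk))=0$.

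\emph{The non-Lebesgue part.} Fix a conformal chart $\phi:U\to\Omega$, let $\Psi:=\Phi\circ\phi\m$, and set $E:=\phi(U\setminus\good)$; then $\mathcal{L}^2(E)=0$. The key tool is the modulus-of-continuity estimate already extracted in the proof of Proposition \ref{contandsupp} (estimates \eqref{eq:xpdiamest}--\eqref{eq:xpdiamest2}), namely
\[ \diam\Psi(B_r^2(x))^2\le C\int_{B_{2r}^2(x)}\abs{\nabla\Psi}^2\,d\mathcal{L}^2, \]
valid for all $x$ and all sufficiently small $r>0$ (one passes from $\tilde\good\cap B_r^2(x)$ to the full ball using continuity of $\Psi$ and density of $\tilde\good$). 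Given $\epsilon>0$, enclose $E$ in an open set $U_\epsilon\cptsub\Omega$ with $\mathcal{L}^2(U_\epsilon)<\epsilon$, and use the Besicovitch covering theorem to select a countable family of balls $B_{r_i}(x_i)$ with $x_i\in E$, $r_i<\delta$, $B_{2r_i}(x_i)\subseteq U_\epsilon$, covering $E$, and whose doubled balls have uniformly bounded overlap. Then, for every small $\eta>0$ (ensured by choosing $\delta$ small),
\[ \mathcal{H}^2_\eta(\Psi(E))\le\sum_i\diam\Psi(B_{r_i}(x_i))^2\le C\sum_i\int_{B_{2r_i}(x_i)}\abs{\nabla\Psi}^2\,d\mathcal{L}^2\le C'\int_{U_\epsilon}\abs{\nabla\Psi}^2\,d\mathcal{L}^2, \]
which tends to $0$ as $\epsilon\to 0$ by absolute continuity of the measure $\abs{\nabla\Psi}^2\mathcal{L}^2$. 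Running this over a finite atlas completes the proof.

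\emph{Main obstacle.} The delicate piece is the non-Lebesgue part: continuous $W^{1,2}$ maps in dimension two do not enjoy the Lusin $N$ property in general, so the $\Phi$-image of an $\mathcal{L}^2$-null set need not be $\mathcal{H}^2$-null. What rescues the argument is precisely the quantitative modulus of continuity coming from the monotonicity formula for the stationary varifold $\vfd_\omega$ on small conformal balls, which is the new input provided by the parametrized-stationary hypothesis; once that estimate is available, the rest reduces to a standard Besicovitch--absolute continuity argument.
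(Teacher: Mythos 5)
Your decomposition and overall strategy coincide with the paper's: the Lebesgue part is handled by the area formula exactly as in the printed proof, and the non-Lebesgue part by the diameter estimate \eqref{eq:xpdiamest}--\eqref{eq:xpdiamest2} plus a Besicovitch covering and absolute continuity. Your remark about the failure of the Lusin $N$ property correctly identifies why the monotonicity-formula input is indispensable.

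There is, however, one genuine gap in the covering step. You claim that Besicovitch produces balls $B_{r_i}^2(x_i)$ covering $E$ \emph{whose doubled balls have uniformly bounded overlap}. Besicovitch controls the overlap of the selected balls themselves, not of their dilates; and a family of balls with bounded overlap (even pairwise disjoint) can have dilates with unbounded overlap. For instance, the disjoint balls $B_{2^{-k-2}}((2^{-k},0))$ all contain the origin after dilation by a fixed factor. Since the diameter estimate for $\Psi(B_{r}^2(x))$ is in terms of $\int_{B_{2r}^2(x)}\abs{\nabla\Psi}^2\,d\mathcal{L}^2$, the inequality
\[ \sum_i\int_{B_{2r_i}^2(x_i)}\abs{\nabla\Psi}^2\,d\mathcal{L}^2\le C'\int_{U_\epsilon}\abs{\nabla\Psi}^2\,d\mathcal{L}^2 \]
does not follow from what you have established, and this is precisely the quantity you need to make small. (Applying Besicovitch to the doubled balls instead does not help: then the diameter estimate would require the energy on the quadrupled balls, and the mismatch recurs.)

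The paper closes this gap by choosing, at each point $z$, a small radius $r$ additionally satisfying the doubling-type inequality
\[ \int_{B_{2r}^2(z)}\abs{\nabla\Psi}^2\,d\mathcal{L}^2\le 8\int_{B_r^2(z)}\abs{\nabla\Psi}^2\,d\mathcal{L}^2+4r^2, \]
which exists by a short contradiction argument (otherwise the energy of $B_{2^{-j}}^2(z)$ would be simultaneously $\ge(2^{-j})^2$ and $o((2^{-j})^2)$). This converts the sum over doubled balls into a sum over the undoubled balls plus an area term $\sum_i\mathcal{L}^2(B_{r_i}^2(x_i))\le\mathfrak N\,\mathcal{L}^2(W)$, both of which are controlled by the bounded overlap of the $B_{r_i}^2(x_i)$ that Besicovitch actually provides, and both of which vanish as the open neighbourhood $W\supseteq E$ shrinks. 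With this modification your argument becomes correct and is then essentially identical to the paper's.
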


	\begin{proof}
		As already observed in Remark \ref{rectvar}, the area formula can be applied on subsets of $\good$. In particular, since $d\Phi=0$ on $\good\setminus\goodrk$, we get $\mathcal{H}^2(\Phi(\good\setminus\goodrk))=0$.
		In order to show that $\mathcal{H}^2(\Phi(\Sigma\setminus\good))=0$, we pick any local conformal chart $\phi:U(\subseteq\Sigma)\to\Omega(\subseteq\C)$ and, as in the previous proof, we set $\Psi:=\Phi\circ\phi^{-1}$ and
		$\tilde\good:=\phi(\good\cap U)$.

		\begin{comment}
		For $x\in\Omega$, we select a radius $s<\dist(x,\de\Omega)$ such that the map $\gamma(\theta):=\Psi(x+s(\cos\theta,\sin\theta))$ is in $W^{1,1}(S^1,\R^\envdim)$ (such $s$ exists by \cite[Theorem~4.21]{evans}). Since $\Omega$ can be covered by countably many such balls, it suffices to show that $\Psi(B_s^2(x)\setminus f(\good))$ is $\mathcal{H}^2$-negligible.
		\end{comment}
		Fix an arbitrary $\delta>0$ and an open set $W\subseteq\Omega$ containing $\Omega\setminus\tilde\good$. For any $z\in W$ we can find a radius $r<\mz\dist(z,\de W)\wedge 1$ such that
		$\bar C\pa{\int_{B_{2r}^2(z)}\abs{\nabla\Psi}^2\,d\mathcal{L}^2}^{1/2}<\delta$, where $\bar C$ is the constant appearing in the right-hand side of \eqref{eq:xpdiamest} (or \eqref{eq:xpdiamest2} if $\subman=\R^\envdim$), and
		\[ \int_{B_{2r}^2(z)}\abs{\nabla\Psi}^2\,d\mathcal{L}^2\le 8\int_{B_r^2(z)}\abs{\nabla\Psi}^2\,d\mathcal{L}^2+4r^2: \]
		indeed, if such $r$ did not exist, for $j$ big enough we would have $(2^{-j})^2\le\int_{B_{2^{-j}}^2(z)}\abs{\nabla\Psi}^2\,d\mathcal{L}^2\le\frac{1}{8}\int_{B_{2^{-j+1}}^2(z)}\abs{\nabla\Psi}^2\,d\mathcal{L}^2$, hence $(2^{-j})^2\le\int_{B_{2^{-j}}^2(z)}\abs{\nabla\Psi}^2\,d\mathcal{L}^2=O(2^{-3j})=o((2^{-j})^2)$, which is a contradiction. By Besicovitch covering theorem, we can extract countably many balls $B_{r_i}^2(x_i)$ from this collection with $\uno_W\le\sum_i\uno_{B_{r_i}^2(x_i)}\le\mathfrak{N}\uno_W$, for some universal constant $\mathfrak{N}$. By inequality \eqref{eq:xpdiamest} (or \eqref{eq:xpdiamest2}) we have $\diam\Phi(B_{r_i}^2(x_i))<\delta$ and
		\[ \begin{split} \sum_i\frac{\pi}{4}(\diam\Phi(B_{r_i}^2(x_i)))^2&\le\frac{\pi{\bar C}^2}{4}\sum_i\int_{B_{2r_i}^2(x_i)}\abs{\nabla\Psi}^2\,d\mathcal{L}^2 \\
		&\le
		2\pi{\bar C}^2\sum_i\int_{B_{r_i}^2(x_i)}\abs{\nabla\Psi}^2\,d\mathcal{L}^2+{\bar C}^2\mathcal{L}^2(B_{r_i}^2(x_i)) \\
		&\le 2\pi{\bar C}^2\mathfrak{N}\int_W\abs{\nabla\Psi}^2\,d\mathcal{L}^2+{\bar C}^2\mathfrak{N}\mathcal{L}^2(W). \end{split} \]
		Since $\delta$ was arbitrary, we get $\mathcal{H}^2(\Phi(\Omega\setminus\tilde\good))\le 2\pi{\bar C}^2\mathfrak{N}\int_W\abs{\nabla\Psi}^2\,d\mathcal{L}^2+{\bar C}^2\mathfrak{N}\mathcal{L}^2(W)$. Since $\mathcal{L}^2(\Omega\setminus\tilde\good)=0$ and $W$ was arbitrary as well, we arrive at $\mathcal{H}^2(\Phi(\Omega\setminus\tilde\good))=0$.
	\end{proof}

	\begin{proposition}[structure of fibers]\label{fincpt}
		For any $p\in\subman$ the compact set $\Phi^{-1}(p)$ has finitely many connected components. If $x\in\goodrk$ then $x$ is isolated in $\Phi^{-1}(\Phi(x))$.
	\end{proposition}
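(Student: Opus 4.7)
The plan is to prove the two statements separately, with both arguments relying on the monotonicity formula together with the diameter/support control from Proposition \ref{contandsupp}.

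For the first statement, I argue by contradiction. Supposing $\Phi^{-1}(p)$ has infinitely many components, for any $n$ I pick $n$ distinct ones $K_1,\dots,K_n$ and use the normality of $\Sigma$ applied to the pairwise disjoint closed sets $K_1,\ldots,K_n,\Phi^{-1}(p)\setminus\bigcup_iK_i$ to obtain pairwise disjoint open neighborhoods separating them; taking connected components containing each $K_i$ yields disjoint \emph{connected} open sets $\omega_i\supseteq K_i$ with $\omega_i\cap\Phi^{-1}(p)=K_i$. The nonempty open set $\omega_i\setminus K_i$ is disjoint from $\Phi^{-1}(p)$, so $\Phi$ is nonconstant on $\omega_i$ and Proposition \ref{contandsupp} gives $p\in\Phi(\bar\omega_i)=\supp{\norm{\vfd_{\omega_i}}}$. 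Since $K_i\subseteq\omega_i^\circ$, we have $p\notin\Phi(\de\omega_i)$, so a single ball $B_\epsilon^\envdim(p)$ avoids all these compact boundary images; in it each $\vfd_{\omega_i}$ is stationary with uniformly bounded generalized mean curvature (Remark \ref{rectvar}), and the monotonicity formula yields $\Theta_{\vfd_{\omega_i}}(p)\ge 1$. The disjointness of the $\omega_i$ gives the pointwise inequality $\vfd\ge\sum_i\vfd_{\omega_i}$, hence $\Theta_\vfd(p)\ge n$, and sending $n\to\infty$ contradicts the finite density of the globally stationary $\vfd$.

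For the second statement, I work in a local conformal chart with $\phi(x)=0$ and set $\Psi:=\Phi\circ\phi^{-1}$, $p:=\Psi(0)$. Assuming by contradiction that $z_n\to 0$ with $z_n\neq 0$ and $\Psi(z_n)=p$, I rescale by $r_n:=\abs{z_n}$ to $\Psi_n(z):=(\Psi(r_nz)-p)/r_n$ and plan to show $\Psi_n\to L$ uniformly on compact sets, where $L(z):=d\Psi(0)z$; extracting $z_n/r_n\to\zeta_\infty\in S^1$ then yields $0=\Psi_n(z_n/r_n)\to L(\zeta_\infty)=d\Psi(0)\zeta_\infty\neq 0$, contradicting the injectivity of the nonzero conformal linear map $d\Psi(0)$. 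The Lebesgue-point property of $\abs{\nabla\Psi}^2$ at $0$ gives the norm convergence $\norm{\nabla\Psi_n}_{L^2(B_R)}^2=r_n^{-2}\int_{B_{Rr_n}(0)}\abs{\nabla\Psi}^2\to\pi R^2\abs{\nabla\Psi(0)}^2$, while writing $\Psi_n(z)-L(z)=\int_0^1(\nabla\Psi(r_ntz)-\nabla\Psi(0))\cdot z\,dt$ on a.e. radial line (using $\Psi\in W^{1,2}$) and applying dominated convergence yields $\Psi_n\to L$ in $L^2_{loc}$. Together with weak $W^{1,2}_{loc}$ compactness, these upgrade to strong $W^{1,2}_{loc}$ convergence. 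I then apply the diameter estimate \eqref{eq:xpdiamest}--\eqref{eq:xpdiamest2}, which is scale-invariant (the exponent $\norm{A}_{L^\infty}\diam\subman$ appearing in $\bar C$ does not see the rescaling), to get $\diam\Psi_n(B_\rho(y))\le\bar C\,(\int_{B_{2\rho}(y)}\abs{\nabla\Psi_n}^2)^{1/2}$ uniformly in $n$; the strong convergence makes the right-hand side converge uniformly for $y$ in a compact set to $2\sqrt\pi\,\rho\abs{\nabla\Psi(0)}$, yielding equicontinuity and therefore uniform convergence by Arzel\`a--Ascoli.

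The main obstacle is this equicontinuity step. The diameter estimate applied naively bounds $\int_{B_{2\rho}(y)}\abs{\nabla\Psi_n}^2$ by the Dirichlet energy of $\Psi$ on the ball $B_{(\abs{y}+2\rho)r_n}(0)$, which scales like $(\abs{y}+\rho)^2$ and gives only uniform boundedness of $\Psi_n$ on compact sets. Upgrading to strong $L^2_{loc}$ convergence of the gradients---so that the energy concentration on small off-center balls behaves like that of the limiting affine $L$---is the nontrivial ingredient and is precisely what the Lebesgue-point property at $0$ supplies.
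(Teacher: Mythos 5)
Your argument for the first claim follows the paper's strategy (counting the density of the global varifold at $p$ from below by the number of components, via the monotonicity formula), but the separation step is not correct as written. The set $\Phi^{-1}(p)\setminus\bigcup_{i=1}^{n}K_i$ need not be closed when the fiber has infinitely many components: for a fiber homeomorphic to $\set{0}\cup\set{1/n:n\ge 1}$, with $K_1=\set{0}$, the remaining components accumulate on $K_1$, their union is not closed, and \emph{no} open neighborhood $\omega_1$ of $K_1$ satisfies $\omega_1\cap\Phi^{-1}(p)=K_1$. So normality does not deliver the neighborhoods you describe. What you actually need is weaker, namely pairwise disjoint open $\omega_i\supseteq K_i$ with $\de\omega_i\cap\Phi^{-1}(p)=\emptyset$ (so that $p\nin\Phi(\de\omega_i)$ and $\vfd_{\omega_i}$ is stationary near $p$); this can be arranged either as in the paper, by partitioning the \emph{whole} fiber into $J>M$ disjoint nonempty compact pieces and separating those, or by using that every connected component of a compact metric space admits a neighborhood basis of relatively clopen subsets. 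Likewise, "$\Phi$ nonconstant on $\omega_i$" should not rely on $\omega_i\cap\Phi^{-1}(p)=K_i$: if $\Phi$ were constant on the connected open set $\omega_i\supseteq K_i$, then $\omega_i$ would be a connected subset of the fiber containing $K_i$, hence $\omega_i=K_i$ would be open and compact, forcing $\omega_i=\Sigma$ and $\Phi$ constant, a contradiction.

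For the second claim you take a genuinely different and much heavier route than the paper, which merely uses Lemma \ref{slicingleb} to produce arbitrarily small circles $\de B_{r'}^2(0)$ with $\Psi(0)\nin\Psi(\de B_{r'}^2(0))$, concludes $K_x=\set{x}$, and then invokes the first part. The problem with your blow-up is that it needs more than membership in $\goodrk$ supplies. The set $\good$ consists of Lebesgue points of $\Phi$ and $d\Phi$ in the $L^1$-average sense, whereas your norm convergence $r_n^{-2}\int_{B_{Rr_n}^2(0)}\abs{\nabla\Psi}^2\,d\mathcal{L}^2\to\pi R^2\abs{\nabla\Psi(0)}^2$ and the dominated-convergence estimate for $\Psi_n-L$ both require $\media_{B_r^2(0)}\abs{\nabla\Psi-\nabla\Psi(0)}^2\,d\mathcal{L}^2\to 0$, i.e.\ an $L^2$-Lebesgue point of the gradient. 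This holds almost everywhere but not necessarily at every point of $\goodrk$; indeed Proposition \ref{nisusc} lists precisely this condition as an \emph{additional} hypothesis beyond $x\in\goodrk$, and only claims an a.e.\ conclusion. Without it, the strong $W^{1,2}_{loc}$ convergence fails, the equicontinuity step collapses, and your argument proves isolation only for a.e.\ point of $\goodrk$ rather than for all of them, which is what the statement (and its later uses) requires. The slicing lemma is designed exactly to extract the needed "small circle avoiding $\Psi(0)$" from $L^1$ information alone, so I would replace the blow-up by that argument.
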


	\begin{proof}
		Since the varifold $\vfd_\Sigma$ is stationary, the limit
		\[ M:=\lim_{s\to 0}\frac{\norm{\vfd_\Sigma}(B_s^\envdim(p))}{\pi s^2} \]
		exists. We claim that the number of connected components of $\Phi^{-1}(p)$ is not greater than $M$. If this were not the case, we could split $\Phi^{-1}(p)=\bigsqcup_{j=1}^J K_j$, where the subsets $K_j$ are disjoint and compact and $J>M$ is a finite integer (if the maximum value of $J$ such that this can be done were at most $M$, then one of the subsets would be disconnected, contradicting the maximality). We could then find disjoint open neighborhoods $\omega_j\supseteq K_j$ and we would have
		\[ M=\lim_{s\to 0}\frac{\norm{\vfd_\Sigma}(B_s^\envdim(p))}{\pi s^2}\ge\sum_{j=1}^J\lim_{s\to 0}\frac{\norm{\vfd_{\omega_j}}(B_s^\envdim(p))}{\pi s^2}\ge J \]
		(by \cite[Remark~17.9(1)]{simon}: notice that $\Phi$ must be nonconstant on any connected component of $\omega_j$ intersecting $K_j$, hence by Proposition \ref{contandsupp} $p\in\supp{\norm{\vfd_{\omega_j}}}$). This is a contradiction.

		Assume now $x\in\goodrk$ and call $K_x$ the connected component of $\Phi^{-1}(\Phi(x))$ containing $x$. It suffices to show that $K_x=\set{x}$, since we already know that $\Phi^{-1}(\Phi(x))$ is a finite union of compact connected sets. If $\phi$ is a local conformal chart centered at $x$ and $\Psi:=\Phi\circ\phi^{-1}$, as in the proof of Proposition \ref{nisusc} below we can find a radius $r'>0$ such that $\Phi(x)=\Psi(0)\nin\Psi(\de B_{r'}^2(0))$. Hence $K_x\subseteq\phi^{-1}(B_{r'}^2(0))$ and, since $r'$ is arbitrarily small, we deduce $K_x=\phi^{-1}(\set{0})=\set{x}$.
	\end{proof}

	We now define a more robust representative $\tilde N$ of the multiplicity function $N$, which is canonically defined everywhere and is upper semicontinuous. We point out that (a priori) $\tilde N$ could take values in $\brapa{1,\infty}$ instead of $\N\setminus\set{0}$.

	\begin{definition}[robust representative of $\bm{N}$]\label{tildendef}
		Given $x\in\Sigma$, we call $K_x$ the connected component of $\Phi^{-1}(\Phi(x))$ containing $x$ and we let
		\[ \tilde N(x):=\inf_{\substack{\omega\supseteq K_x, \\ \Phi(x)\nin\Phi(\de\omega)}}\lim_{s\to 0}\frac{\norm{\vfd_\omega}(B_s^\envdim(\Phi(x)))}{\pi s^2}. \]
		The limit exists and is at least $1$, by the stationarity of $\vfd_\omega$ in $\subman\setminus\Phi(\de\omega)$ (which contains $\Phi(x)$) and the fact that $\Phi(x)\in\supp{\norm{\vfd_\omega}}$ (by Lemma \ref{contandsupp}, since $\Phi$ is necessarily nonconstant on the connected component of $\omega$ containing $x$). Notice that $\tilde N=\tilde N(x)$ on $K_x$. Moreover, the infimum is actually a minimum and is achieved whenever $\bar\omega$ is disjoint from the compact set $\Phi^{-1}(\Phi(x))\setminus K_x$. \hfill\qedsymbol
	\end{definition}

	\begin{proposition}[upper semicontinuity of $\tilde{\bm{N}}$]\label{nisusc}
		The function $\tilde N$ is upper semicontinuous and $\tilde N\ge 1$. Moreover, $\tilde N=N$ a.e. on $\goodrk$.
	\end{proposition}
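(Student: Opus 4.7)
The three claims can be handled independently.

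The bound $\tilde N\ge 1$ is immediate from Definition~\ref{tildendef}: every term appearing in the defining infimum is already recorded there to be $\ge 1$, since $\vfd_\omega$ has bounded generalized mean curvature in $\R^\envdim\setminus\Phi(\de\omega)$ (Remark~\ref{rectvar}) and $\Phi(x)\in\supp{\norm{\vfd_\omega}}$, so the monotonicity formula \cite[Theorem~17.6]{simon} forces density $\ge 1$ at $\Phi(x)$.

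For upper semicontinuity, I would fix $x_n\to x$ and show that every open $\omega$ admissible for $\tilde N(x)$ (meaning $K_x\subseteq\omega$ and $\Phi(x)\nin\Phi(\de\omega)$) is eventually admissible for $\tilde N(x_n)$. Compactness of $\de\omega$ makes $\Phi(\de\omega)$ closed in $\R^\envdim$, so continuity of $\Phi$ yields $\Phi(x_n)\nin\Phi(\de\omega)$ and $x_n\in\omega$ for $n$ large. Now $K_{x_n}$ is connected with image $\set{\Phi(x_n)}$ disjoint from $\Phi(\de\omega)$, hence it cannot meet $\de\omega$; containing $x_n\in\omega$, it must lie inside $\omega$. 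Therefore $\tilde N(x_n)\le\Theta(\vfd_\omega,\Phi(x_n))$. The standard upper semicontinuity of the density function $p\mapsto\Theta(\vfd_\omega,p)$ on $\R^\envdim\setminus\Phi(\de\omega)$, a direct corollary of the monotonicity formula applied to $\vfd_\omega$, then gives $\limsup_n\Theta(\vfd_\omega,\Phi(x_n))\le\Theta(\vfd_\omega,\Phi(x))$; taking the infimum over admissible $\omega$ concludes the argument.

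For the equality $\tilde N=N$ a.e.\ on $\goodrk$, I would restrict to the full-measure subset of points $x\in\goodrk$ which are also Lebesgue points of $N$. For such $x$, Proposition~\ref{fincpt} forces $K_x=\set{x}$. Working in a conformal chart $\phi$ centered at $x$ with $\Psi:=\Phi\circ\phi^{-1}$, I would pick $r>0$ small enough that $\phi^{-1}(\bar B_r^2)$ is disjoint from the compact set $\Phi^{-1}(\Phi(x))\setminus\set{x}$ and $\Phi(x)\nin\Psi(\de B_r^2)$; both are achievable because $\{x\}$ is isolated and $\Psi$ is continuous. By the minimum claim recorded in Definition~\ref{tildendef}, $\tilde N(x)=\Theta(\vfd_{\omega_r},\Phi(x))$ for $\omega_r:=\phi^{-1}(B_r^2)$. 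Setting $\Psi_s(z):=s^{-1}(\Psi(sz)-\Phi(x))$ and changing variables $w=sz$, one obtains
\[ \frac{\norm{\vfd_{\omega_r}}(B_s^\envdim(\Phi(x)))}{\pi s^2}=\frac{1}{2\pi}\int_{B_{r/s}^2}(N\circ\phi^{-1})(sz)\,\abs{\nabla\Psi_s(z)}^2\,\uno_{\abs{\Psi_s(z)}<1}\,d\mathcal{L}^2(z). \]
Since $0$ is a Lebesgue point of $d\Psi$ (in $L^2$) and of $N\circ\phi^{-1}$, one has $\Psi_s\to L:=d\Psi(0)$ strongly in $W^{1,2}_{\mathrm{loc}}$ and $(N\circ\phi^{-1})(s\,\cdot)\to N(x)$ in $L^1_{\mathrm{loc}}$; passing to the limit, the expression converges to $\frac{N(x)}{2\pi}\int_\C\abs{\nabla L}^2\uno_{\abs{L(z)}<1}\,d\mathcal{L}^2=N(x)$, using that $L$ is a linear conformal isomorphism onto a $2$-plane (so $\abs{\nabla L}^2=2|\lambda_0|^2$ and $\mathcal{L}^2(\set{|L(z)|<1})=\pi/|\lambda_0|^2$, where $\lambda_0$ is the conformal factor).

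The most delicate step is the passage to the limit in this integral, whose integrand is a product of factors converging only in $L^1_{\mathrm{loc}}$, including an indicator function whose set of discontinuities needs to be controlled. The cleanest route I would take bypasses a direct manipulation: reinterpret the calculation as a statement about tangent varifolds, noting that the rescalings of $\vfd_{\omega_r}$ around $\Phi(x)$ converge, as $s\to 0$, to the stationary varifold given by the plane $L(\C)$ with constant multiplicity $N(x)$ (which has density $N(x)$ at the origin); then continuity of density under varifold convergence with uniformly bounded first variation immediately gives $\Theta(\vfd_{\omega_r},\Phi(x))=N(x)$.
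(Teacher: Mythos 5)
Your treatment of $\tilde N\ge 1$ and of upper semicontinuity is correct and is essentially the paper's own argument (fix an admissible $\omega$ for $x$, observe it is eventually admissible for nearby points, and invoke upper semicontinuity of the density of $\vfd_\omega$ from the monotonicity formula). The lower bound $\tilde N(x)\ge N(x)$ at a Lebesgue point of $\goodrk$ is also sound and matches the paper's computation.

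The gap is in the upper bound $\tilde N(x)\le N(x)$. Your identity expresses $\frac{\norm{\vfd_{\omega_r}}(B_s^\envdim(\Phi(x)))}{\pi s^2}$ as an integral over the \emph{expanding} domain $B_{r/s}^2$, whereas the convergences you have ($\Psi_s\to L$ in $W^{1,2}_{loc}$, $(N\circ\phi^{-1})(s\,\cdot)\to N(x)$ in $L^1_{loc}$) are only local: they control $\int_{B_{R'}^2}$ for each fixed $R'$, and that piece does converge to $N(x)$ in the limit $s\to 0$, $R'\to\infty$. What is missing is the tail estimate, namely that
\[ \limsup_{s\to 0}\,s^{-2}\int_{B_{r}^2(0)\setminus B_{R's}^2(0)}(N\circ\phi^{-1})\abs{\nabla\Psi}^2\,\uno_{\abs{\Psi-\Phi(x)}<s}\,d\mathcal{L}^2 \]
is small for $R'$ large: a priori, domain points at scale $\gg s$ (e.g. $\abs{w}\sim\sqrt{s}$) could still be mapped into $B_s^\envdim(\Phi(x))$ and contribute mass of order $s^2$ or worse, and the Lebesgue-point hypotheses give only $o(t^2)$-type bounds at scale $t$, which do not sum to $o(s^2)$ across the dyadic annuli between $R's$ and $r$. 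You flag the delicacy but attribute it to the indicator function; the real obstruction is this unbounded domain of integration. Your proposed remedy does not close it: identifying the blow-up of $\vfd_{\omega_r}$ at $\Phi(x)$ as \emph{exactly} $N(x)\,[L(\C)]$, with no extra mass, is equivalent to the tail estimate (extra mass in the tangent varifold is precisely what would make $\tilde N(x)>N(x)$), so the argument is circular; moreover the density is only upper semicontinuous, not continuous, under varifold convergence, which gives the inequality in the unhelpful direction.

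The paper circumvents this by working at a fixed scale rather than taking $s\to 0$ directly: using Lemma \ref{slicingleb} it selects a radius $r'$ for which $\abs{\Psi(r'y)-\Phi(x)-\ang{\nabla\Psi(0),r'y}}\le\epsilon r'$ on $S^1$, so that with $\omega=\phi^{-1}(B_{r'}^2(0))$ the boundary image $\Phi(\de\omega)$ avoids $B_{(\beta-\epsilon)r'}^\envdim(\Phi(x))$, $\beta:=\abs{\nabla\Psi(0)}/\sqrt 2$. The monotonicity formula then bounds the density at $\Phi(x)$ by $(1+O(r'))\,\norm{\vfd_\omega}(\R^\envdim)/(\pi(\beta-\epsilon)^2(r')^2)$, and the \emph{total} mass $\mz\int_{B_{r'}^2}(N\circ\phi^{-1})\abs{\nabla\Psi}^2$ is computable from the Lebesgue-point hypotheses alone, giving $\tilde N(x)\le N(x)\beta^2/(\beta-\epsilon)^2\to N(x)$. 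You need this (or an equivalent quantitative substitute for the tail estimate) to complete the third claim.
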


	\begin{proof}
		We already observed that $\tilde N\ge 1$ everywhere. Let $\lambda>1$ and $x\in\Sigma$ such that $\tilde N(x)<\lambda$. Choose any open set $\omega\supseteq K_x$ with $\bar\omega$ disjoint from $\Phi^{-1}(\Phi(x))\setminus K_x$, so that $\lim_{s\to 0}\frac{\norm{\vfd_\omega}(B_s^\envdim(\Phi(x)))}{\pi s^2}<\lambda$. Whenever $z\in\omega$ is close enough to $x$ we have $\Phi(z)\nin\Phi(\de\omega)$, so $K_z\subseteq\omega$ and by definition
		\[ \tilde N(z)\le\lim_{s\to 0}\frac{\norm{\vfd_\omega}(B_s^\envdim(\Phi(z)))}{\pi s^2}. \]
		But as $z\to x$ we have $\Phi(z)\to\Phi(x)$. Hence, eventually $\lim_{s\to 0}\frac{\norm{\vfd_\omega}(B_s^\envdim(\Phi(z)))}{\pi s^2}<\lambda$ (see \cite[Corollary~17.8]{simon}) and so $\tilde N(z)<\lambda$.

		Assume now $x\in\goodrk$ and $\int_{B_r^2(x)}\abs{N-N(x)}\,d\mathcal{L}^2=o(r^2)$, $\int_{B_r^2(x)}\abs{\nabla\Phi-\nabla\Phi(x)}^2\,d\mathcal{L}^2=o(r^2)$. Fix any open set $\omega$ containing $x$. Let $\phi$ be a local conformal chart centered at $x$ and set $\Psi:=\Phi\circ\phi^{-1}$, $\alpha:=\pa{\frac{\abs{\nabla\Psi(0)}}{\sqrt{2}}}^{-1}$. For any $s>0$ small enough we have
		\[ \begin{split} \norm{\vfd_\omega}(B_s^\envdim(\Phi(x)))&\ge\mz\int_{B_{\alpha s}^2(0)\cap\Psi^{-1}(B_s^\envdim(\Psi(0)))}(N\circ\phi^{-1})\abs{\nabla\Psi}^2\,d\mathcal{L}^2 \\
		&\ge\mz N(x)\abs{\nabla\Psi(0)}^2\mathcal{L}^2\Big(B_{\alpha s}^2(0)\cap\Psi^{-1}(B_s^\envdim(\Psi(0)))\Big) \\
		&\fantasma{\ge}-\mz\int_{B_{\alpha s}^2(0)}\abs{(N\circ\phi^{-1})\abs{\nabla\Psi}^2-N(x)\abs{\nabla\Psi(0)}^2}\,d\mathcal{L}^2. \end{split} \]
		By \cite[Theorem~6.1]{evans}, the function $s^{-1}(\Psi(\alpha s\,\cdot)-\Psi(0))$ converges to $\alpha\ang{\nabla\Psi(0),\cdot}$ (which is a linear isometry) in measure on $B_1^2(0)$, hence the first term in the right-hand side is $\pi N(x)s^2+o(s^2)$. Moreover, the function $N\circ\phi^{-1}(\alpha s\,\cdot)$ converges to $N(x)$ in measure and is bounded by $\norm{N}_{L^\infty}$, while $\abs{\nabla\Psi}^2(\alpha s\,\cdot)\to\abs{\nabla\Psi(0)}^2$ in $L^1(B_1^2(0))$. So the last term in the right-hand side is $o(s^2)$.
		This shows that $\tilde N(x)\ge N(x)$.

		Fix now any $0<\epsilon<\alpha^{-1}$. By Lemma \ref{slicingleb}, applied to $y\mapsto\Psi(y)-\Psi(0)-\ang{\nabla\Psi(0),y}$, we can find a radius $r'$ such that
		\[ \abs{\Psi(r'y)-\Psi(0)-\ang{\nabla\Psi(0),r'y}}\le\epsilon r' \]
		for all $y\in S^1$.
		Thus, choosing $\omega:=\phi^{-1}(B_{r'}^2(0))$ and applying the monotonicity formula,
		\[ \tilde N(x)\le e^{(\sqrt{2}\norm{A}_{L^\infty})(\beta-\epsilon)r'}\frac{\norm{\vfd_\omega}(B_{(\beta-\epsilon)r'}^\envdim(\Phi(x)))}{\pi(\beta-\epsilon)^2(r')^2}
		\le(1+O(r'))\frac{\int_{B_{r'}^2(0)}(N\circ\phi^{-1})\abs{\nabla\Psi}^2\,d\mathcal{L}^2}{2\pi(\beta-\epsilon)^2(r')^2}, \]
		where $\beta:=\frac{\abs{\nabla\Psi(0)}}{\sqrt{2}}=\alpha^{-1}$.
		Since $r'$ is arbitrarily small, we get $\tilde N(x)\le\frac{N(x)\beta^2}{(\beta-\epsilon)^2}$. Letting $\epsilon\to 0$ we get the converse inequality $\tilde N(x)\le N(x)$.
	\end{proof}

	It is useful to introduce the following local notion of parametrized stationary varifold.

	\begin{definition}[local counterpart]\label{locpardef}
		Let $\Omega\subseteq\C$ be open. A triple $(\Omega,\Phi,N)$ with $\Phi\in W^{1,2}_{loc}(\Omega,\R^\envdim)$, $N\in L^\infty(\Omega,\N\setminus\set{0})$ and $\Phi(\Omega)\subseteq\subman$ is called a \emph{local parametrized stationary varifold} (in $\subman$) if $\Phi$ is weakly conformal and if, for almost every domain $\omega\cptsub\Omega$,
		\[ \int_\omega N\Big(\ang{\nabla(F(\Phi));\nabla\Phi}-F(\Phi)\cdot A(\Phi)(\nabla\Phi,\nabla\Phi)\Big)\,d\mathcal{L}^2=0 \]
		for all $F\in C^\infty_c(\subman\setminus\Phi(\de\omega),\R^\envdim)$, with $A(\Phi)(\nabla\Phi,\nabla\Phi):=A(\Phi)(\de_1\Phi,\de_1\Phi)+A(\Phi)(\de_2\Phi,\de_2\Phi)$.
		We also require that
		\begin{equation}\label{eq:massass}
		\norm{\vfd_\Omega}(B_s^\envdim(p))=\mz\int_{\Phi^{-1}(B_s^\envdim(p))}N\abs{\nabla\Phi}^2\,d\mathcal{L}^2=O(s^2),
		\end{equation}
		uniformly in $p\in\R^\envdim$. \hfill\qedsymbol
	\end{definition}

	Notice that, in the last definition, the map $\Phi$ is allowed to be constant. The technical assumption \eqref{eq:massass} will be used only in the proof of Lemma \ref{removab}, which in turn is used in Sections \ref{conicalregsec} and \ref{genregsec}.

	\begin{rmk}[localization]
		 If $(\Sigma,\Phi,N)$ is a parametrized stationary varifold and $\phi:U(\subseteq\Sigma)\to\Omega(\subseteq\C)$ is a local conformal chart, then $(\Omega,\Phi\circ\phi^{-1},N\circ\phi^{-1})$ is a local parametrized stationary varifold: assumption \eqref{eq:massass} holds thanks to the monotonicity formula satisfied by $\vfd_\Sigma$. \hfill\qedsymbol
	\end{rmk}

	\begin{rmk}[local statements]\label{alsoloc}
		Proposition \ref{contandsupp} applies to the local case as well (with $\omega\cptsub\Omega$ in the statement), with the same proof: hence, we will tacitly assume that $\Phi$ is continuous for all local parametrized stationary varifolds. The same is true for Proposition \ref{zerocontrib}.
		
		The first part of Proposition \ref{fincpt} holds whenever $\Phi^{-1}(p)$ is compact (with the same proof with a neighborhood $\Phi^{-1}(p)\subseteq\omega\cptsub\Omega$ in place of $\Sigma$), while the second part holds in general (since, in its proof, we can apply the first part to the domain $\phi^{-1}(B_{r'}^2(0))$).
		
		The domain of definition of the function $\tilde N$, i.e. the set $\{\tilde N<\infty\}$, is an open subset of $\Omega$. The same argument of Proposition \ref{fincpt} shows that it consists of all points $x$ such that $K_x$ is compact and disjoint from the closure of $\Phi^{-1}(\Phi(x))\setminus K_x$. Proposition \ref{nisusc} always holds in the local case, with the same proof. \hfill\qedsymbol
	\end{rmk}

	\begin{rmk}[isolating sets with high $\tilde{\bm{N}}$]\label{hightilden}
		A useful fact which will be used in Sections \ref{conicalregsec} and \ref{genregsec} is the following: if $(\Omega,\Phi,N)$ is a local parametrized stationary varifold and $x\in\Omega$ satisfies $\tilde N(x)<\infty$, then for any $0<\epsilon<1$ we can find a neighborhood $K_x\subseteq\omega\cptsub\Omega$ with $\Phi(\omega)\cap\Phi(\de\omega)=\emptyset$ and such that $\omega\cap\Phi^{-1}(\Phi(y))=K_y$ whenever $y\in\omega$ has $\tilde N(y)\ge\tilde N(x)-\epsilon$.

		Indeed, let $K_x\subseteq\omega\cptsub\Omega$ with $\bar{\omega}$ disjoint from $\Phi^{-1}(\Phi(x))\setminus K_x$, so that $\tilde N(x)$ is the density of $\vfd_{\omega}$ at $\Phi(x)$.
		If $y_1,y_2\in\omega$ have the same image and are close enough to $K_x$, then $K_{y_1},K_{y_2}\subseteq\omega$ (since $\Phi(y_1)=\Phi(y_2)\nin\Phi(\de\omega)$) and the density of $\vfd_{\omega}$ at $\Phi(y_1)=\Phi(y_2)$ is less than $\tilde N(x)+1-\epsilon$ (by upper semicontinuity of the density). Hence,
		if $K_{y_1}\neq K_{y_2}$, calling $K_{y_i}\subseteq\omega_i\subseteq\omega$ two disjoint neighborhoods with $\Phi(y_i)\nin\Phi(\de\omega_i)$ we get
		\[ \begin{split} \tilde N(y_1)+1&\le\tilde N(y_1)+\tilde N(y_2)\le\lim_{s\to 0}\frac{\norm{\vfd_{\omega_1}}(B_s^\envdim(\Phi(y_1)))}{\pi s^2}+\lim_{s\to 0}\frac{\norm{\vfd_{\omega_2}}(B_s^\envdim(\Phi(y_2)))}{\pi s^2} \\
		&\le\lim_{s\to 0}\frac{\norm{\vfd_{\omega}}(B_s^\envdim(\Phi(y_1)))}{\pi s^2}<\tilde N(x)+1-\epsilon. \end{split} \]
		The claim is established by shrinking $\omega$ and by replacing it with $\omega\setminus\Phi^{-1}(\Phi(\de\omega))$. \hfill\qedsymbol
	\end{rmk}

	We quote without proof the following theorem, which is the main result of \cite{rivtarget}.

	\begin{thm}[constant multiplicity case]\label{constn}
		Let $(\Sigma,\Phi,N)$ be a parametrized stationary varifold in $\subman$.
		If $N$ is a.e. constant (and hence can be changed to $1$ without affecting the stationarity), then $\Phi\in C^\infty(\Sigma,\subman)$ and $-\Delta_h\Phi=A(\Phi)(d\Phi,d\Phi)_h$. The same holds for local parametrized stationary varifolds.
	\end{thm}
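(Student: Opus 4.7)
My plan, following the general strategy of \cite{rivtarget}, is to use Allard's $\epsilon$-regularity theorem to obtain smoothness on a set of full measure, and then to propagate the regularity across the branch locus.

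Since the stationarity relation \eqref{eq:locstat} is linear in $N$ and $N$ is a.e.\ equal to a single positive integer, I may normalize to $N\equiv 1$. Fix $z\in\goodrk$. By Proposition \ref{fincpt}, $z$ is isolated in its fiber $\Phi^{-1}(\Phi(z))$, so I can choose a small neighborhood $\omega\ni z$ on which \eqref{eq:locstat} holds and such that $\omega\cap\Phi^{-1}(\Phi(z))=\{z\}$ and $\Phi(z)\notin\Phi(\de\omega)$. The Lebesgue-point property of $d\Phi$ combined with weak conformality forces the rescaled maps $\Psi_r(y):=r^{-1}(\Phi(z+ry)-\Phi(z))$ (read in a local conformal chart) to converge strongly in $W^{1,2}_{loc}$ to the linear conformal map $y\mapsto d\Phi(z)\cdot y$, so that the blow-ups of $\vfd_\omega$ at $\Phi(z)$ converge to a flat multiplicity-one $2$-plane. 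Consequently both the density ratio and the tilt-excess of $\vfd_\omega$ centered at $\Phi(z)$ satisfy Allard's $\epsilon$-regularity hypotheses at some definite small scale; Allard's theorem then gives that $\vfd_\omega$ is a smooth minimal graph over its tangent plane in a neighborhood of $\Phi(z)$, and via the local injective parametrization this makes $z$ a smooth point of $\Phi$. Hence the regular set $\mathcal{R}\supseteq\goodrk$ is open, and on $\mathcal{R}$ the classical equivalence between conformal minimality and conformal harmonicity yields $-\Delta_h\Phi=A(\Phi)(d\Phi,d\Phi)_h$ strongly.

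It remains to extend the equation and smoothness across $\mathcal{B}:=\Sigma\setminus\mathcal{R}\subseteq\{d\Phi=0\}$, which has zero Lebesgue measure. Using the continuity of $\Phi$ (Proposition \ref{contandsupp}) together with its $W^{1,2}$-bound, one checks that the harmonic map equation holds weakly on all of $\Sigma$: in two dimensions a Borel set of vanishing $W^{1,2}$-capacity (certainly any countable or $\sigma$-finite $\mathcal{H}^0$ set, which $\mathcal{B}$ should be reducible to by analyzing the already-established equation on $\mathcal{R}$) is removable for continuous $W^{1,2}$ solutions of semilinear elliptic equations with $L^1$ nonlinearity. Once the weak harmonic map equation holds on $\Sigma$, the combination of continuity and the equation gives smoothness by a classical bootstrap (alternatively by H\'elein's or Rivi\`ere's regularity theorems, \cite{helein,rivharm}), so $\Phi\in C^\infty(\Sigma,\subman)$ with $-\Delta_h\Phi=A(\Phi)(d\Phi,d\Phi)_h$ pointwise. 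The local parametrized case is identical, working with an exhaustion of $\Omega$ by subdomains $\omega\cptsub\Omega$.

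The main anticipated obstacle is the quantitative verification of Allard's hypothesis at points of $\goodrk$: the density ratio and tilt-excess must be controlled at \emph{definite} scales, not merely in the infinitesimal limit. The decisive inputs are the continuity of $\Phi$ and the diameter estimate \eqref{eq:xpdiamest}, which together imply that on a small ball $B_r(z)$ the image $\Phi(B_r(z))$ is contained in an ambient ball of radius $O(r|d\Phi(z)|)$, so that the Allard excess quantities for $\vfd_\omega$ genuinely approach those of the flat limit as $r\to 0$. A secondary delicacy is the control of the size of $\mathcal{B}$: although it trivially has zero Lebesgue measure, the removable-singularity step requires additional control on its capacity, which is plausible through a preliminary unique-continuation-type analysis on $\mathcal{R}$ exploiting the fact that $|d\Phi|^2$ cannot vanish on a set of positive measure without $\Phi$ being constant on a component.
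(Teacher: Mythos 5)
First, a point of reference: the paper does not actually prove Theorem \ref{constn} — it is quoted without proof as the main result of \cite{rivtarget} — so there is no in-paper argument to compare yours against line by line. That said, your first step (Allard's $\epsilon$-regularity at points of $\goodrk$) is exactly what the introduction identifies as the starting point of the strategy of \cite{rivtarget}, and with $N\equiv 1$ it is legitimate: Proposition \ref{nisusc} gives density exactly $1$ at a.e.\ point of $\goodrk$, so the monotonicity formula supplies the density-ratio hypothesis at a definite scale. Even here, though, you elide a real step: Allard yields that the \emph{image} is a multiplicity-one $C^{1,\alpha}$ minimal graph near $\Phi(z)$, and passing from that to smoothness of the \emph{parametrization} $\Phi$ — for which no local injectivity is known a priori, only injectivity of $d\Phi(z)$ and isolatedness of $z$ in its fiber — requires an argument (e.g.\ testing the stationarity against vector fields of the form $(h\circ\pi)e_i$ as in Step 2 of Lemma \ref{ninfty}, to show that the projection of $\Phi$ onto the tangent plane solves a good equation). "Via the local injective parametrization" is not a proof of this.

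The genuine gap is in your removability step. The bad set $\mathcal{B}=\Sigma\setminus\mathcal{R}$ is closed and Lebesgue-null, but there is no reason it should be countable, $\sigma$-finite for $\mathcal{H}^0$, or of vanishing $W^{1,2}$-capacity: a priori it can be a Cantor-type set of positive Hausdorff dimension, hence of positive $2$-capacity in the plane, and then the domain-capacity removability you invoke simply fails. Your proposed repair — a "unique-continuation-type analysis on $\mathcal{R}$" — cannot close this, because unique continuation only constrains the zero set of $d\Phi$ \emph{inside} the open set where the harmonic map equation is already established; it gives no control on the size or capacity of the complement $\mathcal{B}$ itself. This is exactly why the removability lemma of the present paper (Lemma \ref{removab}) is formulated with a smallness hypothesis \emph{in the target}, namely $\mathcal{H}^1(\Phi(S))=0$, exploited through the mass bound \eqref{eq:massass} and the monotonicity formula; and even then, verifying that the relevant singular set has $\mathcal{H}^1$-null image is the hard content of the regularity theory — Proposition \ref{zerocontrib} only gives $\mathcal{H}^2(\Phi(\Sigma\setminus\goodrk))=0$, which is insufficient, and the general case needs all of the blow-up machinery of Sections \ref{blowsec} and \ref{genregsec}. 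As written, your argument does not establish the equation across $\mathcal{B}$, so the proof is incomplete at its decisive step.
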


	This statement is proved in \cite{rivtarget} for parametrized stationary varifolds in a compact manifold $\subman$, but the argument carries over to the local case and to the situation $\subman=\R^\envdim$, as well.

	\section{Regularity of parametrized stationary varifolds in a polyhedral cone}\label{conicalregsec}

	This section addresses the regularity problem for local parametrized stationary varifolds in $\R^\envdim$, under the additional constraint that they are contained in a finite union of 2-dimensional planes through the origin.
	We will need a nontrivial result in planar topology, which we state and prove below.

		\subsection{A topological lemma about triods}

		\begin{definition}[generalized triod]
			A \emph{generalized triod} in $S^2$ is a quadruple $T=(K,\gamma_1,\gamma_2,\gamma_3)$ such that:
			\begin{itemize}
				\item $\emptyset\neq K\subseteq S^2$ is compact and connected;
				\item $\gamma_i\in C^\infty([0,1],S^2)$ are injective regular curves (i.e. $\dot\gamma(t)\neq 0$ for all $t\in [0,1]$);
				\item $K$, $\gamma_1(\brapa{0,1})$, $\gamma_2(\brapa{0,1})$, $\gamma_3(\brapa{0,1})$ are pairwise disjoint;
				\item $\gamma_i(1)\in K$.
			\end{itemize}
			We will denote $\supp{T}:=K\sqcup\gamma_1(\brapa{0,1})\sqcup\gamma_2(\brapa{0,1})\sqcup\gamma_3(\brapa{0,1})$. \hfill\qedsymbol
		\end{definition}

		The proof of the following lemma is inspired by the proof of a simpler statement which appears in \cite[Lemma~2.15]{abc}.

		\begin{lemmaen}[uncountably many triods intersect]\label{triods}
			Let $(T_j)_{j\in J}$ be a collection of generalized triods in $S^2$ such that $\supp{T_j}\cap\supp{T_{j'}}=\emptyset$ for any $j\neq j'$. Then $J$ is at most countable.
		\end{lemmaen}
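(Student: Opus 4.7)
The plan is to argue by contradiction, assuming $J$ is uncountable. The strategy divides into a standard pigeonhole reduction followed by a subtle planar topology argument; the latter is the main obstacle.

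For the reduction, fix a countable basis of $S^2$ consisting of topological open disks. For each triod $T_j=(K_j,\gamma_1^j,\gamma_2^j,\gamma_3^j)$, I pick an interior point $x_i^j\in\gamma_i^j((0,1))$ of each arc and, exploiting the smoothness and regularity of $\gamma_i^j$ and the pairwise disjointness of $K_j$, $\gamma_1^j((0,1))$, $\gamma_2^j((0,1))$, $\gamma_3^j((0,1))$, select basis disks $B_{n_i(j)}$ around the $x_i^j$'s with pairwise disjoint closures, all disjoint from $K_j$, and small enough that each $B_{n_i(j)}\cap\supp{T_j}$ is a single crosscut (a sub-arc of $\gamma_i^j$ whose two endpoints lie on the boundary circle). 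Since there are only countably many triples of indices, pigeonhole yields an uncountable subfamily $\mathcal{F}'$ for which the triple is a fixed $(B_1,B_2,B_3)$.

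Consider the pair of pants $P:=S^2\setminus(B_1\cup B_2\cup B_3)$ and, for each $T_j\in\mathcal{F}'$, the continuum $\hat T_j$ given by the connected component of $\supp{T_j}\cap\bar P$ containing $K_j$. Each $\hat T_j$ meets $\partial B_i$ at a single point $a_i^j\in\gamma_i^j$ (the endpoint of the crosscut attached to $K_j$), and the $\hat T_j$'s are pairwise disjoint in $\bar P$. The problem reduces to the following: uncountably many pairwise disjoint continua in a pair of pants, each meeting all three boundary components, cannot exist.

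This last statement is essentially the classical Moore triod theorem, and is the main technical hurdle. My plan is to perform a further pigeonhole on a countable basis of short open arcs of each $\partial B_i$, isolating an uncountable sub-subfamily for which the three endpoints $a_i^j$ lie in fixed short arcs $I_i\subset\partial B_i$. Within this refined subfamily, I would use the planarity of $S^2$ to extract a combinatorial invariant (for instance, a cyclic ordering of the $\hat T_j$'s around one of the boundary circles, or a nesting relation modulo a fixed reference configuration) taking only countably many values, and conclude by a final pigeonhole together with a Jordan-curve separation argument showing that two configurations with the same invariant must intersect. Making this combinatorial argument fully rigorous in the present generality, where $K_j$ is an arbitrary continuum rather than a point, is the delicate step; I would adapt the strategy from \cite[Lemma~2.15]{abc}.
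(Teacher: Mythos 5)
Your reduction is fine as far as it goes: the pigeonhole over a countable basis of disks, producing an uncountable family of pairwise disjoint continua in a fixed ``pair of pants'' $\bar P=S^2\setminus(B_1\cup B_2\cup B_3)$, each meeting all three boundary circles, is a legitimate and standard normalization (modulo small points, e.g.\ that $\hat T_j\cap\de B_i$ need not be a \emph{single} point unless you also force the arc not to return to $\de B_i$ tangentially, and that the disks must avoid the closures of $K_j$ and of the other two arcs). The problem is that this reduction does not reduce the difficulty at all: the statement ``uncountably many pairwise disjoint continua in a pair of pants, each meeting all three boundary components, cannot exist'' \emph{is} Moore's theorem in its general (continuum-branch-point) form, and your proposal stops exactly there. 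The passage ``I would use the planarity of $S^2$ to extract a combinatorial invariant \dots\ taking only countably many values, and conclude by \dots\ a Jordan-curve separation argument'' is a plan, not an argument; you yourself flag it as ``the delicate step.'' Note in particular that no finite or countable configuration is forbidden, so no invariant with countably many values can finish the job by pigeonhole alone on two configurations sharing the same value: the separation argument has to exploit a condensation/limit phenomenon, and that is precisely where all the topological content lies.

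For comparison, the paper's proof puts the work exactly where you left a blank. It metrizes the space of generalized triods (Hausdorff distance on $K$ plus uniform distance on the parametrized arcs), invokes separability, and shows that each $T_{j_0}$ is \emph{isolated} among the pairwise disjoint family. The isolation is the hard part: when all three arcs lie in one component $U$ of $S^2\setminus K$, one maps $U$ diffeomorphically onto the disk with $K$ sent to the boundary, truncates the images $\alpha_i$ of the arcs at the circles $\{|z|=\tfrac12\}$ and $\{|z|=\tau\}$, and observes that these truncated arcs cut the annulus into three regions $R_i(\tau)$, each at a definite distance $\delta>0$ from the starting point $\alpha_i(0)$ of the excluded arc; a nearby disjoint triod must be contained in one region $R_{i_0}(\tau)$, yet its $i_0$-th arc starts within $\delta$ of $\alpha_{i_0}(0)$, a contradiction. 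If you want to complete your route instead, you must supply an argument of comparable substance for the pair-of-pants statement (e.g.\ the condensation-point argument behind Moore's theorem, using unicoherence of $S^2$ or repeated applications of the Jordan curve theorem); as written, the lemma is not proved.
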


		\begin{proof}
			We equip the set $\mathcal{T}$ of all generalized triods in $S^2$ with the following metric: given $T=(K,\gamma_1,\gamma_2,\gamma_3)$, $T'=(K',\gamma_1',\gamma_2',\gamma_3')\in\mathcal{T}$ we set
			\[ d(T,T'):=d_H(K,K')+\sum_i\max_{t\in [0,1]}d_{S^2}(\gamma_i(t),\gamma_i'(t)), \]
			where $d_{S^2}$ denotes the spherical distance on $S^2$ and $d_H$ is the corresponding Hausdorff distance on the set of all nonempty compact subsets of $S^2$.

			Since the metric space $(\mathcal{T},d)$ is separable, it suffices to show that any triod $T_{j_0}$ is isolated in $\set{T_j\mid j\in J}\subseteq\mathcal{T}$. Let $T_{j_0}=(K,\gamma_1,\gamma_2,\gamma_3)$.

			\emph{Case 1:} $\gamma_1(\brapa{0,1}),\gamma_2(\brapa{0,1}),\gamma_3(\brapa{0,1})$ \emph{do not belong to the same connected component of} $S^2\setminus K$.
			Assume for instance that $\gamma_1(\brapa{0,1})$ and $\gamma_2(\brapa{0,1})$ belong to different connected components: then, letting
			\[ \epsilon:=\min\set{d_{S^2}(\gamma_1(0),K),d_{S^2}(\gamma_2(0),K)}>0, \]
			any different triod $T_j=(K',\gamma_1',\gamma_2',\gamma_3')$ satisfies $d(T_{j_0},T_j)\ge\epsilon$. Indeed, if this were not the case, $\gamma_1'(0)$ would lie in the same component of $S^2\setminus K$ as $\gamma_1(0)$ (since the spherical ball $B_\epsilon^{S^2}(\gamma_1(0))$ is a connected subset of $S^2\setminus K$) and similarly for $\gamma_2'(0)$. But this contradicts the fact that $\supp{T_j}$ is a connected subset of $S^2\setminus K$.

			\emph{Case 2:} $\gamma_1(\brapa{0,1}),\gamma_2(\brapa{0,1}),\gamma_3(\brapa{0,1})$ \emph{belong to the same connected component} $U$ \emph{of} $S^2\setminus K$.
			Since $K$ is connected, there exists a diffeomorphism
			\[ \upsilon:U\to B_1^2(0)\subseteq\C \]
			(indeed, $S^2\setminus U$ is connected and we can apply \cite[Theorems~13.11~and~14.8]{rudin}).
			For $t\in\brapa{0,1}$ let $\alpha_i(t):=\upsilon\circ\gamma_i(t)$.
			Notice that $\lim_{t\to 1}\abs{\alpha_i(t)}=1$.
			Up to applying another diffeomorphism, we can assume that $\abs{\alpha_i(0)}=\mz$ and $\abs{\alpha_i(t)}>\mz$ for $t\in(0,1)$ and $i=1,2,3$ (e.g. by adapting the argument in \cite[Theorem~II.5.2]{kosinski}).

			Let $s_i:=\min\set{t:\abs{\alpha_i(t)}=\frac{3}{4}}>0$. Moreover, for any $\tau\in\pa{\frac{3}{4},1}$ let
			\[ r_i(\tau):=\min\set{t:\abs{\alpha_i(t)}=\tau}>s_i. \]
			By Jordan's closed curve theorem for piecewise smooth curves, the points $\alpha_i(0)$ and $\alpha_i(r_i(\tau))$ are in the same order on the circles $\set{\abs{z}=\mz}$ and $\set{\abs{z}=\tau}$.
			The curves $\alpha_i([0,r_i(\tau)])$ and the circles $\set{\abs{z}=\mz}$, $\set{\abs{z}=\tau}$ bound three disjoint domains $R_1(\tau)$, $R_2(\tau)$, $R_3(\tau)$; we adopt the convention that $R_i(\tau)$ is the region whose closure is disjoint from $\alpha_i([0,r_i(\tau)])$. Let
			\begin{equation}\label{eq:defdelta} \delta:=\inf_{\tau\in\pa{\frac{3}{4},1}}\min_i d_{\R^2}(\alpha_i(0),\obar{R_i(\tau)}) \end{equation}
			and notice that, since
			\[ d_{\R^2}(\alpha_i(0),\obar{R_i(\tau)})=d_{\R^2}(\alpha_i(0),\de R_i(\tau))=d_{\R^2}(\alpha_i(0),\de R_i(\tau)\setminus\de B_\tau^2(0)), \]
			we have $\delta>0$.

			\begin{center}
				\begin{overpic}[width=11cm]{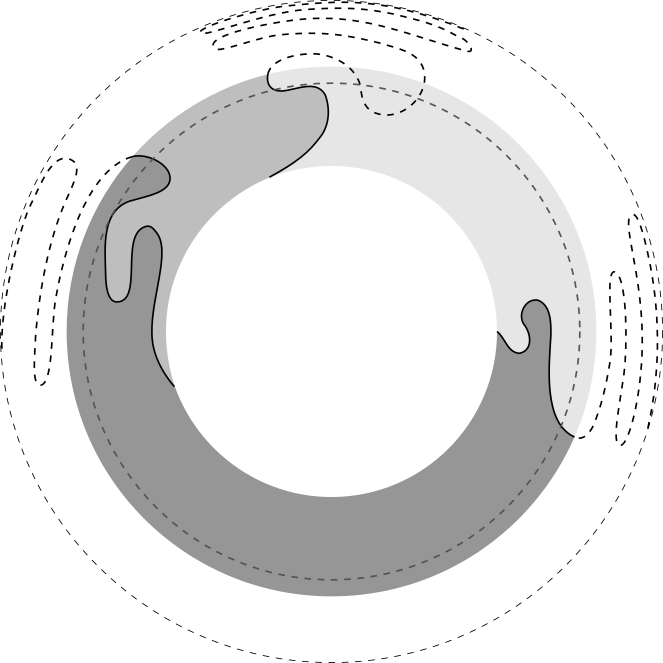}
					\put(65,49){$\alpha_1(0)$}
					\put(40,70){$\alpha_2(0)$}
					\put(28,41){$\alpha_3(0)$}
					\put(74,34){$\alpha_1(s_1)$}
					\put(38,83){$\alpha_2(s_2)$}
					\put(18.5,67){$\alpha_3(s_3)$}
					\put(29,76){$R_1(\tau)$}
					\put(47,17){$R_2(\tau)$}
					\put(70,70){$R_3(\tau)$}
				\end{overpic}
			\end{center}

			Assume now $T_j=(K',\gamma_1',\gamma_2',\gamma_3')$ satisfies $d(T_{j_0},T_j)<\epsilon$. If $\epsilon$ is small enough, we have:
			\begin{itemize}
				\item $\supp{T_j}\subseteq U$ (this is obtained arguing as in the first case), so that we can define $\alpha_i'(t):=\upsilon\circ\gamma_i'(t)$ for $t\in [0,1]$;
				\item $\upsilon(K')\subseteq\set{\frac{3}{4}<\abs{z}<1}$;
				\item $\max_{t\in[0,s_i]}\abs{\alpha_i'(t)-\alpha_i(t)}<\delta'$, for some $\delta'\le\delta$ to be chosen later;
				\item $\alpha_i'([s_i,1])\subseteq\set{\abs{z}>\mz}$.
			\end{itemize}
			Let $t_i':=\max\pa{\set{t:\abs{\alpha_i'(t)}=\mz}\cup\set{0}}<s_i$. We claim that
			\begin{equation}\label{eq:nearness} \abs{\alpha_i'(t_i')-\alpha_i(0)}<\delta. \end{equation}
			If $t_i'=0$ this is trivial, while otherwise $\abs{\alpha_i'(t_i')}=\mz$ and
			\[ \dist(\alpha_i'(t_i'),\alpha_i([0,s_i]))\le\abs{\alpha_i'(t_i')-\alpha_i(t_i')}<\delta', \]
			which yields our claim once $\delta'$ is chosen so small that
			\[ \Big\{z:\abs{z}=\mz,\dist(z,\alpha_i([0,s_i]))<\delta'\Big\}\subseteq B_\delta^2(\alpha_i(0)) \]
			(if such $\delta'$ did not exist, we could find points $\abs{z_k}=\mz$ with $\dist(z_k,\alpha_i([0,s_i]))\to 0$ and $\abs{z_k-\alpha_i(0)}\ge\delta$; up to subsequences we could assume $z_k\to z_\infty$, for some $z_\infty\in\alpha_i([0,s_i])$ with $\abs{z_\infty}=\mz$, hence $z_\infty=\alpha_i(0)$, contradiction).

			Fix now any $\tau$ such that $\max\set{\abs{z}:z\in\upsilon(\supp{T_j})}<\tau<1$.
			The connected set
			\[ \upsilon(K')\sqcup\alpha_1'(\pa{t_1',1})\sqcup\alpha_2'(\pa{t_2',1})\sqcup\alpha_3'(\pa{t_3',1}) \]
			is contained in $\set{\mz<\abs{z}<\tau}$ and is disjoint from $\alpha_1(\brapa{0,1})\sqcup\alpha_2(\brapa{0,1})\sqcup\alpha_3(\brapa{0,1})$, so it is contained in some region $R_{i_0}(\tau)$ and, in particular, $\alpha_{i_0}'([t_{i_0}',1])\subseteq\bar {R_{i_0}(\tau)}$. But, using \eqref{eq:defdelta} and \eqref{eq:nearness}, we infer that $\alpha_{i_0}'(t_{i_0}')\nin\obar{R_{i_0}(\tau)}$. This contradiction shows that such $T_j$ with $d(T_{j_0},T_j)<\epsilon$ cannot exist, completing the treatment of the second case.
		\end{proof}

%		\begin{rmk}
%			With more effort, the same should work for triods whose curves $\gamma_i$ are merely $C^0$ and injective.
%		\end{rmk}

		\subsection{Planar case}

		We now show the regularity in the special case where the parametrized varifold is contained in a plane.

		\begin{thm}[regularity in codimension 0]\label{planarreg}
			Let $(\Omega,\Phi,N)$ be a local parametrized stationary varifold in $\R^2=\C$ defined on a bounded connected open set $\Omega\subset\C$. Assume that $\Phi^{-1}(p)$ is compact for all $p\in\C$. Then $\Phi$ is holomorphic or antiholomorphic.
		\end{thm}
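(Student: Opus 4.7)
The overall strategy is to reduce the theorem to showing that $N$ is locally constant on $\Omega$, after which Theorem~\ref{constn} together with a short complex-analytic argument finishes the job. Let me first sketch the final part: assuming $N$ is locally constant on each connected component of $\Omega$ (where it equals some positive integer $k$), dividing the stationarity identity by $k$ reduces us to the multiplicity-one case and Theorem~\ref{constn} gives that $\Phi$ is smooth and harmonic on $\Omega$ (with $A\equiv 0$ since $\subman=\R^{2}$). Real and imaginary parts of $\Phi$ being harmonic, $\partial\Phi$ is a holomorphic and $\bar\partial\Phi$ an antiholomorphic complex-valued function. Weak conformality of $\Phi$ as a map into $\R^2=\C$ is equivalent to the pointwise identity $|\partial\Phi|\cdot|\bar\partial\Phi|\equiv 0$, so for every $z\in\Omega$ either $\partial\Phi(z)=0$ or $\bar\partial\Phi(z)=0$. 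Since these are (anti)holomorphic functions on the connected set $\Omega$, each is either identically zero or has a discrete zero set; as their zero sets cover $\Omega$, one must vanish everywhere, forcing $\Phi$ to be holomorphic or antiholomorphic.

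The key analytic input is the simplification of the stationarity identity in codimension zero. The second fundamental form term is absent, and a direct computation using weak conformality (which in two dimensions gives $|\nabla\Phi^{1}|^{2}=|\nabla\Phi^{2}|^{2}=\tfrac{1}{2}|\nabla\Phi|^{2}$ and $\nabla\Phi^{1}\cdot\nabla\Phi^{2}=0$) shows that $\langle\nabla(F\circ\Phi);\nabla\Phi\rangle=\tfrac{1}{2}(\operatorname{div} F)(\Phi)\,|\nabla\Phi|^{2}$. Hence for a.e.\ $\omega\cptsub\Omega$ and every $F\in C^{\infty}_{c}(\R^{2}\setminus\Phi(\de\omega),\R^{2})$ the stationarity identity becomes $\int_{\omega}N\,(\operatorname{div} F)(\Phi)\,|\nabla\Phi|^{2}\,d\mathcal{L}^{2}=0$, which via the area formula is precisely the stationarity of the pushforward $2$-varifold $\vfd_{\omega}$ inside $\R^{2}\setminus\Phi(\de\omega)$. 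The classical constancy theorem for stationary $k$-varifolds in a $k$-dimensional ambient space (\cite[Theorem~41.1]{simon}) then yields that the multiplicity $\theta_{\omega}$ is a locally constant nonnegative integer on each connected component of $\R^{2}\setminus\Phi(\de\omega)$.

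To deduce that $N$ is locally constant, pick $x_{0}\in\goodrk$; by Proposition~\ref{fincpt} it is isolated in its fiber, and by Proposition~\ref{nisusc} we have $\tilde N(x_{0})=N(x_{0})$. By Remark~\ref{hightilden}, for every small $\epsilon>0$ we may select a neighborhood $\omega$ of $x_{0}$ with $\bar\omega\cap\Phi^{-1}(\Phi(x_{0}))=\{x_{0}\}$, $\Phi(x_{0})\notin\Phi(\de\omega)$, and $\omega\cap\Phi^{-1}(\Phi(y))=K_{y}$ for each $y\in\omega$ with $\tilde N(y)\geq\tilde N(x_{0})-\epsilon$. On a connected neighborhood of $\Phi(x_{0})$ in $\R^{2}\setminus\Phi(\de\omega)$, $\theta_{\omega}$ is a constant positive integer $c$, and evaluating at $\Phi(x_{0})$ gives $c=\tilde N(x_{0})=N(x_{0})$. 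For every $y\in\omega\cap\goodrk$ whose fiber structure is governed by the above identity, one obtains $N(y)=\tilde N(y)=c$.

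The hard part is ruling out pathological fiber configurations for $y$ away from $x_{0}$, where $\tilde N$ might drop (it is only upper semicontinuous) and the pushforward density at $\Phi(y)$ could, a priori, pick up contributions from extra preimages in $\omega$ with mixed local orientations, breaking the clean identity $\theta_{\omega}(\Phi(y))=\tilde N(y)=N(y)$. This is where Lemma~\ref{triods} is used: a configuration in which three or more locally separating sheets of $\Phi$ coalesce over a target point $p$ produces a generalized triod in the image, and the existence of a positive-measure set of such pathological points would yield uncountably many pairwise disjoint such triods, contradicting Lemma~\ref{triods}. Together with the compactness of fibers and Proposition~\ref{fincpt}, this forces the generic fiber structure on a full-measure set of points, upgrading the local-constancy statement for $N$ from the "good" points to all of $\Omega$. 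The main obstacle is precisely this topological bookkeeping; once it is done, the first paragraph concludes the proof.
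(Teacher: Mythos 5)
Your endgame is fine and matches the paper's: once $N$ is (a.e.) locally constant, Theorem~\ref{constn} gives $\Delta\Phi=0$, and weak conformality plus analyticity of $\de_z\Phi$, $\de_{\bar z}\Phi$ forces $\Phi$ to be globally holomorphic or antiholomorphic. The reduction of stationarity to stationarity of the pushforward varifold and the appeal to the constancy theorem in the target are also correct and are used in the paper. But the paragraph you label ``the hard part'' is where the actual proof lives, and what you offer there is not an argument. Three things are missing or wrong. First, the entire architecture of the paper's proof is a strong induction on the maximal density $\tilde N(x_0)$ (after localizing so that $\vfd_\Omega=\tilde N(x_0)\vfd(\Phi(\Omega))$), resting on the dichotomy obtained from $\sum_i\tilde N(y_i)=\tilde N(x_0)$: for every $y$, either $\tilde N(y)=\tilde N(x_0)$ and $\Phi^{-1}(\Phi(y))$ is connected, or $\tilde N(y)\le\tilde N(x_0)-1$ and the fiber has at least two components. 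Your proposal contains neither the induction nor the dichotomy, yet both are indispensable: the inductive hypothesis is what makes $\Phi$ a (branched) holomorphic covering over $\Phi(\Omega_0)$ with $\Omega_0=\{\tilde N<\tilde N(x_0)\}$, and without that you cannot lift curves to build the legs of a triod at all.

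Second, your description of how Lemma~\ref{triods} enters does not match any workable construction. In the paper the generalized triods live in the \emph{domain}: $K$ is the connected preimage $\Phi^{-1}(\gamma([2a/3,1-2b/3]))$ of an arc of a foliation curve $\gamma$ joining two points of $\Phi(\Omega_0)$ through the uncountable set $\Phi(\Omega_{max})\cap B$, the three legs are two lifts of an initial sub-arc and one lift of a terminal sub-arc (two on one side exist precisely because the fiber over $\Phi(\Omega_0)$ has at least two components, by the dichotomy), and disjointness of the triods comes from disjointness of the foliation curves. Your version --- triods ``in the image'' produced by ``three or more sheets coalescing over a target point'', indexed by a ``positive-measure set of pathological points'' --- does not explain why such a configuration yields a connected compact set with three disjoint attached arcs, nor why distinct target points give \emph{disjoint} triods (their fibers could interleave), nor why there would be uncountably many. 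Third, you omit the case distinction handled by Lemma~\ref{removab}: when $\Phi(\Omega_{max})$ is countable (so no triod contradiction is available), one needs the removable-singularity lemma, using $\mathcal{H}^1(\Phi(\Omega_{max}))=0$, to conclude. As written, the proposal identifies the right tools but does not bridge from them to the conclusion.
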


		\begin{proof}
			We recall that, under these hypotheses, $\Phi^{-1}(p)$ has always finitely many connected components and the upper semicontinuous function $\tilde N\ge 1$ is everywhere finite (see Remark \ref{alsoloc}). It suffices to show that $\Delta\Phi=0$: once this is done, since $\Phi$ is necessarily nonconstant we can pick any $z_0\in\Omega$ such that $\nabla\Phi(z_0)\neq 0$ and, by weak conformality, there is an $r>0$ such that $\restr{\de_z\Phi}{B_r^2(z_0)}=0$ or $\restr{\de_{\bar z}\Phi}{B_r^2(z_0)}=0$; the statement then follows by the analyticity of $\de_z\Phi$ and $\de_{\bar z}\Phi$.

			We further make the following assumptions, which will be dropped in Step 4 below:
			\begin{itemize}
				\item[(i)] $\Phi$ extends continuously to $\bar\Omega$ and $\Phi(\de\Omega)\cap\Phi(\Omega)=\emptyset$;
				\item[(ii)]  $\Phi(\Omega)\subseteq\C$ is open and the varifold $\vfd_\Omega$ equals $\tilde N(x_0)\vfd(\Phi(\Omega))$, for some $x_0$ in $\Omega$, $\vfd(\Phi(\Omega))$ denoting the canonical varifold associated to $\Phi(\Omega)$.
			\end{itemize}
			We show that in this situation the theorem holds, by strong induction on $\tilde N(x_0)$. Notice that $\tilde N(x_0)$ is necessarily an integer, since $\vfd_\Omega$ has integer multiplicity.

			\emph{Step 1.} If $\tilde N(x_0)=1$ then $\tilde N=1$ everywhere: indeed, for every $z\in\Omega$ and every $K_z\subseteq\omega\cptsub\Omega$ we have
			\[ 1\le \tilde N(z)\le\lim_{s\to 0}\frac{\norm{\vfd_\omega}(B_s^2(\Phi(z)))}{\pi s^2}\le\lim_{s\to 0}\frac{\norm{\vfd_\Omega}(B_s^2(\Phi(z)))}{\pi s^2}=1. \]
			By Proposition \ref{nisusc} we can replace $N$ with $\tilde N$ without affecting the stationarity of $(\Omega,\Phi,N)$, hence by Theorem \ref{constn} we have $\Delta\Phi=0$.

			Assume now $\tilde N(x_0)>1$. Fix any $y\in\Omega$ and choose a point $y_i$ in every connected component $K_i$ of $\Phi^{-1}(\Phi(y))$. Choosing disjoint neighborhoods $K_i\subseteq\omega_i\cptsub\Omega$ we have
			\[ \sum_i\tilde N(y_i)=\lim_{s\to 0}\sum_i\frac{\norm{\vfd_{\omega_i}}(B_s^2(\Phi(y)))}{\pi s^2}=\lim_{s\to 0}\frac{\norm{\vfd_\Omega}(B_s^2(\Phi(y)))}{\pi s^2}=\tilde N(x_0), \]
			since $\Phi(y)\nin\bar{\Phi(\Omega\setminus\bigcup_i\omega_i)}$. We deduce that the following dichotomy is true: for any $y\in\Omega$, either $\tilde N(y)=\tilde N(x_0)$ and $\Phi^{-1}(\Phi(y))$ is connected, or $\tilde N(y)\le\tilde N(x_0)-1$ and $\Phi^{-1}(\Phi(y))$ has at least two components.
			We can assume that $\tilde N$ is not identically equal to $\tilde N(x_0)$, since otherwise we are done as in the base case of the induction.

			\emph{Step 2.} We claim that, by inductive hypothesis, $\Phi$ is holomorphic or antiholomorphic on each connected component of the set
			\[ \Omega_0:=\big\{\tilde N\le\tilde N(x_0)-1\big\}=\big\{\tilde N<\tilde N(x_0)\big\}, \]
			which is open by Proposition \ref{nisusc}. For any $y_0\in\Omega_0$ we can take an open set $K_{y_0}\subseteq\omega\cptsub\Omega_0$ with $\bar\omega$ disjoint from $\Phi^{-1}(\Phi(y_0))\setminus K_{y_0}$. Possibly replacing $\omega$ with the connected component of $\omega\setminus\Phi^{-1}(\Phi(\de\omega))$ containing $y_0$, we observe that $\omega$ satisfies the same hypotheses as $\Omega$, as well as (i)--(ii): the only nontrivial task is to check (ii), which we do below.

			By the constancy theorem \cite[Theorem~41.1]{simon} applied to $\vfd_\omega$, which is stationary in $\C\setminus\Phi(\de\omega)$, the varifold $\vfd_\omega$ equals a nontrivial constant multiple of $\vfd(W)$, where $W$ is the connected component of $\C\setminus\Phi(\de\omega)$ containing the connected set $\Phi(\omega)$. Since $\Phi(\omega)$ is relatively closed in $W$, we deduce $W=\Phi(\omega)$. Finally, by definition of $\tilde N(y_0)$ we must have $\vfd_\omega=\tilde N(y_0)\vfd(W)$.
			Thus, the inductive hypothesis applies and we deduce $\Delta\Phi=0$ on $\omega$. Since $y_0$ was arbitrary, we get $\Delta\Phi=0$ on $\Omega_0$ and our claim is established.

			\emph{Step 3.} Notice that $\Phi(\Omega_0)$ is nonempty and open, being $\Phi$ nonconstant on every connected component of $\Omega_0$. We call $D\subset\Omega_0$ the relatively closed, discrete set of points where $\nabla\Phi$ vanishes. The map $\restr{\Phi}{\Omega_0}:\Omega_0\to\Phi(\Omega_0)$ is proper, thanks to the fact that $\Phi(\Omega_0)$ and $\Phi(\bar\Omega\setminus\Omega_0)$ are disjoint, so $\Phi(D)$ is a relatively closed, discrete subset of $\Phi(\Omega_0)$. Hence, $D':=\Phi^{-1}(\Phi(D))$ is still relatively closed in $\Omega_0$ and $\restr{\Phi}{\Omega_0\setminus D'}$, being a proper local diffeomorphism onto $\Phi(\Omega_0)\setminus\Phi(D)$, is a covering map.

			Let $\Omega_{max}:=\big\{\tilde N=\tilde N(x_0)\big\}$, which is closed in $\Omega$. Due to Lemma \ref{removab}, we can assume that $\Phi(\Omega_{max})$ is uncountable.
			Observe that $\Phi(\Omega_{max})$ is relatively closed in the open set $\Phi(\Omega)$, being $\Phi$ a proper map, and $\Phi(\Omega)=\Phi(\Omega_0)\sqcup\Phi(\Omega_{max})$ by the dichotomy of Step 1.
			Take two distinct points $p,q\in\Phi(\Omega_0)$ and choose any ball $p,q\nin\obar B\subseteq\Phi(\Omega)$ such that $\Phi(\Omega_{max})\cap B$ is uncountable.
			We consider a foliation of curves on the connected set $\Phi(\Omega)$ as in the picture (which illustrates the position of $p,q,B$ up to a diffeomorphism of $\Phi(\Omega)$).

			\begin{center}
				\begin{overpic}[width=11cm]{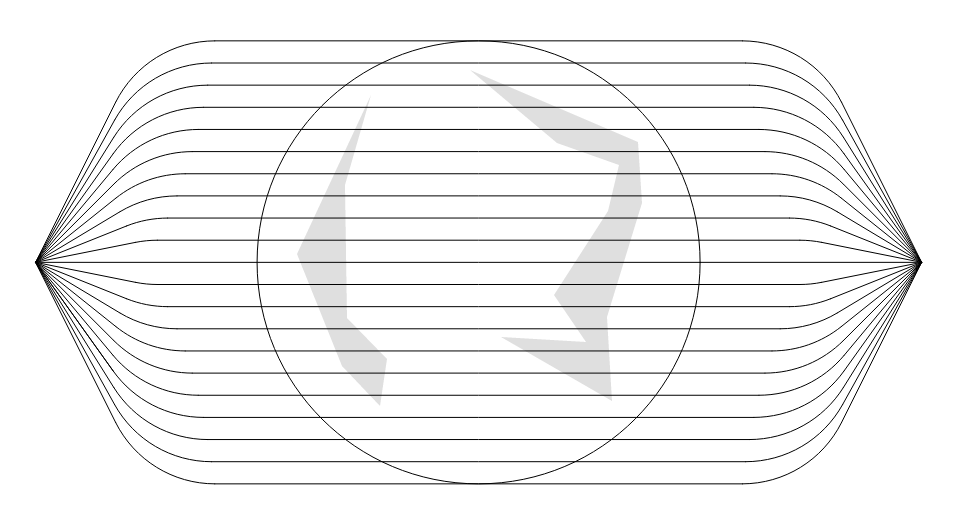}
					\put(0,26){$p$}
					\put(98,26){$q$}
					\put(39,44){$B$}
					\put(28,25.5){\scalebox{0.85}{$\Phi(\Omega_{max})$}}
					\put(55,22.5){\scalebox{0.85}{$\Phi(\Omega_{max})$}}
				\end{overpic}
			\end{center}

			We can assume that uncountably many of these curves intersect $\Phi(\Omega_{max})$: if this does not happen, it means that uncountably many points of $\Phi(\Omega_{max})$ lie on a single horizontal segment in $B$, so it suffices to apply a diffeomorphism which rotates $B$ slightly.
			Uncountably many such curves do not intersect $\Phi(D)=\Phi(D')$, as well. For any such good curve $\gamma:[0,1]\to\Phi(\Omega)$ (with $\gamma(0)=p$, $\gamma(1)=q$) we let
			\[ a:=\min\set{t:\gamma(t)\in\Phi(\Omega_{max})},\quad 1-b:=\max\set{t:\gamma(t)\in\Phi(\Omega_{max})}. \]
			We clearly have $0<a\le 1-b<1$.
			We can lift $\restr{\gamma}{[a/3,2a/3]}$ to two curves $\gamma_1$, $\gamma_2$ in $\Omega_0\setminus D'$ and $\restr{\gamma}{[1-2b/3,1-b/3]}$ to a curve $\gamma_3$ in $\Omega_0\setminus D'$, thanks to the dichotomy observed in Step 1 and the fact that $\restr{\Phi}{\Omega_0\setminus D'}$ is a covering map.

			Finally, the compact set $K:=\Phi^{-1}(\gamma([2a/3,1-2b/3]))$ is connected: assume by contradiction that it splits into two disjoint compact sets $A\sqcup B$. For each point $z\in\gamma([2a/3,1-2b/3])$ the fiber $\Phi^{-1}(z)$ lies either in $A$ or in $B$: this is clear if $z\in\Phi(\Omega_{max})$, since then $\Phi^{-1}(z)$ is connected; if $z\nin\Phi(\Omega_{max})$ we can travel $\gamma$ back or forward until we hit a point $w\in\Phi(\Omega_{max})$ and the corresponding lifted curves will necessarily accumulate against $\Phi^{-1}(w)$ (thanks to the properness of $\Phi$), so they lie either all in $A$ or all in $B$. We infer that $\gamma([2a/3,1-2b/3])=\Phi(A)\sqcup\Phi(B)$, which contradicts the connectedness of $[2a/3,1-2b/3]$.

			Thus any such good curve produces a generalized triod $(K,\gamma_1,\gamma_2,\gamma_3)$ and these triods are disjoint from each other. Since there are uncountably many such triods, this contradicts Lemma \ref{triods}. The inductive proof is complete.

			\emph{Step 4.} We now drop the extra assumptions (i)--(ii). This is done with the same argument of Step 2: for any $y_0\in\Omega$ we can find a neighborhood $\omega\cptsub\Omega$ satisfying (i)--(ii), hence $\Delta\Phi=0$ on $\omega$. We deduce that $\Delta\Phi=0$ on all of $\Omega$.
		\end{proof}

		\begin{corollary}[constant multiplicity in codimension 0]\label{constantn}
			Under the same hypotheses, $N$ is a.e. constant.
		\end{corollary}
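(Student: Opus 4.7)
The plan is to combine Theorem \ref{planarreg} with the constancy theorem for stationary varifolds in $\C$, exploiting the local biholomorphic structure of $\Phi$. By Theorem \ref{planarreg}, $\Phi$ is holomorphic or antiholomorphic on $\Omega$; up to replacing $\Phi$ by its complex conjugate, I assume the holomorphic case. If $\Phi$ is constant the assertion is vacuous, so I assume $\Phi$ nonconstant. The critical set $D:=\{z\in\Omega:\de_z\Phi(z)=0\}$ is then discrete in $\Omega$ and $\Phi|_{\Omega\setminus D}$ is a local biholomorphism onto its image.

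For each $x\in\Omega\setminus D$, Proposition \ref{fincpt} ensures $x$ is isolated in its fiber $\Phi^{-1}(\Phi(x))$. I would then pick a small open disk $\omega\ni x$ such that $\Phi|_{\bar\omega}$ is a biholomorphism onto its image, $\bar\omega\cap\Phi^{-1}(\Phi(x))=\{x\}$, and consequently $\Phi(\omega)\cap\Phi(\de\omega)=\emptyset$. Arguing as in Step~2 of the proof of Theorem \ref{planarreg}, I apply the constancy theorem \cite[Theorem~41.1]{simon} to the stationary varifold $\vfd_\omega$ on $\C\setminus\Phi(\de\omega)$: on the connected component $W$ of $\C\setminus\Phi(\de\omega)$ containing $\Phi(x)$ this produces $\vfd_\omega=\tilde N(x)\vfd(W)$, and a support comparison using Proposition \ref{contandsupp} forces $W=\Phi(\omega)$. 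Because $\Phi|_\omega$ is a biholomorphism, the multiplicity of $\vfd_\omega$ at a.e. $p\in\Phi(\omega)$ equals $N((\Phi|_\omega)^{-1}(p))$; identifying this with $\tilde N(x)$ and changing variables yields $N\equiv\tilde N(x)$ a.e. on $\omega$.

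Finally, I would globalize this local constancy. Since $\Omega$ is connected and $D$ is discrete, $\Omega\setminus D$ is open and connected. The neighborhoods $\omega$ constructed above cover $\Omega\setminus D$, and any two which meet in $\Omega\setminus D$ overlap on a set of positive Lebesgue measure; because $N$ is integer-valued, the values $\tilde N(x)$ attached to overlapping neighborhoods must coincide, so by connectedness $N$ equals a single integer $c$ a.e. on $\Omega\setminus D$. Since $\mathcal{L}^2(D)=0$, we conclude $N=c$ a.e. on $\Omega$. The one delicate point is arranging $\omega$ so that $\Phi(\omega)$ coincides with a full connected component of $\C\setminus\Phi(\de\omega)$—this is what lets the constancy theorem deliver a single multiplicity across $\Phi(\omega)$ rather than several—but this is guaranteed by taking $\omega$ to be a disk so small that $\Phi|_{\bar\omega}$ is biholomorphic and its boundary separates $\Phi(x)$ from the rest of its fiber.
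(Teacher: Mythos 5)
Your argument is correct and is essentially the paper's own proof: both localize to a small neighbourhood $\omega$ on which the (anti)holomorphic map $\Phi$ is injective, apply the constancy theorem to $\vfd_\omega$ in $\C\setminus\Phi(\de\omega)$ to obtain a single density $\theta=\tilde N(x)$ on $\Phi(\omega)$, pull this back through the injective parametrization to get $N=\theta$ a.e.\ on $\omega$, and then globalize via the connectedness of $\set{\nabla\Phi\neq 0}$. The only difference is cosmetic: you spell out the overlap/chaining step and the negligibility of the critical set, which the paper leaves implicit.
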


		\begin{proof}
			Since $U:=\set{\nabla\Phi\neq 0}\subseteq\Omega$ is connected, it suffices to show the claim locally in $U$. Fix $z_0\in U$. We can find a connected open neighborhood $\omega\cptsub U$ such that $\Phi$ is injective on $\bar\omega$. Arguing as in the proof of Theorem \ref{planarreg}, $\Phi(\omega)$ is open and $\vfd_\omega=\theta\vfd(\Phi(\omega))$ for some $\theta$. By definition of $\tilde N$ and Proposition \ref{nisusc}, $N=\tilde N=\theta$ a.e. on $\omega$.
		\end{proof}

		\subsection{Conical case} We now gradually move to the case where the varifold is contained in a finite union of planes.

		\begin{lemmaen}[regularity in a dihedral angle]\label{notwohalf}
			Let $(\Omega,\Phi,N)$ be a local parametrized stationary varifold in $\R^\envdim$ defined on a bounded connected open set $\Omega\subset\C$. Assume that $\Phi^{-1}(p)$ is compact for all $p\in\R^\envdim$ and that $\Phi$ takes values in the union of two $2$-dimensional closed half-planes $H_a$, $H_b$ with common boundary. Then $\Delta\Phi=0$.
		\end{lemmaen}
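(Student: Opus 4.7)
The plan is to reduce to Theorem \ref{planarreg} by showing that $\Phi$ must actually take values in a single $2$-plane. Let $P_a, P_b$ be the $2$-planes containing $H_a, H_b$. If $P_a=P_b$, then $\Phi$ takes values in this common plane $P$, and stationarity in $\R^\envdim$ descends to stationarity in $P$ (any test vector field decomposes orthogonally and the normal component contributes zero, because $\nabla\Phi$ lies in $P$ at Lebesgue points). Thus $(\Omega,\Phi,N)$ is a local parametrized stationary varifold in $P\cong\R^2$, and Theorem \ref{planarreg} gives $\Delta\Phi=0$.

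Assume henceforth $P_a\neq P_b$, and let $v_a, v_b$ be the unit vectors in $P_a, P_b$ orthogonal to $L$ and pointing into $H_a, H_b$: these are linearly independent. I claim that $\Omega_L:=\Phi^{-1}(L)$ is either empty or all of $\Omega$. Fix $z_0\in\Omega_L$ and assume by contradiction that $\Phi$ is not locally constant at $z_0$. Since $\Phi^{-1}(\Phi(z_0))$ is compact, I can choose a connected open neighborhood $z_0\in\omega\cptsub\Omega$ on which $\Phi$ is nonconstant and with $\de\omega\cap\Phi^{-1}(\Phi(z_0))=\emptyset$, so that $\vfd_\omega$ is stationary in a full neighborhood of $\Phi(z_0)$. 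Proposition \ref{contandsupp} (in its local form, see Remark \ref{alsoloc}) gives $\Phi(z_0)\in\supp{\norm{\vfd_\omega}}$, and the monotonicity formula together with integrality of the multiplicity give that the density $\Theta(\Phi(z_0))$ is at least $1$.

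The key ingredient is the classification of stationary integer $2$-cones in $\R^\envdim$ supported in the book $H_a\cup H_b$. The constancy theorem \cite[Theorem~41.1]{simon}, applied separately in the open sets $\R^\envdim\setminus H_b$ and $\R^\envdim\setminus H_a$ (where the support reduces respectively to the smooth connected $2$-submanifolds $H_a^\circ$ and $H_b^\circ$), shows that any such cone must equal $m_a\vfd(H_a)+m_b\vfd(H_b)$ for some integers $m_a, m_b\ge 0$ (any additional mass on $L$ is automatically zero since $L$ is $\mathcal{H}^2$-null). Computing the first variation via the divergence theorem on each half-plane, the boundary contribution along $L$ equals $(m_av_a+m_bv_b)\,d\mathcal H^1\mrestr L$, which must vanish by stationarity; linear independence of $v_a,v_b$ then forces $m_a=m_b=0$. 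Applying this to a tangent cone of $\vfd_\omega$ at $\Phi(z_0)$ (whose support lies in the translated book $H_a\cup H_b$) yields $\Theta(\Phi(z_0))=0$, contradicting the lower bound.

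Hence $\Phi$ is locally constant at each point of $\Omega_L$, so $\Omega_L$ is open; being also closed (preimage of $L$ under the continuous $\Phi$) and $\Omega$ connected, one has $\Omega_L\in\set{\emptyset,\Omega}$. If $\Omega_L=\Omega$, then $\Phi$ is constant on $\Omega$ and $\Delta\Phi=0$ trivially. Otherwise, $\Omega=\Phi^{-1}(H_a^\circ)\sqcup\Phi^{-1}(H_b^\circ)$ is a disjoint union of two open sets, so by connectedness one of them is empty; say $\Phi(\Omega)\subseteq H_a^\circ\subseteq P_a$, and Theorem \ref{planarreg} applied in $P_a\cong\R^2$ concludes. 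The main technical obstacle is the cone classification: one must carefully localize the stationarity to the two smooth faces of the book $H_a\cup H_b$ in order to invoke the constancy theorem, and then derive the sharp balance condition $m_av_a+m_bv_b=0$ along the singular line $L$.
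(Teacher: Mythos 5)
Your proof is correct, but it takes a genuinely different route from the paper's. The paper handles the non-coplanar case by \emph{folding the book flat}: it introduces the linear map $S(x,y,z)=(x,y/\cos\theta)$, which restricts to a linear isometry on each of the two planes containing $H_a$ and $H_b$, checks that $\Psi:=S\circ\Phi$ is still weakly conformal and still satisfies the local stationarity identity (by pulling planar test fields back through $S$), and then concludes via Corollary \ref{constantn} and Theorem \ref{constn}. You instead prove the stronger geometric statement that a nonconstant $\Phi$ cannot meet the spine $L:=\de H_a=\de H_b$ at all: your classification of stationary integral $2$-cones supported in a non-flat book (constancy theorem on each open face, negligibility of $L$ for an integral varifold, and the balance condition $m_av_a+m_bv_b=0$ coming from the boundary term in the first variation) forces the tangent cone at any would-be point of $\Phi(\Omega)\cap L$ to vanish, contradicting the density lower bound at points of $\supp{\norm{\vfd_\omega}}$ guaranteed by Proposition \ref{contandsupp} and the monotonicity formula; connectedness of $\Omega$ then confines $\Phi(\Omega)$ to a single plane, and Theorem \ref{planarreg} finishes. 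The trade-off: the folding trick is shorter and needs no blow-up, tangent-cone or constancy-theorem machinery, but it is special to exactly two half-planes; your cone classification is the $k=2$ instance of the structure underlying Lemma \ref{nomanyhalf}, where for $k\ge 3$ the balance condition $\sum_i m_iv_i=0$ admits nontrivial solutions and the paper must resort to the triod argument instead. Both routes legitimately rest on the codimension-zero results (Theorem \ref{planarreg} and Corollary \ref{constantn}), which are already available at this point.
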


		\begin{proof}
			The idea is to straighten the two half-planes into a single plane and then apply Corollary \ref{constantn}. We can assume that $\envdim=3$ and, by Theorem \ref{planarreg}, that the two half-planes are not contained in a single plane.
			Up to translations and rotations, $\Phi(\Omega)\subseteq H_a\cup H_b$, where
			\[ H_a:=\set{\lambda v_1+\mu v_2\mid \lambda\in\R,\mu\in\brapa{0,\infty}},\  H_b:=\set{\lambda v_1+\mu v_3\mid \lambda\in\R,\mu\in\brapa{0,\infty}}, \]
			\[ v_1:=(1,0,0),\quad v_2:=(0,\cos\theta,\sin\theta),\quad v_3:=(0,-\cos\theta,\sin\theta), \]
			for some $0<\theta<\frac{\pi}{2}$. Let
			\[ S:\R^3\to\R^2=\C,\quad S(x,y,z):=\pa{x,\frac{y}{\cos\theta}} \]
			and $\Psi:=S\circ\Phi$. This map is still weakly conformal: indeed, if $x$ is a Lebesgue point for $\nabla\Phi$ and $\nabla\Phi(x)=0$, then the same holds for $\Psi$; if instead $\nabla\Phi(x)$ has full rank, then $d\Phi(x)$ takes values in the linear span of $v_1$, $v_2$, or in the linear span of $v_1$, $v_3$ (since $\liminf_{r\to 0}r^{-1}\norm{\Phi(x+ry)-\Phi(x)-r\ang{\nabla\Phi(x),y}}_{C^0(S^1)}=0$, by Lemma \ref{slicingleb}) and the claim follows from the chain rule.

			We now show that $(\Psi,N)$ is still a local parametrized stationary varifold, i.e. that
			\[ \int_\omega N\ang{\nabla(X\circ\Psi);\nabla\Psi}\,d\mathcal{L}^2=0 \]
			for any $\omega\cptsub\Omega$ and any vector field $X\in C^\infty_c(\R^2\setminus\Psi(\de\omega),\R^2)$. Let
			\[ A:=\begin{pmatrix}1 & 0 \\ 0 & \cos\theta\end{pmatrix}, \quad P:=\begin{pmatrix}1 & 0 & 0 \\ 0 & 1 & 0\end{pmatrix}, \quad Y:=P^t A^{-1}(X\circ S). \]
			Notice that, although $Y$ does not have compact support, it vanishes in a neighborhood of $\Phi(\de\omega)$. Since we know that $\Phi(\obar\omega)$ is compact, we have
			\[ \int_\omega N\ang{\nabla(Y\circ\Phi);\nabla\Phi}\,d\mathcal{L}^2=0. \]
			But, viewing $S$ also as a matrix, $\nabla Y=P^t A^{-1}(\nabla X\circ S)S$ and $P=AS$, thus
			\[ \begin{split} \ang{\nabla(Y\circ\Phi);\nabla\Phi}&=\ang{P^t A^{-1}(\nabla X\circ S\circ\Phi)\nabla(S\circ\Phi);\nabla\Phi} \\
			&=\ang{P^tA^{-1}\nabla(X\circ\Psi);\nabla\Phi} \\
			&=\ang{S^t\nabla(X\circ\Psi);\nabla\Phi} \\
			&=\ang{\nabla(X\circ\Psi);S\nabla\Phi} \\
			&=\ang{\nabla(X\circ\Psi);\nabla\Psi}. \end{split} \]
			This shows the stationarity of $(\Psi,N)$. By Corollary \ref{constantn}, $N$ is a.e. constant. By Theorem \ref{constn}, this implies $\Delta\Phi=0$.
		\end{proof}

		\begin{lemmaen}[regularity in a wedge of several half-planes]\label{nomanyhalf}
			The same conclusion holds if $\Phi$ takes values in the union of finitely many (distinct) closed half-planes $\bigcup_{i=1}^k H_i\subseteq\R^\envdim$ with a common boundary $C=\de H_i$.
		\end{lemmaen}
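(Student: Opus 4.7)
I proceed by induction on $k$, with the base case $k\leq 2$ handled by Lemma~\ref{notwohalf} (or Theorem~\ref{planarreg} in the coplanar case). For the inductive step with $k\geq 3$, assuming $\Phi$ is nonconstant, the goal is to show that around every $x_0\in\Omega$ there is an open neighborhood on which $\Phi$ misses at least one of the half-planes $H_l$, so that the inductive hypothesis applies. If $\Phi(x_0)\nin H_l$ for some $l$, this is immediate: the open set $U:=\Phi\m(\R^\envdim\setminus H_l)$ is a neighborhood of $x_0$ with $\Phi(U)\subseteq\bigcup_{i\neq l}H_i$, and the restricted triple $(U,\restr\Phi U,\restr NU)$ is a local parametrized stationary varifold with compact fibers (since $(\restr\Phi U)\m(p)$ equals $\Phi\m(p)$ for $p\nin H_l$ and is empty otherwise).

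The essential case is $\Phi(x_0)\in\bigcap_l H_l=C$. Choose a neighborhood $\omega$ around $K_{x_0}$ as in Remark~\ref{hightilden}, so that $\vfd_\omega$ is stationary in $\R^\envdim\setminus\Phi(\de\omega)$ with support in $\bigcup H_i$. The classical constancy theorem \cite[Theorem~41.1]{simon}, applied on each connected component of $H_l\setminus(C\cup\Phi(\de\omega))$, ensures that $\vfd_\omega$ has constant nonnegative integer multiplicity $\theta_l$ there. Assuming additionally $x_0\in\goodrk$: the blow-ups $\Phi_r(y):=r\m(\Phi(x_0+ry)-\Phi(x_0))$ converge to the linear map $L=d\Phi(x_0)$, whose image is a $2$-plane $V$; as each $H_i$ is invariant under dilations centered at any point of $C$, $V$ sits inside $\bigcup(H_i-\Phi(x_0))$, and a dimensional argument forces $V$ to contain $C-\Phi(x_0)$ and to coincide with $(H_{i_1}\cup H_{i_2})-\Phi(x_0)$ for a single pair of opposite half-planes. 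The tangent cone of $\vfd_\omega$ at $\Phi(x_0)$ is then $N(x_0)\vfd(V)=N(x_0)(\vfd(H_{i_1})+\vfd(H_{i_2}))$, which, compared with the book decomposition $\sum_l\theta_l\vfd(H_l)$ and using that $\tilde N(x_0)=N(x_0)$ on $\goodrk$ by Proposition~\ref{nisusc}, forces $\theta_l=0$ on the component adjacent to $\Phi(x_0)$ for every $l\nin\{i_1,i_2\}$. The area formula then gives $\nabla\Phi=0$ a.e.\ on the portion of $\Phi\m(H_l)$ lying in a sufficiently small sub-neighborhood, and a continuity/connectedness argument (using that $\Omega$ is connected and $\Phi$ nonconstant) produces a neighborhood $U'\ni x_0$ with $\Phi(U')\cap(H_l\setminus C)=\emptyset$, reducing to the first case.

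The residual points $x_0\nin\goodrk$ (non-Lebesgue points or points with vanishing gradient) have image lying in the $1$-dimensional set $C$; the harmonicity established on the complement is extended across this exceptional set by invoking the singularity-removal Lemma~\ref{removab}. The main technical hurdle is the blow-up/tangent-cone identification together with the passage from the measure-theoretic vanishing $\theta_l=0$ to the pointwise conclusion $\Phi(U')\cap(H_l\setminus C)=\emptyset$; this requires ruling out the accumulation at $x_0$ of small components of $\Phi\m(H_l\setminus C)$ on which $\Phi$ might be locally constant.
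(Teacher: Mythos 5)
Your overall architecture is genuinely different from the paper's: you induct on the number $k$ of half-planes and try to locally drop one half-plane near every point via a tangent-cone analysis at points of $\goodrk\cap\Phi^{-1}(C)$, whereas the paper inducts on $\lfloor 2\tilde N(x_0)\rfloor$ and reduces everything to counting generalized triods. Your local argument at a.e.\ point of $\goodrk\cap\Phi^{-1}(C)$ is essentially sound (and the worry you flag at the end about locally constant components accumulating at $x_0$ is actually not the problem: if $\nabla\Phi=0$ a.e.\ on the open set $\omega'\cap\Phi^{-1}(W_l)$, each of its components is clopen in a connected $\omega'$ containing $x_0$, hence empty, since $\Phi(x_0)\in C\not\subseteq W_l$). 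Note, though, that the tangent cone equals $N(x_0)\vfd(V)$ only at those points of $\goodrk$ where $\tilde N(x_0)=N(x_0)$ and the Lebesgue-point conditions of Proposition \ref{nisusc} hold, i.e.\ only \emph{almost everywhere} on $\goodrk$, so your residual set is strictly larger than $\Omega\setminus\goodrk$.

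The genuine gap is the last step. Your residual set $S$ maps into the line $C$, and Lemma \ref{removab} requires $\mathcal{H}^1(\Phi(S))=0$; a subset of $C$ has locally \emph{positive finite} $\mathcal{H}^1$-measure, so the hypothesis is simply not verified. The only general smallness available for the exceptional set is Proposition \ref{zerocontrib}, which gives $\mathcal{H}^2(\Phi(\Omega\setminus\goodrk))=0$ — useless here, since nothing a priori prevents $\Phi(S)$ from covering a full segment of $C$ (e.g.\ via the set where $\nabla\Phi=0$, or via non-Lebesgue points). This is exactly the obstruction the paper's proof is built around: it defines $C':=C\cap\Phi\big(\{\tilde N\ge\tilde N(x_0)-\tfrac12\}\big)$ and shows $C'$ must be \emph{countable} — otherwise each $c\in C'$ spawns a generalized triod by lifting three short segments perpendicular to $C$ into three different half-planes (using that the constancy theorem forces $\Phi(\omega_i)$ to fill $B_r^\envdim(\Phi(x_0))\cap(H_i\setminus C)$ for at least three indices $i$), and uncountably many disjoint triods contradict Lemma \ref{triods}. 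Only after this countability does Lemma \ref{removab} become applicable. Without this (or an equivalent mechanism showing $\mathcal{H}^1(\Phi(S))=0$), your proof does not close.
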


		\begin{proof}
			It suffices to show that $\Delta\Phi=0$ near any point $x_0\in\Omega$. By Theorem \ref{planarreg} we have $\Delta\Phi=0$ on $\Phi^{-1}(H_i\setminus C)$, for all $i$, so we can assume $\Phi(x_0)\in C$.
			By Remark \ref{hightilden}, shrinking $\Omega$ if necessary, we can further assume that $\Phi^{-1}(\Phi(y))$ is connected whenever $\tilde N(y)\ge\tilde N(x_0)-\mz$ and that $\Phi$ extends continuously to $\bar\Omega$ with $\Phi(\Omega)\cap\Phi(\de\Omega)=\emptyset$.

			By induction on $\lfloor 2\tilde N(x_0)\rfloor$, $\floor{\cdot}$ denoting the integer part, we can also assume that in this situation we have $\Delta\Phi=0$ on the open set $\big\{\tilde N<\tilde N(x_0)-\mz\big\}$ (using e.g. $\lfloor 2\tilde N(x_0)\rfloor=1$ as the base case, which is vacuously true).

			We pick any $0<r<\dist(\Phi(x_0),\Phi(\de\Omega))$ and let $\omega:=\Phi^{-1}(B_r^\envdim(\Phi(x_0)))\cptsub\Omega$, as well as $\omega_i:=\omega\cap\Phi^{-1}(H_i\setminus C)$. If $\omega_i$ is nonempty for at most two values of $i$, then we are done by Lemma \ref{notwohalf}, applied to the connected components of $\omega$. Thus, we suppose e.g. that $\omega_i\neq\emptyset$ for $i=1,2,3$.
			Applying the constancy theorem to the varifold $\vfd_{\omega_i}$, which is a nontrivial stationary varifold in $B_r^\envdim(\Phi(x_0))\cap(H_i\setminus C)$, and using the fact that $\Phi(\omega_i)$ is relatively closed in this set, we infer that
			\[ \Phi(\omega_i)=B_r^\envdim(\Phi(x_0))\cap(H_i\setminus C). \]
			for $i=1,2,3$.
			%Thus $\Phi(\bar\omega)\supseteq\bar B_r^\envdim(\Phi(x_0))\cap\bigcup_{i=1}^3 H_i$ and, taking into account that $\Phi(\de\omega)\subseteq\de B_r^\envdim(\Phi(x_0))$, we deduce that
			%\[ \Phi(\omega)\supseteq B_r^\envdim(\Phi(x_0))\cap\bigcup_{i=1}^3 H_i. \]
			Let $C':=C\cap\Phi\Big(\big\{x\in\omega:\tilde N(x)\ge\tilde N(x_0)-\mz\big\}\Big)$, which is closed in $B_r^\envdim(\Phi(x_0))$, being $\restr{\Phi}{\omega}:\omega\to B_r^\envdim(\Phi(x_0))$ a proper map. If $C'$ is countable, then we are done by applying Lemma \ref{removab} to $\omega$.

			Otherwise, we can pick for each $c\in C'$ a segment $\beta_i([0,1])\subseteq B_r^\envdim(\Phi(x_0))\cap H_i$ perpendicular to $C$ with $\beta_i(1)=c$ (for $i=1,2,3$). Apart from (at most) countably many exceptions, these segments do not intersect the singular values of $\restr{\Phi}{\omega_i}$. So, arguing as in the proof of Theorem \ref{planarreg}, the curves $\beta_i([0,1/2])$ can be lifted to smooth regular curves $\gamma_i$ in $\omega_i$ and, setting $K:=\Phi^{-1}(c)\cup\bigcup_{i=1}^3\beta_i([1/2,1])$, $(K,\gamma_1,\gamma_2,\gamma_3)$ is a generalized triod. This gives an uncountable family of disjoint generalized triods, contradicting Lemma \ref{triods}.
		\end{proof}

		\begin{thm}[regularity in a polyhedral cone]\label{conicalreg}
			Let $(\Omega,\Phi,N)$ be a local parametrized stationary varifold in $\R^\envdim$, with $\Omega$ connected. Assume that $\Phi$ takes values in the union of finitely many (distinct) $2$-dimensional planes $\Sigma_1,\dots,\Sigma_k$ passing through the origin. Then $\Phi$ takes values in a single plane $\Sigma_{i_0}$ and, once we identify it with $\C$, it is holomorphic or antiholomorphic (but possibly constant).
		\end{thm}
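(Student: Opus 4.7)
The plan is to reduce everything to the already-established Lemma \ref{nomanyhalf}, to prove $\Delta\Phi=0$ on all of $\Omega$, and then to force the image into a single plane by analytic continuation. Specifically, I first show $\Delta\Phi=0$ on $\Omega\setminus\Phi^{-1}(0)$, then remove the singularity along $\Phi^{-1}(0)$, then conclude that all values lie in one plane, and finally use harmonicity together with weak conformality to get holomorphic or antiholomorphic.

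For the harmonicity off $\Phi^{-1}(0)$, fix $x\in\Omega$ with $\Phi(x)\neq 0$ and set $L_x:=\R\cdot\Phi(x)$. Any $\Sigma_i$ containing $\Phi(x)$ also contains $L_x$ (being a linear subspace through both $0$ and $\Phi(x)$), while the remaining planes have positive distance from $\Phi(x)$. Hence, for a sufficiently small open ball $B\subseteq\R^\envdim$ centered at $\Phi(x)$ and disjoint from those other planes, the set $B\cap\bigcup_i\Sigma_i$ is contained in a finite union of closed half-planes with common boundary $L_x$. The mass bound \eqref{eq:massass} combined with the monotonicity formula yields $\tilde N<\infty$ on $\Omega$, so Remark \ref{hightilden} supplies a connected precompact neighborhood $\omega$ of $x$ with $\Phi(\omega)\cap\Phi(\de\omega)=\emptyset$; after further shrinking to the connected component of $\omega\cap\Phi^{-1}(B)$ containing $x$, I may also arrange $\Phi(\omega)\subseteq B$ while preserving $\Phi(\omega)\cap\Phi(\de\omega)=\emptyset$. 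The hypotheses of Lemma \ref{nomanyhalf} are then all satisfied on $(\omega,\Phi|_\omega,N|_\omega)$---compact fibers follow from $\Phi(\omega)\cap\Phi(\de\omega)=\emptyset$---and give $\Delta\Phi=0$ on $\omega$.

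Thus $\Delta\Phi=0$ on $\Omega\setminus\Phi^{-1}(0)$; the exceptional image $\Phi(\Phi^{-1}(0))=\{0\}$ is a single point, so Lemma \ref{removab} extends harmonicity to all of $\Omega$ and makes $\Phi$ real-analytic. Now $\Omega=\bigcup_{i=1}^{k}\Phi^{-1}(\Sigma_i)$ is a finite union of relatively closed sets, so Baire category produces an index $i_0$ for which $\Phi^{-1}(\Sigma_{i_0})$ has nonempty interior $V$. On $V$ the linear equations cutting out $\Sigma_{i_0}$ hold for the analytic map $\Phi$, and they propagate to all of the connected set $\Omega$ by analytic continuation; hence $\Phi(\Omega)\subseteq\Sigma_{i_0}$. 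Identifying $\Sigma_{i_0}\cong\C$, the map $\Phi$ is harmonic and weakly conformal, so either $\Phi$ is constant, or a point $z_0$ with $\nabla\Phi(z_0)\neq 0$ forces---by weak conformality in a small ball around $z_0$ and the analyticity of $\de_z\Phi$ and $\de_{\bar z}\Phi$---exactly one of $\de_z\Phi$, $\de_{\bar z}\Phi$ to vanish on that ball and therefore on all of $\Omega$, yielding $\Phi$ holomorphic or antiholomorphic.

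The main obstacle is the singularity removal across $\Phi^{-1}(0)$: this set can be large inside $\Omega$ even though its image is the single point $\{0\}$, and extending harmonicity across it genuinely requires the nontrivial Lemma \ref{removab} (which in turn uses the mass bound \eqref{eq:massass}). The arrangement of a domain with compact fibers in the first step is also a bookkeeping subtlety, but it is a routine application of Remark \ref{hightilden}; the remainder is a straightforward Baire-plus-analytic-continuation argument.
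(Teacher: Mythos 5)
Your overall architecture (harmonicity away from $\Phi^{-1}(0)$ via Lemma \ref{nomanyhalf}, removal of the singularity over $\{0\}$ via Lemma \ref{removab}, then Baire plus analytic continuation to land in a single plane) matches the paper's, and the final two steps are fine. But there is a genuine gap in the first step: you claim that the mass bound \eqref{eq:massass} together with the monotonicity formula yields $\tilde N<\infty$ on all of $\Omega$, and you need this to invoke Remark \ref{hightilden}. This is not justified. By Remark \ref{alsoloc}, $\tilde N(x)<\infty$ is equivalent to a \emph{topological} condition on the fiber, namely that $K_x$ (the connected component of $\Phi^{-1}(\Phi(x))$ through $x$) is compact and disjoint from the closure of $\Phi^{-1}(\Phi(x))\setminus K_x$; equivalently, one needs \emph{some} domain $x\in\omega\cptsub\Omega$ with $\Phi(x)\nin\Phi(\de\omega)$. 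The density bound \eqref{eq:massass} says nothing about this: a priori $K_x$ could be a continuum reaching $\de\Omega$ (so that every $\de\omega$ around $x$ meets the fiber), and ruling this out is essentially part of what the theorem is proving. Without $\tilde N(x)<\infty$, Remark \ref{hightilden} cannot be applied, and your construction of the domain $\omega$ on which Lemma \ref{nomanyhalf} acts collapses at exactly those points.

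The paper circumvents this in two stages that your proposal is missing. First, for $x$ in the set $\goodrk$ of Lebesgue points of $\nabla\Phi$ with $\nabla\Phi(x)\neq 0$, Lemma \ref{slicingleb} produces a small circle on which $\Phi$ is uniformly close to the injective affine map $\Phi(x)+\ang{\nabla\Phi(x),\cdot-x}$, hence avoids the value $\Phi(x)$; this gives the required $\omega$ and shows $\Delta\Phi=0$ on the open set $\goodrk\setminus\Phi^{-1}(0)$. Second, for the remaining points one cannot argue pointwise: the paper instead slices along a circle $\de B_{r'}^2(x_0)$, shows that $K:=\Phi(\de B_{r'}^2(x_0)\setminus\goodrk)$ is $\mathcal{H}^1$-null, observes that every $x\in B_{r'}^2(x_0)\setminus\Phi^{-1}(K)$ admits a circle $\de B_s^2(x_0)$ whose image misses $\Phi(x)$ (since any accumulation of the fiber on $\de B_{r'}^2(x_0)$ would land in $\goodrk$, where $\Phi$ is a local diffeomorphism), and finally disposes of $\Phi^{-1}(K)$ by a second application of Lemma \ref{removab}. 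You need some version of this two-tier argument; as written, your proof only covers the points where $\tilde N$ happens to be finite, and offers no mechanism for the others.
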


		\begin{proof}
			Assume without loss of generality that $\Phi$ is nonconstant.
			It suffices to show that $\Delta\Phi=0$: then, by weak conformality, $\Phi(\Sigma)$ cannot be contained in a finite union of lines, i.e. for some $i_0$ we have $\Phi^{-1}\pa{\Sigma_{i_0}\setminus\bigcup_{i\neq i_0}\Sigma_i}\neq\emptyset$; thus on this open set we have $\pi_{\Sigma_{i_0}^\perp}\Phi=0$ and we deduce that this holds on all of $\Omega$ by analyticity. The statement then follows by weak conformality, as in the proof of Theorem \ref{planarreg}.

			Fix now $x_0\in\Omega\setminus\Phi^{-1}(0)$ and let $J:=\set{j:\Phi(x_0)\in\Sigma_j}$. We pick any radius $r<\dist(x_0,\de\Omega)$ such that
			$\Phi(\bar B_r^2(x_0))$ intersects only the planes $\Sigma_j$ with $j\in J$.
			Notice that $\bigcup_{j\in J}\Sigma_j$ is a finite union of half-planes with common boundary $\set{t\Phi(x_0):t\in\R}$. We assume that the weak gradient coincides with the classical one on the biggest open subset of $\Omega$ where $\Phi$ is smooth and we call $\goodrk$ the set of Lebesgue points for $\nabla\Phi$ where $\nabla\Phi\neq 0$.

			For $x\in B_r^2(x_0)\cap\goodrk$ we have $\liminf_{s\to 0}s^{-1}\norm{\Phi(x+sy)-\Phi(x)-s\ang{\nabla\Phi(x),y}}_{C^0(S^1)}=0$, by Lemma \ref{slicingleb}. Hence, we can find an open neighborhood $\omega\subset B_r^2(x_0)$ of $x$ such that $\Phi(x)\nin\Phi(\de\omega)$.
			Possibly replacing $\omega$ with the connected component of $\omega\setminus\Phi^{-1}(\Phi(\de\omega))$ containing $x$, we can even assume $\Phi(\omega)\cap\Phi(\de\omega)=\emptyset$. Hence we can apply Lemma \ref{nomanyhalf} on $\omega$, obtaining $\Delta\Phi=0$ near $x$. In particular, this shows that $\goodrk\setminus\Phi^{-1}(0)$ is open and $\Delta\Phi=0$ on $\goodrk\setminus\Phi^{-1}(0)$.

			Using Fubini's theorem and \cite[Theorem~4.21]{evans}, we can pick an $r'<r$ such that the map $\restr{\Phi}{\de B_{r'}^2(x_0)}$ is absolutely continuous (with weak derivative given by the chain rule) and $\int_{\de B_{r'}^2(x_0)\setminus\goodrk}\abs{\nabla\Phi}\,d\mathcal{H}^1=0$. For any relatively open subset $U\subseteq\de B_{r'}^2(x_0)$ containing $\de B_{r'}^2(x_0)\setminus\goodrk$ we have
			\[ \mathcal{H}^1(\Phi(\de B_{r'}^2(x_0)\cap U))\le\int_U\abs{\nabla\Phi}\,d\mathcal{H}^1, \]
			by definition of $\mathcal{H}^1$. Since $U$ is arbitrary, we deduce
			\[ \mathcal{H}^1(K)=0,\quad K:=\Phi(\de B_{r'}^2(x_0)\setminus\goodrk). \]

			\begin{comment}
			If $x$ lies in $B_{r'}^2(x_0)\setminus\Phi^{-1}(K)$ and $x\nin\goodrk$, then $K_x$ (i.e. the connected component of $\Phi^{-1}(\Phi(x))$ containing $x$) is a compact subset of $B_{r'}^2(x_0)$: indeed, we have $K_x\cap\de B_{r'}^2(x_0)\subseteq\goodrk$, but for all $y\in\goodrk$ we have $y\nin K_x$ (as $K_y=\set{y}\neq K_x$), hence $K_x\cap\de B_{r'}^2(x_0)=\emptyset$.
			\end{comment}

			Fix now any $x\in B_{r'}^2(x_0)\setminus\Phi^{-1}(K)$. Assume $\de B_s^2(x_0)\cap\Phi^{-1}(\Phi(x))\neq\emptyset$ for all $\abs{x-x_0}<s<r'$. Then we can find a sequence $s_\ell\uparrow r'$ and points $y_\ell\in\de B_{s_k}^2(x_0)$ such that $\Phi(y_\ell)=\Phi(x)$ and $y_k\to y_\infty$, for some $y_\infty\in\de B_{r'}^2(x_0)$. Necessarily we have $y_\infty\in\goodrk$, as $\Phi(y_\infty)=\Phi(x)\nin K$, but this contradicts the fact that $\Phi$ is injective near $y_\infty$.

			Thus there exists a radius $s<r'$ such that $x\in B_s^2(x_0)$ and $\Phi(x)\nin\Phi(\de B_s^2(x_0))$. Again we can let $\omega$ be the connected component of $B_s^2(x_0)\setminus\Phi^{-1}(\Phi(\de B_s^2(x_0)))$ containing $x$ and we can apply Lemma \ref{nomanyhalf} on $\omega$. This shows that $\Delta\Phi=0$ on $B_{r'}^2(x_0)\setminus\Phi^{-1}(K)$.

			Finally, by Lemma \ref{removab} and $\mathcal{H}^1(K)=0$, we have $\Delta\Phi=0$ on $B_{r'}^2(x_0)$. So $\Delta\Phi=0$ on $\Omega\setminus\Phi^{-1}(0)$ and we deduce that $\Delta\Phi=0$ on all of $\Omega$, again by Lemma \ref{removab}.
		\end{proof}

	\section{Blow-up of a parametrized stationary varifold}\label{blowsec}

	Let $(\Omega,\Phi,N)$ be a local parametrized stationary varifold.
	Let us fix a sequence of points $(x_k)\subseteq\Omega$ and a sequence of radii $(r_k)$ such that
	$0<r_k<\mz\dist(x_k,\de\Omega)$. We let
	\[ \ell_k^2:=\int_{B_{r_k}^2(x_k)}\abs{\nabla\Phi}^2\,d\mathcal{L}^2,\quad\Phi_k:=\ell_k^{-1}(\Phi(x_k+r_k\cdot)-\Phi(x_k)),\quad N_k:=N(x_k+r_k\cdot), \]
	\[ \nu_k:=\mz N_k\abs{\nabla\Phi_k}^2\uno_{B_2^2(0)}\mathcal{L}^2,\quad\mu_k:=(\Phi_k)_*\nu_k. \]
	Notice that the functions $\Phi_k,N_k$ are defined on $B_2^2(0)$, so the definition of the measures $\nu_k$ and $\mu_k$, on $B_2^2(0)$ and $\R^\envdim$ respectively, makes sense. Throughout the section we will assume that there exist two constants $\enrat,\massbd\ge 1$ such that
	\begin{itemize}
		\item $0<\int_{B_{2r_k}^2(x_k)}\abs{\nabla\Phi}^2\,d\mathcal{L}^2\le\enrat\int_{B_{r_k}^2(x_k)}\abs{\nabla\Phi}^2\,d\mathcal{L}^2$; \hfill\eqlabel{eq:enbd}
		\item $\limsup_{k\to\infty}\mu_k(B_s^\envdim(p))\le\massbd\pi s^2$ for all $s>0$ and all $p\in\R^\envdim$; \hfill\eqlabel{eq:massbd}
		\item $\ell_k\to 0$.  \hfill\eqlabel{eq:shrinking}
	\end{itemize}

	We will show the following result.
	\begin{thm}[parametrized blow-up]\label{blow-up}
		Up to subsequences, there exist a map $\Phi_\infty\in W^{1,2}(B_2^2(0),\R^\envdim)$, a function $N_\infty\in L^\infty(B_2^2(0),\N\setminus\set{0})$ and a
		quasiconformal homeomorphism $\varphi_\infty\in W^{1,2}(B_2^2(0),\Omega_\infty)$ (for some bounded open set $\Omega_\infty\subseteq\C$), with $\varphi_\infty(0)=0$, such that
		\[ \Phi_k\to\Phi_\infty\quad\mbox{in }C^0_{loc}(B_2^2(0),\R^\envdim),\qquad\nabla\Phi_k\weakto\nabla\Phi_\infty\quad\text{in }L^2(B_2^2(0),\R^{\envdim\times 2}), \]
		\[ \mz N_k\abs{\nabla\Phi_k}^2\mathcal{L}^2\weakstarto N_\infty\abs{\de_1\Phi_\infty\wedge\de_2\Phi_\infty}\mathcal{L}^2\quad\text{as Radon measures}.
		\]
		Moreover, $\Phi_\infty\circ\varphi_\infty^{-1}$ is weakly conformal and $(\Omega_\infty,\Phi_\infty\circ\varphi_\infty^{-1},N_\infty\circ\varphi_\infty^{-1})$ is a local parametrized stationary varifold in $\R^\envdim$.
	\end{thm}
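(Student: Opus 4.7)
My plan is to run a compactness argument in three stages. First, I would use the energy bound \eqref{eq:enbd} together with the normalization $\Phi_k(0)=0$ to obtain a uniform $W^{1,2}$ bound on the $\Phi_k$, and then extract a weak $W^{1,2}$ limit $\Phi_\infty$ (strong in every $L^p$ with $p<\infty$). The $L^\infty$-bound on $N$ and the assumption \eqref{eq:massbd} yield, up to a further subsequence, weak-$*$ Radon-measure limits of the energy densities $\nu_k$ on $B_2^2(0)$ and of the pushforwards $\mu_k$ on $\R^\envdim$. To upgrade to $C^0_{loc}$ convergence, I would exploit that each $\Phi_k$ is itself a local parametrized stationary varifold on $B_2^2(0)$, with ambient manifold $\ell_k^{-1}(\subman-\Phi(x_k))$ whose second fundamental form has norm $O(\ell_k)\to 0$ by \eqref{eq:shrinking}. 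Applying the diameter estimates \eqref{eq:xpdiamest}--\eqref{eq:xpdiamest2} of Proposition \ref{contandsupp} to $\Phi_k$, with \eqref{eq:massbd} playing the role of the a priori mass bound used there, gives $\diam\Phi_k(B_r^2(y))\le C\pa{\int_{B_{2r}^2(y)}\abs{\nabla\Phi_k}^2\,d\mathcal{L}^2}^{1/2}$ uniformly in $k$; equicontinuity and Arzel\`a--Ascoli then yield $C^0_{loc}$ convergence, with limit $\Phi_\infty$ by uniqueness.

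The crucial step is the construction of $\varphi_\infty$. Weak conformality of each $\Phi_k$ says that the pullback metric $g_k:=\nabla\Phi_k^t\nabla\Phi_k=\lambda_k^2 I$ is pointwise Euclidean-conformal, with $\lambda_k^2=\mz\abs{\nabla\Phi_k}^2$. Up to a subsequence, the matrix-valued measure $g_k\mathcal{L}^2$ converges weakly-$*$ to some $\tilde g_\infty$; I would show that $\tilde g_\infty$ is absolutely continuous, with density $\bar g_\infty\in L^\infty(B_2^2(0))$ which is uniformly elliptic and has eigenvalue ratio bounded in terms of $\enrat$. The Beltrami coefficient associated to $\bar g_\infty$ then has $L^\infty$-norm strictly less than $1$, and the Measurable Riemann Mapping Theorem (Ahlfors--Bers) produces the desired quasi-conformal homeomorphism $\varphi_\infty:B_2^2(0)\to\Omega_\infty\subseteq\C$ with $\varphi_\infty(0)=0$, such that the pullback metric of $\Phi_\infty\circ\varphi_\infty^{-1}$ is pointwise a scalar multiple of the Euclidean metric on $\Omega_\infty$; equivalently, $\Phi_\infty\circ\varphi_\infty^{-1}$ is weakly conformal.

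For the identification of the energy measure, I would use the pointwise identity $\mz\abs{\nabla\Phi_k}^2=\abs{\de_1\Phi_k\wedge\de_2\Phi_k}$ (from weak conformality) and the good behavior of the $2$-Jacobian under weak $W^{1,2}$ convergence to write the weak-$*$ limit of $\nu_k$ as $N_\infty\abs{\de_1\Phi_\infty\wedge\de_2\Phi_\infty}\mathcal{L}^2$ for some integer-valued $N_\infty\in L^\infty(B_2^2(0),\N\setminus\set{0})$; atomic or singular contributions are ruled out via \eqref{eq:massbd} and the density lower bound from the monotonicity formula. The stationarity identity \eqref{eq:locstat} passes to the limit because its tangential part is linear in $\nabla\Phi_k$ while the curvature term is $O(\ell_k)\to 0$; composing with $\varphi_\infty^{-1}$, which being quasiconformal defines a $W^{1,2}$ change of variables preserving the stationarity identity, then gives the corresponding stationarity for $(\Omega_\infty,\Phi_\infty\circ\varphi_\infty^{-1},N_\infty\circ\varphi_\infty^{-1})$, while \eqref{eq:massass} descends directly from \eqref{eq:massbd}. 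The main obstacle lies in the middle stage: the quadratic conformality identities $\abs{\de_1\Phi_k}^2=\abs{\de_2\Phi_k}^2$ and $\de_1\Phi_k\cdot\de_2\Phi_k=0$ are not preserved under weak $W^{1,2}$ limits, so extracting a bounded, uniformly elliptic density $\bar g_\infty$ requires a delicate analysis exploiting the stationarity and the mass bound to rule out both concentration of energy (which would break the $L^\infty$-bound on $\bar g_\infty$) and degeneracies of the limit metric (which would break ellipticity and prevent the application of Ahlfors--Bers).
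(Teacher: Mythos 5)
Your overall architecture (extract weak limits, upgrade to $C^0_{loc}$ via diameter/monotonicity estimates, repair conformality by a measurable Riemann mapping, pass stationarity to the limit) matches the paper's, and the first and last stages are essentially sound. But the middle stage — the construction of $\varphi_\infty$ — contains a genuine error. You propose to take the weak-$*$ limit $\tilde g_\infty$ of the matrix-valued measures $g_k\mathcal{L}^2=\nabla\Phi_k^t\nabla\Phi_k\,\mathcal{L}^2$ and feed its Beltrami coefficient into Ahlfors--Bers. Since each $g_k$ is pointwise a scalar multiple of the identity, $g_k\mathcal{L}^2=\mz\abs{\nabla\Phi_k}^2 I\,\mathcal{L}^2$, and hence any weak-$*$ limit is again a (scalar measure)$\,\times\, I$: its Beltrami coefficient is identically zero, the normal solution of the Beltrami equation is the identity, and $\Phi_\infty\circ\varphi_\infty^{-1}=\Phi_\infty$ — which is precisely the map whose failure of weak conformality you were trying to repair. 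The object that must be fed into the measurable Riemann mapping theorem is the pullback metric of the \emph{limit map}, $g_{ij}=\de_i\Phi_\infty\cdot\de_j\Phi_\infty$, which is \emph{not} the weak limit of the $g_k$ (quadratic expressions in $\nabla\Phi_k$ do not pass to weak limits — the very point you flag as the "main obstacle").

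What makes the correct construction work is a quantitative dilatation bound on $g$ that your outline never produces: the inequality $\abs{\nabla\Phi_\infty}^2\le 2N_\infty\abs{\de_1\Phi_\infty\wedge\de_2\Phi_\infty}$ a.e., with $N_\infty$ integer-valued and $\norm{N_\infty}_{L^\infty}\le\massbd$ (not a bound in terms of $\enrat$, which only controls energy doubling). This gives $\abs{\mu}\le\frac{N_\infty^2-1}{N_\infty^2+1}<1$ for the Beltrami coefficient of $g$ and is the technical core of the argument. It is obtained in the paper by first showing the limit energy measure is absolutely continuous with density $m$ satisfying $\abs{\nabla\Phi_\infty}^2\le 2m$ and $m=0$ off the set where $\nabla\Phi_\infty$ has full rank (Lemmas \ref{c0convandhull} and \ref{smallrank}, using $\mu_\infty=(\Phi_\infty)_*\nu_\infty$ and \eqref{eq:massbd}), and then identifying $m=N_\infty\abs{\de_1\Phi_\infty\wedge\de_2\Phi_\infty}$ with $N_\infty\in\N\cap[1,\massbd]$ via a degree-counting argument: one tests stationarity against vector fields of the form $(h\circ\pi)e_i$, controls the normal components of the energy by \eqref{eq:negcpts}, and invokes the $L^{1,\infty}$ variance estimate of Lemma \ref{oneinfty} to force the averaged degree $Q_k$ toward an integer. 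Your appeal to "the good behavior of the $2$-Jacobian under weak $W^{1,2}$ convergence" does not substitute for this: the Jacobian is not weakly continuous here, and integrality of $N_\infty$ is exactly the hard content. Without this lemma your Stage 3 cannot be repaired, and the weak conformality of $\Phi_\infty\circ\varphi_\infty^{-1}$ is not established.
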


	We refer the reader to \cite[Chapter~4]{imayoshi} and \cite{lehto} for the theory of quasiconformal homeomorphisms in the plane. Before proving this theorem we shall establish a number of intermediate results. Many arguments are similar to those used in \cite{rivminmax}.

	First of all, since $\int_{B_2^2(0)}\abs{\nabla\Phi_k}^2\,d\mathcal{L}^2\le\enrat$ and $\nu_k(B_2^2(0))\le\mz\enrat\norm{N}_{L^\infty}$, up to subsequences there exists $\Phi_\infty\in W^{1,2}(B_2^2(0),\R^\envdim)$ such that $\Phi_k\weakto\Phi_\infty$ in $W^{1,2}(B_2^2(0))$ and there exists a finite Radon measure $\nu_\infty$ on $B_2^2(0)$ such that $\nu_k\weakstarto\nu_\infty$ in $B_2^2(0)$.
	We can also assume that, for all $j\ge 0$,
	\[ \mu_{k,j}:=(\Phi_k)_*\pa{\uno_{B_{2-2^{-j}}^2(0)}\nu_k}\weakstarto\mu_{\infty,j}\quad\text{as }k\to\infty \]
	in $\R^\envdim$, for some finite measure $\mu_{\infty,j}$.
	Since $\mu_{\infty,j}\le\mu_{\infty,j+1}$ and $\mu_{\infty,j}(\R^\envdim)\le \mz\enrat\norm{N}_{L^\infty}$, the measure $\mu_\infty:=\lim_{j\to\infty}\mu_{\infty,j}$
	is defined and is again finite.

	\begin{lemmaen}[uniform convergence $\bm{\Phi_k\to\Phi_\infty}$]\label{c0convandhull}
		The measure $\nu_\infty$ is absolutely continuous with respect to $\mathcal{L}^2$, i.e. $\nu_\infty=m\mathcal{L}^2$ for some nonnegative $m\in L^1(B_2^2(0))$. Moreover, $\Phi_\infty$ is continuous and $\Phi_k\to\Phi_\infty$ in $C^0_{loc}(B_2^2(0),\R^\envdim)$. Finally, for any open subset $\omega\cptsub B_2^2(0)$,
		\[ \Phi_\infty(\bar\omega)\subseteq\operatorname{conv}(\Phi_\infty(\de\omega)), \]
		where $\operatorname{conv}(\cdot)$ denotes the convex hull.
	\end{lemmaen}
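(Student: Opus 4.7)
\emph{Plan.} The three assertions will be proved in the natural order: uniform convergence, absolute continuity, and finally the convex hull inclusion. The common tool is the diameter estimate \eqref{eq:xpdiamest2} from the proof of Proposition~\ref{contandsupp}, valid in the present $\subman=\R^\envdim$ setting, which for each $\Phi_k$ yields
\[
\diam\Phi_k(\bar{\good}_k\cap B_r^2(z))\le C\Bigl(\textstyle\int_{B_{2r}^2(z)}|\nabla\Phi_k|^2\,d\mathcal{L}^2\Bigr)^{1/2}
\]
with $C=C(\norm{N}_\infty)$ independent of $k,r,z$, where $\good_k$ is the set of Lebesgue points of $(\Phi_k,\nabla\Phi_k)$. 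Combined with weak conformality ($\mz|\nabla\Phi_k|^2=|\de_1\Phi_k\wedge\de_2\Phi_k|$ a.e.) and the mass bound \eqref{eq:massbd}, this will fuel the argument.

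\emph{Step 1 (uniform convergence).} I extract equicontinuity of $\{\Phi_k\}$ on compact subsets of $B_2^2(0)$ by a Lions-type concentration-compactness analysis. Concentration of $|\nabla\Phi_k|^2\mathcal{L}^2$ at a point $z_0$ would (through the diameter estimate) produce an image lump of definite diameter and mass, and the density lower bound $\ge 1$ at points of $\supp{\vfd_\omega}$ coming from the monotonicity formula for stationary varifolds in $\R^\envdim$ forces a definite mass quantum per concentration point. The global energy bound \eqref{eq:enbd} caps the number of such points, and the hypothesis $\ell_k\to 0$ (which encodes that we sit at the natural blow-up scale with unit energy in $B_1^2(0)$) excludes any residual concentration via a further extraction. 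Arzelà--Ascoli then yields $\Phi_k\to\Phi_\infty$ in $C^0_{loc}(B_2^2(0))$ up to subsequences, and $\Phi_\infty$ is continuous.

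\emph{Step 2 (absolute continuity).} Uniform convergence together with $\nu_k\weakstarto\nu_\infty$ gives $\mu_\infty=(\Phi_\infty)_*\nu_\infty$, since for every $f\in C_c(\R^\envdim)$ the pairing $\int f\,d\mu_{k,j}=\int(f\circ\Phi_k)\uno_{B_{2-2^{-j}}^2(0)}\,d\nu_k$ passes to the limit. The bound \eqref{eq:massbd} transfers to $\mu_\infty$, which is therefore atomless; by continuity of $\Phi_\infty$ this forces $\nu_\infty$ to be atomless as well. Moreover, the absence of concentration established in Step~1 yields equi-integrability of $\{|\nabla\Phi_k|^2\}$ on compact subsets of $B_2^2(0)$. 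If $\nu_\infty$ had a nontrivial singular part carried by an $\mathcal{L}^2$-null set $S$, one could surround $S$ by an open set $U$ of arbitrarily small Lebesgue measure with $\nu_\infty(U)\ge c>0$; equi-integrability would then give $\int_U|\nabla\Phi_k|^2<c$ for $k$ large and $U$ small, contradicting $\nu_k(U)\to\nu_\infty(U)$. Hence $\nu_\infty=m\,\mathcal{L}^2$ with $m\in L^1(B_2^2(0))$ by finiteness of $\nu_\infty$.

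\emph{Step 3 (convex hull inclusion).} Fix an open $\omega\cptsub B_2^2(0)$. For each $k$, Proposition~\ref{contandsupp} gives $\Phi_k(\bar\omega)\subseteq\supp{\vfd_\omega}$ on every connected component where $\Phi_k$ is nonconstant (the constant case is trivial). Stationarity of $\vfd_\omega$ in $\R^\envdim\setminus\Phi_k(\de\omega)$, combined with the classical convex-hull property for stationary varifolds in Euclidean space---obtained by testing the first variation with affine vector fields cut off outside any closed half-space containing $\Phi_k(\de\omega)$---yields $\supp{\vfd_\omega}\subseteq\operatorname{conv}(\Phi_k(\de\omega))$. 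Since $\Phi_k\to\Phi_\infty$ uniformly on $\bar\omega$, the compact sets $\Phi_k(\bar\omega)$ and $\Phi_k(\de\omega)$ converge in Hausdorff distance to $\Phi_\infty(\bar\omega)$ and $\Phi_\infty(\de\omega)$; continuity of convex hulls under Hausdorff convergence of compact sets then delivers $\Phi_\infty(\bar\omega)\subseteq\operatorname{conv}(\Phi_\infty(\de\omega))$.

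The main obstacle is Step~1: the naive feedback between the diameter estimate and the target mass bound closes only into a trivial doubling-type inequality, and genuine uniform equicontinuity requires the concentration-compactness analysis outlined above, exploiting both the monotonicity formula for stationary varifolds in $\R^\envdim$ (to produce a definite mass quantum at each concentration point) and the hypothesis $\ell_k\to 0$ (to rule out persistent bubbling at the blow-up scale).
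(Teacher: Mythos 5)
Your proposal assembles the right peripheral ingredients (the diameter estimate, the target mass bound \eqref{eq:massbd}, the convex hull property for stationary varifolds), but the core of your Step~1 --- which you yourself flag as the main obstacle --- is not actually resolved. There is no $\epsilon$-regularity or energy-quantization statement available for the maps $\Phi_k$: they satisfy no PDE, and the monotonicity formula gives a density lower bound in the \emph{target}, not an energy quantum in the \emph{domain} at a would-be concentration point. Even granting finitely many concentration points, equicontinuity fails precisely at those points, so capping their number does not feed Arzel\`a--Ascoli; and the assertion that $\ell_k\to 0$ ``excludes residual concentration'' is not an argument --- in this lemma that hypothesis serves only to make the generalized mean curvature of the rescaled varifolds $O(\ell_k)\to 0$, so that limit varifolds are stationary. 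The mechanism that actually breaks the circularity is different: via Lemmas \ref{slicing} and \ref{equibded} one picks $r'\in(r,2r)$ such that $\restr{\Phi_{k_i}}{\de B_{r'}^2(x)}\to\restr{\Phi_\infty}{\de B_{r'}^2(x)}$ uniformly and $\diam\Phi_\infty(\de B_{r'}^2(x))\le\big(4\pi\int_{B_{2r}^2(x)}\abs{\nabla\Phi_\infty}^2\,d\mathcal{L}^2\big)^{1/2}$; the limit varifold, stationary off $\Phi_\infty(\de B_{r'}^2(x))$ and compactly supported, lies in $K:=\operatorname{conv}(\Phi_\infty(\de B_{r'}^2(x)))$ by \cite[Theorem~19.2]{simon}; a monotonicity argument forces $\sup_{B_{r'}^2(x)}\dist(\Phi_{k_i},K)\to 0$; and \eqref{eq:massbd} then yields $\nu_\infty(B_{r'}^2(x))\le C''\pi(\diam K)^2\le 4\pi^2 C''\int_{B_{2r}^2(x)}\abs{\nabla\Phi_\infty}^2\,d\mathcal{L}^2$. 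The modulus of continuity is thus measured by the Dirichlet energy of the fixed weak limit $\Phi_\infty$, not of the $\Phi_k$; this one estimate simultaneously rules out concentration, gives continuity of $\Phi_\infty$, gives local uniform convergence, and gives the convex hull inclusion.

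Step~2 contains a second genuine error: absence of point concentration does \emph{not} imply equi-integrability of $\{\abs{\nabla\Phi_k}^2\}$. A sequence such as $f_k=k\uno_{S_k}$, with $S_k$ a $k^{-1}$-neighbourhood of a segment, has no concentration points yet converges weakly-$*$ to a measure singular with respect to $\mathcal{L}^2$; likewise, atomlessness of $\mu_\infty$ only excludes atoms of $\nu_\infty$, not a singular continuous part. To obtain $\nu_\infty\ll\mathcal{L}^2$ one needs the pointwise comparison $\nu_\infty(B_r^2(x))\le C\int_{B_{2r}^2(x)}\abs{\nabla\Phi_\infty}^2\,d\mathcal{L}^2$ at every $x$ for small $r$ (the estimate above), followed by a Besicovitch-type covering argument comparing $\nu_\infty$ with the fixed $L^1$ density $\abs{\nabla\Phi_\infty}^2$. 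Finally, a smaller issue in Step~3: the rescaled varifolds are not stationary --- their mean curvature is only $O(\ell_k)$ --- so the convex hull property cannot be applied to each $\vfd_\omega$ induced by $\Phi_k$ individually; one must pass to the stationary limit varifold first and transfer the inclusion back to $\Phi_k(\bar\omega)$ by the monotonicity/no-mass-drop argument.
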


	\begin{proof}
		We introduce the \emph{oscillation set}
		\[ \mathcal{O}:=\bigg\{x\in\supp{\nu_\infty}:\liminf_{r\to 0}\frac{\int_{B_{2r}^2(x)}\abs{\nabla\Phi_\infty}^2\,d\mathcal{L}^2}{\nu_\infty(B_r^2(x))}=0\bigg\}. \]

		\emph{Step 1.} We show that $\nu_\infty$ is absolutely continuous with respect to $\mathcal{L}^2$ on the Borel set $B_2^2(0)\setminus\mathcal{O}$.
		Let $E\subseteq B_2^2(0)\setminus\mathcal{O}$ be a Borel set with $\mathcal{L}^2(E)=0$. It suffices to show that $\nu_\infty(K)=0$ for any compact subset $K\subseteq E\cap\supp{\nu_\infty}$ (since this implies that $\nu_\infty(E\cap\supp{\nu_\infty})=0$ and thus $\nu_\infty(E)=0$, as required). We define the Borel sets
		\[ F_j:=\bigg\{x\in K:\inf_{0<r\le\bar r}\frac{\int_{B_{2r}^2(x)}\abs{\nabla\Phi_\infty}^2\,d\mathcal{L}^2}{\nu_\infty(B_r^2(x))}\ge 2^{-j}\bigg\},\quad\bar r:=\mz\dist(K,\de B_2^2(0)) \]
		and observe that $K=\bigcup_j F_j$. Fix $j$ and an open set $E\subseteq V\subseteq B_2^2(0)$. Letting $r_V:=\mz\dist(K,\R^2\setminus V)\le\bar r$, we choose a maximal (finite) subset $\set{x_i}$ of $F_j$ such that $\abs{x_i-x_{i'}}\ge r_V$ for $i\neq i'$. We have
		\[ F_j\subseteq\bigcup_i B_{r_V}^2(x_i),\quad \sum_i\uno_{B_{2r_V}^2(x_i)}\le\mathfrak{N} \]
		for some universal constant $\mathfrak{N}$.
		We deduce that
		\[ \nu_\infty(F_j)\le\sum_i\nu_\infty(B_{r_V}^2(x_i))\le 2^j\sum_i\int_{B_{2r_V}^2(x_i)}\abs{\nabla\Phi_\infty}^2\,d\mathcal{L}^2
		\le 2^j\mathfrak{N}\int_V\abs{\nabla\Phi_\infty}^2\,d\mathcal{L}^2. \]
		Letting $V$ range along a sequence of open sets $V_\ell\supseteq E$ with $\mathcal{L}^2(V_\ell)\to 0$, we deduce $\nu_\infty(F_j)=0$. Hence,
		\[ \nu_\infty(K)\le\sum_j\nu_\infty(F_j)=0. \]

		\emph{Step 2.} We show that $\mathcal{O}=\emptyset$.
		Fix any $x\in B_2^2(0)$ and any $r<\mz\dist(x,\de B_2^2(0))$. Using Lemma \ref{equibded} and Lemma \ref{slicing} we select a radius $r'\in(r,2r)$ such that $\restr{\Phi_{k_i}}{\de B_{r'}^2(x)}\to\restr{\Phi_\infty}{\de B_{r'}^2(x)}$ in $L^\infty$, for some subsequence $(\Phi_{k_i})$, and
		\begin{equation} \label{eq:diamK} \diam\Phi_\infty(\de B_{r'}^2(x))\le\sqrt{4\pi}\pa{\int_{B_{2r}^2(x)}\abs{\nabla\Phi_\infty}^2\,d\mathcal{L}^2}^{1/2} \end{equation}
		(we are implicitly referring to the continuous representative of $\restr{\Phi_\infty}{\de B_{r'}^2(x)}$). Since $(\Omega,\Phi,N)$ is a local parametrized stationary varifold in $\subman$, the varifolds $\vfd_{k_i}$ issued by $(\Phi_{k_i},N_{k_i})$ from the domain $B_{r'}^2(x)$ have generalized mean curvature bounded by $O(\ell_{k_i})$ (in $L^\infty$) in $\R^\envdim\setminus\Phi_{k_i}(\de B_{r'}^2(x))$. As a consequence of assumption \eqref{eq:shrinking}, up to further subsequences they converge to a varifold $\vfd_\infty$ (in $\R^\envdim$) which is stationary in $\R^\envdim\setminus\Phi_\infty(\de B_{r'}^2(0))$.

		Moreover, by Proposition \ref{contandsupp}, $0=\Phi_k(0)\in\supp{\norm{\vfd_k}}$ unless $\supp{\norm{\vfd_k}}=\emptyset$. Using estimate \eqref{eq:suppest} we infer that the sets $\supp{\norm{\vfd_k}}$ are all included in a unique compact set. It follows that $\vfd_\infty$ has compact support, hence by \cite[Theorem 19.2]{simon} (which applies to general stationary varifolds) $\supp{\vfd_\infty}\subseteq K:=\operatorname{conv}(\Phi_\infty(\de B_{r'}^2(x)))$. It follows that
		\begin{equation}\label{eq:convtoK} \sup_{z\in\bar B_{r'}^2(x)}\dist(\Phi_{k_i}(z),K)\to 0: \end{equation}
		if this were not the case, up to subsequences we could find $z_{k_i}\in B_{r'}^2(x)$ such that $\dist(\Phi_{k_i}(z_{k_i}),K)\ge\epsilon$. Eventually $\Phi_{k_i}(x_{k_i})\in\supp{\norm{\vfd_{k_i}}}$ (by Proposition \ref{contandsupp}, since eventually $\Phi_{k_i}$ must be nonconstant on $B_{r'}^2(x)$), so we can assume that $\Phi_{k_i}(x_{k_i})\to p_\infty$ and
		\[ \norm{\vfd_\infty}(\bar B_{\epsilon/2}^\envdim(p_\infty))\ge\limsup_{i\to\infty}\norm{\vfd_{k_i}}(\bar B_{\epsilon/2}^\envdim(\Phi_{k_i}(x_{k_i})))\ge\pi\frac{\epsilon^2}{4}, \]
		thanks to the monotonicity formula and the fact that $\bar B_{\epsilon/2}^\envdim(\Phi_{k_i}(x_{k_i}))\cap\Phi_{k_i}(\de B_{r'}^2(x))=\emptyset$ eventually. This, however, contradicts the fact that $\bar B_{\epsilon/2}^\envdim(p_\infty)\cap K=\emptyset$. Using \eqref{eq:convtoK} and \eqref{eq:massbd} we deduce that
		\[ \nu_\infty(B_{r'}^2(x))\le\liminf_{i\to\infty}\nu_{k_i}(B_{r'}^2(x))
		\le\liminf_{i\to\infty}\mu_{k_i}(\Phi_{k_i}(B_{r'}^2(x)))\le C''\pi(\diam K)^2. \]
		From \eqref{eq:diamK} and the fact that the convex hull preserves the diameter, we have $(\diam K)^2\le 4\pi\int_{B_{2r}^2(x)}\abs{\nabla\Phi_\infty}^2\,d\mathcal{L}^2$. Hence,
		\[ \liminf_{r\to 0}\frac{\int_{B_{2r}^2(x)}\abs{\nabla\Phi_\infty}^2\,d\mathcal{L}^2}{\nu_\infty(B_r^2(x))}\ge\frac{1}{4\pi^2 C''}. \]
		It follows that $\mathcal{O}=\emptyset$.

		\emph{Step 3.} We show that $\Phi_\infty$ has a continuous representative. Since $\Phi_{k_i}\to\Phi_\infty$ in $L^2(B_2^2(0),\R^\envdim)$, from \eqref{eq:convtoK} we infer that $\Phi_\infty(z)\in K$ for a.e. $z\in B_r^2(x)$. In particular, this must happen whenever $z$ is a Lebesgue point. This, together with the estimate for $\diam K$, proves that $\Phi_\infty$ is locally uniformly continuous on the set of its Lebesgue points, hence it has a continuous representative.

		\emph{Step 4.} Assume now by contradiction that $\Phi_k$ does not converge locally uniformly to (the continuous representative of) $\Phi_\infty$. Then we can find a subsequence $\Phi_{k_i}$ and points $x_{k_i}$, lying in a compact subset of $B_2^2(0)$, such that
		\[ \abs{\Phi_{k_i}(x_{k_i})-\Phi_\infty(x_{k_i})}\ge\epsilon. \]
		We can further assume that $x_{k_i}\to x_\infty\in B_2^2(0)$. Let $r<\mz\dist(x_\infty,\de B_2^2(0))$ be such that $4\pi\int_{B_{2r}^2(x_\infty)}\abs{\nabla\Phi_\infty}^2\,d\mathcal{L}^2\le\frac{\epsilon^2}{4}$. Up to subsequences, we can repeat the argument of the two previous steps and conclude that
		\[ \dist(\Phi_{k_i}(x_{k_i}),K)\to 0,\quad\Phi_\infty(x_\infty)\in K, \]
		where again $K:=\operatorname{conv}(\Phi_\infty(\de B_{r'}^2(x_\infty)))$ for a suitable $r'\in(r,2r)$. In particular we have $\limsup_{i\to\infty}\abs{\Phi_{k_i}(x_{k_i})-\Phi_\infty(x_\infty)}\le\diam K\le\frac{\epsilon}{2}$, which is the desired contradiction.

		\emph{Step 5.} Finally, let us turn to the last part of the statement. We can assume that $\omega$ is connected. We already know that $\Phi_k\to\Phi_\infty$ uniformly on $\de\omega$, so we can repeat the argument of Step 2, with $B_{r'}^2(x)$ replaced by $\omega$, and conclude that $\dist(\Phi_k(z),\operatorname{conv}(\Phi_\infty(\de\omega)))\to 0$ for all $z\in\omega$, from which it follows that $\Phi_\infty(z)\in\operatorname{conv}(\Phi_\infty(\de\omega))$.
	\end{proof}

	\begin{lemmaen}[convergence of localized measures in the target]\label{pushforward}
		We have $\mu_\infty=(\Phi_\infty)_*\nu_\infty$ and, for any $\omega\cptsub B_2^2(0)$ with $\mathcal{L}^2(\de\omega)=0$,
		\[ (\Phi_k)_*(\uno_{\omega}\nu_k)\weakstarto(\Phi_\infty)_*(\uno_{\omega}\nu_\infty) \]
		as Radon measures in $\R^\envdim$.
	\end{lemmaen}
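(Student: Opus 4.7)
The plan is to prove the second convergence statement first, because the first identity $\mu_\infty=(\Phi_\infty)_\ast\nu_\infty$ then falls out by applying it along an exhaustion of $B_2^2(0)$ and taking a monotone limit.

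For the second statement, fix $g\in C_c(\R^\envdim)$ and $\omega\cptsub B_2^2(0)$ with $\mathcal{L}^2(\de\omega)=0$. The plan is to split
\[
\int_\omega g\circ\Phi_k\,d\nu_k-\int_\omega g\circ\Phi_\infty\,d\nu_\infty=\int_\omega(g\circ\Phi_k-g\circ\Phi_\infty)\,d\nu_k+\int_\omega g\circ\Phi_\infty\,d(\nu_k-\nu_\infty).
\]
The first term vanishes in the limit: by Lemma~\ref{c0convandhull}, $\Phi_k\to\Phi_\infty$ uniformly on $\bar\omega$, so by uniform continuity of $g$ we get $\|g\circ\Phi_k-g\circ\Phi_\infty\|_{L^\infty(\bar\omega)}\to 0$, while $\sup_k\nu_k(\bar\omega)<\infty$ since $\nu_k\weakstarto\nu_\infty$. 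For the second term, the key observation is that Lemma~\ref{c0convandhull} also gives $\nu_\infty=m\mathcal{L}^2$, so the hypothesis $\mathcal{L}^2(\de\omega)=0$ upgrades to $\nu_\infty(\de\omega)=0$. A standard Portmanteau-style argument (approximating $\uno_\omega$ by a monotone sequence of continuous cutoffs and exploiting the upper semicontinuity $\limsup_k\nu_k(K)\le\nu_\infty(K)$ on compact $K\subseteq B_2^2(0)$, which is immediate from weak-$\ast$ convergence) then yields $\int_\omega f\,d\nu_k\to\int_\omega f\,d\nu_\infty$ for every bounded continuous $f$ on $\bar\omega$, applied here to $f=g\circ\Phi_\infty$. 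Combining both pieces gives $\int g\,d(\Phi_k)_\ast(\uno_\omega\nu_k)\to\int g\,d(\Phi_\infty)_\ast(\uno_\omega\nu_\infty)$.

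To deduce the first identity, I apply the above with $\omega_j:=B_{2-2^{-j}}^2(0)$, whose boundary is $\mathcal{L}^2$-null. On one hand $\mu_{k,j}\weakstarto\mu_{\infty,j}$ by the definition of $\mu_{\infty,j}$; on the other hand, the second statement just proved gives $\mu_{k,j}\weakstarto(\Phi_\infty)_\ast(\uno_{\omega_j}\nu_\infty)$. Uniqueness of weak-$\ast$ limits therefore identifies $\mu_{\infty,j}=(\Phi_\infty)_\ast(\uno_{\omega_j}\nu_\infty)$. Letting $j\to\infty$ and noting that $\uno_{\omega_j}\nu_\infty\uparrow\nu_\infty$ (because $\nu_\infty$ is a finite measure supported in $B_2^2(0)$), both sides pass to the limit by monotone convergence, delivering $\mu_\infty=(\Phi_\infty)_\ast\nu_\infty$.

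The only delicate ingredient is the passage from weak-$\ast$ convergence of the Radon measures $\nu_k$ to convergence of the restricted integrals $\int_\omega(\,\cdot\,)\,d\nu_k$; this step is unambiguous only because Lemma~\ref{c0convandhull} guarantees $\nu_\infty\ll\mathcal{L}^2$, and thus $\nu_\infty(\de\omega)=0$. Once this is in hand, the rest is a standard splitting argument combining uniform convergence of $\Phi_k$ with uniform mass bounds on $\nu_k$.
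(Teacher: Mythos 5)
Your proof is correct and follows essentially the same route as the paper: the same splitting of $\int_\omega g\circ\Phi_k\,d\nu_k-\int_\omega g\circ\Phi_\infty\,d\nu_\infty$ into a term controlled by the locally uniform convergence $\Phi_k\to\Phi_\infty$ plus the mass bound on $\nu_k$, and a term handled via $\nu_\infty(\de\omega)=0$ (from $\nu_\infty\ll\mathcal{L}^2$, Lemma \ref{c0convandhull}) and approximation of $\uno_\omega$ by continuous cutoffs. The derivation of $\mu_\infty=(\Phi_\infty)_*\nu_\infty$ from the exhaustion by $B_{2-2^{-j}}^2(0)$ is likewise identical to the paper's.
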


	\begin{proof}
		We first show the second statement. Consider any nonnegative $\rho\in C^0_c(\R^\envdim)$.
		\begin{comment}
		We have, for an arbitrary $s<\dist(\omega,\de B_2^2(0))$,
		\[ \int_{\R^\envdim}\rho\,d(\Phi_k)_*(\uno_\omega\nu_k)=\int_{\omega}\rho(\Phi_k)\,d\nu_k=\int_{\omega}\rho(\psi_s*\Phi_\infty)\,d\nu_k\fantasma{=}+\int_{\omega}(\rho(\Phi_k)-\rho(\psi_s*\Phi_\infty))\,d\nu_k. \]
		Now, since $\nu_\infty(\de\omega)=0$,
		\[ \lim_{k\to\infty}\int_{\omega}\rho(\psi_s*\Phi_\infty)\,d\nu_k
		=\int_{\omega}\rho(\psi_s*\Phi_\infty)\,d\nu_\infty. \]
		On the other hand, since $\nu_k(\omega)\le \mz C'\norm{N}_{L^\infty}$ by \eqref{eq:enbd},
		\[ \abs{\int_{\omega}(\rho(\Phi_k)-\rho(\psi_s*\Phi_\infty))\,d\nu_k}\le C'\norm{\nabla\rho}_{L^\infty}\norm{N}_{L^\infty}\norm{\Phi_k-\psi_s*\Phi_\infty}_{L^\infty(\omega)}. \]
		We deduce that
		\[ \begin{split} &\limsup_{k\to\infty}\abs{\int_{\R^\envdim}\rho\,d(\Phi_k)_*(\uno_\omega\nu_k)-\int_{\omega}\rho(\Phi_\infty)\,d\nu_\infty} \\ &\le\int_{\omega}\abs{\rho(\psi_s*\Phi_\infty)-\rho(\Phi_\infty)}m\,d\mathcal{L}^2+\limsup_{k\to\infty}C'\norm{\nabla\rho}_{L^\infty}\norm{N}_{L^\infty}\norm{\Phi_k-\psi_s*\Phi_\infty}_{L^\infty(\omega)}. \end{split} \]
		\end{comment}
		By Lemma \ref{c0convandhull} we have $\nu_\infty(\de\omega)=0$, so approximating $\uno_\omega\rho(\Phi_\infty)$ from above and below by functions in $C^0_c(B_2^2(0))$ we get
		\[ \int_{B_2^2(0)}\rho(\Phi_\infty)\uno_\omega\,d\nu_k\to\int_{B_2^2(0)}\rho(\Phi_\infty)\uno_\omega\,d\nu_\infty. \]
		Moreover, thanks to the local uniform convergence $\Phi_k\to\Phi_\infty$ and $\nu_k(B_2^2(0))\le\mz C'\norm{N}_{L^\infty}$,
		\[ \bigg|\int_{B_2^2(0)}\rho(\Phi_k)\uno_\omega\,d\nu_k-\int_{B_2^2(0)}\rho(\Phi_\infty)\uno_\omega\,d\nu_k\bigg|\le\mz C'\norm{N}_{L^\infty}\norm{\rho(\Phi_k)-\rho(\Phi_\infty)}_{L^\infty(\omega)}\to 0 \]
		as $k\to\infty$ and the claim follows, since
		\[ \int_{\R^\envdim}\rho\,d(\Phi_k)_*(\uno_\omega\nu_k)=\int_{B_2^2(0)}\rho(\Phi_k)\uno_\omega\,d\nu_k\to\int_{B_2^2(0)}\rho(\Phi_\infty)\uno_\omega\,d\nu_\infty=\int_{\R^\envdim}\rho\,d(\Phi_\infty)_*(\uno_\omega\nu_\infty). \]
		Choosing $\omega=B_{2-2^{-j}}^2(0)$ we get
		\[ \int_{\R^\envdim}\rho\,d\mu_{\infty,j}=\lim_{k\to\infty}\int_{\R^\envdim}\rho\,d\mu_{k,j}=\int_{B_{2-2^{-j}}^2(0)}\rho(\Phi_\infty)\,d\nu_\infty. \]
		The first statement now follows by letting $j\to\infty$.
	\end{proof}

	\begin{lemmaen}[behaviour at bad points]\label{smallrank}
		Let $\mathcal{G}'\subseteq B_2^2(0)$ denote the set of points $z$ where $\nabla\Phi_\infty(z)$ has full rank and both $\media_{B_r^2(z)}\abs{m-m(z)}\,d\mathcal{L}^2$ and $\media_{B_r^2(z)}\abs{\nabla\Phi_\infty-\nabla\Phi_\infty(z)}^2\,d\mathcal{L}^2$ are infinitesimal as $r\to 0$.
		Then $m=0$ and $\nabla\Phi_\infty=0$ a.e. on $B_2^2(0)\setminus\mathcal{G}'$.
	\end{lemmaen}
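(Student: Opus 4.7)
The plan is to show, via a covering argument in the target, that $m(z_0)=0$ at every $L^1$-Lebesgue point $z_0$ of $m$ which is also an $L^2$-Lebesgue point of $\nabla\Phi_\infty$ and at which $L:=\nabla\Phi_\infty(z_0)$ has rank strictly less than $2$. Since the set where either of the two Lebesgue-point properties fails is negligible, such $z_0$ exhaust a.e.\ point of $B_2^2(0)\setminus\mathcal{G}'$. A preliminary step is to check the inequality
\[ \tfrac{1}{2}\abs{\nabla\Phi_\infty}^2\,\mathcal{L}^2\le\nu_\infty \]
as measures on $B_2^2(0)$; this follows from the weak $L^2$-lower semicontinuity of $|\nabla\Phi_k|^2$ applied to open sets $U$ with $\nu_\infty(\de U)=0$, together with $\tfrac{1}{2}\int_U|\nabla\Phi_k|^2\le\nu_k(U)\to\nu_\infty(U)$ (using $N_k\ge 1$). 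Hence, once $m(z_0)=0$ is established, the a.e.\ bound $\tfrac{1}{2}\abs{\nabla\Phi_\infty(z_0)}^2\le m(z_0)=0$ will give $\nabla\Phi_\infty(z_0)=0$ as well.

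Given such a $z_0$ and setting $\Psi(z):=\Phi_\infty(z)-\Phi_\infty(z_0)-L(z-z_0)$, the $L^2$-Lebesgue-point property $\media_{B_r^2(z_0)}|\nabla\Phi_\infty-L|^2\,d\mathcal{L}^2=o(1)$ combined with Poincar\'e's inequality (and the Calder\'on--Zygmund approximate differentiability of $\Phi_\infty$ at $z_0$ to absorb the mean) gives $\int_{B_r^2(z_0)}\abs{\Psi}^2\,d\mathcal{L}^2=o(r^4)$. Chebyshev then bounds the planar measure of the exceptional set $A_r:=\set{z\in B_r^2(z_0):\abs{\Psi(z)}>\epsilon r}$ by $o(r^2)/\epsilon^2$. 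Since $L$ has rank at most one, $V_r:=\Phi_\infty(z_0)+L(\bar B_r^2(0))$ is either a single point or a line segment of length at most $2r\abs{L}$; its $\epsilon r$-neighbourhood $T_{\epsilon,r}\subseteq\R^\envdim$ thus covers $\Phi_\infty(B_r^2(z_0)\setminus A_r)$ and can itself be covered by $O(1+\abs{L}/\epsilon)$ Euclidean balls of radius $2\epsilon r$.

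The last step combines the uniform mass bound with Lemma \ref{pushforward}. The estimate \eqref{eq:massbd} passes to the weak-$*$ limit $\mu_\infty$ on open balls, since $\mu_\infty(B_s^\envdim(p))\le\liminf_k\mu_k(B_s^\envdim(p))\le\massbd\pi s^2$; summing over the covering gives $\mu_\infty(T_{\epsilon,r})\le C\epsilon r^2(\abs{L}+\epsilon)$. On the other hand Lemma \ref{pushforward} gives $\mu_\infty=(\Phi_\infty)_*\nu_\infty$, so
\[ \mu_\infty(T_{\epsilon,r})\ge\nu_\infty(B_r^2(z_0)\setminus A_r)=\nu_\infty(B_r^2(z_0))-\nu_\infty(A_r). \]
At a Lebesgue point of $m$ one has $\nu_\infty(B_r^2(z_0))=m(z_0)\pi r^2+o(r^2)$ and
\[ \nu_\infty(A_r)\le m(z_0)\mathcal{L}^2(A_r)+\int_{B_r^2(z_0)}\abs{m-m(z_0)}\,d\mathcal{L}^2=o(r^2)\quad(r\to 0\text{ with }\epsilon\text{ fixed}). \]
Dividing by $r^2$, letting $r\to 0$ and then $\epsilon\to 0$ forces $m(z_0)=0$.

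The main obstacle is that the Lebesgue-point hypothesis only gives $L^2$- (and not $L^\infty$-) control of $\Psi$: the covering must therefore tolerate a small exceptional set $A_r$ of positive planar measure, and $\nu_\infty(A_r)$ has to be bounded without any $L^\infty$ bound on $m$, which is precisely why the Lebesgue-point property of $m$ enters through the splitting above. As a by-product, if $L$ had rank exactly $1$ the conclusion $\nabla\Phi_\infty(z_0)=0$ would contradict the rank-$1$ assumption, so the set of Lebesgue points at which $L$ has rank $1$ is in fact empty, and the bad set $B_2^2(0)\setminus\mathcal{G}'$ coincides, up to a null set, with the zero locus of $\nabla\Phi_\infty$.
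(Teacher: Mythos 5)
Your proof is correct, and it rests on the same two pillars as the paper's argument: the quadratic mass bound \eqref{eq:massbd}, passed to $\mu_\infty$ on balls via lower semicontinuity, and the identity $\mu_\infty=(\Phi_\infty)_*\nu_\infty$ from Lemma \ref{pushforward}; your preliminary inequality $\frac12\abs{\nabla\Phi_\infty}^2\le m$ a.e.\ is also obtained exactly as in the paper. Where you genuinely differ is in how the image of the degenerate set is shown to be two-dimensionally small in the target. The paper argues globally: by Lemma \ref{rectif} and the area formula, $\mathcal{H}^2(\Phi_\infty(\good\setminus\mathcal{G}'))\le\int_{\good\setminus\mathcal{G}'}\abs{\de_1\Phi_\infty\wedge\de_2\Phi_\infty}\,d\mathcal{L}^2=0$, so this image is covered by balls with $\sum_i s_i^2$ arbitrarily small and the mass bound gives $\nu_\infty(\good\setminus\mathcal{G}')\le\mu_\infty\big(\bigcup_i B_{s_i}^\envdim(p_i)\big)\le\massbd\pi\sum_i s_i^2$ in one stroke. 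You instead argue locally at each degenerate Lebesgue point, replacing the area formula by the $o(r^4)$ approximate-differentiability estimate (the same one the paper invokes in Lemma \ref{slicingleb}) and a hand-built covering of the $\epsilon r$-tube around $\Phi_\infty(z_0)+L(\bar B_r^2(0))$ by $O(1+\abs{L}/\epsilon)$ balls of radius $2\epsilon r$, paying for the lack of an $L^\infty$ control on $\Psi$ with the exceptional set $A_r$, whose $\nu_\infty$-mass you correctly bound by splitting off $m(z_0)\mathcal{L}^2(A_r)$ and using the Lebesgue-point property of $m$. Your version is more quantitative and bypasses Lemma \ref{rectif}, at the cost of more $o(\cdot)$ bookkeeping; the paper's version is shorter because the $\mathcal{H}^2$-nullity of the image packages the whole estimate at once. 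Your closing observation that the rank-one case is in fact vacuous is correct and consistent with the conclusion.
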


	\begin{proof}
		Let $\good$ be the set of Lebesgue points for $\nabla\Phi_\infty$. It suffices to show that $m=0$ a.e. on $\good\setminus\mathcal{G}'$ and $\abs{\nabla\Phi_\infty}^2\le 2m$ a.e. By Lemma \ref{rectif} and the area formula,
		\[ \mathcal{H}^2(\Phi_\infty(\good\setminus\mathcal{G}'))\le\int_{\good\setminus\mathcal{G}'}\abs{\de_1\Phi_\infty\wedge\de_2\Phi_\infty}\,d\mathcal{L}^2=0. \]
		We can thus cover $\Phi_\infty(\good\setminus\mathcal{G}')$ with countably many balls $B_{s_i}^\envdim(p_i)$ such that $\sum_i s_i^2$ is arbitrarily small.
		By assumption \eqref{eq:massbd}, $\mu_\infty(B_{s_i}^\envdim(p_i))\le\liminf_{k\to\infty}\mu_k(B_{s_i}^\envdim(p_i))\le\massbd\pi s_i^2$. Hence, by Lemma \ref{pushforward},
		\[ \int_{\good\setminus\mathcal{G}'}m\,d\mathcal{L}^2=\nu_\infty(\good\setminus\mathcal{G}')\le\nu_\infty\pa{\Phi_\infty^{-1}\pa{{\textstyle\bigcup_i} B_{s_i}^\envdim(p_i)}}=\mu_\infty\pa{{\textstyle\bigcup_i} B_{s_i}^\envdim(p_i)}\le\massbd\pi\sum_i s_i^2 \]
		is arbitrarily small. We deduce that $\int_{\good\setminus\mathcal{G}'}m\,d\mathcal{L}^2=0$. Moreover, for any open sets $V\cptsub W\subseteq B_2^2(0)$ we have
		\[ \int_V\abs{\nabla\Phi_\infty}^2\,d\mathcal{L}^2\le\liminf_{k\to\infty}\int_V\abs{\nabla\Phi_k}^2\,d\mathcal{L}^2\le 2\limsup_{k\to\infty}\nu_k(\bar V)\le 2\nu_\infty(\bar V)\le 2\int_W m\,d\mathcal{L}^2 \]
		and we infer that $\int_W\abs{\nabla\Phi_\infty}^2\,d\mathcal{L}^2\le 2\int_W m\,d\mathcal{L}^2$. Since $W$ is arbitrary, we deduce that $\abs{\nabla\Phi_\infty}^2\le 2m$ a.e. and the claim follows.
	\end{proof}

	\begin{lemmaen}[structure of the density $\bm{m}$]\label{ninfty}
		There exists $N_\infty\in L^\infty(B_2^2(0),\N\setminus\set{0})$ bounded above by $C''$, i.e. the constant in \eqref{eq:massbd}, and
		\[ m=N_\infty\abs{\de_1\Phi_\infty\wedge\de_2\Phi_\infty}\quad\text{a.e.} \]
		Moreover, $\Phi_\infty$ satisfies $\abs{\nabla\Phi_\infty}^2\le 2N_\infty\abs{\de_1\Phi_\infty\wedge\de_2\Phi_\infty}$ a.e.
	\end{lemmaen}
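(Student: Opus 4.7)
The plan is to define $N_\infty := m/J_\Phi$ on $\mathcal{G}'$, where $J_\Phi := \abs{\de_1\Phi_\infty\wedge\de_2\Phi_\infty}$ (which is positive on $\mathcal{G}'$), and $N_\infty := 1$ on the complement. Lemma \ref{smallrank} guarantees that $m = 0 = J_\Phi$ a.e.\ on $B_2^2(0)\setminus\mathcal{G}'$, so the identity $m = N_\infty J_\Phi$ holds a.e.\ automatically; combined with the a.e.\ estimate $\abs{\nabla\Phi_\infty}^2 \le 2m$ from the same lemma, this also yields the inequality $\abs{\nabla\Phi_\infty}^2 \le 2 N_\infty J_\Phi$ a.e. Since $\abs{\nabla\Phi_\infty}^2 \ge 2 J_\Phi$ holds pointwise for any map, I also get $N_\infty \ge 1$ on $\mathcal{G}'$. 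The real content is therefore that $N_\infty$ is a.e.\ a \emph{positive integer}, bounded above by $\massbd$.

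For this I would invoke Allard's closure theorem for integer rectifiable varifolds with bounded first variation. For any $\omega \cptsub B_2^2(0)$ with smooth boundary, the integer rectifiable varifolds $\vfd_{k,\omega}$ issued by $(\restr{\Phi_k}{\omega}, \restr{N_k}{\omega})$ have weights $(\Phi_k)_*(\uno_\omega \nu_k)$ and generalized mean curvatures of order $O(\ell_k)\to 0$ on $\R^\envdim \setminus \Phi_k(\de\omega)$. By Allard's compactness combined with Lemma \ref{pushforward}, a subsequence will converge as varifolds to an integer rectifiable stationary varifold $V_\omega$ on $\R^\envdim \setminus \Phi_\infty(\de\omega)$ whose weight equals $(\Phi_\infty)_*(\uno_\omega\nu_\infty)$. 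The monotonicity formula then applies to $V_\omega$, and its density $\theta_{V_\omega}(p)$ is a positive integer at $\mathcal{H}^2$-a.e.\ point $p$ of the support of $V_\omega$.

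I would then transfer this integrality back to the domain by a blow-up at a generic Lebesgue point $z \in \mathcal{G}'$. Choosing $\omega := B_\rho^2(z)$ with $\rho$ small enough that $p_\infty := \Phi_\infty(z) \nin \Phi_\infty(\de\omega)$ and that $\omega$ avoids the other connected components of $\Phi_\infty^{-1}(p_\infty)$, the approximate differentiability of $\Phi_\infty$ at $z$, together with the Lebesgue property of $m$, should give
\[
\theta_{V_\omega}(p_\infty) = \lim_{s\to 0}\frac{1}{\pi s^2}\int_{\omega \cap \Phi_\infty^{-1}(B_s^\envdim(p_\infty))} m \, d\mathcal{L}^2 = \frac{m(z)}{J_\Phi(z)} = N_\infty(z),
\]
since for small $s$ the preimage inside $\omega$ is close to the elliptic disk $z + L^{-1}(B_s^\envdim(0))$ (with $L := \ang{\nabla\Phi_\infty(z),\cdot}$) of area $\pi s^2/J_\Phi(z)$. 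This yields $N_\infty(z) \in \N\setminus\set{0}$, while the upper bound $N_\infty(z) \le \massbd$ follows from the same blow-up combined with \eqref{eq:massbd} applied at $p_\infty$.

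The main obstacle will be the middle equality in the above display: identifying $\Phi_\infty^{-1}(B_s^\envdim(p_\infty)) \cap \omega$ with a small perturbation of the elliptic disk of area $\pi s^2/J_\Phi(z)$ requires control on how $\Phi_\infty$ deviates from its linearization at $z$, which is delicate in the low-regularity setting $\Phi_\infty\in W^{1,2}\cap C^0$. This will rely on the classical theory of approximate differentiability at Lebesgue points of Sobolev maps, together with the continuity of $\Phi_\infty$ and the compactness of its fibers to rule out spurious contributions from far regions of $\omega$.
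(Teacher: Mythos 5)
Your overall strategy is genuinely different from the paper's: you try to import integrality from Allard's closure theorem applied to the image varifolds $V_\omega=\lim_k\vfd_{(\omega,\Phi_k,N_k)}$, whereas the paper never uses varifold integrality here. Several pieces of your plan are sound and indeed appear elsewhere in the paper: the limit $V_\omega$ is an integer rectifiable stationary varifold in $\R^\envdim\setminus\Phi_\infty(\de\omega)$ with $\norm{V_\omega}=(\Phi_\infty)_*(\uno_\omega\nu_\infty)$ (this is exactly the last step of the proof of Theorem \ref{blow-up}, and \cite[Theorem~6.4]{allard} is invoked for the same purpose in Lemma \ref{softlemma}); the lower bound $\theta_{V_\omega}(p_\infty)\ge m(z)/J(z)$ at a Lebesgue point and the upper bound $N_\infty\le\massbd$ are fine; and your definition of $N_\infty$ and the reduction of the two displayed identities to the integrality claim match the paper.

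The gap is in the integrality step, and it is twofold. First, the middle equality $\theta_{V_\omega}(p_\infty)=m(z)/J(z)$ cannot be obtained by ``identifying $\Phi_\infty^{-1}(B_s^\envdim(p_\infty))\cap\omega$ with a perturbed elliptic disk'': at $W^{1,2}\cap C^0$ regularity the preimage of a small ball can be wild, and approximate differentiability only controls it up to sets of small measure, which is useless for an \emph{upper} bound on $\norm{V_\omega}(B_s^\envdim(p_\infty))$. The workable route (the one used in Proposition \ref{nisusc}) is to choose $\omega=z+A^{-1}(B_{r'}^2(0))$ with $r'$ given by Lemma \ref{slicingleb}, so that $\Phi_\infty(\de\omega)$ stays at distance $\approx r'$ from $p_\infty$, and then bound the density from above by $\norm{V_\omega}(\R^\envdim)/(\pi(1-\epsilon)^2(r')^2)=\nu_\infty(\omega)/(\pi(1-\epsilon)^2(r')^2)$ via the monotonicity formula. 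But this forces $\omega$ to shrink to $z$, which exposes the second and more serious problem: Allard's theorem gives $\theta_{V_\omega}(p)\in\N$ only for $\norm{V_\omega}$-a.e.\ $p$, not at every point of the support; the density of a stationary integer $2$-varifold at a prescribed point can be $3/2$ (three half-planes). Since the exceptional set depends on $\omega$, and $\omega$ must be a shrinking sequence of domains depending on $z$ (to isolate the contribution of $z$ from the rest of the fiber $\Phi_\infty^{-1}(p_\infty)$ — with a fixed $\omega$ you would only learn that a \emph{sum} $\sum_i N_\infty(z_i)$ plus unknown contributions is an integer), there is a circularity: you cannot guarantee that $p_\infty=\Phi_\infty(z)$ avoids the bad set of every $V_{\omega_j}$. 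I do not see how to break this within your framework.

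The paper circumvents precisely this issue by working at the level of the approximating sequence in the domain: after rotating so that $\nabla\Phi_\infty(z)$ spans $\R^2\times\{0\}$, it tests the stationarity of $(B_2^2(0),\Phi_k,N_k)$ with vector fields of the form $\chi(p)(p_j-\Phi_\infty^j(z))e_j$ to kill the normal components of $\nabla\Phi_k$ near $z$ (Step 1), and then with $(h\circ\pi)e_1,(h\circ\pi)e_2$ to show that the \emph{integer-valued} degree function $Q_k(w)=\sum_{y\in\good,\,\pi\circ\Phi_k(y)=w}N_k(y)$ of the projected map is almost constant in an $L^{1,\infty}$ sense (Lemma \ref{oneinfty}). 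Integrality of $N_\infty(z)$ then follows because the averages of $Q_k$ are forced close to $\N\setminus\{0\}$ and converge to $N_\infty(z)$ — the integrality is inherited from $Q_k$ itself, not from an a.e.\ statement about a limit varifold. If you want to salvage your approach you would need a substitute for this mechanism.
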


	\begin{proof}
		We let $N_\infty(z):=\frac{m(z)}{\abs{\de_1\Phi_\infty\wedge\de_2\Phi_\infty}(z)}$ if $z\in\mathcal{G}'$, otherwise we let $N_\infty(z):=1$.
		We remark that, for $z\in\mathcal{G}'$,
		\[ \begin{split} &\abs{\nabla\Phi_\infty}^2(z)=\lim_{r\to 0}\media_{B_r^2(z)}\abs{\nabla\Phi_\infty}^2\,d\mathcal{L}^2
		\le\liminf_{r\to 0}\liminf_{k\to\infty}\media_{B_r^2(z)}\abs{\nabla\Phi_k}^2\,d\mathcal{L}^2 \\
		&\le\lim_{r\to 0}\lim_{k\to\infty}\frac{2\nu_k(B_r^2(z))}{\pi r^2}
		=\lim_{r\to 0}\frac{2\nu_\infty(B_r^2(z))}{\pi r^2}=2m(z)=2N_\infty(z)\abs{\de_1\Phi_\infty\wedge\de_2\Phi_\infty}(z), \end{split} \]
		while $\nabla\Phi_\infty=0$ and $m=0$ a.e. on $B_2^2(0)\setminus\mathcal{G}'$, by Lemma \ref{smallrank}. Hence, it suffices to show that $N_\infty(z)\in\N\cap[1,C'']$ for any fixed $z\in\mathcal{G}'$.

		Up to rotations, we can assume that $\de_1\Phi_\infty$ and $\de_2\Phi_\infty$ span $P:=\R^2\times\set{0}\subseteq\R^\envdim$. Let $A:=\begin{pmatrix}\de_1\Phi_\infty^1(z) & \de_2\Phi_\infty^1(z) \\ \de_1\Phi_\infty^2(z) & \de_2\Phi_\infty^2(z)\end{pmatrix}\in GL_2(\R)$.
		We introduce the notation $\ball_s^2(z):=z+A^{-1}(B_s^2(0))$.

		\emph{Step 1.} We first show that
		\begin{equation}\label{eq:negcpts} \limsup_{r\to 0}\limsup_{k\to\infty}r^{-2}\int_{\ball_r^2(z)}N_k\big|\nabla\Phi_k^j\big|^2\,d\mathcal{L}^2=0, \end{equation}
		for any fixed $j=3,\dots,\envdim$. By Lemma \ref{slicingleb} (applied to $\Phi_\infty(z+A^{-1}y)-\Phi_\infty(z)-(y,0)$), for a fixed $r>0$ we can find $r'\in(r,2r)$ and $r''\in(5r,6r)$ such that
		\begin{equation}\label{eq:sliceapp} \begin{split}
		&\Phi_\infty(z+A^{-1}r'y)=\Phi_\infty(z)+r'(y,0)+o(r), \\
		&\Phi_\infty(z+A^{-1}r''y)=\Phi_\infty(z)+r''(y,0)+o(r) \end{split} \end{equation}
		for $y\in S^1$.
		We can assume that $r$ is so small that the error terms in \eqref{eq:sliceapp} are smaller than $r$. In particular,
		\[ \Phi_\infty(\de\ball_{r'}^2(z))\subseteq B_{3r}^\envdim(\Phi_\infty(z)),\quad \Phi_\infty(\de\ball_{r''}^2(z))\cap B_{4r}^\envdim(\Phi_\infty(z))=\emptyset. \]
		Lemma \ref{c0convandhull} gives $\Phi_\infty(\bar\ball_r^2(z))\subseteq B_{3r}^\envdim(\Phi_\infty(z))$ and thus the same holds for $\Phi_k$ eventually.
		Pick any $\chi\in C^\infty_c(B_{4r}^\envdim(\Phi_\infty(z)))$ such that $\chi\equiv 1$ on $B_{3r}^\envdim(\Phi_\infty(z))$ and $|\nabla\chi|\le\frac{2}{r}$.
		Applying the stationarity of the parametrized varifold $(B_2^2(0),\Phi_k,N_k)$, on the domain $\ball_{r''}^2(z)$ and against the vector field $F(p):=\chi(p)(p_j-\Phi_\infty^j(z)) e_j$, we obtain
		\[ \begin{split} \int_{\ball_{r''}^2(z)}N_k\,\chi(\Phi_k)|\nabla\Phi_k^j|^2\,d\mathcal{L}^2&=\ell_k\int_{\ball_{r''}^2(z)}N_k\,F(\Phi_k)\cdot A(\Phi(x_k+r_k\cdot))(\nabla\Phi_k,\nabla\Phi_k)\,d\mathcal{L}^2 \\
		&\fantasma{=}-\int_{\ball_{r''}^2(z)}N_k(\Phi_k^j-\Phi_\infty^j(z))\ang{\nabla(\chi(\Phi_k)),\nabla\Phi_k^j}\,d\mathcal{L}^2. \end{split} \]
		The left-hand side is bounded below by $\int_{\ball_{r}^2(z)}N_k|\nabla\Phi_k^j|^2\,d\mathcal{L}^2$, while the first term in the right-hand side is bounded by
		$o_k(1)\nu_k(\ball_{6r}^2(z))$, thanks to \eqref{eq:shrinking}. Thus,
		\[ \limsup_{k\to\infty}r^{-2}\int_{\ball_{r}^2(z)}N_k|\nabla\Phi_k^j|^2\,d\mathcal{L}^2\le C\limsup_{k\to\infty}r^{-3}\int_{\ball_{r''}^2(z)}|\Phi_k^j-\Phi_\infty^j(z)||\nabla\Phi_k||\nabla\Phi_k^j|\,d\mathcal{L}^2. \]
		Hence, by Cauchy-Schwarz inequality and the bound $\nu_\infty(\ball_{6r}^2(z))=O(r^2)$, it suffices to show that
		\begin{equation}\label{eq:steponeclaim} \limsup_{k\to\infty}\int_{B_r^2(z)}|\Phi_k^j-\Phi_\infty^j(z)|^2|\nabla\Phi_k^j|^2\,d\mathcal{L}^2=o(r^4). \end{equation}
		By \eqref{eq:sliceapp} and the last part of Lemma \ref{c0convandhull}, we have
		\[ \lim_{k\to\infty}\big\|\Phi_k^j-\Phi_\infty^j(z)\big\|_{L^\infty(\ball_r^2(z))}=\big\|\Phi_\infty^j-\Phi_\infty^j(z)\big\|_{L^\infty(\ball_r^2(z))}=o(r), \]
		while $\limsup_{k\to\infty}\int_{\ball_r^2(z)}\big|\nabla\Phi_k^j\big|^2\,d\mathcal{L}^2\le 2\nu_\infty(\bar\ball_r^2(z))=O(r^2)$ and \eqref{eq:steponeclaim} follows.

		\emph{Step 2.} Call $\pi:\R^\envdim\to\R^2$ the projection onto the first two coordinates and set $p_0:=\pi(\Phi_\infty(z))$.
		Fix a parameter $\tau\in\Big(0,\mz\Big)$, which will tend to zero at the end of the proof. The error terms in the following computations will not be uniform in $\tau$ in general.

		For a fixed $r>0$, using Lemma \ref{slicingleb} we can select $r'\in(r-2\tau r,r-\tau r)$ and $r''\in (r+\tau r,r+2\tau r)$ such that
		\[ \Phi_\infty(z+A^{-1}r'y)=\Phi_\infty(z)+r'(y,0)+o(r),\quad \Phi_\infty(z+A^{-1}r''y)=\Phi_\infty(z)+r''(y,0)+o(r) \]
		for $y\in S^1$.
		We assume that $r$ is so small that the above error terms are less than $\tau r$.
		As soon as $k$ is big enough, by Lemma \ref{c0convandhull} we have
		\begin{equation}\label{eq:inclusions} \begin{split}&\pi\circ\Phi_k(\ball_{r'}^2(z))\subseteq B_r^2(p_0),\ \quad\qquad B_r^2(p_0)\subseteq\pi\circ\Phi_k(\ball_{r''}^2(z))\subseteq B_{r+3\tau r}^2(p_0), \\
		&\pi\circ\Phi_k(\de\ball_{r''}^2(z))\cap B_r^2(p_0)=\emptyset,\quad\Phi_k(\ball_{r''}^2(z))\subseteq B_{r+3\tau r}^\envdim(\Phi_\infty(z)). \end{split} \end{equation}
		In particular, by \eqref{eq:massbd},
		\begin{equation}\label{eq:upperest} \limsup_{k\to\infty}\mz\int_{\ball_{r''}^2(z)}N_k\abs{\nabla\Phi_k}^2\,d\mathcal{L}^2\le C''\pi(r+3\tau r)^2. \end{equation}
		Given any $h\in C^\infty_c(B_r^2(p_0))$ with $\norm{\nabla h}_{L^\infty}\le 1$, we can thus use the stationarity of $(B_2^2(0),\Phi_k,N_k)$ on the domain $\ball_{r''}^2(z)$ with the vector field $(h\circ\pi)e_1$. We obtain
		\[ \begin{split} &\int_{\ball_{r''}^2(z)}N_k\ang{\nabla(h(\pi\circ\Phi_k)),\nabla\Phi_k^1}\,d\mathcal{L}^2 \\
		&=\ell_k\int_{\ball_{r''}^2(z)}N_k h(\pi\circ\Phi_k)A^1(\Phi(x_k+r_k \cdot))(\nabla\Phi_k,\nabla\Phi_k)\,d\mathcal{L}^2. \end{split} \]
		The right-hand side is $o_k(1)\int_{\ball_{r''}^2(z)}N_k\abs{\nabla\Phi_k}^2\,d\mathcal{L}^2$, so
		\[ \lim_{k\to\infty}\int_{\ball_{r''}^2(z)}N_k\ang{\nabla(h(\pi\circ\Phi_k)),\nabla\Phi_k^1}\,d\mathcal{L}^2=0. \]
		On the other hand,
		\[ \begin{split} &\int_{\ball_{r''}^2(z)}N_k\ang{\nabla(h(\pi\circ\Phi_k)),\nabla\Phi_k^1}\,d\mathcal{L}^2
		=\int_{\ball_{r''}^2(z)}N_k\de_1 h(\pi\circ\Phi_k)(\abs{\de_1\Phi_k^1}^2+\abs{\de_2\Phi_k^1}^2)\,d\mathcal{L}^2 \\
		&+\int_{\ball_{r''}^2(z)}N_k\de_2 h(\pi\circ\Phi_k)(\de_1\Phi_k^2\de_1\Phi_k^1+\de_2\Phi_k^2\de_2\Phi_k^1)\,d\mathcal{L}^2. \end{split} \]
		Letting $J(\pi\circ\Phi_k):=\abs{\de_1\Phi_k^1\de_2\Phi_k^2-\de_2\Phi_k^1\de_1\Phi_k^2}$ and using Lemma \ref{cptconfineq}, we deduce
		\[ \int_{\ball_{r''}^2(z)}N_k\ang{\nabla(h\circ\Phi_k),\nabla\Phi_k^1}\,d\mathcal{L}^2
		=\int_{\ball_{r''}^2(z)}N_k\de_1 h(\pi\circ\Phi_k)J(\pi\circ\Phi_k)\,d\mathcal{L}^2+\delta_{r,k}, \]
		where $\limsup_{k\to\infty}|\delta_{r,k}|\le 2\tau C''\pi(r+3\tau r)^2+o(r^2)$, thanks to \eqref{eq:upperest} and \eqref{eq:negcpts}.
		Using the vector field $(h\circ\pi)e_2$ similarly, we arrive at
		\[ \limsup_{k\to\infty}\bigg|\int_{\ball_{r''}^2(z)}N_k\nabla h(\pi\circ\Phi_k)J(\pi\circ\Phi_k)\,d\mathcal{L}^2\bigg|\le 4\tau C''\pi(r+3\tau r)^2+o(r^2). \]
		Using the area formula (see Lemma \ref{rectif}), this translates into
		\[ \limsup_{k\to\infty}\bigg|\int_{B_r^2(p_0)}Q_k(w)\nabla h(w)\,dw\bigg|\le 4\tau C''\pi(r+3\tau r)^2+o(r^2), \]
		where $Q_k(w):=\sum_{y\in\ball_{r''}^2(z)\cap\good:\pi\circ\Phi_k(y)=w}N_k(y)$.
		By \eqref{eq:inclusions} and Lemma \ref{zerocontrib} we have $Q_k\in\N$ and $Q_k\ge 1$ a.e. By Lemma \ref{oneinfty} and $\limsup_{k\to\infty}\norm{Q_k}_{L^1(B_{3r/4}^2(p_0))}=O(r^2)$ we finally deduce that, for every $\epsilon>0$,
		\[ \limsup_{r\to 0}\limsup_{k\to\infty}r^{-2}\big\|Q_k-(Q_k)_{B_{r/2}^2(p_0)}\big\|_{L^{1,\infty}(B_{r/2}^2(p_0))}=O(\tau)+O(\epsilon), \]
		where $(Q_k)_{B_{r/2}^2(p_0)}:=\media_{B_{r/2}^2(p_0)}Q_k\,d\mathcal{L}^2$ and the implied constant in $O(\tau)$ depends on $\epsilon$.
		Since $\tau$ and $\epsilon$ are arbitrary, we get
		\[ \limsup_{r\to 0}\limsup_{k\to\infty}r^{-2}\norm{Q_k-(Q_k)_{B_r^2(p_0)}}_{L^{1,\infty}(B_r^2(p_0))}=0. \]
		In particular,
		\[ \limsup_{r\to 0}\limsup_{k\to\infty}\ \operatorname{dist}((Q_k)_{B_r^2(p_0)},\N\setminus\set{0})=0. \] But, again by \eqref{eq:inclusions},
		\[ \begin{split} \bigg|\nu_k(\ball_r^2(z))-\int_{B_r^2(p_0)}Q_k\,d\mathcal{L}^2\bigg|&\le \bigg|\mz\int_{\ball_r^2(z)}N_k\abs{\nabla\Phi_k}^2\,d\mathcal{L}^2-\int_{\ball_r^2(z)}N_k J(\pi\circ\Phi_k)\,d\mathcal{L}^2\bigg| \\
		&\fantasma{\le}+\int_{\ball_{r''}^2(z)\setminus\ball_{r'}^2(z)}N_kJ(\pi\circ\Phi_k)\,d\mathcal{L}^2, \end{split} \]
		so that Lemma \ref{cptconfineq} and \eqref{eq:negcpts} give
		\[ \begin{split} &\limsup_{r\to 0}\limsup_{k\to\infty}r^{-2}\bigg|\nu_k(\ball_r^2(z))-\int_{B_r^2(p)}Q_k\,d\mathcal{L}^2\bigg|\le\limsup_{r\to 0}\limsup_{k\to\infty}r^{-2}\nu_k(\ball_{r''}^2(z)\setminus\ball_{r'}^2(z)) \\
		&=\lim_{r\to 0}r^{-2}\int_{\ball_{r''}^2(z)\setminus \ball_{r'}^2(z)}m\,d\mathcal{L}^2=O(\tau). \end{split} \]
		Hence, $\limsup_{r\to 0}\limsup_{k\to\infty}\operatorname{dist}\Big(\frac{\nu_k(\ball_r^2(z))}{\pi r^2},\N\setminus\set{0}\Big)=0$ as well. As
		\[ N_\infty(z)=|\det(A)|^{-1}m(z)=\lim_{r\to 0}\frac{\nu_\infty(\ball_r^2(z))}{\pi r^2}=\lim_{r\to 0}\lim_{k\to\infty}\frac{\nu_k(\ball_r^2(z))}{\pi r^2}, \]
		we get $N_\infty(z)=|\det(A)|^{-1}m(z)\in\N\setminus\set{0}$ and,
		using \eqref{eq:upperest}, $N_\infty(z)\le C''(1+3\tau)^2$. Letting $\tau\to 0$, we finally deduce $N_\infty(z)\le C''$.
	\end{proof}

	\begin{proof}[Proof of Theorem \ref{blow-up}]
		We let $g_{ij}:=\de_i\Phi_\infty\cdot\de_j\Phi_\infty$ and we define the Beltrami coefficient
		\[ \mu:=\frac{g_{11}-g_{22}+2i g_{12}}{g_{11}+g_{22}+2\sqrt{g_{11}g_{22}-g_{12}^2}}\uno_{B_2^2(0)\cap\mathcal{G}'}\quad\text{on }\C. \]
		In particular, $\mu=0$ on $\C\setminus B_2^2(0)$. Moreover, a.e. on the set $\mathcal{G}'$ we have
		\[ \abs{\mu}^2\le\frac{(g_{11}-g_{22})^2+4g_{12}^2}{(g_{11}+g_{22})^2+4(g_{11}g_{22}-g_{12}^2)}
		=\frac{(g_{11}+g_{22})^2-4(g_{11}g_{22}-g_{12}^2)}{(g_{11}+g_{22})^2+4(g_{11}g_{22}-g_{12}^2)}\le\frac{N_\infty^2-1}{N_\infty^2+1}, \]
		since $(g_{11}+g_{22})^2=\abs{\nabla\Phi_\infty}^4$, which by Lemma \ref{ninfty} is bounded by $4N_\infty^2\abs{\de_1\Phi_\infty\wedge\de_2\Phi_\infty}^2
		=4N_\infty^2(g_{11}g_{22}-g_{12}^2)$. We know that $\norm{N_\infty}_{L^\infty}\le C''$, so by \cite[Theorem~4.24]{imayoshi} there exists a $(C'')^2$-quasiconformal homeomorphism $\varphi_\infty\in W^{1,2}_{loc}(\C,\C)$, with $\varphi_\infty(0)=0$, satisfying a.e.
		\begin{align}\label{eq:defvarphiinfty} \de_{\bar z}\varphi_\infty=\mu\,\de_z\varphi_\infty. \end{align}
		We recall that the inverse map is also a $(C'')^2$-quasiconformal homeomorphism in $W^{1,2}_{loc}(\C,\C)$ (see \cite[Theorem~4.10 and Proposition~4.2]{imayoshi}) and that both $\varphi_\infty$ and $\varphi_\infty^{-1}$ map negligible sets to negligible sets (see \cite[Lemma~4.12]{imayoshi}). Moreover, using the chain rule (which holds by \cite[Lemma~III.6.4]{lehto}), we get that $\varphi_\infty$ has invertible differential a.e. and
		\[ 0=\de_{\bar w}(\varphi_\infty\circ\varphi_\infty^{-1}(w))
		=(\de_z\varphi_\infty)\circ\varphi_\infty^{-1}\de_{\bar w}(\varphi_\infty^{-1})+(\de_{\bar z}\varphi_\infty)\circ\varphi_\infty^{-1}\de_{\bar w}\pa{\bar{\varphi_\infty^{-1}}}. \]
		Being also $\de_z\varphi_\infty\neq 0$ a.e. (by \eqref{eq:defvarphiinfty}), we deduce that
		\[ \de_w\pa{\bar{\varphi_\infty^{-1}}}=\bar{\de_{\bar w}(\varphi_\infty^{-1})}=-(\bar\mu\circ\varphi_\infty^{-1})\de_w(\varphi_\infty^{-1}) \]
		a.e. From now on, $\varphi_\infty$ will denote the homeomorphism restricted to $B_2^2(0)$. Let $\Omega_\infty:=\varphi_\infty(B_2^2(0))$. By the chain rule again, we have $\Phi_\infty\circ\varphi_\infty^{-1}\in W^{1,1}_{loc}(\Omega_\infty)$ and
		\[ \begin{split} &\de_w(\Phi_\infty\circ\varphi_\infty^{-1})\cdot\de_w(\Phi_\infty\circ\varphi_\infty^{-1}) \\
		&\hspace{-3pt}=((\de_z\Phi_\infty\cdot\de_z\Phi_\infty-2\bar\mu\,\de_z\Phi_\infty\cdot\de_{\bar z}\Phi_\infty+\bar\mu^2\de_{\bar z}\Phi_\infty\cdot\de_{\bar z}\Phi_\infty)\circ\varphi_\infty^{-1})(\de_w(\varphi_\infty^{-1}))^2 \end{split} \]
		vanishes a.e., since $\bar\mu=\frac{g_{11}+g_{22}-\sqrt{(g_{11}+g_{22})^2-((g_{11}-g_{22})^2+4g_{12}^2)}}{g_{11}-g_{22}+2i g_{12}}$ on the subset of $\mathcal{G}'$ where $\nabla\Phi_\infty$ is not conformal, while on the complement of this set $\bar\mu=0$. Hence, $\Phi_\infty\circ\varphi_\infty^{-1}$ is weakly conformal.
		By the area formula (see Lemma \ref{rectif}),
		\[ \begin{split} \int_{\Omega_\infty}\abs{\nabla(\Phi_\infty\circ\varphi_\infty^{-1})}^2\,d\mathcal{L}^2&=2\int_{\Omega_\infty}\abs{\de_1(\Phi_\infty\circ\varphi_\infty^{-1})\wedge\de_2(\Phi_\infty\circ\varphi_\infty^{-1})}\,d\mathcal{L}^2 \\
		&=2\int_{\Omega_\infty}\abs{\de_1\Phi_\infty\wedge\de_2\Phi_\infty}\circ\varphi_\infty^{-1}\abs{\de_1\varphi_\infty^{-1}\wedge\de_2\varphi_\infty^{-1}}\,d\mathcal{L}^2 \\
		&=2\int_{B_2^2(0)}\abs{\de_1\Phi_\infty\wedge\de_2\Phi_\infty}\,d\mathcal{L}^2\le\int_{B_2^2(0)}\abs{\nabla\Phi_\infty}^2\,d\mathcal{L}^2, \end{split} \]
		which shows that $\Phi_\infty\circ\varphi_\infty^{-1}\in W^{1,2}(\Omega_\infty,\R^\envdim)$. The same computation shows that
		\begin{equation}\label{eq:pushhomeo} \nu_\infty=N_\infty\abs{\de_1\Phi_\infty\wedge\de_2\Phi_\infty}\mathcal{L}^2=(\varphi_\infty^{-1})_*\pa{\mz N_\infty\circ\varphi_\infty^{-1}\abs{\nabla(\Phi_\infty\circ\varphi_\infty^{-1})}^2\mathcal{L}^2}. \end{equation}
		This, together with $(\Phi_\infty)_*\nu_\infty=\mu_\infty$ (by Lemma \ref{pushforward}) and assumption \eqref{eq:massbd}, shows that $(\Omega_\infty,\Phi_\infty\circ\varphi_\infty^{-1},N_\infty\circ\varphi_\infty^{-1})$ satisfies \eqref{eq:massass}.

		Finally, for any $\omega\cptsub\Omega_\infty$ with smooth boundary, we show that the varifold $\vfd$ associated to $(\omega,\Phi_\infty\circ\varphi_\infty^{-1},N_\infty\circ\varphi_\infty^{-1})$ is stationary in $\R^\envdim\setminus\Phi_\infty\circ\varphi_\infty^{-1}(\de\omega)$. Setting $\omega':=\varphi_\infty^{-1}(\omega)$, from the $C^0_{loc}$ convergence $\Phi_k\to\Phi_\infty$ we infer that the varifolds $\vfd_k:=\vfd_{(\omega',\Phi_k,N_k)}$ converge (a priori only after extracting a subsequence) to a stationary varifold $\tilde\vfd$ in $\R^\envdim\setminus\Phi_\infty(\de\omega')$.
		This varifold is rectifiable (see \cite[Theorem~42.4]{simon}). But, since $\mathcal{L}^2(\de\omega')=0$, Lemma \ref{pushforward} gives
		\[ \begin{split} \norm{\vfd_k}&= (\Phi_k)_*(\uno_{\omega'}\nu_k)\weakstarto(\Phi_\infty)_*(\uno_{\omega'}\nu_\infty) \\
		&=(\Phi_\infty\circ\varphi_\infty^{-1})_*\pa{\mz N_\infty\circ\varphi_\infty^{-1}\abs{\nabla(\Phi_\infty\circ\varphi_\infty^{-1})}^2\uno_\omega\mathcal{L}^2}=\norm{\vfd} \end{split} \]
		as Radon measures in $\R^\envdim$. Since $\norm{\vfd_k}\weakstarto\norm{\tilde\vfd}$ in $\R^\envdim\setminus\Phi_\infty(\de\omega')=\R^\envdim\setminus\Phi_\infty\circ\varphi_\infty^{-1}(\de\omega)$ and a rectifiable varifold is uniquely determined by the associated mass measure, we deduce that on this open set $\tilde\vfd=\vfd$. Since $\tilde\vfd$ is stationary, the theorem follows.
	\end{proof}

	Theorem \ref{blow-up} admits an analogous statement in which $B_2^2(0)$ is replaced by $\C$. Let $(x_k)\subseteq\Omega$ be a sequence of points, together with a sequence of radii $(r_k)$ such that $\lim_{k\to\infty}\frac{\dist(x_k,\de\Omega)}{r_k}=\infty$.
	Assuming $\ell_k^2:=\int_{B_{r_k}^2(x_k)}\abs{\nabla\Phi}^2\,d\mathcal{L}^2>0$ eventually, we let
	\[ \Phi_k:=\ell_k^{-1}(\Phi(x_k+r_k\cdot)-\Phi(x_k)),\quad N_k:=N(x_k+r_k\cdot) \]
	and notice that, for any $R>0$, the functions $\Phi_k,N_k$ are eventually defined on $B_R^2(0)$. Assume moreover that
	\begin{itemize}
		\item $\limsup_{k\to\infty}\frac{\int_{B_{Rr_k}^2(x_k)}\abs{\nabla\Phi}^2\,d\mathcal{L}^2}{\int_{B_{r_k}^2(x_k)}\abs{\nabla\Phi}^2\,d\mathcal{L}^2}<\infty$ for all $R>0$;
		\item $\limsup_{k\to\infty}(\Phi_k)_*\pa{\mz N_k\abs{\nabla\Phi_k}^2\uno_{B_R^2(0)}\mathcal{L}^2}(B_s^\envdim(p))\le\massbd\pi s^2$ for all $s>0$, all $p\in\R^\envdim$ and all $R>0$;
		\item $\ell_k\to 0$.
	\end{itemize}

	\begin{thm}[parametrized blow-up defined on $\bm{\C}$]\label{blow-up-c}
		Up to subsequences, there exist $\Phi_\infty\in W^{1,2}_{loc}(\C,\R^\envdim)$, $N_\infty\in L^\infty(\C,\N\setminus\set{0})$ and a
		quasiconformal homeomorphism $\varphi_\infty\in W^{1,2}_{loc}(\C,\C)$, with $\varphi_\infty(0)=0$, such that
		\[ \Phi_k\to\Phi_\infty\quad\text{in }C^0_{loc}(\C,\R^\envdim),\qquad\nabla\Phi_k\weakto\nabla\Phi_\infty\quad\text{in }L^2_{loc}(\C,\R^{\envdim\times 2}), \]
		\[ \mz N_k\abs{\nabla\Phi_k}^2\mathcal{L}^2\weakstarto N_\infty\abs{\de_1\Phi_\infty\wedge\de_2\Phi_\infty}\mathcal{L}^2\quad\text{as Radon measures}.
		\]
		Moreover, $\Phi_\infty\circ\varphi_\infty^{-1}$ is weakly conformal and $(\C,\Phi_\infty\circ\varphi_\infty^{-1},N_\infty\circ\varphi_\infty^{-1})$ is a local parametrized stationary varifold in $\R^\envdim$.
	\end{thm}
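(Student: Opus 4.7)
The plan is to reduce to Theorem \ref{blow-up} by a diagonal argument, or equivalently to re-run its proof with $B_2^2(0)$ replaced by $\C$. The three hypotheses of the statement are tailored precisely so that, for every fixed $R>0$ and eventually in $k$, the sequence $(\Phi_k,N_k)$ restricted to $B_R^2(0)$ satisfies the analogues of \eqref{eq:enbd}--\eqref{eq:shrinking}; in particular the first assumption gives a uniform local energy bound, and the second gives the local mass control on the pushforward measures.

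First I would extract a subsequence such that, for every $R>0$, $\Phi_k\weakto\Phi_\infty$ in $W^{1,2}(B_R^2(0),\R^\envdim)$ and the localized energy measures $\mz N_k\abs{\nabla\Phi_k}^2\uno_{B_R^2(0)}\mathcal{L}^2$ converge weakly* to some Radon measure on $\C$. By a standard diagonal extraction along $R=2^j$, these limits are consistent and glue to a single Radon measure $\nu_\infty$ on $\C$ which is finite on compacta. Next, the proofs of Lemmas \ref{c0convandhull}, \ref{pushforward}, \ref{smallrank}, and \ref{ninfty} are essentially local: they use only slicing over balls $B_r^2(x)$ of finite radius, the monotonicity formula in the target, and the stationarity of $(\Omega,\Phi,N)$, which is itself local in the domain. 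Hence they adapt verbatim, yielding a map $\Phi_\infty\in W^{1,2}_{loc}(\C,\R^\envdim)$ with a continuous representative, the local uniform convergence $\Phi_k\to\Phi_\infty$, a weak $L^2_{loc}$ gradient limit, and a function $N_\infty\in L^\infty(\C,\N\setminus\set{0})$ with $\norm{N_\infty}_{L^\infty}\le\massbd$ satisfying $\nu_\infty=N_\infty\abs{\de_1\Phi_\infty\wedge\de_2\Phi_\infty}\mathcal{L}^2$.

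The quasiconformal step is where the unbounded domain matters most. The Beltrami coefficient $\mu$ defined via $\Phi_\infty$ (exactly as in the proof of Theorem \ref{blow-up}) is no longer compactly supported, but satisfies $\abs{\mu}\le\frac{\massbd^2-1}{\massbd^2+1}<1$ a.e.\ on $\C$. The existence of a global quasiconformal homeomorphism $\varphi_\infty\colon\C\to\C$ solving $\de_{\bar z}\varphi_\infty=\mu\,\de_z\varphi_\infty$ a.e., with $\varphi_\infty(0)=0$ and a normalization at infinity (e.g.\ $\varphi_\infty(1)=1$), is the content of the measurable Riemann mapping theorem for the whole plane; see \cite[Chapter~V]{lehto}. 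From there, the weak conformality of $\Phi_\infty\circ\varphi_\infty^{-1}$, its $W^{1,2}_{loc}$ regularity, and the local parametrized stationary varifold property of $(\C,\Phi_\infty\circ\varphi_\infty^{-1},N_\infty\circ\varphi_\infty^{-1})$ all follow from the chain rule and area formula arguments of the final page of the proof of Theorem \ref{blow-up}, applied to each $\omega\cptsub\C$ with smooth boundary; the stationarity on each such $\omega$ is obtained by the same convergence argument, which only uses the local analogue of Lemma \ref{pushforward} on $\omega$.

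The hard part should be precisely this passage from the bounded-domain measurable Riemann mapping theorem of \cite[Theorem~4.24]{imayoshi} to its global version on $\C$: in the proof of Theorem \ref{blow-up} the coefficient $\mu$ was supported in $B_2^2(0)$, whereas here $\mu$ is only bounded with $\norm{\mu}_{L^\infty}<1$, so one must rely on the classical global existence result. A secondary technical point is to make the diagonal extraction genuinely consistent across nested balls, which is routine but should be tracked, together with the compatibility of local statements such as the absolute continuity of $\nu_\infty$ across the exhaustion by balls.
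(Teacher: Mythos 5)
Your proposal is correct and follows essentially the same route as the paper: a diagonal extraction over the exhaustion by balls $B_{2^j}^2(0)$, re-running the proof of Theorem \ref{blow-up} (whose lemmas are local), and invoking the global measurable Riemann mapping theorem on $\C$ (the paper cites \cite[Theorem~4.30]{imayoshi}, which is the same result as the one you reference in \cite{lehto}) to handle the Beltrami coefficient that is no longer compactly supported. You correctly identify this last point as the only genuinely new ingredient relative to Theorem \ref{blow-up}.
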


	\begin{proof}
		Let $\Phi_\infty\in W^{1,2}_{loc}(\C,\R^\envdim)$ be a local weak limit. Repeating the proof of Theorem \ref{blow-up} with $R=2^j$ in place of $2$ (for all $j\ge 1$) and using a diagonal argument, up to subsequences we get $\Phi_k\to\Phi_\infty$ in $C^0_{loc}(\C,\R^\envdim)$ and, assuming without loss of generality that
		\[ \nu_{\infty,j}:=\lim_{k\to\infty}\mz N_k\abs{\nabla\Phi_k}^2\uno_{B_{2^j}^2(0)}\mathcal{L}^2 \]
		exists, we also get that
		\[ \nu_{\infty,j}=N_\infty\abs{\de_1\Phi_\infty\wedge\de_2\Phi_\infty}\uno_{B_{2^j}^2(0)}\mathcal{L}^2 \]
		for some $N_\infty\in L^\infty(\C,\N\setminus\set{0})$ with $\norm{N_\infty}_{L^\infty}\le C''$, as well as
		\[ \abs{\nabla\Phi_\infty}^2\le 2N_\infty\abs{\de_1\Phi_\infty\wedge\de_2\Phi_\infty} \]
		a.e. Assuming also that $\lim_{k\to\infty}(\Phi_k)_*\pa{\mz N_k\abs{\nabla\Phi_k}^2\uno_{B_{2^j}^2(0)}\mathcal{L}^2}$ exists for all $j$, we can set
		\[ \nu_\infty:=\lim_{j\to\infty}\nu_{\infty,j},\quad\mu_\infty:=\lim_{j\to\infty}\lim_{k\to\infty}(\Phi_k)_*\pa{\mz N_k\abs{\nabla\Phi_k}^2\uno_{B_{2^j}^2(0)}\mathcal{L}^2} \]
		and, with the same proof as Lemma \ref{pushforward}, we have again $\mu_\infty=(\Phi_\infty)_*\nu_\infty$.
		The remainder of the proof is completely analogous to the one of Theorem \ref{blow-up}, using \cite[Theorem~4.30]{imayoshi} in order to build the quasiconformal homeomorphism $\varphi_\infty:\C\to\C$.
	\end{proof}

	\section{Regularity in the general case}\label{genregsec}

	This section is devoted to the proof of the main regularity result (see Theorem \ref{genreg} and Corollary \ref{globgenreg} below). We first show a removable singularity criterion. Its proof consists of a standard capacity argument in the target $\R^\envdim$ and could be well known to the expert community. We include it both for the reader's convenience and because it is the only place where the technical assumption \eqref{eq:massass} is used.

	\begin{lemmaen}[removable singularities]\label{removab}
		Let $(\Omega,\Phi,N)$ be a local parametrized stationary varifold in $\subman$ (possibly $\subman=\R^\envdim$). Assume that $\Phi$ satisfies
		\[ -\Delta\Phi=A(\Phi)(\nabla\Phi,\nabla\Phi) \]
		in the distributional sense on $\Omega\setminus S$, for some relatively closed $S\subseteq\Omega$ with $\mathcal{H}^1(\Phi(S))=0$. Then the equation is satisfied on the whole $\Omega$ and, as a consequence, $\Phi$ is $C^\infty$-smooth.
	\end{lemmaen}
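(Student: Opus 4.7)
The plan is to upgrade the equation from $\Omega\setminus S$ to all of $\Omega$ by multiplying test functions by a cutoff in the \emph{target} space which vanishes near $\Phi(S)$. Concretely, given $\zeta\in C^\infty_c(\Omega,\R^\envdim)$, I would use a smooth $\chi_\epsilon\colon\R^\envdim\to[0,1]$ vanishing on an open neighbourhood $U_\epsilon$ of $\Phi(S)$ and equal to $1$ outside a slightly larger neighbourhood; then $\zeta\chi_\epsilon(\Phi)\in W^{1,2}\cap L^\infty$ has compact support contained in $\Omega\setminus S$ and, by a standard approximation by smooth functions, is a legitimate test function for the equation on $\Omega\setminus S$. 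The limit $\epsilon\to 0$ should recover the identity
\begin{equation*}
\int_\Omega\ang{\nabla\zeta,\nabla\Phi}\,d\mathcal{L}^2=\int_\Omega\zeta\cdot A(\Phi)(\nabla\Phi,\nabla\Phi)\,d\mathcal{L}^2
\end{equation*}
for the original $\zeta$, which by definition is the equation on $\Omega$.

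The construction of $\chi_\epsilon$ is where $\mathcal{H}^1(\Phi(S))=0$ and the mass assumption \eqref{eq:massass} meet. Using $\mathcal{H}^1(\Phi(S))=0$, I would cover $\Phi(S)$ by countably many balls $B_{r_i}^\envdim(p_i)$ contained in the $\epsilon$-neighbourhood of $\Phi(S)$, with $\sum_i r_i<\epsilon$, and set $\chi_\epsilon:=\prod_i\pa{1-\eta(\abs{\cdot-p_i}/r_i)}$ for a fixed smooth $\eta\colon\R\to[0,1]$ satisfying $\eta\equiv 1$ on $[0,1]$ and $\eta\equiv 0$ on $[2,\infty)$. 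This yields $\abs{\nabla\chi_\epsilon}\le C\sum_i r_i^{-1}\uno_{B_{2r_i}^\envdim(p_i)}$. Combining the push-forward identity $\int g\,d\norm{\vfd_\Omega}=\mz\int g(\Phi)\,N\abs{\nabla\Phi}^2\,d\mathcal{L}^2$ (with $N\ge 1$) and \eqref{eq:massass} gives the key estimate
\begin{equation*}
\int_\Omega\abs{(\nabla\chi_\epsilon)(\Phi)}\,\abs{\nabla\Phi}^2\,d\mathcal{L}^2\le 2\int_{\R^\envdim}\abs{\nabla\chi_\epsilon}\,d\norm{\vfd_\Omega}\le C\sum_i r_i^{-1}\norm{\vfd_\Omega}(B_{2r_i}^\envdim(p_i))\le C\massbd\,\epsilon,
\end{equation*}
which is the heart of the argument and the only point where \eqref{eq:massass} is used.

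Inserting $\zeta\chi_\epsilon(\Phi)$ into the equation and expanding $\nabla(\zeta\chi_\epsilon(\Phi))$ produces
\begin{equation*}
\int\chi_\epsilon(\Phi)\ang{\nabla\zeta,\nabla\Phi}\,d\mathcal{L}^2+R_\epsilon=\int\chi_\epsilon(\Phi)\,\zeta\cdot A(\Phi)(\nabla\Phi,\nabla\Phi)\,d\mathcal{L}^2,
\end{equation*}
with a remainder $R_\epsilon$ consisting of terms of the form $\int\zeta\,(\nabla\chi_\epsilon)(\Phi)\cdot\nabla\Phi\otimes\nabla\Phi$, so that $\abs{R_\epsilon}=O_\zeta(\epsilon)$ by the displayed estimate. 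For the main terms I would observe that $\chi_\epsilon(\Phi)\to 1$ pointwise on $\Omega\setminus\Phi^{-1}(\Phi(S))$ (by shrinking the covers to $\Phi(S)$), while on $\Phi^{-1}(\Phi(S))$ the area formula (cf. Remark \ref{rectvar}), combined with $\mathcal{H}^2(\Phi(S))=0$ and weak conformality, forces $\abs{\nabla\Phi}^2=2\abs{\de_1\Phi\wedge\de_2\Phi}=0$ a.e.; dominated convergence then applies and yields the identity on $\Omega$. Smoothness of $\Phi$ is a classical consequence of the regularity theory for weak solutions to the harmonic map equation (see e.g. \cite{helein,rivharm}), given that $\Phi$ is already continuous by Proposition \ref{contandsupp}. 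I expect the capacity-type estimate above to be the main obstacle: its validity crucially requires the uniform quadratic mass bound \eqref{eq:massass} that the measure $\norm{\vfd_\Omega}$ enjoys as a consequence of the monotonicity formula for stationary varifolds.
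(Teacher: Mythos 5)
Your argument is correct and is essentially the paper's own proof: the same target-space cutoff built from a cover of the singular image by balls with $\sum_i r_i<\epsilon$, the same key capacity estimate $\sum_i r_i^{-1}\norm{\vfd_\Omega}(B_{2r_i}^\envdim(p_i))=O(\epsilon)$ via \eqref{eq:massass}, the same use of the area formula to get $\nabla\Phi=0$ a.e.\ on the preimage of the bad set, and the same appeal to classical harmonic-map regularity for smoothness. The only cosmetic difference is that the paper covers just the compact set $\Phi(S\cap\supp{v})$ by \emph{finitely} many balls, which sidesteps the (harmless but unaddressed) question of whether your countably infinite product $\chi_\epsilon$ is actually smooth.
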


	\begin{proof}
		Let $v\in C^\infty_c(\Omega)$.
		For any $\epsilon>0$ we can cover the compact set $K:=\Phi(S\cap\supp{v})$ by a finite union of balls $\bigcup_{i\in I}B_{r_i}^\envdim(p_i)$ with centers on this set and $\sum_i r_i<\epsilon$. Let $\rho_i\in C^\infty(\R^\envdim)$ be a function which equals $0$ on $B_{r_i}^\envdim(p_i)$, $1$ on $\R^\envdim\setminus B_{2r_i}^\envdim(p_i)$ and has $\norm{\nabla\rho_i}_{L^\infty}\le 2r_i^{-1}$.

		Since $v_\epsilon:=v\,\prod_i(\rho_i\circ\Phi)$ vanishes near $S\cap\supp{v}$, the function $v_\epsilon\in W^{1,2}\cap L^\infty(\Omega)$ is supported in a compact subset of $\Omega\setminus S$. Thus, using the hypothesis and a standard approximation argument,
		\begin{equation}\label{eq:approxpde} \int_\Omega\ang{\nabla\Phi,\nabla v_\epsilon}\,d\mathcal{L}^2=\int_\Omega A(\Phi)(\nabla\Phi,\nabla\Phi)v_\epsilon\,d\mathcal{L}^2. \end{equation}
		We claim that, as $\epsilon\to 0$, the left-hand side converges to $\int_{\Omega\setminus\Phi^{-1}(K)}\ang{\nabla\Phi,\nabla v}\,d\mathcal{L}^2$. Indeed, let us write
		\[ \nabla v_\epsilon=\bigg(\prod_i\rho_i\circ\Phi\bigg)\nabla v+v\sum_i\bigg(\prod_{j\neq i}\rho_j\circ\Phi\bigg)\nabla(\rho_i\circ\Phi). \]
		The first term converges to $\uno_{\Omega\setminus\Phi^{-1}(K)}\nabla v$ in $L^2(\Omega,\R^2)$. On the other hand, by \eqref{eq:massass},
		\[ \begin{split} &\Big|\int_\Omega\Big\langle\nabla\Phi,v\sum_i\Big(\prod_{j\neq i}\rho_j\circ\Phi\Big)\nabla(\rho_i\circ\Phi)\Big\rangle\,d\mathcal{L}^2\Big| \\
		&\le 2\norm{v}_{L^\infty}\sum_i r_i^{-1}\int_{\Phi^{-1}(B_{2r_i}^\envdim(p_i))}\abs{\nabla\Phi}^2\,d\mathcal{L}^2 \\
		&\le 4\norm{v}_{L^\infty}\sum_i r_i^{-1}\norm{\vfd_\Omega}(B_{2r_i}^\envdim(p_i))
		=4\norm{v}_{L^\infty}\sum_i O(r_i)\to 0 \end{split} \]
		as $\epsilon\to 0$. This establishes the claim. Moreover, the right-hand side of \eqref{eq:approxpde} converges to $\int_{\Omega\setminus\Phi^{-1}(K)}A(\Phi)(\nabla\Phi,\nabla\Phi)v\,d\mathcal{L}^2$. Finally, in order to establish \eqref{eq:approxpde} with $v$ in place of $v_\epsilon$, we observe that $\nabla\Phi=0$ a.e. on $\Phi^{-1}(K)$: indeed, by the area formula (see Lemma \ref{rectif}),
		\[ \int_{\Phi^{-1}(K)}\abs{\nabla\Phi}^2\,d\mathcal{L}^2=2\int_K\bigg(\sum_{y\in\Phi^{-1}(p)\cap\good}1\bigg)\,d\mathcal{H}^2(p)=0, \]
		since $\mathcal{H}^2(K)=0$. The smoothness of $\Phi$ follows from the continuity of $\Phi$ and \cite[Section~3.4]{moser}.
	\end{proof}

	\begin{definition}[admissible points]
		Given a local parametrized stationary varifold $(\Omega,\Phi,N)$, a point $x\in\Omega$ is said to be \emph{admissible} if $\Phi$ is nonconstant in any neighborhood of $x$ and
		\[ \liminf_{r\to 0}\frac{\int_{B_{2r}^2(x)}\abs{\nabla\Phi}^2\,d\mathcal{L}^2}{\int_{B_r^2(x)}\abs{\nabla\Phi}^2\,d\mathcal{L}^2}<\infty. \]
		We call $\mathcal{A}\subseteq\Omega$ the Borel set of the admissible points. \hfill\qedsymbol
	\end{definition}

	We now show that the image of the set of non-admissible points is very small.

	\begin{lemmaen}[negligibility of non-admissible points in the target]\label{noblowdimzero}
		The set $\Phi(\Omega\setminus\mathcal{A})$ has Hausdorff dimension $0$, i.e.
		\[ \mathcal{H}^s(\Phi(\Omega\setminus\mathcal{A}))=0 \]
		for any real $s>0$.
	\end{lemmaen}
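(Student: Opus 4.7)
Plan. The non-admissible set splits naturally as $\Omega \setminus \mathcal{A} = U \cup E$, where $U \subseteq \Omega$ is the open set on which $\Phi$ is locally constant and
\[ E := \set{x \in \Omega : \Phi \text{ is nonconstant near } x \text{ and } \liminf_{r \to 0} \frac{e(2r,x)}{e(r,x)} = \infty}, \]
with $e(r,x) := \int_{B_r^2(x)} \abs{\nabla\Phi}^2\,d\mathcal{L}^2$. Since $U$ has at most countably many connected components and $\Phi$ is constant on each (by continuity), $\Phi(U)$ is at most countable, so $\mathcal{H}^s(\Phi(U)) = 0$ for every $s > 0$; the work is therefore to handle $E$.

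Two ingredients drive the argument. First, the diameter estimate
\[ \diam \Phi(B_r^2(x)) \leq C\sqrt{e(2r,x)}, \]
valid for every $x \in \Omega$ and every sufficiently small $r$: this is exactly \eqref{eq:xpdiamest} (respectively \eqref{eq:xpdiamest2} if $\subman = \R^\envdim$), which carries over to the local setting thanks to assumption \eqref{eq:massass} and the monotonicity formula, as noted in Remark \ref{alsoloc}; the constant $C$ depends only on $\norm{N}_{L^\infty}$, $\norm{A}_{L^\infty}$ and $\diam\subman$. Second, the super-geometric decay of the Dirichlet energy at any point $x$ of $E$: by the very definition of liminf, for any $M>0$ there is $r_0(x,M)>0$ with $e(2r,x) \geq M e(r,x)$ for all $0 < r \leq r_0(x,M)$. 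Given $s > 0$, fix $M$ so large that $4 M^{-s/2} < 1$, and for each $j \in \N$ set
\[ F_j := \set{x \in \Omega : \dist(x,\de\Omega) > 2/j \text{ and } e(2r,x) \geq M e(r,x)\ \forall\, 0 < r \leq 1/j}. \]
Every $x \in E$ lies in $F_j$ for all $j$ sufficiently large, so $E \subseteq \bigcup_j F_j$, and it suffices to show $\mathcal{H}^s(\Phi(F_j)) = 0$ for each $j$.

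Fix $j$, a set $\Omega'' \cptsub \Omega$, and let $r_k := 2^{-k}/j$. For every $x \in F_j$, iterating the energy inequality $k$ times yields $e(r_k, x) \leq M^{-k} e(1/j, x) \leq M^{-k} E_0$, where $E_0 := \int_\Omega \abs{\nabla\Phi}^2\,d\mathcal{L}^2$. Applying a $5r$-Vitali extraction to $\set{B_{r_k}^2(x) : x \in F_j \cap \Omega''}$ produces a disjoint subfamily $\set{B_{r_k}^2(x_i)}$ with $F_j \cap \Omega'' \subseteq \bigcup_i B_{5r_k}^2(x_i)$; disjointness together with boundedness of $\Omega''$ gives $\#\set{i} \leq C_j 4^k$. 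For $k \geq 4$ we have $10 r_k \leq r_{k-4} \leq 1/j$, so the diameter estimate at scale $5 r_k$, monotonicity of $e(\cdot,x_i)$, and a further iteration give
\[ \diam \Phi(B_{5r_k}^2(x_i)) \leq C\sqrt{e(10 r_k, x_i)} \leq C\sqrt{e(r_{k-4}, x_i)} \leq C M^{-(k-4)/2}\sqrt{E_0}. \]
Summing,
\[ \sum_i \pa{\diam \Phi(B_{5r_k}^2(x_i))}^s \leq C_j'\,\pa{4 M^{-s/2}}^k \xrightarrow{k\to\infty} 0, \]
while the individual diameters also tend to $0$. Hence $\mathcal{H}^s(\Phi(F_j \cap \Omega'')) = 0$; exhausting $\Omega$ by such $\Omega''$ and taking a countable union over $j$ gives $\mathcal{H}^s(\Phi(E)) = 0$, and the proof is complete. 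The only potentially subtle step is securing the diameter estimate in the local setting with $\subman = \R^\envdim$, but this follows verbatim from the argument in the proof of Proposition \ref{contandsupp}, using only the monotonicity formula and \eqref{eq:massass}; everything else is a routine covering argument driven by the rapid energy decay.
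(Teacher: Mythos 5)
Your proof is correct and follows essentially the same route as the paper's: both arguments combine the diameter bound $\diam\Phi(B_r^2(x))\le C\,(\int_{B_{2r}^2(x)}\abs{\nabla\Phi}^2\,d\mathcal{L}^2)^{1/2}$ from (the local version of) Proposition \ref{contandsupp} with the faster-than-any-power energy decay forced at non-admissible points, and conclude by a covering argument. The only (harmless) differences are that you split off the locally constant set explicitly and use a fixed-scale Vitali cover counted by disjointness, where the paper absorbs the constant case into the same decay inequality and uses a Besicovitch cover at variable radii $r_x$.
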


	\begin{proof}
		We can assume that $\Omega$ is bounded. Fix $s>0$ and two arbitrary parameters $\delta,\epsilon>0$ and choose a real number $\alpha>4s^{-1}$. For any $x\in\Omega\setminus\mathcal{A}$ we have
		\[ \int_{B_{2^{-k-1}}^2(x)}\abs{\nabla\Phi}^2\,d\mathcal{L}^2\le 2^{-\alpha}\int_{B_{2^{-k}}^2(x)}\abs{\nabla\Phi}^2\,d\mathcal{L}^2 \]
		for all $k\ge k_0$ (for some threshold $k_0\ge 0$ depending on $x$), so
		\[ \int_{B_{2^{-k}}^2(x)}\abs{\nabla\Phi}^2\,d\mathcal{L}^2\le (2^{-k})^{\alpha}\Big(2^{k_0\alpha}\int_{B_{2^{-k_0}}^2(x)}\abs{\nabla\Phi}^2\,d\mathcal{L}^2\Big)
		=o((2^{-k})^{4/s}). \]
		Hence we can find, for all $x\in\Omega\setminus\mathcal{A}$, a radius $r_x<\mz\dist(x,\de\Omega)$ such that
		\[ \int_{B_{2r_x}^2(x)}\abs{\nabla\Phi}^2\,d\mathcal{L}^2\le\epsilon r_x^{4/s},\quad
		\diam\Phi(B_{r_x}^2(x))<\delta \]
		(by continuity of $\Phi$).
		Finally, Besicovitch covering theorem gives us a countable subcollection
		of balls $(B_{r_i}^2(x_i))$ such that $\Omega\setminus\mathcal{A}\subseteq\bigcup_i B_{r_i}^2(x_i)$ and $\sum_i\uno_{B_{r_i}^2(x_i)}$ is bounded everywhere by a universal constant. Thus, by inequality \eqref{eq:xpdiamest} (or \eqref{eq:xpdiamest2} if $\subman=\R^\envdim$),
		\[ \begin{split} \sum_i\pa{\diam\Phi(B_{r_i}^2(x_i))}^s&\le C\sum_i\Big(\int_{B_{2r_i}^2(x_i)}\abs{\nabla\Phi}^2\,d\mathcal{L}^2\Big)^{s/2}\le C\epsilon^{s/2}\sum_i r_i^2 \\
		&\le C\epsilon^{s/2}\sum_i\mathcal{L}^2(B_{r_i}^2(x_i))\le C\epsilon^{s/2}\mathcal{L}^2(\Omega). \end{split} \]
		Since $\delta$ and $\epsilon$ were arbitrary, we deduce $\mathcal{H}^s(\Phi(\Omega\setminus\mathcal{A}))=0$.
	\end{proof}

	Let $(\Omega,\Phi,N)$ be a local parametrized stationary varifold. Assume moreover that $\Omega$ is bounded, $\Phi$ extends continuously to $\bar\Omega$ with $\Phi(\Omega)\cap\Phi(\de\Omega)=\emptyset$ and $\vfd_\Omega$ is stationary in $\subman\setminus\Phi(\de\Omega)$. Recall that in this situation the upper semicontinuous function $\tilde N$ is finite (see Remark \ref{alsoloc}). We also assume that $\sup_\Omega\tilde N$ is finite.
	For any $x\in\Omega$ and any $r\le\dist(x,\de\Omega)$ we define $\ell(x,r):=\pa{\int_{B_r^2(x)}\abs{\nabla\Phi}^2\,d\mathcal{L}^2}^{1/2}$.

	We now use a compactness argument, together with Theorem \ref{blow-up} and Theorem \ref{conicalreg}, in order to prove that, under some technical assumptions, the Dirichlet energy does not decay too fast (in a uniform, quantitative way). The main underlying idea is that this happens for a holomorphic function at a zero whose order is controlled, but we have to take care of the possible distortion caused by the quasiconformal homeomorphism appearing in the blow-up.

	\begin{lemmaen}[propagation of slow energy decay at a smaller scale]\label{softlemma}
		For every $C'>0$ there exists $\epsilon=\epsilon(\Omega,\Phi,N,C')<\mz$ with the following property: whenever
		\begin{itemize}
			\item $x\in\omega\cptsub\Omega$ and $0<r<\mz\dist(x,\de\omega)$,
			\item $\ell(x,r)<\epsilon\dist(\Phi(x),\Phi(\de\omega))$,
			\item $0<\int_{B_{2r}^2(x)}\abs{\nabla\Phi}^2\,d\mathcal{L}^2< C'\int_{B_r^2(x)}\abs{\nabla\Phi}^2\,d\mathcal{L}^2$,
			\item $\frac{\norm{\vfd_\omega}(B_s^\envdim(\Phi(x)))}{\pi s^2}\in(\tilde N(x)-\epsilon,\tilde N(x)+\epsilon)$ for all $0<s<\epsilon^{-1}\ell(x,r)$,
		\end{itemize}
		there exists $r'\in\pa{\epsilon r,\frac{r}{2}}$ such that
		$\int_{B_{2r'}^2(x)}\abs{\nabla\Phi}^2\,d\mathcal{L}^2<\bar C\int_{B_{r'}^2(x)}\abs{\nabla\Phi}^2\,d\mathcal{L}^2$,
		for some $\bar C$ depending only on $\sup_\Omega\tilde N$.
	\end{lemmaen}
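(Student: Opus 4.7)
I will argue by contradiction: along a sequence of violating configurations I perform the parametrized blow-up of Section \ref{blowsec}, exploit hypothesis 4 to identify the blow-up limit as a stationary cone near the origin, invoke the rigidity of Section \ref{conicalregsec} to recognise it as a (possibly conjugate-)holomorphic map into a single plane, and finally compare its polynomial energy decay with the geometric decay forced by the failed conclusion. Suppose the lemma fails for some $C'>0$. Then there exist sequences $x_k\in\omega_k\cptsub\Omega$, radii $0<r_k<\mz\dist(x_k,\de\omega_k)$, and $\epsilon_k\downarrow 0$ satisfying the four hypotheses with $\epsilon=\epsilon_k$, yet with $\int_{B_{2r'}^2(x_k)}\abs{\nabla\Phi}^2\,d\mathcal{L}^2\ge\bar C\int_{B_{r'}^2(x_k)}\abs{\nabla\Phi}^2\,d\mathcal{L}^2$ for every $r'\in\pa{\epsilon_k r_k,r_k/2}$, where $\bar C$ depends only on $\sup_\Omega\tilde N$ and will be fixed at the end. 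Since $\tilde N$ is integer-valued and bounded by $\sup_\Omega\tilde N$, after a subsequence I may assume $\tilde N(x_k)=N_*$ is a fixed integer. Iterating the failure at the dyadic scales $r_k/2^j$ gives
\begin{equation*}
\int_{B_{2^{-j}r_k}^2(x_k)}\abs{\nabla\Phi}^2\,d\mathcal{L}^2\le\bar C^{-j}\ell_k^2
\end{equation*}
for every integer $j\ge 0$ with $2^{-j}r_k>\epsilon_k r_k$, where $\ell_k:=\ell(x_k,r_k)$.

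I next apply Theorem \ref{blow-up} to the sequence $(x_k,r_k)$. Condition \eqref{eq:enbd} is the third hypothesis of the lemma, \eqref{eq:shrinking} follows from $\ell_k<\epsilon_k\diam\Phi(\bar\Omega)\to 0$ (hypothesis 2), and \eqref{eq:massbd} follows from the monotonicity formula applied to $\vfd_\omega$ (whose density is bounded in terms of $\sup_\Omega\tilde N$, the $L^\infty$ norm of the mean curvature, and the total mass). After extraction, I obtain $\Phi_\infty$, $N_\infty$, and a $K$-quasiconformal homeomorphism $\varphi_\infty:B_2^2(0)\to\Omega_\infty$ with $K$ bounded in terms of $\sup_\Omega\tilde N$, such that $g:=\Phi_\infty\circ\varphi_\infty^{-1}$ parametrizes a local stationary varifold in $\R^\envdim$. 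Hypothesis 4, combined with the monotonicity formula applied separately to $\vfd_{B_{2r_k}^2(x_k)}$ (giving a matching lower bound for the density) and with Remark \ref{hightilden} (to isolate the component $K_{x_k}$ that carries the density $N_*$), translates in the limit to $\mu_\infty(B_s^\envdim(0))=N_*\pi s^2$ for every $s$ in some nontrivial interval $(0,s_0)$. By the equality case of the monotonicity formula, the local stationary varifold associated with $(\Omega_\infty,g,N_\infty\circ\varphi_\infty^{-1})$ agrees inside $B_{s_0}^\envdim(0)$ with a stationary cone of constant density $N_*$ at the origin.

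To invoke Theorem \ref{conicalreg} I need the support of this cone to lie in a finite union of $2$-dimensional planes through $0$. I will justify this via a further blow-up at $0\in\Omega_\infty$ (Theorem \ref{blow-up-c}) together with the triod rigidity of Lemma \ref{triods}: an edge shared by two half-plane sectors in the target would force, on the domain side, uncountably many pairwise disjoint generalized triods, exactly as in the proof of Theorem \ref{planarreg}. Theorem \ref{conicalreg} then identifies $g$, near $0$, as a (possibly conjugate-)holomorphic function into a single plane $\Sigma_0\cong\C$, with $g(0)=0$ and order of vanishing at the origin equal to $N_*$.

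The contradiction now follows by comparing growth rates. Since $g(z)\sim cz^{N_*}$ near $0$, $\int_{B_\rho(0)}\abs{\nabla g}^2\,d\mathcal{L}^2\gtrsim\rho^{2N_*}$; by Mori's H\"older distortion theorem for $K$-quasiconformal maps, $\varphi_\infty(B_r(0))\supseteq B_{c_0 r^K}(0)$ for small $r$, so \eqref{eq:pushhomeo} yields
\begin{equation*}
\nu_\infty(B_r(0))\gtrsim r^{2N_*K}.
\end{equation*}
On the other hand, $\nu_k\le\mz\norm{N}_{L^\infty}\abs{\nabla\Phi_k}^2\mathcal{L}^2$ together with the weak-$*$ convergence $\nu_k\weakstarto\nu_\infty$ and the iterated Dirichlet estimate gives $\nu_\infty(B_{2^{-j}}(0))\le\mz\norm{N}_{L^\infty}\bar C^{-j}$ for a.e. $j$. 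Choosing $\bar C$ strictly larger than $2^{2N_*K}$, a quantity bounded in terms of $\sup_\Omega\tilde N$ alone, produces the desired contradiction as $j\to\infty$. The main technical hurdle is the cone-structure step of the third paragraph: while the density identity rigidly forces the blow-up limit to be a cone, verifying that its support lies in a union of full planes through the origin — rather than half-planes meeting at edges — is precisely where the parametrized nature of the problem, through the triod rigidity of Section \ref{conicalregsec}, plays its essential role.
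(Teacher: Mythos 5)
Your overall strategy coincides with the paper's: blow up along a contradicting sequence, identify the limit as a stationary cone, reduce to the conical regularity of Theorem \ref{conicalreg}, and compare the polynomial decay of the holomorphic limit (corrected for quasiconformal distortion) with the geometric decay forced by the failure of the conclusion. Your final quantitative step via Mori's H\"older distortion theorem is a legitimate variant of the paper's argument (which instead uses the module of ring domains and Teichm\"uller's module theorem to get a doubling-type inclusion \eqref{eq:circdist}); it yields a worse but still admissible constant $\bar C$. Two minor imprecisions: the density pinching rescales to \emph{all} $s\in(0,2^k)\to(0,\infty)$, so the limit is a global cone, not merely conical on $(0,s_0)$; and the vanishing order of $g$ at $0$ is only bounded \emph{above} by $N_*$ (via the fiber-counting argument of Proposition \ref{fincpt}, since each of the $\bar k$ sheets contributes at least $1$ to the density $N_*$), not equal to it — but an upper bound is all the comparison needs.

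The genuine gap is the step you yourself flag as the ``main technical hurdle'': showing that the support of the limit cone $\vfd_\infty$ lies in a finite union of $2$-planes through the origin. Your proposed substitute — a further blow-up at $0$ combined with the triod rigidity of Lemma \ref{triods} — cannot do this job. Blowing up a cone at its vertex returns the same cone and yields no new structural information; and the triod construction presupposes that the target set already has a known polyhedral structure (half-planes meeting along a common edge) so that curves in the target can be lifted to build the triods. In the paper, triods are used \emph{inside} Theorem \ref{planarreg} and Lemma \ref{nomanyhalf} to prove regularity once the polyhedral containment is known; they are never used to \emph{establish} that containment. For a general two-dimensional stationary cone in arbitrary codimension no structure theorem is available, so the polyhedral structure must come from elsewhere: the paper invokes Allard's integrality theorem \cite[Theorem~6.4]{allard} to conclude that $\vfd_\infty$ is an integer rectifiable cone, writes it as the cone over a one-dimensional stationary integral varifold $\mathbf{w}$ in $S^{\envdim-1}$ with density in $[1,\bar N]$, and then applies the Allard--Almgren structure theorem \cite[Section~3]{allalm} to conclude that $\mathbf{w}$ is supported in finitely many geodesics, hence $\vfd_\infty$ in finitely many planes. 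These two external theorems are indispensable here and are absent from your argument; without them the appeal to Theorem \ref{conicalreg} is unfounded.
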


	\begin{proof}
		Assume by contradiction that the statement is false for all $\epsilon=2^{-k}$. We can then find a sequence of points $x_k\in\omega_k$ and radii $r_k<\mz\dist(x_k,\de\omega_k)$ such that \eqref{eq:enbd} and \eqref{eq:shrinking} are satisfied, as well as
		$\tilde N(x_k)\to\bar N\in[1,\sup_\Omega\tilde N]$ (up to subsequences),
		\begin{equation}\label{eq:denspinch} \frac{\norm{\vfd_{\omega_k}}(B_s^\envdim(\Phi(x_k)))}{\pi s^2}\in(\tilde N(x_k)-2^{-k},\tilde N(x_k)+2^{-k})\quad\text{for }0<s<2^k\ell(x_k,r_k), \end{equation}
		\begin{equation}\label{eq:badenergy} \int_{B_{2r'}^2(x_k)}\abs{\nabla\Phi}^2\,d\mathcal{L}^2\ge\bar C\int_{B_{r'}^2(x_k)}\abs{\nabla\Phi}^2\,d\mathcal{L}^2\quad\text{for }2^{-k} r_k<r'<\frac{r_k}{2} \end{equation}
		($\bar C$ will be chosen at the end of the proof).
		Moreover, using the notation introduced in Section \ref{blowsec}, the varifolds $\vfd_k:=(\ell_k^{-1}(\cdot-\Phi(x_k)))_*\vfd_{\omega_k}$ have generalized mean curvature bounded by $O(\ell_k)$ (in $L^\infty$) in
		\[ \R^\envdim\setminus\ell_k^{-1}(\Phi(\de\omega_k)-\Phi(x_k))\supseteq B_{2^k}^\envdim(0). \]
		Hence, up to subsequences, the varifolds $\vfd_k$ converge to a stationary varifold $\vfd_\infty$ in $\R^\envdim$. Moreover, by \eqref{eq:denspinch}, we have
		\[ \frac{\norm{\vfd_\infty}(B_s^\envdim(0))}{\pi s^2}=\frac{\norm{\vfd_\infty}(\bar B_s^\envdim(0))}{\pi s^2}=\bar N. \]
		The varifold $\vfd_\infty$ is rectifiable and conical, with density in $[1,\bar N]$ on $\supp{\vfd_\infty}$ (see e.g. the proofs of \cite[Corollary~42.6~and~Theorem~19.3]{simon}). In particular, since $\mu_k\le\norm{\vfd_k}$, \eqref{eq:massbd} follows, with $C'':=\bar N$. So, up to subsequences, the conclusions of Theorem \ref{blow-up} hold. We remark that $\Phi_\infty$ satisfies
		\[ \int_{B_1^2(0)}N_\infty\abs{\de_1\Phi_\infty\wedge\de_2\Phi_\infty}\,d\mathcal{L}^2=\nu_\infty(B_1^2(0))=\lim_{k\to\infty}\nu_k(B_1^2(0))=\mz \]
		and in particular it is nonconstant. The varifold $\vfd_\infty$ has also integer multiplicity by \cite[Theorem~6.4]{allard}, thus it can be expressed as a cone (with vertex $0$) over some stationary integer 1-rectifiable varifold $\mathbf{w}$ in $S^{\envdim-1}$ with density in $[1,\bar N]$ $\norm{\mathbf{w}}$-a.e.

		By the structure theorem in \cite[Section~3]{allalm}, $\mathbf{w}$ is supported in a finite union of geodesic curves. Hence, $\vfd_\infty$ is supported in a finite union of planes through the origin. Letting $\Psi:=\Phi_\infty\circ\varphi_\infty^{-1}$, by \eqref{eq:pushhomeo} and Lemma \ref{pushforward} we have
		\[ \big\|\vfd_{(\Omega_\infty,\Psi,N_\infty\circ\varphi_\infty^{-1})}\big\|=(\Phi_\infty)_*\nu_\infty=\mu_\infty\le\norm{\vfd_\infty}. \]
		So, using Proposition \ref{contandsupp}, we deduce that $\Psi(\Omega_\infty)\subseteq\supp{\norm{\vfd_\infty}}$. Hence, Theorem \ref{conicalreg} applies: we obtain that $\Psi$ takes values in a plane and is a holomorphic function (once this plane is suitably identified with $\C$).
		Furthermore, by Lemma \ref{ninfty}, we can assume $\norm{N_\infty}_\infty\le C''=\bar N$. Now, by Lemma \ref{nisusc} and Lemma \ref{ninfty},
		\[ \mz\tilde N(x_k+r_k\cdot)\abs{\nabla\Phi_k}^2\uno_{B_2^2(0)}\mathcal{L}^2=\nu_k\weakstarto N_\infty\abs{\de_1\Phi_\infty\wedge\de_2\Phi_\infty}\mathcal{L}^2, \]
		so \eqref{eq:badenergy} gives
		\[ \int_{B_{2r}^2(0)}\abs{\de_1\Phi_\infty\wedge\de_2\Phi_\infty}\,d\mathcal{L}^2\ge\frac{\bar C}{(\sup_\Omega\tilde N)^2}\int_{B_r^2(0)}\abs{\de_1\Phi_\infty\wedge\de_2\Phi_\infty}\,d\mathcal{L}^2\quad\text{for }r<\mz. \]
		Let $\bar k\ge 1$ be the biggest integer such that $\abs{\Psi(w)}=O\pa{\abs{w}^{\bar k}}$. We claim that
		\[ \bar k\le\bar N. \]
		Indeed, since $\Psi$ is nonconstant and holomorphic, we have $K_w=\set{w}$ for all $w\in\Omega_\infty$ and the function $\tilde N$ for the local parametrized stationary varifold $(\Omega_\infty,\Psi,N_\infty\circ\varphi_\infty^{-1})$ is everywhere finite (see Remark \ref{alsoloc}). We call it $N'$ in order not to confuse it with the same function for $(\Omega,\Phi,N)$.
		Since the density of the varifold $\vfd_{(\Omega_\infty,\Psi,N_\infty\circ\varphi_\infty^{-1})}$ is bounded everywhere by $\bar N$ (being this true for $\vfd_\infty$), the same argument used to prove Lemma \ref{fincpt} gives
		\[ \bar k\le\sum_{j=1}^{\bar k}N'(w_j)\le\bar N. \]
		whenever $w_1,\dots,w_{\bar k}\in\Omega_\infty$ are distinct points with the same image.
		Such points exist because $\Psi$ is a $\bar k$-to-$1$ map near $0$. This establishes our claim and we deduce that
		\[ \lim_{r\to 0}\frac{\int_{B_{2r}^2(0)}\abs{\nabla\Psi}^2\,d\mathcal{L}^2}{\int_{B_r^2(0)}\abs{\nabla\Psi}^2\,d\mathcal{L}^2}=2^{2\bar k}\le 2^{2\bar N}. \]
		As was shown in the proof of Theorem \ref{blow-up}, $\varphi_\infty$ is an $\bar N^2$-quasiconformal homeomorphism. Since $\bar N\le\sup_\Omega\tilde N$, we claim that there exists a constant $\mathcal{K}\in\N$ depending only on $\sup_\Omega\tilde N$ such that, for $r>0$ small enough, there exists $s=s(r)>0$ with
		\begin{equation}\label{eq:circdist} B_s^2(0)\subseteq\varphi_\infty(B_r^2(0))\subseteq\varphi_\infty(B_{2r}^2(0))\subseteq B_{2^{\mathcal{K}}s}^2(0). \end{equation}
		Let $s:=\min_{z\in\de B_r^2(0)}\abs{\varphi_\infty(z)}$, $s':=\max_{z\in\de B_{2r}^2(0)}\abs{\varphi_\infty(z)}$ and call $z_1$ and $z_2$ two points where the minimum and the maximum are attained, respectively. If $r$ is small enough we have $\bar B_{s'}^2(0)\subseteq\varphi_\infty(B_2^2(0))$. Letting $A:=B_{s'}^2(0)\setminus\bar B_{s}^2(0)$ and denoting by $M(\cdot)$ the module of a ring domain (see \cite[Section~I.6.1]{lehto} for the definition), by \cite[Theorem~I.7.1]{lehto} we have
		\begin{equation}\label{eq:moddistortion} \log\pa{\frac{s'}{s}}=M(A)\le\bar N^2 M(\varphi_\infty^{-1}(A)). \end{equation}
		The ring domain $\varphi_\infty^{-1}(A)$ separates $0$ and $z_1$ from $z_2$ and $\infty$ (in $\widehat\C$), so Teichm\"uller's module theorem (see \cite[Section~II.1.3]{lehto}), together with $\abs{z_2}=2\abs{z_1}$, implies that $M(\varphi_\infty^{-1}(A))$ is bounded by a constant depending only on $\sup_\Omega\tilde N$. Since $\varphi_\infty(B_r^2(0))\supseteq B_{s}^2(0)$ and $\varphi_\infty(B_{2r}^2(0))\subseteq B_{s'}^2(0)$, \eqref{eq:circdist} follows from \eqref{eq:moddistortion}.
		Thus, by the area formula,
		\[ \frac{\bar C}{(\sup_\Omega\tilde N)^2}\le\frac{\int_{B_{2r}^2(0)}\abs{\de_1\Phi_\infty\wedge\de_2\Phi_\infty}\,d\mathcal{L}^2}{\int_{B_r^2(0)}\abs{\de_1\Phi_\infty\wedge\de_2\Phi_\infty}\,d\mathcal{L}^2}\le\frac{\int_{B_{2^{\mathcal K}s(r)}^2(0)}\abs{\nabla\Psi}^2\,d\mathcal{L}^2}{\int_{B_{s(r)}^2(0)}\abs{\nabla\Psi}^2\,d\mathcal{L}^2}\to (2^{2\bar k})^{\mathcal K}\le 2^{2\mathcal{K}\bar N} \]
		as $r\to 0$. We deduce $\bar C\le(\sup_\Omega\tilde N)^2 2^{2\mathcal K\sup_\Omega\tilde N}$ and this is a contradiction once we choose $\bar C$ so large that this inequality fails.
	\end{proof}

	We need another technical result, which is again obtained by means of a compactness argument.

	\begin{lemmaen}[absence of energy jumps]\label{soft2}
		For every $\delta\in(0,1)$ there exists $\epsilon'=\epsilon'(\Omega,\Phi,N,\delta)$ with the following property: whenever
		\begin{itemize}
			\item $x\in\omega\cptsub\Omega$ and $0<r<\mz\dist(x,\de\omega)$,
			\item $\ell(x,r)<\epsilon'\dist(\Phi(x),\Phi(\de\omega))$,
			\item $0<\int_{B_{2r}^2(x)}\abs{\nabla\Phi}^2\,d\mathcal{L}^2<\bar C\int_{B_r^2(x)}\abs{\nabla\Phi}^2\,d\mathcal{L}^2$, with $\bar C$ given by Lemma \ref{softlemma},
			\item $\frac{\norm{\vfd_\omega}(B_s^\envdim(\Phi(x)))}{\pi s^2}\in(\tilde N(x)-\epsilon',\tilde N(x)+\epsilon')$ for all $0<s<(\epsilon')^{-1}\ell(x,r)$,
		\end{itemize}
		we have $\int_{B_{\delta r}^2(0)}\abs{\nabla\Phi}^2\,d\mathcal{L}^2\ge\epsilon'\int_{B_r^2(0)}\abs{\nabla\Phi}^2\,d\mathcal{L}^2$.
	\end{lemmaen}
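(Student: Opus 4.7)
The plan is to argue by contradiction with a blow-up analysis that mirrors the one carried out in Lemma \ref{softlemma}, but with a different final step: we use the extra energy-vanishing hypothesis to force the holomorphic blow-up limit to be constant on a nonempty open set, then invoke the identity principle for holomorphic functions to contradict the natural normalization of the rescaled maps.

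Suppose the conclusion fails for $\epsilon' = 2^{-k}$: one produces sequences $x_k \in \omega_k \cptsub \Omega$ and radii $r_k < \mz \dist(x_k,\de\omega_k)$ satisfying all four bulleted hypotheses together with
\[ \int_{B_{\delta r_k}^2(x_k)}\abs{\nabla\Phi}^2\,d\mathcal{L}^2 < 2^{-k}\ell_k^2,\quad \ell_k := \ell(x_k,r_k). \]
Up to subsequences, $\tilde N(x_k)\to \bar N \in [1,\sup_\Omega \tilde N]$, and since $\Phi\in C^0(\bar\Omega)$ has bounded image the second hypothesis forces $\ell_k \to 0$. The remaining hypotheses coincide with those of Lemma \ref{softlemma} (with the doubling constant fixed as $\bar C$), so the same argument based on the monotonicity formula and the density pinching shows that the rescaled varifolds $\vfd_k := (\ell_k^{-1}(\cdot - \Phi(x_k)))_*\vfd_{\omega_k}$ subconverge to an integer rectifiable stationary cone $\vfd_\infty$ in $\R^\envdim$ of constant density ratio $\bar N$, and by the Allard--Almgren structure theorem $\vfd_\infty$ is supported in a finite union of $2$-planes through the origin. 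As in Lemma \ref{softlemma}, this conical structure gives the mass bound \eqref{eq:massbd} with $\massbd := \bar N$ via $\mu_k \le \norm{\vfd_k}$, so Theorem \ref{blow-up} applies and produces limits $\Phi_\infty$, $N_\infty$, $\varphi_\infty$, $\Omega_\infty$; Theorem \ref{conicalreg} then forces $\Phi_\infty \circ \varphi_\infty^{-1}$ to be holomorphic (or antiholomorphic) on the connected open set $\Omega_\infty = \varphi_\infty(B_2^2(0))$ into a single plane which we identify with $\C$.

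At this point I diverge from Lemma \ref{softlemma} and exploit the extra hypothesis. Rescaling gives $\int_{B_\delta^2(0)}\abs{\nabla\Phi_k}^2\,d\mathcal{L}^2 < 2^{-k}$, whence $\nu_k(B_\delta^2(0)) \le \mz\norm{N}_{L^\infty}\,2^{-k} \to 0$. Weak-$*$ lower semicontinuity on open sets yields $\nu_\infty(B_\delta^2(0))=0$, and together with $N_\infty\ge 1$ and the bound $\abs{\nabla\Phi_\infty}^2 \le 2 N_\infty \abs{\de_1\Phi_\infty\wedge\de_2\Phi_\infty}$ from Lemma \ref{ninfty}, this forces $\nabla\Phi_\infty = 0$ a.e. on $B_\delta^2(0)$; by continuity $\Phi_\infty$ is constant on $\bar B_\delta^2(0)$. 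Hence $\Phi_\infty \circ \varphi_\infty^{-1}$ is a holomorphic function on the connected open set $\Omega_\infty$ which is constant on the nonempty open subset $\varphi_\infty(B_\delta^2(0))$; the identity principle makes it constant on all of $\Omega_\infty$, so $\Phi_\infty$ is constant on $B_2^2(0)$. This contradicts the normalization $\int_{B_1^2(0)}\abs{\nabla\Phi_k}^2\,d\mathcal{L}^2 = 1$: since $N_k\ge 1$, weak-$*$ upper semicontinuity on compact sets gives $\nu_\infty(\bar B_1^2(0)) \ge \limsup_k \nu_k(\bar B_1^2(0)) \ge \mz > 0$, whereas a constant $\Phi_\infty$ would make $\nu_\infty$ identically zero on $B_2^2(0)$ via Lemma \ref{ninfty}.

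The hard part is really the setup: checking that the blow-up machinery of Section \ref{blowsec} can be invoked, which reduces to the mass bound \eqref{eq:massbd}. This is precisely where the density-pinching hypothesis is needed — it ensures the limit cone $\vfd_\infty$ has constant density $\bar N$ and hence bounded multiplicity in the plane decomposition — and it is obtained by the same mechanism as in Lemma \ref{softlemma}; once we are inside the blow-up framework and have Theorem \ref{conicalreg} at our disposal, the identity principle finishes the job in a few lines.
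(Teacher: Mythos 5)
Your proposal is correct and follows essentially the same route as the paper: argue by contradiction, run the blow-up machinery of Lemma \ref{softlemma} verbatim to obtain a limiting local parametrized stationary varifold whose map $\Psi=\Phi_\infty\circ\varphi_\infty^{-1}$ is holomorphic and nonconstant (by the normalization $\nu_\infty(B_1^2(0))=\mz$), and then observe that the failing extra hypothesis forces $\int_{B_\delta^2(0)}\abs{\nabla\Phi_\infty}^2\,d\mathcal{L}^2=0$, so that $\Psi$ vanishes identically on the open set $\varphi_\infty(B_\delta^2(0))$, contradicting nonconstancy via the identity principle. The paper phrases this last step as ``a nonconstant holomorphic function with uncountably many zeroes,'' which is the same contradiction you derive.
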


	\begin{proof}
		Arguing by contradiction as in the proof of Lemma \ref{softlemma}, we would get a local parametrized stationary varifold $(\Omega_\infty,\Phi_\infty\circ\varphi_\infty^{-1},N_\infty\circ\varphi_\infty^{-1})$ with
		\[ \int_{B_\delta^2(0)}\abs{\nabla\Phi_\infty}^2\,d\mathcal{L}^2=0. \]
		But then $\Psi:=\Phi_\infty\circ\varphi_\infty^{-1}$ could be identified with a nonconstant holomorphic function, on the connected domain $\Omega_\infty=\varphi_\infty(B_2^2(0))$, with uncountably many zeroes. This is a contradiction.
	\end{proof}

	\begin{comment}
	\begin{lemmaen}\label{soft3}
		For every $\omega\cptsub\Omega$ there exists $\epsilon''=\epsilon''(\Phi,N)$ with the following property: whenever
		\begin{itemize}
			\item $x\in\omega$ with $0<r<\mz\dist(x,\de\omega)$,
			\item $\ell(x,r)<\epsilon''\dist(\Phi(x),\Phi(\de\omega))$,
			\item $0<\int_{B_{2r}^2(x)}\abs{\nabla\Phi}^2\,d\mathcal{L}^2\le \bar C\int_{B_r^2(x)}\abs{\nabla\Phi}^2\,d\mathcal{L}^2$,
			\item $\frac{\norm{\vfd_\omega}(B_s^\envdim(\Phi(x)))}{\pi s^2}\in(\tilde N(x)-\epsilon'',\tilde N(x)+\epsilon'')$ for all $0<s<(\epsilon'')^{-1}\ell(x,r)$,
		\end{itemize}
		there exists $r'\in\pa{r,2r}$ such that
		$\min_{\de B_{r'}^2(x)}\abs{\Phi-\Phi(x)}\ge\epsilon''\ell(x,r)$.
	\end{lemmaen}

	\begin{proof}
		Arguing by contradiction as in the proof of Lemma \ref{softlemma}, using the $C^0_{loc}$ convergence $\Phi_k\to\Phi_\infty$ we would have
		\[ \min_{\de B_r^2(0)}\abs{\Phi_\infty}=0 \]
		for all $1<r<2$. But then $\Psi:=\Phi_\infty\circ\varphi_\infty^{-1}$ would be a nonconstant holomorphic function taking values in a plane, on the connected domain $\varphi_\infty(B_2^2(0))$, with uncountably many zeroes. This is the desired contradiction.
	\end{proof}
	\end{comment}

	\begin{corollary}[slow energy decay at admissible points]\label{enwellbeh}
		If $x$ lies in the admissible set $\adm$, then there exists an arbitrarily small $r>0$ such that $\int_{B_{2r}^2(x)}\abs{\nabla\Phi}^2\,d\mathcal{L}^2<\bar C\int_{B_r^2(0)}\abs{\nabla\Phi}^2\,d\mathcal{L}^2$. Moreover,
		\[ \limsup_{r\to 0}\frac{\int_{B_{2r}^2(0)}\abs{\nabla\Phi}^2\,d\mathcal{L}^2}{\int_{B_r^2(0)}\abs{\nabla\Phi}^2\,d\mathcal{L}^2}<\infty. \]
	\end{corollary}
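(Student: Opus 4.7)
The strategy is to combine the two preceding soft compactness lemmas in tandem: Lemma \ref{softlemma} propagates the doubling property at a single scale to an infinite geometric sequence of scales, while Lemma \ref{soft2} forbids sudden energy drops in between, upgrading the sequential estimate into a uniform $\limsup$ control.

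First I would localize. Since $x\in\mathcal{A}$, by admissibility I fix a constant $C_0>1$ and a sequence $r_k\downarrow 0$ with
$0<\int_{B_{2r_k}^2(x)}|\nabla\Phi|^2\,d\mathcal{L}^2<C_0\int_{B_{r_k}^2(x)}|\nabla\Phi|^2\,d\mathcal{L}^2$. By Definition \ref{tildendef} and Remark \ref{hightilden}, I select a neighborhood $K_x\subseteq\omega\cptsub\Omega$ with $\Phi(x)\nin\Phi(\de\omega)$ realizing $\tilde N(x)$ as the limit density of $\vfd_\omega$ at $\Phi(x)$. I let $\epsilon_1$ denote the constant supplied by Lemma \ref{softlemma} applied with $C':=\max(C_0,\bar C)$, and $\epsilon_2$ the one supplied by Lemma \ref{soft2} applied with $\delta:=\epsilon_1^2$. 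Since $\ell(x,r)\to 0$ by absolute continuity of the Dirichlet integral, and since the density ratio at $\Phi(x)$ converges to $\tilde N(x)$, the three non-doubling hypotheses of both lemmas are eventually satisfied as $r\to 0$.

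The first assertion now follows by applying Lemma \ref{softlemma} with $C'=C_0$ at $r=r_k$ for $k$ large: it returns $r_k'\in(\epsilon_1 r_k,r_k/2)$ with $\int_{B_{2r_k'}^2(x)}|\nabla\Phi|^2<\bar C\int_{B_{r_k'}^2(x)}|\nabla\Phi|^2$, and $r_k'\to 0$. Starting from any such ``good'' scale $\tilde r_0$, I would then iterate Lemma \ref{softlemma} with $C'=\bar C$: each application transforms a good scale $\tilde r_k$ into a good scale $\tilde r_{k+1}\in(\epsilon_1\tilde r_k,\tilde r_k/2)$, yielding an infinite decreasing sequence of good scales converging to $0$.

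For the uniform bound, given $r$ sufficiently small I fix $k\ge 1$ with $\tilde r_{k+1}\le r<\tilde r_k$. From $2\tilde r_k<\tilde r_{k-1}$ we get $B_{2r}^2(x)\subseteq B_{\tilde r_{k-1}}^2(x)$, while $\tilde r_{k+1}\ge\epsilon_1\tilde r_k\ge\epsilon_1^2\tilde r_{k-1}$ gives $B_r^2(x)\supseteq B_{\epsilon_1^2\tilde r_{k-1}}^2(x)$. Applying Lemma \ref{soft2} at the good scale $\tilde r_{k-1}$ with $\delta=\epsilon_1^2$ then yields
\[ \frac{\int_{B_{2r}^2(x)}|\nabla\Phi|^2\,d\mathcal{L}^2}{\int_{B_r^2(x)}|\nabla\Phi|^2\,d\mathcal{L}^2}\le\frac{\int_{B_{\tilde r_{k-1}}^2(x)}|\nabla\Phi|^2\,d\mathcal{L}^2}{\int_{B_{\epsilon_1^2\tilde r_{k-1}}^2(x)}|\nabla\Phi|^2\,d\mathcal{L}^2}\le\frac{1}{\epsilon_2}, \]
so $\limsup_{r\to 0}\int_{B_{2r}^2(x)}|\nabla\Phi|^2\,d\mathcal{L}^2\big/\int_{B_r^2(x)}|\nabla\Phi|^2\,d\mathcal{L}^2\le 1/\epsilon_2$. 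The main delicate point is to choose the various parameters once and for all so that the iteration never breaks; this is guaranteed by the monotone decay of $\ell(x,r)$ and by the existence of a uniform threshold $s_0$ below which the density is suitably pinched around $\tilde N(x)$.
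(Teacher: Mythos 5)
Your proposal is correct and follows essentially the same route as the paper: one application of Lemma \ref{softlemma} with the admissibility constant produces a first good scale, iteration with $C'=\bar C$ produces a geometric sequence of good scales $\tilde r_k\to 0$ (the hypotheses persisting by monotonicity of $\ell(x,\cdot)$ and a fixed density-pinching threshold), and Lemma \ref{soft2} fills the gaps between consecutive scales to give the uniform $\limsup$ bound. The only (harmless) cosmetic differences are that you take a single $\epsilon_1$ for $C'=\max(C_0,\bar C)$ and apply Lemma \ref{soft2} once with $\delta=\epsilon_1^2$ spanning two scales, where the paper uses two separate constants and chains two applications; just make sure the first application is also invoked with $C'=\max(C_0,\bar C)$ so that the claimed lower bound $r_k'>\epsilon_1 r_k$ is the one the lemma actually returns.
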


	\begin{proof}
		Since $x\in\adm$, we have $\liminf_{r\to 0}\frac{\int_{B_{2r}^2(0)}\abs{\nabla\Phi}^2\,d\mathcal{L}^2}{\int_{B_r^2(0)}\abs{\nabla\Phi}^2\,d\mathcal{L}^2}<C'$ for some finite $C'$. Let $K_x\subseteq\omega\cptsub\Omega$ with $\bar\omega$ disjoint from $\Phi^{-1}(\Phi(x))\setminus K_x$ and choose a radius $r$ such that
		\[ \int_{B_{2r}^2(0)}\abs{\nabla\Phi}^2\,d\mathcal{L}^2< C'\int_{B_r^2(0)}\abs{\nabla\Phi}^2\,d\mathcal{L}^2 \]
		and $r$ so small that it satisfies the other hypotheses of Lemma \ref{softlemma} with both $\epsilon=\epsilon(\Omega,\Phi,N,C')$ and $\epsilon=\epsilon(\Omega,\Phi,N,\bar C)$ (the density assumptions for $\norm{\vfd_\omega}$ are eventually satisfied by definition of $\tilde N$). Then, by Lemma \ref{softlemma}, we can find
		\[  \epsilon(\Omega,\Phi,N,C')r<r_1<\frac{r}{2} \]
		such that $\int_{B_{2r_1}^2(0)}\abs{\nabla\Phi}^2\,d\mathcal{L}^2<\bar C\int_{B_{r_1}^2(0)}\abs{\nabla\Phi}^2\,d\mathcal{L}^2$.
		This new radius $r_1$ satisfies the hypotheses of Lemma \ref{softlemma} with $\epsilon=\epsilon(\Omega,\Phi,N,\bar C)$, so there exists
		\[ \epsilon(\Omega,\Phi,N,\bar C)r_1<r_2<\frac{r_1}{2} \]
		such that $\int_{B_{2r_2}^2(0)}\abs{\nabla\Phi}^2\,d\mathcal{L}^2<\bar C\int_{B_{r_2}^2(0)}\abs{\nabla\Phi}^2\,d\mathcal{L}^2$. Again, $r_2$ satisfies the hypotheses of Lemma \ref{softlemma} with $\epsilon=\epsilon(\Omega,\Phi,N,\bar C)$, so we can find $\epsilon(\Omega,\Phi,N,\bar C)r_2<r_3<\frac{r_2}{2}$ and so on. Eventually $r_k$ satisfies the hypotheses of Lemma \ref{soft2} with $\epsilon'=\epsilon'(\Omega,\Phi,N,\epsilon(\Omega,\Phi,N,\bar C))$. Thus,
		\[ \int_{B_{r_{k+1}}^2(0)}\abs{\nabla\Phi}^2\,d\mathcal{L}^2\ge\epsilon'\int_{B_{r_k}^2(0)}\abs{\nabla\Phi}^2\,d\mathcal{L}^2. \]
		Any radius $s>0$ small enough lies in some interval $[r_{k+1},r_k]$ and $2s\le r_{k-1}$. The result follows.
	\end{proof}

	We are finally ready to show the full regularity result for parametrized stationary varifolds.

	\begin{thm}[regularity in the general case]\label{genreg}
		Let $(\Omega,\Phi,N)$ be a local parametrized stationary varifold in $\subman$. Then $\Phi$ solves $-\Delta\Phi=A(\Phi)(\nabla\Phi,\nabla\Phi)$ and, on each connected component where $\Phi$ is nonconstant, $\Phi$ is a $C^\infty$-smooth branched immersion and $N$ is a.e. constant.
	\end{thm}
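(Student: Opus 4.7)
The plan is to proceed by strong induction on $\lfloor 2\bar N\rfloor$, where $\bar N := \sup_\omega\tilde N$, for suitably small localizations $\omega\cptsub\Omega$. Given any $x_0\in\Omega$, I would use Remark \ref{hightilden} to pick $\omega$ around $K_{x_0}$ so that $\Phi$ extends continuously to $\bar\omega$, $\Phi(\omega)\cap\Phi(\de\omega)=\emptyset$, $\bar N$ is finite, and the fibres $\Phi^{-1}(\Phi(y))$ are connected whenever $\tilde N(y)\ge\bar N-\tfrac12$. Since $\tilde N\ge 1$ the value $\lfloor 2\bar N\rfloor=1$ is vacuous; in the inductive step, the open set $\{\tilde N<\bar N-\tfrac12\}$ is covered by further such localizations on which the inductive hypothesis applies, so $\Phi$ already solves $-\Delta\Phi=A(\Phi)(\nabla\Phi,\nabla\Phi)$ there.

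By Lemma \ref{removab}, it then suffices to prove $\mathcal H^1(\Phi(S))=0$ for the relatively closed set $S:=\{\tilde N\ge\bar N-\tfrac12\}$, and Lemma \ref{noblowdimzero} already handles $S\setminus\adm$. At each admissible $x\in S\cap\adm$, Corollary \ref{enwellbeh} gives uniformly controlled energy doubling, so Theorem \ref{blow-up-c} produces a limit $(\C,\Phi_\infty,N_\infty)$ together with a quasiconformal homeomorphism $\varphi_\infty$. The target mass measure converges to a rectifiable stationary cone $\vfd_\infty$ of density $\bar N$ at the origin; the Allard--Almgren structure theorem for stationary $1$-varifolds in $S^{\envdim-1}$ forces $\vfd_\infty$ to be supported in a finite union of planes through the origin, whereupon Theorem \ref{conicalreg} makes $\Psi:=\Phi_\infty\circ\varphi_\infty^{-1}$ a nonconstant holomorphic map into a single plane, with $N_\infty\circ\varphi_\infty^{-1}$ a.e.\ equal to a positive integer $N_0$ (Corollary \ref{constantn}). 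The zero of $\Psi$ at $0$ has some finite order $k\ge 1$, and comparing target densities at the origin gives $\bar N=kN_0$.

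I would then split into two cases. If $k\ge 2$ then $N_0=\bar N/k\le\bar N-1$, and the density of $\vfd_\infty$ at any $p\ne 0$ in its support equals $N_0$; transferring this back via the $C^0_{loc}$-convergence $\Phi_k\to\Phi_\infty$ (Lemma \ref{c0convandhull}), the pushforward convergence of Lemma \ref{pushforward}, and upper semicontinuity of $\tilde N$, I would obtain $\tilde N(y)<\bar N-\tfrac12$ for every $y$ in some punctured neighborhood of $x$, so $x$ is isolated in $S$. Isolated points of $S\cap\adm$ form a countable set, hence an $\mathcal H^1$-negligible $\Phi$-image. If instead $k=1$ then $\Psi$ is a local biholomorphism and $\vfd_\infty$ has density $\bar N$ throughout an open neighborhood of $0$; the same transfer now gives $\tilde N\equiv\bar N$ on a full open neighborhood of $x$ in $\omega$, so $N=\bar N$ a.e.\ there and Theorem \ref{constn} yields the PDE directly. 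Combining both cases with Lemma \ref{noblowdimzero} yields $\mathcal H^1(\Phi(S))=0$, and Lemma \ref{removab} closes the inductive step. The remaining assertions (smoothness, branched immersion, and a.e.\ constancy of $N$ on each connected component where $\Phi$ is nonconstant) then follow from Theorem \ref{constn} and weak conformality on $\{\nabla\Phi\ne 0\}$.

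The main obstacle I anticipate is the \emph{density-transfer} step, i.e.\ converting density information on the blow-up side into density information for $\vfd_\omega$ before blow-up. This requires carefully commuting the $s\to 0$ limit defining $\tilde N$ with the $k\to\infty$ blow-up limit, using both the uniform-in-scale energy doubling of Corollary \ref{enwellbeh} and the quasiconformal distortion control on $\varphi_\infty$ (of the kind provided by the Teichm\"uller module estimate in the proof of Lemma \ref{softlemma}).
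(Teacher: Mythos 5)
Your overall architecture matches the paper's in its outer layers --- induction on a multiplicity bound, Lemma \ref{noblowdimzero} plus Lemma \ref{removab} to dispose of the non-admissible points, and blow-up combined with Allard--Almgren and Theorem \ref{conicalreg} to produce a holomorphic model $\Psi=\Phi_\infty\circ\varphi_\infty^{-1}$ --- but the ``density-transfer'' step you flag as the main obstacle is a genuine gap in both branches of your dichotomy, not a technicality. In the case $k=1$ you claim $\tilde N\equiv\bar N$ (hence $N$ a.e.\ constant) on a full neighborhood of $x$. The blow-up only provides $\nabla\Phi_k\weakto\nabla\Phi_\infty$ weakly in $L^2$ and $\frac12 N_k\abs{\nabla\Phi_k}^2\mathcal L^2\weakstarto N_\infty\abs{\de_1\Phi_\infty\wedge\de_2\Phi_\infty}\mathcal L^2$; these give one-sided inequalities only, and $\tilde N$ is merely upper semicontinuous, so there is no a priori lower bound on $\tilde N(y)$ for $y$ near $x$. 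The multiplicity $N_k$ could oscillate at scales finer than $r_k$ without affecting the limit measure, because an excess of $N_k$ can be compensated by a deficit of $\abs{\nabla\Phi_k}^2$ that is invisible under weak convergence. Ruling this out requires \emph{strong} $W^{1,2}_{loc}$ convergence of $\Phi_k$, which is available only where the equation is already known to hold (via the elliptic estimate $\int\rho\abs{\nabla(\Phi_j-\Phi_\infty)}^2\,d\mathcal{L}^2\to 0$). This is exactly how the paper's Step~3 closes its contradiction, and it is why the paper cannot argue at an arbitrary admissible point: it must first locate a point of the bad set that touches a ball on which the induction has already produced the PDE, and run the comparison of $\nu_\infty$ against $\alpha\lim_j\int\abs{\de_1\Phi_j\wedge\de_2\Phi_j}\,d\mathcal{L}^2$ there.

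In the case $k\ge 2$, to conclude $\tilde N(y)<\bar N-\frac12$ for every $y$ with $\Phi(y)$ in a punctured neighborhood of $\Phi(x)$ you would need the density bound $\Theta(\norm{\vfd_\omega},p)\le N_0+o(1)$ at \emph{all} sufficiently small scales around $\Phi(x)$, whereas the blow-up limit (and hence the order $k$ itself) is only subsequential and tangent cones are not known to be unique; even arguing by contradiction along a sequence of offending points $y_j$, the rescaled images $\ell_j^{-1}(\Phi(y_j)-\Phi(x))$ may converge to the vertex $0$, where the cone yields no improvement over $\Theta=\tilde N(x)$. (Two smaller inaccuracies: the vertex density is $\tilde N(x)=kN_0$, not $\bar N$; and points of $K_x\setminus\set{x}$ also lie in $S$, so at best $\Phi(x)$ is isolated in $\Phi(S)$.) The paper's Steps~1 and~3 --- relative openness of $\mathcal A_k$ in $\mathcal C_k$ obtained by iterating Lemma \ref{softlemma} along a much finer stratification of width $\gamma=\epsilon/3$ (needed so that the density-pinching hypothesis of that lemma propagates to nearby points), followed by the touching/half-plane argument --- are precisely the replacement for your two-case transfer, and they carry essentially all of the difficulty of the theorem.
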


	For the definition of branched immersion, see e.g. \cite[Definitions~1.2~and~1.6]{gulliver}.

	\begin{proof}
		Assume that $\Omega'\cptsub\Omega$ satisfies $\Phi(\Omega')\cap\Phi(\de\Omega')=\emptyset$ and $\sup_{\Omega'}\tilde N<\infty$.
		We show that the partial differential equation holds in $\Omega'$ (the full result will be obtained at the end of the proof). More precisely, letting
		\[ \gamma:=\frac{1}{3}\epsilon\pa{\Omega',\restr{\Phi}{\Omega'},\restr{N}{\Omega'},\bar C}<1 \]
		(where $\bar C$ is the constant given by Lemma \ref{softlemma}, depending only on $\sup_{\Omega'}\tilde N$), we will show that the equation holds on the open set $\Omega_k:=\Omega'\cap\{\gamma^{-1}\tilde N<k+1\}$, by induction on $k$. The base case $k=0$ is trivial, since $\{\tilde N<\gamma\}=\emptyset$.
		Assume that the equation holds for $k-1$.

		\begin{comment}
		If $x_0$ is an isolated point for the relatively closed subset $\omega'\cap\set{\tilde N\in\brapa{k\gamma,(k+1)\gamma}}\subseteq\omega'$, then we immediately get the thesis by Lemma \ref{removab}.
		\end{comment}
		We call $\mathcal{C}_k$ the set of accumulation points of $\Omega_k\setminus\Omega_{k-1}$ in $\Omega_k$, i.e. its derived set in $\Omega_k$. Notice that $\mathcal{C}_k\subseteq\Omega_k\setminus\Omega_{k-1}$ is closed in $\Omega_k$. We also set $\mathcal A_k:=\mathcal C_k\cap\adm$ and $\mathcal B_k:=\mathcal C_k\setminus\mathcal A_k$.
		Notice that, by Lemma \ref{removab}, the equation holds in the open set $\Omega_k\setminus\mathcal C_k$, since here the points with $k\gamma\le\tilde N(\cdot)<(k+1)\gamma$ form a discrete set.

		\emph{Step 1.} We first show that $\mathcal A_k$ is relatively open in $\mathcal C_k$, so that $\mathcal B_k$ is closed in $\Omega_k$. Let $x_0\in\mathcal A_k$. First of all, by Remark \ref{hightilden}, we can find $K_{x_0}\subseteq\omega\cptsub\Omega'$ with $\bar\omega$ disjoint from $\Phi^{-1}(\Phi(x_0))\setminus K_{x_0}$, $\Phi(\omega)\cap\Phi(\de\omega)=\emptyset$ and $\omega\cap\Phi^{-1}(\Phi(y))=K_y$ whenever $y\in\omega$ has $k\gamma\le\tilde N(y)<(k+1)\gamma$ (thanks to the fact that $\gamma<1$).

		Let $\epsilon:=\epsilon\pa{\Omega',\restr{\Phi}{\Omega'},\restr{N}{\Omega'},\bar C}$ and assume that $x_j\to x_0$, with $x_j\in\omega\cap\mathcal C_k$. In particular, by definition of $\tilde N$, the density of $\vfd_\omega$ at $\Phi(x_j)$ coincides with $\tilde N(x_j)$.
		Using Corollary \ref{enwellbeh}, we choose a radius $0<r<\mz\dist(x_0,\de\omega)$ with
		\[ \int_{B_{2r}^2(x_0)}\abs{\nabla\Phi}^2\,d\mathcal{L}^2<\bar C\int_{B_r^2(x_0)}\abs{\nabla\Phi}^2\,d\mathcal{L}^2,\quad\ell(x_0,r)<\epsilon\dist(\Phi(x_0),\Phi(\de\omega)), \]
		\[ \frac{\norm{\vfd_\omega}(\bar B_{\epsilon^{-1}\ell(x_0,r)}^\envdim(\Phi(x_0)))}{\pi(\epsilon^{-1}\ell(x_0,r))^2}<\tilde N(x_0)+\gamma. \]
		Eventually all the assumptions of Lemma \ref{softlemma} are satisfied by $x_j\in\omega$ (with $\Omega'$ and $C':=\bar C$), provided $\ell(x_0,r)$ is small enough: eventually we have $B_{\epsilon^{-1}\ell(x_j,r)}^\envdim(\Phi(x_j))\cap\Phi(\de\omega)=\emptyset$, so by the monotonicity formula we get, for $0<s<\epsilon^{-1}\ell(x_j,r)$,
		\[ \begin{split} &\tilde N(x_j)\le e^{(\sqrt{2}\norm{A}_\infty)s}\frac{\norm{\vfd_\omega}(B_s^\envdim(\Phi(x_j)))}{\pi s^2}\le e^{(\sqrt{2}\norm{A}_\infty)\epsilon^{-1}\ell(x_j,r)}\frac{\norm{\vfd_\omega}(B_{\epsilon^{-1}\ell(x_j,r)}^\envdim(\Phi(x_j)))}{\pi(\epsilon^{-1}\ell(x_j,r))^2} \\
		&\le e^{(\sqrt{2}\norm{A}_\infty)\epsilon^{-1}\ell(x_j,r)}(\tilde N(x_0)+\gamma)\le(e^{(\sqrt{2}\norm{A}_\infty)\epsilon^{-1}\ell(x_0,r)}+o_j(1))\frac{\tilde N(x_0)+\gamma}{\tilde N(x_0)+2\gamma}(\tilde N(x_j)+\epsilon) \end{split} \]
		eventually and, since $\frac{\tilde N(x_j)}{\tilde N(x_j)-\epsilon}\ge\frac{\tilde N(x_0)-\gamma}{\tilde N(x_0)-2\gamma}$, it suffices to impose additionally that
		\[ e^{(\sqrt{2}\norm{A}_\infty)\epsilon^{-1}\ell(x_0,r)}<\min\set{\frac{\tilde N(x_0)-\gamma}{\tilde N(x_0)-2\gamma},\frac{\tilde N(x_0)+2\gamma}{\tilde N(x_0)+\gamma}}. \]
		By Lemma \ref{softlemma} applied to the parametrized varifold $\pa{\Omega',\restr{\Phi}{\Omega'},\restr{N}{\Omega'}}$ and the point $x_j\in\omega$, for $j$ big enough there exists $r'<\frac{r}{2}$ (depending on $j$) such that
		$\int_{B_{2r'}^2(x_j)}\abs{\nabla\Phi}^2\,d\mathcal{L}^2<\bar C\int_{B_{r'}^2(x_j)}\abs{\nabla\Phi}^2\,d\mathcal{L}^2$. Since $r'$ satisfies again all the hypotheses of Lemma \ref{softlemma}, we can iterate and deduce that $x_j\in\adm$. Hence, $x_j\in\adm_k$ eventually.

		\emph{Step 2.} We now claim that $\tilde N(x)$ is an integer for any admissible point $x\in\Omega'\cap\adm$. Indeed, as in the proof of Lemma \ref{softlemma}, we can apply Theorem \ref{blow-up} with $x_k:=x$ and a suitable sequence of radii $r_k\to 0$: whenever $K_x\subseteq\tilde\omega\cptsub \Omega'$ has its closure disjoint from $\Phi^{-1}(\Phi(x))\setminus K_x$, the varifolds $(\ell_k^{-1}(\cdot-\Phi(x_k)))_*\vfd_{\tilde\omega}$ converge to a stationary cone $\vfd_\infty$ having density at most $\tilde N(x)$ at $0$, so we have $\norm{\vfd_\infty}(B_s^\envdim(p))\le\tilde N(x)\pi s^2$ (see the proof of \cite[Theorem~42.4]{simon}) and thus \eqref{eq:massbd} holds with $C'':=\tilde N(x)$.
		
		We obtain a local parametrized stationary varifold $(\Omega_\infty,\Phi_\infty\circ\varphi_\infty^{-1},N_\infty\circ\varphi_\infty^{-1})$ with $\Psi:=\Phi_\infty\circ\varphi_\infty^{-1}$ nonconstant and holomorphic (again by Theorem \ref{conicalreg}, since the mass measure of this parametrized varifold is bounded by the mass measure of $\vfd_\infty$, which is an integer rectifiable stationary cone).

		Let $\omega'\cptsub B_2^2(0)$ be a smooth neighborhood of $0$ with $0\nin\Phi_\infty(\de\omega')$ and $\Phi_\infty^{-1}(0)\cap\omega'=\set{0}$.
		Using the notation of Section \ref{blowsec}, from the locally uniform convergence $\Phi_k\to\Phi_\infty$ we infer that eventually $\Phi(x)\nin\Phi(x+r_k\de\omega')$ and, for all $0<s<\dist(0,\Phi_\infty(\de\omega'))$,
		\[ \begin{split} \tilde N(x)&\le\lim_{k\to\infty}\frac{\norm{\vfd_{x+r_k\omega'}}(B_{\ell_k s}^\envdim(\Phi(x)))}{\pi(\ell_k s)^2}=\lim_{k\to\infty} \frac{(\Phi_k)_*(\uno_{\omega'}\nu_k)(B_s^\envdim(0))}{\pi s^2} \\
		&=\frac{(\Phi_\infty)_*(\uno_{\omega'}\nu_\infty)(B_s^\envdim(0))}{\pi s^2}, \end{split} \]
		by the definition of $\tilde N$, the monotonicity formula and Lemma \ref{pushforward}.
		We deduce that, with the same notation as in the proof of Lemma \ref{softlemma}, $N'(0)\ge\tilde N(x)$. We also have the converse inequality $N'(0)\le\tilde N(x)$, since the density of $\mu_\infty$ is everywhere at most $C''=\tilde N(x)$. This argument also shows that $\Psi^{-1}(0)=0$ and $\sum_{i=1}^r N'(z_i)\le\tilde N(x_0)$ whenever $z_i\in\Omega_\infty$ are distinct points in a fiber $\Psi^{-1}(p)$ (since $z_i$ contributes by $N'(z_i)$ to the density of $\mu_\infty$ at $p$).

		Finally, %by Proposition \ref{nisusc} we have $N'=N_\infty\circ\varphi_\infty^{-1}\in\N$ a.e. Thus, since $\Psi$ is smooth and, near $0$, is $\bar k$-to-$1$ for some $\bar k$, we can take $x_1,\dots,x_{\bar k}$ close to zero, with the same (nonzero) image and $N'(x_i)\in\N$. As in the proof of Theorem \ref{planarreg}, using the constancy theorem we get
		%\[ \tilde N(x)=N'(0)=\sum_i N'(x_i)\in\N. \]
		arguing as in the proof of Theorem \ref{planarreg}, we conclude that $N'(0)$ is integer since it equals the constant density of a suitable localization of $(\Omega_\infty,\Psi,N_\infty\circ\varphi_\infty^{-1})$ (in an open subset of $\Psi(\Omega_\infty)$).
		This establishes our claim.

		\emph{Step 3.} We show by contradiction that there cannot be any $x_0\in\de\mathcal A_k\cap\mathcal A_k\cap\Omega_k$.
		Indeed, if this happens, then we can find $x_1\in\Omega_k\setminus\mathcal A_k$ with
		\[ \abs{x_1-x_0}<\mz\min\set{\dist(x_0,\mathcal B_k),\dist(x_0,\de\Omega_k)}, \]
		thanks to the fact that the latter is positive, as $\mathcal B_k$ is closed in $\Omega_k$. We infer that
		\[ r:=\dist(x_1,\mathcal{C}_k)<\mz\min\set{\dist(x_0,\mathcal B_k),\dist(x_0,\de\Omega_k)}, \]
		there exists $y_0\in\mathcal{C}_k$ with $\abs{y_0-x_1}=r$ and necessarily we have
		\[ y_0\in\mathcal A_k,\quad B_r^2(x_1)\subseteq\Omega_k\setminus\mathcal{C}_k. \]

		Let $H\subset\C$ be the unique open half-plane with $0\in\de H$ and $B_r^2(x_1)\subseteq y_0+H$. By definition of $\mathcal{C}_k$, we can find a sequence $y_j\to y_0$ with $y_j\in\Omega_k\setminus\Omega_{k-1}$ and $y_j\neq y_0$. We can assume that $\frac{y_j-y_0}{\abs{y_j-y_0}}\to\bar y$. Let $r_j:=\abs{y_j-y_0}$ and set $\ell_j^2:=\int_{B_{r_j}^2(y_0)}\abs{\nabla\Phi}^2\,d\mathcal{L}^2$, $\Phi_j:=\ell_j^{-1}(\Phi(y_0+r_j\cdot)-\Phi(y_0))$,
		$N_j:=N(y_0+r_j\cdot)$. Thanks to Corollary \ref{enwellbeh}, we can apply Theorem \ref{blow-up-c} and obtain, up to subsequences, a limiting local parametrized stationary varifold $(\C,\Phi_\infty\circ\varphi_\infty^{-1},N_\infty\circ\varphi_\infty^{-1})$.

		\begin{center}
			\begin{overpic}[width=11cm]{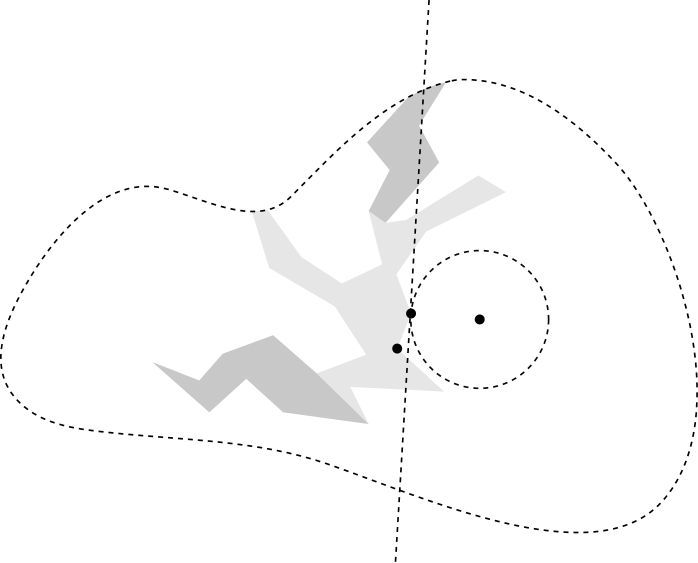}
				\put(19,47){$\Omega_k$}
				\put(51,36.5){$\mathcal{A}_k$}
				\put(55,59.5){$\mathcal{B}_k$}
				\put(36,27.5){$\mathcal{B}_k$}
				\put(64,40){$B_r^2(x_1)$}
				\put(65,75){$y_0+H$}
				\put(53,28){$x_0$}
				\put(70,32.5){$x_1$}
				\put(60.5,34){$y_0$}
			\end{overpic}
		\end{center}

		With the same notation and the same argument used in Step 2, we get $N'(0)=\tilde N(y_0)$. Actually, we also have $N'(\varphi_\infty(\bar y))\ge k\gamma$: letting $\omega''$ be a smooth neighborhood of $\bar y$ with $\Phi_\infty(\bar y)\nin\Phi_\infty(\de\omega'')$ and $\Phi_\infty^{-1}(\Phi_\infty(\bar y))\cap\omega''=\set{\bar y}$, as soon as $\Phi_j(r_j^{-1}(y_j-y_0))\nin\Phi_j(\de\omega'')$ and $r_j^{-1}(y_j-y_0)\in\omega''$ the varifold
		\[ \vfd_{(\omega'',\Phi_j,N_j)}=(\ell_j^{-1}(\cdot-\Phi(y_0)))_*\vfd_{(y_0+r_j\omega'',\Phi,N)} \]
		has density at least $\tilde N(y_j)\ge k\gamma$ at $\ell_j^{-1}(\Phi(y_j)-\Phi(y_0))=\Phi_j(r_j^{-1}(y_j-y_0))\to\Phi_\infty(\bar y)$, has infinitesimal mean curvature in $\R^\envdim\setminus\Phi_j(\de\omega'')$ and its mass measure converges to $(\Phi_\infty)_*(\uno_{\omega''}\nu_\infty)$, by Lemma \ref{pushforward}. Thus, by the monotonicity formula, $N'(\varphi_\infty(\bar y))$ (i.e. the density of this last measure at $\Phi_\infty(\bar y)$) is at least $k\gamma$.

		Since $k\gamma+1>\tilde N(y_0)$, as observed in Step 2 we must have $\Phi_\infty^{-1}(0)=\set{0}$ and $\Phi_\infty^{-1}(\Phi_\infty(\bar y))=\set{\bar y}$.
		Recall that, as in Step 2, the map $\Psi:=\Phi_\infty\circ\varphi_\infty^{-1}$ takes values in a plane and is an entire holomorphic function, up to suitable identification of this plane with $\C$.
		Since it has two values having only one preimage, by Picard's great theorem it does not have an essential singularity at $\infty$ and is thus a polynomial. Actually, Picard's great theorem can be easily avoided: by Corollary \ref{enwellbeh} the Dirichlet energy $\int_{B_R^2(0)}\abs{\nabla\Phi_\infty}^2\,d\mathcal{L}^2$ grows at most polynomially in $R$ and, by inspecting the proof of \cite[Theorem~4.30]{imayoshi} (as well as inequalities (4.21) and (4.24) in \cite{imayoshi}), we see that $\sup_{z\in B_R^2(0)}\abs{\varphi_\infty^{-1}}(z)$ also grows at most polynomially, hence the same is true for $\int_{B_R^2(0)}\abs{\nabla\Psi}^2\,d\mathcal{L}^2$ and thus (by the mean value property for harmonic functions) for $\sup_{z\in B_R^2(0)}\abs{\nabla\Psi}(z)$, i.e. $\Psi$ is a polynomial. Since $\Psi^{-1}(0)=\set{0}$, it must have the form
		\[ \Psi(z)=cz^{\bar k} \]
		for some $\bar k$ and finally the fact that $\Psi^{-1}(\Phi_\infty(\bar y))$ is a singleton gives $\bar k=1$. We deduce that $\Psi'(0)\neq 0$. Arguing as in the proof of Corollary \ref{constantn}, we get $N_\infty\circ\varphi_\infty^{-1}=N'(0)\ge k\gamma$ a.e. near $0$.

		We finally show that $\Phi_j\to\Phi_\infty$ in $W^{1,2}_{loc}(H,\R^\envdim)$. In particular, for any small ball $B\cptsub H$ close enough to $0$, this will contradict the estimate
		\[ \begin{split} &\kappa\gamma\int_B \abs{\de_1\Phi_\infty\wedge\de_2\Phi_\infty}\,d\mathcal{L}^2\le\nu_\infty(B)
		=\lim_{j\to\infty}\nu_j(B) \\
		&=\mz\lim_{j\to\infty}\int_B N_j\abs{\nabla\Phi_j}^2\,d\mathcal{L}^2
		\le\alpha\lim_{j\to\infty}\int_B\abs{\de_1\Phi_j\wedge\de_2\Phi_j}\,d\mathcal{L}^2, \end{split} \]
		where $\alpha$ is the biggest integer smaller than $\kappa\gamma$ (the last inequality comes from the fact that $N_j\in\N$ and $N_j=\tilde N(y_0+r_j\cdot)$ a.e. on $\set{\nabla\Phi_j\neq 0}$, together with the fact that eventually $y_0+r_j B\subseteq B_r^2(x_1)\subseteq\Omega_k\setminus\mathcal{C}_k$).

		Fix any $U\cptsub H$. Since eventually $-\Delta\Phi_j=\ell_j A(\Phi(y_0+r_j\cdot))(\nabla\Phi_j,\nabla\Phi_j)$ on $U$ and the right-hand side converges to $0$ in $L^1(U,\R^\envdim)$, we get
		\[ -\Delta\Phi_\infty=0 \]
		on $U$ and hence (since $U$ was arbitrary) on $H$. Fix any nonnegative $\rho\in C^\infty_c(H)$ with $\rho=1$ on $U$. Setting $\Psi_j:=\Phi_j-\Phi_\infty$, we have $-\Delta\Psi_j=\ell_j A(\Phi(y_0+r_j\cdot))(\nabla\Phi_j,\nabla\Phi_j)$ and thus
		\[ \begin{split} &\int_H\rho\abs{\nabla\Psi_j}^2\,d\mathcal{L}^2+\int_H\Psi_j\cdot\ang{\nabla\rho,\nabla\Psi_j}\,d\mathcal{L}^2=\int_H\ang{\nabla(\rho\Psi_j);\nabla\Psi_j}\,d\mathcal{L}^2 \\
		&=\ell_j\int_H\rho\Psi_j\cdot A(\Phi(y_0+r_j\cdot))(\nabla\Phi_j,\nabla\Phi_j)\,d\mathcal{L}^2. \end{split} \]
		But both the right-hand side and the second term in the left-hand side converge to $0$. Hence,
		\[ \int_U\abs{\nabla\Psi_j}^2\,d\mathcal{L}^2\le\int_H\rho\abs{\nabla\Psi_j}^2\,d\mathcal{L}^2\to 0. \]

		\emph{Step 4.} From the previous step we have $\de\adm_k\cap\Omega_k\subseteq\mathcal B_k$, hence $\adm_k$ is open. Since $\gamma<1$ and $\tilde N$ is integer-valued on $\adm_k$ (by Step 2), $\tilde N$ takes exactly a single value here. We can then apply Theorem \ref{constn} (as replacing $N$ with $\tilde N$ does not affect the stationarity) and obtain that the partial differential equation holds on $\adm_k$.

		\emph{Step 5.} From the two previous steps, it follows that $-\Delta\Phi=A(\Phi)(\nabla\Phi,\nabla\Phi)$ on the open set $\Omega_k\setminus\mathcal{B}_k$. Using Lemma \ref{noblowdimzero} and Lemma \ref{removab}, we deduce that the partial differential equation holds on the whole $\Omega_k$ (and $\Phi$ is $C^\infty$-smooth on $\Omega_k$). This completes the induction.

		\emph{Step 6.} The extra assumptions made at the beginning of the proof can be dropped by arguing as in the proof of Theorem \ref{conicalreg} and using the upper semicontinuity of $\tilde N$.
		For the fact that $\Phi$ is a branched immersion on a connected component $\Omega''$ where it is nonconstant, we refer the reader to the proofs of \cite[Theorems~1~and~2]{hartman} and \cite[Lemmas~2.1~and~2.2]{gulliver}. Finally, let $D\subset\Omega''$ denote the discrete subset where $\nabla\Phi=0$. Whenever a connected $\omega\cptsub\Omega''\setminus D$ is such that $\Phi(\omega)\cap\Phi(\de\omega)=\emptyset$ and $\restr{\Phi}{\omega}$ is an embedding, the constancy theorem (see \cite[Theorem~41.1]{simon}) implies that $\tilde N$ is constant on $\omega$. Since $\Omega''\setminus D$ is connected, Proposition \ref{nisusc} gives that $N$ is a.e. constant on $\Omega''\setminus D$, hence on $\Omega''$.
	\end{proof}

	\begin{corollary}[global regularity]\label{globgenreg}
		If $(\Sigma,\Phi,N)$ is a parametrized stationary varifold, then $\Phi$ solves $-\Delta\Phi=A(\Phi)(\nabla\Phi,\nabla\Phi)$ in local conformal coordinates, $\Phi$ is a $C^\infty$-smooth branched immersion and $N$ is a.e. constant.
	\end{corollary}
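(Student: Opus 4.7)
The plan is to reduce the global statement to the local regularity result, Theorem~\ref{genreg}. First I would cover $\Sigma$ by countably many connected conformal charts $\phi_\alpha: U_\alpha \to \Omega_\alpha \subseteq \C$. The localization remark immediately after Definition~\ref{locpardef} ensures that each triple $(\Omega_\alpha, \Phi \circ \phi_\alpha^{-1}, N \circ \phi_\alpha^{-1})$ is a local parametrized stationary varifold, so Theorem~\ref{genreg} applies on every $\Omega_\alpha$ and yields the harmonic map equation $-\Delta(\Phi \circ \phi_\alpha^{-1}) = A(\Phi \circ \phi_\alpha^{-1})(\nabla(\Phi \circ \phi_\alpha^{-1}), \nabla(\Phi \circ \phi_\alpha^{-1}))$ throughout the chart (trivially on connected components where $\Phi \circ \phi_\alpha^{-1}$ is constant, by Theorem~\ref{genreg} otherwise). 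Hence $\Phi$ is $C^\infty$-smooth on $\Sigma$, giving the equation in every local conformal coordinate.

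The next step is to exclude that $\Phi$ is locally constant anywhere. Consider $U := \{x \in \Sigma : \Phi \text{ is constant on some neighborhood of } x\}$, which is open by construction. To see it is also closed, let $x_n \to x$ with $x_n \in U$ and pick a connected chart $U_\alpha \ni x$. For $n$ large, $x_n \in U_\alpha$ and the open set on which $\Phi$ is locally constant near $x_n$ meets $U_\alpha$. By Theorem~\ref{genreg}, $\Phi \circ \phi_\alpha^{-1}$ is either globally constant on $\Omega_\alpha$ or a smooth branched immersion, and a branched immersion cannot be constant on any open subset (branch points are isolated; see the local normal form in \cite{gulliver,hartman}). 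So $\Phi$ must be constant on all of $U_\alpha$, giving $x \in U$. Since $\Sigma$ is connected and $\Phi$ is globally nonconstant by the definition of a parametrized stationary varifold, $U = \emptyset$.

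It follows that on every chart $\Omega_\alpha$ the map $\Phi \circ \phi_\alpha^{-1}$ is nonconstant, so Theorem~\ref{genreg} supplies both the branched immersion property and the a.e.\ identity $N \circ \phi_\alpha^{-1} \equiv n_\alpha$ for some positive integer $n_\alpha$. Whenever $U_\alpha \cap U_\beta \neq \emptyset$, the integers $n_\alpha$ and $n_\beta$ must coincide, since $N$ a.e.\ equals both on the overlap, which has positive measure. Connectedness of $\Sigma$ (chain together overlapping charts) then promotes this to a single global value $N \equiv n$ a.e.\ on $\Sigma$, and glues the local branched immersion structure into a global one. The main obstacle is the clopen-ness argument for $U$; everything else is bookkeeping, since the heavy analytic work has already been done in Theorem~\ref{genreg}.
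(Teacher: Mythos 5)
Your proposal is correct and coincides with the paper's (implicit) argument: the corollary is deduced from Theorem~\ref{genreg} exactly by localizing in conformal charts via the localization remark, ruling out local constancy through the connectedness of $\Sigma$ together with the global nonconstancy of $\Phi$ required by Definition~\ref{pardef}, and chaining overlapping charts to globalize the constancy of $N$. The clopen argument you flag as the main obstacle is sound, since a branched immersion on a connected chart domain cannot be constant on any open subset.
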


	\begin{rmk}[converse statement]\label{converse}
		We notice that the converse statement holds as well: namely, if $\Phi:\Sigma\to\subman$ is a nonconstant weakly conformal, weakly harmonic map and $N$ is a positive integer, then $(\Sigma,\Phi,N)$ is a parametrized stationary varifold. Indeed, for almost every $\omega\subseteq\Sigma$, the continuous representative of $\restr{\Phi}{\de\omega}$ coincides with the trace (by \cite[Theorem~5.7]{evans} this holds whenever $\mathcal{H}^1$-a.e. point of $\de\omega$ is a Lebesgue point for $\Phi$). Thus, for any smooth $F\in C^\infty_c(\R^\envdim\setminus\Phi(\de\omega),\R^\envdim)$, $\restr{F(\Phi)}{\omega}$ has zero trace on $\de\omega$ and \eqref{eq:locstat} follows. Notice that we did not need H\'elein's regularity result to show this assertion: on the contrary, we can immediately deduce the continuity of $\Phi$ (and hence the smoothness) from Proposition \ref{contandsupp}. \hfill\qedsymbol
	\end{rmk}

	\section{An application to the conductivity equation}\label{condsec}

	In this section we illustrate an application of Theorem \ref{genreg} to the regularity theory for the conductivity equation
	\[ -\operatorname{div}(N\nabla\Phi)=0\quad\text{on }B_1^2(0). \]
	This partial differential equation was already investigated by many authors: see e.g. \cite{astala, faraco, leonetti}. We show below that, assuming $\Phi\in W^{1,2}(B_1^2(0),\R^\envdim)$ weakly conformal and $N\in L^\infty(\N\setminus\set{0})$, $\Phi$ is necessarily harmonic and $N$ is a.e. constant, unless $\Phi$ is itself constant.

	This statement initially originated as a possible intermediate step in order to achieve Theorem \ref{genreg}, but as a matter of fact we are able to prove the former only as a consequence of the latter. It would be interesting to find an independent, purely PDE-theoretic proof.

	We can do this in the case $\envdim=2$, where the following slightly stronger result holds.

	\begin{thm}[regularity for $\bm{q=2}$]\label{planarcond}
		Assume $\Phi\in W^{1,2}(B_1^2(0),\R^2)$ is weakly conformal, $N\in L^\infty(B_1^2(0))$ is bounded below by a positive constant and
		\begin{equation}\label{eq:condstat} -\operatorname{div}(N\nabla\Phi)=0\quad\text{in }\mathcal D'(B_1^2(0),\R^2). \end{equation}
		Then $\Phi_1+i\,\Phi_2$ is holomorphic or antiholomorphic and, if $\Phi$ is nonconstant, $N$ is a.e. constant.
	\end{thm}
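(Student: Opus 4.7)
The plan is to exploit the complex-analytic structure available in two dimensions. Writing $\Phi$ as a $\C$-valued function and setting $f:=\partial_z\Phi$, $g:=\partial_{\bar z}\Phi$, the conductivity equation reads
\[
\partial_{\bar z}(Nf)+\partial_z(Ng)=0
\]
distributionally, while weak conformality amounts to the pointwise identity $fg=0$ a.e. Since $B_1^2(0)$ is simply connected, the closed $\C$-valued $1$-form $-Nf\,dz+Ng\,d\bar z$ also admits a primitive $\psi\in W^{1,2}(B_1^2(0),\C)$ with $\partial_z\psi=-Nf$ and $\partial_{\bar z}\psi=Ng$, which is itself weakly conformal and has ``dual'' holomorphic behavior to $\Phi$; this auxiliary observation is not essential to the main argument but highlights the rigidity of the system.

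The argument will then proceed in two steps. Assume first that $\Phi$ is globally holomorphic, i.e.\ $g\equiv 0$ (the antiholomorphic case being analogous via $\Phi\mapsto\bar\Phi$). Then $f=\partial_z\Phi$ is itself holomorphic (since $\partial_{\bar z}f=\partial_z g=0$), and the equation collapses to $\partial_{\bar z}(Nf)=0$, so $Nf$ is holomorphic as well. On the open dense set $\{f\neq 0\}$ the ratio $N=(Nf)/f$ is therefore meromorphic and, being real-valued, is locally constant; hence $N$ is a.e.\ constant on $B_1^2(0)$. With $N$ constant $\Phi$ satisfies $\Delta\Phi=0$, so $\partial_z\Phi$ and $\partial_{\bar z}\Phi$ are respectively holomorphic and antiholomorphic with vanishing product, and unique continuation forces one of them to vanish identically. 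The same argument shows, more generally, that $N$ is a.e.\ locally constant on each of the open sets $\mathrm{int}\{f=0\}$ and $\mathrm{int}\{g=0\}$, so the only obstruction is a potentially pathological ``boundary'' region.

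The second step, and the principal obstacle, is precisely to reduce to this case: ruling out the ``mixed'' situation where both $\{f\neq 0\}$ and $\{g\neq 0\}$ have positive measure with empty interior. My plan is to perform a blow-up at a generic point $z_0$: using the De~Giorgi--Nash--Moser continuity of $\Phi$ together with Meyers' higher integrability $\nabla\Phi\in L^{2+\varepsilon}_{\mathrm{loc}}$, one extracts after rescaling a nonconstant weakly conformal limit $\Phi_\infty$ solving $\Delta\Phi_\infty=0$, since at a Lebesgue point of $N$ the rescaled coefficients $N(z_0+r\,\cdot)$ converge in $L^1_{\mathrm{loc}}$ to the constant $N(z_0)$. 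By the first step, $\Phi_\infty$ is globally holomorphic or antiholomorphic on $\C$. Propagating this local rigidity back to the whole ball should then be feasible via a topological argument in the spirit of Lemma~\ref{triods}, forbidding uncountably many disjoint ``transition triods'' straddling the holomorphic and antiholomorphic regions of $\Phi$. The delicate technical point will be ensuring that weak conformality survives the blow-up: the pointwise constraint $fg=0$ is not preserved under weak $W^{1,2}$ convergence alone, and so one has to upgrade to strong $W^{1,p}_{\mathrm{loc}}$-convergence via Meyers' estimate and pass to the pointwise limit on a subsequence.
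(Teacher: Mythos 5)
Your Step 1 is correct but only treats the easy implication: \emph{assuming} $\Phi$ is globally holomorphic (or antiholomorphic), the identity $\de_{\bar z}(Nf)=0$ indeed makes $N=(Nf)/f$ meromorphic and real, hence constant. The substance of the theorem, however, is precisely the claim you defer to Step 2 — that the weakly conformal solution cannot mix holomorphic and antiholomorphic behavior on sets of positive measure with empty interior — and here the proposal has a genuine gap. The sentence ``propagating this local rigidity back to the whole ball should then be feasible via a topological argument in the spirit of Lemma~\ref{triods}'' is a statement of intent, not an argument: a blow-up limit being holomorphic at a.e.\ point transfers no complex-analytic information back to $\Phi$ itself near that point, since the convergence is only weak/strong in $W^{1,2}_{loc}$ and uniform in $C^0_{loc}$. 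This back-propagation is exactly the hard part of the entire paper (see the discussion in the introduction of why Lebesgue-point arguments fail for non-constant $N$), and it is not clear how one would even extract disjoint ``transition triods'' from the mixed region. Two further technical points in Step 2 are also unresolved: (i) non-degeneracy of the blow-up (a nonconstant limit) requires a doubling-type control on $\int_{B_r}|\nabla\Phi|^2$ which you do not establish — Meyers' estimate does not provide it; (ii) Meyers' higher integrability gives weak $L^{2+\epsilon}$ compactness, \emph{not} strong $W^{1,p}_{loc}$ convergence or pointwise convergence of gradients, so as written weak conformality does not pass to the limit (one would instead need an energy/Caccioppoli argument testing the equation against $(\Phi_r-\Phi_\infty)\chi$).

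For contrast, the paper's proof of Theorem~\ref{planarcond} avoids blow-up entirely. It introduces stream functions $\Psi_k$ with $N\nabla\Phi_k=-\nabla^\perp\Psi_k$, observes that $f_k=\Phi_k+i\Psi_k$ solves a Beltrami equation with $\|\mu_k\|_{L^\infty}<1$ (this is where the two-sided bound on $N$ enters), and factors $f_k=h_k\circ\varphi_k$ with $h_k$ holomorphic and $\varphi_k$ quasiconformal. Away from a discrete set this yields a chart in which $\Phi_2$ is a coordinate; weak conformality in the form $\de_z\Phi_1=i\varepsilon\,\de_z\Phi_2$ then forces $\Psi_1$ to be a function of $\Phi_2$ alone, whence $\Phi=(\Phi_1,\Phi_2)$ is \emph{locally injective}. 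Local injectivity of a continuous planar map pins down the local degree, hence the sign of $\det\nabla\Phi$ on the good set, and this degree argument — not a triod count — is what rules out the mixed situation and gives the Cauchy--Riemann equations locally, then globally by analyticity. If you want to salvage your route, you would need to replace the final hand-wave by a mechanism of comparable strength; as it stands the proposal does not prove the theorem.
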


	\begin{proof}
		First of all, $\Phi$ is continuous (see e.g. \cite[Section~4.4]{giaquinta}). We can assume that $\Phi$ is nonconstant, so that the set $\goodrk\neq\emptyset$ of Lebesgue points $z$ for $\nabla\Phi$ with $\nabla\Phi(z)\neq 0$ is nonempty.
		Notice that $\Phi_1$ and $\Phi_2$ are both nonconstant: %this follows from the estimate
		%\[ \media_{B_r^2(z_0)}\abs{\Phi-\Phi(z_0)-\ang{\nabla\Phi(z_0),\cdot-z_0}}^2\,d\mathcal{L}^2=o(r^2) \]
		%for any $z_0\in\goodrk$ (see \cite[Theorem~6.1]{evans}).
		otherwise e.g. $\nabla\Phi_1$ would be a.e. $0$ and in particular we would have $\nabla\Phi_1=0$ on $\goodrk$, contradicting the weak conformality.
		From \eqref{eq:condstat} and standard Hodge theory, we can find real functions $\Psi_k\in W^{1,2}(B_1^2(0))$ with
		\[ N\nabla\Phi_k=-\nabla^\perp\Psi_k,\quad\nabla^\perp:=(-\de_2,\de_1), \]
		for $k=1,2$. This equation can be equivalently rewritten as
		\begin{equation}\label{eq:complref} N\de_z\Phi_k=i\de_z\Psi_k\quad\text{or}\quad N \de_{\obar{z}}\Phi_k=-i\de_{\obar{z}}\Psi_k. \end{equation}
		Let $f_k:=\Phi_k+i\,\Psi_k$. We have
	\[ \de_{\obar{z}}f_k=(1-N)\de_{\obar{z}}\Phi_k,\quad\de_{{z}}f_k=(1+N)\de_{{z}}\Phi_k. \]
	We define the Beltrami coefficient $\mu_k$ on $\C$ as
	\[ \mu_k:=\frac{(1-N)\de_{\obar{z}}\Phi_k}{(1+N)\de_{{z}}\Phi_k}\uno_{B_1^2(0)\cap\goodrk}. \]
	Notice that, by weak conformality, $\de_z\Phi_k\neq 0$ on $\goodrk$. Our hypotheses on $N$ clearly imply
	\[
	\|\mu_k\|_{L^\infty}<1
	\]
	and $f_k$ satisfies the Beltrami equation
	\[
	\de_{\obar{z}}f_k=\mu_k\de_z f_k
	\]
	on $B_1^2(0)$. Let $\varphi_k$ be the normal solution of
	\[
	\de_{\obar{z}}\varphi_k=\mu_k\de_z\varphi_k
	\]
	(see \cite[Theorem~4.24]{imayoshi}). As already pointed out in the proof of Theorem \ref{blow-up}, $\varphi_k,\varphi_k^{-1}\in W^{1,2}_{loc}(\C,\C)$ are homeomorphisms of $\C$ mapping negligible sets to negligible sets and
	\[ \de_{\bar w}\varphi_k^{-1}=-(\mu_k\circ\varphi_k^{-1})\bar{\de_w\varphi_k^{-1}}. \]
	By the chain rule (see \cite[Lemma~III.6.4]{lehto}), $h_k:=f_k\circ\varphi_k^{-1}$ (defined on $\varphi_k(B_1^2(0))$) satisfies
	\[ \de_{\bar w}h_k=(\de_z f_k\circ\varphi_k^{-1})\de_{\bar w}\varphi_k^{-1}+(\de_{\bar z}f_k\circ\varphi_k^{-1})\bar{\de_w\varphi_k^{-1}}=0 \]
	and is thus a nonconstant holomorphic function. Pick now any $z_0\in B_1^2(0)$ such that the points $\varphi_1(z_0)$ and $w_0:=\varphi_2(z_0)$ satisfy $h_1'(\varphi_1(z_0))\neq 0$ and $h_2'(w_0)\neq 0$: by holomorphicity of $h_1$ and $h_2$, this holds true for all $z_0$ outside a discrete, relatively closed subset $D\subset B_1^2(0)$.

	By the Cauchy--Riemann equations, the harmonic map $\Phi_2\circ\varphi_2^{-1}=\Re h_2$ has nonzero differential at $w_0$. By the inverse function theorem, there exists a local chart $\psi$ centered at $w_0$, with some ball $B_\delta^2(0)$ as its image, such that
	\begin{equation}\label{eq:localchart} \Phi_2\circ\varphi_2^{-1}\circ\psi^{-1}(y)-\Phi_2(z_0)=y_2\quad\text{on }B_\delta^2(0). \end{equation}

	Since $\Phi$ is weakly conformal, we have $(\de_z\Phi_1)^2+(\de_z\Phi_2)^2=0$ a.e., hence there exists a measurable function $\varepsilon\in L^\infty(B_1^2(0),\set{-1,1})$ such that
	\begin{equation}
	\label{eq:segno}
	\de_z\Phi_1(z)=i\varepsilon(z)\de_z\Phi_2(z)\quad\text{a.e. on }B_1^2(0).
	\end{equation}
	Combining \eqref{eq:complref} and \eqref{eq:segno} we obtain
	\[ \nabla\Psi_1=\varepsilon N\nabla\Phi_2 \]
	a.e. Using \eqref{eq:localchart} and the chain rule again, we get
	\[ \de_1(\Psi_1\circ\varphi_2^{-1}\circ\psi^{-1})=0,\quad \de_2(\Psi_1\circ\varphi_2^{-1}\circ\psi^{-1})=(\varepsilon N)\circ\varphi_2^{-1}\circ\psi^{-1} \]
	a.e. and, since $\de_{12}^2(\Psi_1\circ\varphi_2^{-1}\circ\psi^{-1})=0$ distributionally, we deduce that
	\[ (\epsilon N)\circ\varphi_2^{-1}\circ\psi^{-1}(y)=\de_2(\Psi_1\circ\varphi_2^{-1}\circ\psi^{-1})(y)=g(y_2) \]
	a.e. on $B_\delta^2(0)$, for a suitable $g\in L^\infty((-\delta,\delta))$, as is immediately verified e.g. by mollification.

	Let $G$ be a Lipschitz primitive of $g$ on $(-\delta,\delta)$. We have
	\[ \nabla(\Psi_1\circ\varphi_2^{-1}\circ\psi^{-1})=\nabla(G(y_2))\quad\text{on }B_\delta^2(0), \]
	so up to subtracting a constant from $G$ we obtain
	\[ \Psi_1\circ\varphi_2^{-1}\circ\psi^{-1}(y)=G(y_2) \]
	and finally, using \eqref{eq:localchart},
	\begin{equation}\label{eq:psifctphi} \Psi_1=G\circ(\Phi_2-\Phi_2(z_0))\quad\text{on }\varphi_2^{-1}\circ\psi^{-1}(B_\delta^2(0))\ni z_0. \end{equation}
	Since $f_1=\Phi_1+i\Psi_1$ is injective in a neighborhood of $z_0$ (being $h_1'(\varphi_1(z_0))\neq 0$), from \eqref{eq:psifctphi} we deduce that $\Phi$ is injective on some neighborhood $B_\eta^2(z_0)$.

	We claim that, as a consequence, $\det(\nabla\Phi)$ has a constant sign on $B_\eta^2(z_0)\cap\goodrk$: indeed, the induced map
	\[ \Phi_*:H_1(\de B_r^2(z))\to H_1(\C\setminus\set{\Phi(z)}) \]
	is clearly independent of $z\in B_\eta^2(z_0)$ and $0<r<\dist(z,\de B_\eta^2(z_0))$, once the two groups are canonically identified with $\Z$. But, for any $z\in B_\eta^2(z_0)\cap\goodrk$, applying Lemma \ref{slicingleb} to $\Phi-\Phi(z)-\ang{\nabla\Phi(z),\cdot-z}$ we can find a small radius $r$ such that $\restr{\Phi}{\de B_r^2(z)}$ is homotopic to $\Phi(z)+\ang{\nabla\Phi(z),\cdot-z}$ (as a map $\de B_r^2(z)\to\C\setminus\set{\Phi(z)}$). Thus the above map $\Phi_*$ coincides with the multiplication by $\sgn\det(\nabla\Phi(z))$ and our claim follows.

	From this fact and the weak conformality assumption, on $B_\eta^2(z_0)$ either $(\Phi_1,\Phi_2)$ or $(\Phi_1,-\Phi_2)$ satisfy the Cauchy--Riemann equations.
	In particular, $\Phi$ is real analytic on the connected set $B_1^2(0)\setminus D$. Since locally we have either $\de_{\bar z}(\Phi_1+i\Phi_2)=0$ or $\de_{z}(\Phi_1+i\Phi_2)=0$, by analyticity $\Phi_1+i\Phi_2$ is globally holomorphic or antiholomorphic on $B_1^2(0)\setminus D$, hence also on $B_1^2(0)$. Finally, \eqref{eq:condstat} gives
	\[ \de_1\Phi_k\de_1 N+\de_2\Phi_k\de_2 N=0\quad\text{in }\mathcal{D}'(B_1^2(0)) \]
	for $k=1,2$ (as $\Delta\Phi_k=0$). Since $\nabla\Phi_1$ and $\nabla\Phi_2$ are smooth and linearly independent outside a closed discrete set, we infer $\nabla N=0$ here and thus, by connectedness, $N$ is a.e. constant.
	\end{proof}

	We now prove the result for arbitrary $\envdim$.

	\begin{thm}[regularity for $\bm{q>2}$]\label{condthm}
		Assume $\Phi\in W^{1,2}(B_1^2(0),\R^\envdim)$ is weakly conformal, $N\in L^\infty(B_1^2(0),\N\setminus\set{0})$ and
		\[ -\operatorname{div}(N\nabla\Phi)=0\quad\text{in }\mathcal D'(B_1^2(0),\R^\envdim). \]
		Then $\Delta\Phi=0$ and, if $\Phi$ is nonconstant, $N$ is a.e. constant.
	\end{thm}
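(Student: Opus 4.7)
The strategy is to verify that $(B_1^2(0), \Phi, N)$ is a local parametrized stationary varifold in $\R^\envdim$ in the sense of Definition~\ref{locpardef}, so that Theorem~\ref{genreg} applies with $\subman = \R^\envdim$ and hence $A \equiv 0$, yielding $-\Delta\Phi = 0$ on $B_1^2(0)$. Since $B_1^2(0)$ is connected, the $N$-constancy conclusion (when $\Phi$ is nonconstant) follows directly from the corresponding assertion in Theorem~\ref{genreg}.

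The continuity of $\Phi$ is the easy part: by De Giorgi--Nash--Moser applied componentwise to the uniformly elliptic equation $-\operatorname{div}(N \nabla \Phi^j) = 0$ (with $1 \le N \le \|N\|_{L^\infty}$), $\Phi$ admits a locally Hölder continuous representative, which I fix. For the stationarity required by Definition~\ref{locpardef}, given $\omega \cptsub B_1^2(0)$ and $F \in C^\infty_c(\R^\envdim \setminus \Phi(\de\omega), \R^\envdim)$, the continuity of $\Phi$ combined with $\operatorname{supp}(F) \cap \Phi(\de\omega) = \emptyset$ forces $F \circ \Phi$ to vanish in an open neighborhood of $\de\omega$. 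Hence $\phi := (F \circ \Phi) \mathbf{1}_\omega$ belongs to $W^{1,2}_0(B_1^2(0), \R^\envdim)$ with compact support in $B_1^2(0)$, and testing the distributional equation $-\operatorname{div}(N \nabla \Phi) = 0$ against $\phi$ (justified by density of $C^\infty_c$ and $N \in L^\infty$) gives
\[
\int_\omega N \langle \nabla \Phi, \nabla(F \circ \Phi) \rangle \, d\mathcal{L}^2 = 0,
\]
which is exactly the stationarity identity (with $A = 0$).

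The main obstacle is the mass bound \eqref{eq:massass}. I would argue locally: it suffices, for each $z_0 \in B_1^2(0)$, to verify \eqref{eq:massass} for $(B_r(z_0), \Phi, N)$ with $\bar B_{2r}(z_0) \subset B_1^2(0)$, since Theorem~\ref{genreg} applied on $B_r(z_0)$ yields $\Delta\Phi = 0$ in a neighborhood of $z_0$ and arbitrariness of $z_0$ globalizes the conclusion. The bound is trivial for $p$ at distance exceeding $s$ from the compact image $\Phi(\bar B_r(z_0))$. For the remaining $p$, I would invoke the stationarity already established with $\omega = B_{r'}(z_0)$: for a.e.\ $r' \in (r, 2r)$, the varifold $\vfd_{B_{r'}(z_0)}$ is stationary in $\R^\envdim \setminus \Phi(\de B_{r'}(z_0))$, so the monotonicity formula \cite[Theorem~17.6]{simon} gives $\|\vfd_{B_{r'}(z_0)}\|(B_s^\envdim(p)) \le Cs^2$ for $s \le \operatorname{dist}(p, \Phi(\de B_{r'}(z_0)))$; since $\vfd_{B_r(z_0)} \le \vfd_{B_{r'}(z_0)}$ as Radon measures on $\R^\envdim$, this bound descends to the smaller varifold. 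The hardest part is obtaining the constant $C$ uniformly in $p$ near the boundary curve $\Phi(\de B_{r'}(z_0))$; I would address this by choosing $r' = r'(p)$ adaptively via a Fubini argument (the set of $r' \in (r, 2r)$ with $p \in \Phi(\de B_{r'}(z_0))$ equals $\{|x-z_0| : x \in \Phi^{-1}(\{p\}) \cap (B_{2r}(z_0) \setminus \bar B_r(z_0))\}$, which is $\mathcal{L}^1$-null for generic $p$) and by averaging over such $r'$ to keep $\operatorname{dist}(p, \Phi(\de B_{r'(p)}(z_0)))$ uniformly bounded below. Once \eqref{eq:massass} is secured, Theorem~\ref{genreg} concludes the proof.
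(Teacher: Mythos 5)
Your first two steps (continuity of $\Phi$ via De Giorgi--Nash--Moser and the stationarity identity obtained by testing the equation against $(F\circ\Phi)\uno_\omega$) are correct and coincide with the paper's. The gap is in the verification of the mass bound \eqref{eq:massass}, and it is not a technicality: this is exactly the point where the theorem stops being a soft corollary of Theorem \ref{genreg}. Your adaptive choice of $r'=r'(p)$ cannot produce a constant uniform in $p$. The monotonicity formula bounds $\norm{\vfd_{B_{r'}^2(z_0)}}(B_s^\envdim(p))$ by $\pi s^2$ times the density ratio at the largest admissible scale $s_{\max}=\dist(p,\Phi(\de B_{r'}^2(z_0)))$, so the resulting constant is of order $\norm{\vfd}(\R^\envdim)/s_{\max}^2$ and degenerates as $p$ approaches the slice image. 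For a fixed $p$ there may be \emph{no} admissible radius at all: if $\Phi^{-1}(p)$ contains a continuum crossing the annulus $B_{2r}^2(z_0)\setminus B_r^2(z_0)$, then $p\in\Phi(\de B_{r'}^2(z_0))$ for \emph{every} $r'\in(r,2r)$. Discarding ``non-generic'' $p$ is not an option, since \eqref{eq:massass} is a statement about all $p\in\R^\envdim$ simultaneously with a single constant; and even for generic $p$ nothing bounds $\dist(p,\Phi(\de B_{r'(p)}^2(z_0)))$ from below uniformly.

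The paper circumvents this by localizing in the \emph{target} rather than averaging over radii: at a point $x$ admitting a neighborhood $\omega$ with $\Phi(x)\nin\Phi(\de\omega)$, one replaces $\omega$ by $\omega\cap\Phi^{-1}(B_{\rho/2}^\envdim(\Phi(x)))$ with $\rho=\mz\dist(\Phi(x),\Phi(\de\omega))$, so that every point of the new image is at distance at least $\rho/2$ from $\Phi(\de\omega)$ and the monotonicity constant becomes uniform. This, however, only handles points whose fiber can be so separated; by Lemma \ref{slicingleb} these include every Lebesgue point of $\nabla\Phi$ with $\nabla\Phi\neq 0$, hence a.e.\ point, but an exceptional relatively closed set $S$ of measure zero remains, and the varifold machinery alone does not rule it out. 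The entire second half of the paper's proof --- the conjugate functions $\Psi_k$, the Beltrami equation and the quasiconformal straightening $\varphi_1$, the observation that through every point of $S$ the fiber contains a full horizontal segment in the straightened chart, Fubini, and the Hopf boundary-point lemma --- is devoted to showing that $S$ lies in a discrete set and is therefore removable. Your proposal omits this part entirely, and without it the argument does not close.
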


	\begin{proof}
		As in the previous proof, we notice that $\Phi$ is continuous. We can assume that $\Phi$ is not constant. The triple $(B_1^2(0),\Phi,N)$ satisfies Definition \ref{locpardef} (with $\subman=\R^\envdim$), except possibly for the technical condition \eqref{eq:massass}: indeed, for any $\omega\cptsub\B_1^2(0)$ and any $F\in C^\infty_c(\R^\envdim\setminus\Phi(\de\omega),\R^\envdim)$, $F(\Phi)\uno_\omega$ lies in $W^{1,2}_0(B_1^2(0),\R^\envdim)$ and thus $\int_\omega N\ang{\nabla (F(\Phi));\nabla\Phi}\,d\mathcal{L}^2=0$.

		Assume, without loss of generality, that $\Phi_1$ is not constant and let $\Psi_1$, $f_1$, $\varphi_1$ and $h_1$ be the functions constructed as in the preceding proof. Let $D\subset\varphi_1(B_1^2(0))$ be the discrete set of points where $h_1'=0$, or equivalently (by the Cauchy--Riemann equations) where $\nabla(\Phi_1\circ\varphi_1^{-1})=0$. From the chain rule and the fact that $\varphi_1$ and $\varphi_1^{-1}$ map negligible sets to negligible sets (see \cite[Lemma~III.6.4]{lehto} and \cite[Lemma~4.12]{imayoshi}) we deduce that, for a.e. $x\in B_1^2(0)$,
		$\nabla\varphi_1(x)$ is invertible, $\nabla(\Phi_1\circ\varphi_1^{-1})(\varphi_1(x))\neq 0$ and $\nabla\Phi_1(x)$ is given by the composition of these differentials. Hence, $\nabla\Phi\neq 0$ a.e. and thus has full rank a.e. (by weak conformality).

		Let $S\subseteq B_1^2(0)$ denote the complement of the biggest open subset where $\Delta\Phi=0$.
		We remark that, given $x\in B_1^2(0)$, if there exists a neighborhood $\omega\cptsub B_1^2(0)$ with $\Phi(x)\cap\Phi(\de\omega)=\emptyset$ then $x\nin S$: indeed, $\vfd_\omega$ is stationary in $\R^\envdim\setminus\Phi(\de\omega)$, so by the monotonicity formula $\vfd_\omega$ satisfies \eqref{eq:massass} for $p\in B_\rho^\envdim(\Phi(x))$ and $s<\rho$, where $\rho:=\mz\dist(\Phi(x),\Phi(\de\omega))$. Thus, replacing $\omega$ with $\omega\cap\Phi^{-1}(B_{\rho/2}^2(\Phi(x)))$, the triple $(\omega,\Phi,N)$ is a local parametrized stationary varifold and Theorem \ref{genreg} gives $\Delta\Phi=0$ near $x$. In particular, arguing as in the proof of Theorem \ref{conicalreg}, we infer that $x\nin S$ for any Lebesgue point $x$ for $\nabla\Phi$ with $\nabla\Phi(x)\neq 0$. Being $\nabla\Phi\neq 0$ a.e., we get that $\mathcal{L}^2(S)=0$ and that $\Phi$ is nonconstant on any ball outside $S$.

		Moreover, $N$ is a.e. constant on every connected component $U$ of $B_1^2(0)\setminus S$, since on $U$ we have $\de_1\Phi_k\de_1 N+\de_2\Phi_k\de_2 N=0$ for all $k=1,\dots,\envdim$ and the (classical) differential $\nabla\Phi$ has full rank except for a discrete set (being $\Phi$ a nonconstant harmonic function on $U$), which does not disconnect $U$.

		It suffices to show that $S\subseteq\varphi_1^{-1}(D)$, since then any point of $S$ is a removable singularity and thus $S=\emptyset$. Assume by contradiction that there exists a point $x_0\in S\setminus\varphi_1^{-1}(D)$.
		Let $\psi:V\to(-1,1)^2$ be a local chart centered at $\varphi_1(x_0)$ and such that
		\begin{equation}\label{eq:localchart2} \Phi_1\circ\varphi_1^{-1}\circ\psi^{-1}(y)=\Phi_1(x_0)+y_2\quad\text{on }(-1,1)^2. \end{equation}
		Let $\Xi:=\Phi\circ\varphi_1^{-1}\circ\psi^{-1}$. We claim that the negligible set $S':=\psi(V\cap\varphi_1(S))$, relatively closed in $(-1,1)^2$, has the following property: if $y=(y_1,y_2)\in S'$, then $\Xi^{-1}(\Xi(y))$ contains either $\pabra{-1,y_1}\times\set{y_2}$ or $\brapa{y_1,1}\times\set{y_2}$. If this were not true, we could find $a\in(-1,y_1)$ and $b\in(y_1,1)$ with $\Xi(a,y_2),\Xi(b,y_2)\neq\Xi(y)$. Given any $\epsilon<1-\abs{y_2}$, by \eqref{eq:localchart2} we would have
		$\Xi(y)\nin\Xi(\de([a,b]\times[y_2-\epsilon,y_2+\epsilon]))$. But, as remarked earlier, this would imply $y\nin S'$.

		This horizontal segment, i.e. either $\pabra{-1,y_1}\times\set{y_2}$ or $\brapa{y_1,1}\times\set{y_2}$, has to be contained in $S'$ (being $\Xi$ locally injective at points outside $S'$, with at most countably many exceptions). As a consequence we have $\mathcal{L}^1(\set{t:(t,y_2)\in S'})>0$ and thus, by Fubini's theorem and $\mathcal{L}^2(S')=0$, we infer
		\begin{equation}\label{eq:linee} ((-1,1)\times\set{t})\cap S'=\emptyset\quad\text{for a.e. }t\in(-1,1). \end{equation}

		Pick now any $\mu>0$ with $((-1,1)\times\set{\mu})\cap S'=\emptyset$.
		Recall that $(0,0)\in S'$ and let $\lambda:=\max\set{t<\mu:(0,t)\in S'}\ge 0$. From \eqref{eq:linee} it follows that the open set $((-1,1)\times(\lambda,\mu))\setminus S'$ is connected. We infer that $N$ is a.e. constant on $\varphi_1^{-1}\circ\psi^{-1}((-1,1)\times(\lambda,\mu))$, hence $\Delta\Phi=0$ on this open set. However, this contradicts the weak conformality assumption: let
		\[ \omega:=\varphi_1^{-1}\circ\psi^{-1}\pa{\pa{-\mz,\mz}\times\pa{\lambda,\frac{\lambda+\mu}{2}}}, \]
		on which $\Delta\Phi=0$, and take any homeomorphism $\upsilon:\bar\omega\to\bar B_1^2(0)$ biholomorphic on $\omega$ (using Riemann's mapping theorem and \cite[Theorem~I.3.1]{garnett}).
		We have $\Phi_1\circ\upsilon^{-1}>\lambda$ on $B_1^2(0)$, as well as $\Phi_1\circ\upsilon^{-1}=\lambda$ and $\Phi\circ\upsilon^{-1}$ constant on some open arc $A$ in $\de B_1^2(0)$. Since $\Delta(\Phi\circ\upsilon^{-1})=0$ on $B_1^2(0)$, by standard regularity theory $\Phi\circ\upsilon^{-1}$ is smooth in a neighborhood of $A$ in $\bar B_1^2(0)$. But, by Hopf's maximum principle, the radial derivative of $\Phi_1\circ\upsilon^{-1}$ is nonzero on $A$. So, by weak conformality of $\Phi\circ\upsilon^{-1}$, the tangential derivative of $\Phi\circ\upsilon^{-1}$ does not vanish on $A$. This contradicts the fact that $\Phi\circ\upsilon^{-1}$ is constant on $A$.
	\end{proof}

	\appendix
	\section*{Appendix}
	\renewcommand{\thesection}{A}
	\setcounter{definition}{0}
	\setcounter{equation}{0}

	This appendix collects some useful general facts about Sobolev functions on the plane. Most of them are standard results, with the possible exception of Lemmas \ref{oneinfty} and \ref{oneinftyaux}; however, we preferred to include the proofs since a precise reference for these statements is not immediately available in the literature.

	\begin{lemmaen}[essential image of a Sobolev map]\label{essim}
		Let $\Omega\subseteq\C$ be an open connected set.
		Let $\Psi\in W^{1,1}_{loc}(\Omega,\R^\envdim)$ and assume that any point in a nonempty measurable set $E\subseteq\Omega$ is a Lebesgue point for $\Psi$, as well as $\nabla\Psi=0$ a.e. on $\Omega\setminus E$. Then the essential image of $\Psi$ equals $\bar{\Psi(E)}$. %Moreover, the essential image of any real function $F\in W^{1,2}(\Omega)$ is a closed interval.
	\end{lemmaen}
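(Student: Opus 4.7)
The plan is to prove the two set inclusions separately. For the easy inclusion $\bar{\Psi(E)}\subseteq$ (essential image), since the essential image is closed it is enough to check $\Psi(E)\subseteq$ (essential image). Given $x\in E$ and any $r>0$, the Lebesgue-point property at $x$ forces the average of $|\Psi-\Psi(x)|$ over small balls $B_s^2(x)$ to tend to zero, so for $s$ sufficiently small the set $\set{y\in B_s^2(x):\abs{\Psi(y)-\Psi(x)}<r}$ has positive $\mathcal{L}^2$-measure, placing $\Psi(x)$ in the essential image.

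For the reverse inclusion, the key idea is to introduce the composed scalar function $F(x):=\dist(\Psi(x),\bar{\Psi(E)})$. Composition with the $1$-Lipschitz distance function keeps $F$ in $W^{1,1}_{loc}(\Omega)$ with $\abs{\nabla F}\le\abs{\nabla\Psi}$ almost everywhere, so the hypothesis $\nabla\Psi=0$ a.e. on $\Omega\setminus E$ yields $\nabla F=0$ a.e. there. On $E$ we have $F\equiv 0$ because $\Psi(E)\subseteq\bar{\Psi(E)}$, and the classical fact that a Sobolev function has vanishing weak gradient almost everywhere on the set where it equals a constant gives $\nabla F=0$ a.e. on $E$ as well. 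Combining, $\nabla F=0$ a.e. on the connected open set $\Omega$, whence $F$ is a.e. equal to some nonnegative constant.

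To pin down this constant I will split into cases. If $\mathcal{L}^2(E)>0$, the identity $F\equiv 0$ on $E$ forces the constant to be zero, so $\Psi(x)\in\bar{\Psi(E)}$ for a.e. $x\in\Omega$, which is exactly the desired reverse inclusion. If instead $\mathcal{L}^2(E)=0$, then by hypothesis $\nabla\Psi=0$ a.e. on all of $\Omega$, so $\Psi$ is a.e. equal to a single vector $c\in\R^\envdim$; evaluating at any Lebesgue point $x\in E$ (which exists by nonemptiness) yields $\Psi(x)=c$, so both $\bar{\Psi(E)}$ and the essential image reduce to $\set{c}$. The only step that I expect to require a little care is the appeal to $\nabla F=0$ on the zero set of $F$, which is a standard Sobolev fact but must be applied to the scalar $F$ rather than to the vector $\Psi$ directly; the remainder is bookkeeping.
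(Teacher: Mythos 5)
Your proof is correct, and while the first inclusion is the same Chebyshev/Lebesgue-point argument as in the paper, your reverse inclusion goes by a genuinely different route. The paper fixes a ball $B_s^\envdim(p)$ disjoint from $\bar{\Psi(E)}$, composes $\Psi$ with a \emph{smooth} map $\rho$ equal to the identity outside that ball and to $\operatorname{id}+e_1$ on the half-ball, and shows $\rho\circ\Psi-\Psi$ has vanishing gradient (because $\nabla\Psi=0$ a.e.\ on $\Psi^{-1}(B_s^\envdim(p))\subseteq\Omega\setminus E$), hence is a.e.\ constant; since it vanishes at the Lebesgue points in $E$, the constant is zero and $\Psi^{-1}(B_{s/2}^\envdim(p))$ is null. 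This needs only the chain rule for smooth outer functions and treats the cases $\mathcal{L}^2(E)>0$ and $\mathcal{L}^2(E)=0$ uniformly, via the observation that the Lebesgue value of an a.e.\ constant function is that constant. Your argument instead handles all exterior points at once through $F=\dist(\Psi,\bar{\Psi(E)})$, at the cost of two slightly heavier (but standard and correctly invoked) ingredients: the bound $\abs{\nabla(g\circ\Psi)}\le\operatorname{Lip}(g)\abs{\nabla\Psi}$ for a Lipschitz scalar $g$ composed with a vector-valued Sobolev map (best justified via the ACL characterization, since the pointwise chain rule can fail where $g$ is not differentiable), and the vanishing of $\nabla F$ a.e.\ on the level set $\set{F=0}$ -- note that every point of $E$ is a Lebesgue point of $F$ with value $0$, so $E$ genuinely sits inside that level set up to the usual representative issues. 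Your case split at $\mathcal{L}^2(E)=0$ is valid but avoidable: as in the paper, one can pin down the constant value of $F$ by evaluating at a Lebesgue point of $E$ rather than by a positive-measure argument.
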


	We recall that the essential image of $\Psi$ is the (closed) set of values $p\in\R^\envdim$ such that $\mathcal{L}^2(\Psi^{-1}(B_s^\envdim(p)))>0$ for all $s>0$, or equivalently it is the support of the positive measure $\Psi_*(\uno_\Omega\mathcal{L}^2)$.

	\begin{proof}
		Fix $x\in E$, $s>0$ and let $p:=\Psi(x)$. By Chebyshev's inequality
		\[ \mathcal{L}^2(B_r^2(x)\setminus\Psi^{-1}(B_s^\envdim(p)))
		\le s^{-1}\int_{B_r^2(x)}\abs{\Psi-\Psi(x)}\,d\mathcal{L}^2=o(r^2), \]
		so $p$ belongs to the essential image. We deduce that $\bar{\Psi(E)}$ is included in the essential image. Conversely, assume that $B_s^\envdim(p)$ is disjoint from $\bar{\Psi(E)}$. We can find a function $\rho\in C^\infty(\R^\envdim,\R^\envdim)$ with $\rho=\operatorname{id}$ on $\R^\envdim\setminus B_s^\envdim(p)$ and $\rho=\operatorname{id}+e_1$ on $B_{s/2}^\envdim(p)$.
		By the chain rule,
		\[ \nabla(\rho\circ\Psi)=(\nabla\rho\circ\Psi)\nabla\Psi=\nabla\Psi \]
		a.e. on $\Omega$, since $\nabla\Psi=0$ a.e. on $\set{\nabla\rho\circ\Psi\neq\operatorname{id}}$.
		Thus, $\rho\circ\Psi-\Psi$ is constant a.e. But this function vanishes on $E$, hence $0$ belongs to its essential image (by the same argument used above). Thus $\rho\circ\Psi=\Psi$ a.e. and we infer that
		$\Psi^{-1}(B_{s/2}^\envdim(p))\subseteq\set{\rho\circ\Psi\neq\Psi}$ is negligible. This shows that $p$ does not belong to the essential image.
		\begin{comment}
		We now turn to the second statement. Fix any Lebesgue point $x_0$ for $F$ and assume that, for some $\lambda>F(x_0)$, $\lambda$ is not in the essential image. Pick any $\epsilon<\lambda-F(x_0)$ such that $(\lambda-\epsilon,\lambda+\epsilon)$ has negligible preimage. We can find a function $\sigma\in C^\infty(\R)$ with $\sigma(t)=t$ for $t\le\lambda-\epsilon$ and $\sigma(t)=t+1$ for $t\ge\lambda+\epsilon$. As before, we have $\sigma\circ F=F$ a.e. and thus
		$F^{-1}(\brapa{\lambda+\epsilon,\infty})$ is negligible. Hence, the essential image is included in $(-\infty,\lambda)$. A similar argument applies for $\lambda<F(x_0)$. We deduce that the essential image is an interval.
		\end{comment}
	\end{proof}

	\begin{lemmaen}[rectifiability of the image]\label{rectif}
		Let $\Omega\subseteq\C$ be open. If $\Psi\in W^{1,1}_{loc}(\Omega,\R^\envdim)$ and $\good$ denotes the set of Lebesgue points for both $\Psi$ and $\nabla\Psi$, then there exist Lebesgue measurable sets $E_1,E_2,\dots$ such that $\good=\bigcup_i E_i$ and $\restr{\Psi}{E_i}$ is Lipschitz.
	\end{lemmaen}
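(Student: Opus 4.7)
The plan is to apply a localized variant of the classical argument that represents a Sobolev function as a countable union of Lipschitz pieces, with the usual Hardy--Littlewood maximal function replaced by a \emph{scale-truncated} version whose finiteness is built into the Lebesgue-point condition on $\nabla\Psi$. This avoids the standard difficulty at $p=1$, where the global maximal function $M\abs{\nabla\Psi}$ may be infinite at some points of $\good$, which would otherwise leave a measure-zero remainder uncovered.

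First, I would parametrize $\good$ by rational data. For every $x\in\good$ the Lebesgue-point property gives
\[ \lim_{r\to 0}\media_{B_r^2(x)}\abs{\nabla\Psi(y)-\nabla\Psi(x)}\,d\mathcal{L}^2(y)=0, \]
and in particular $\limsup_{r\to 0}\media_{B_r^2(x)}\abs{\nabla\Psi}\,d\mathcal{L}^2=\abs{\nabla\Psi(x)}<\infty$. Hence there exist rationals $r_0,M>0$ with $\bar B_{r_0}^2(x)\subset\Omega$ and $\sup_{0<r\le r_0}\media_{B_r^2(x)}\abs{\nabla\Psi}\,d\mathcal{L}^2\le M$. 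Writing $\Omega=\bigcup_n K_n$ for a compact exhaustion, one obtains a countable cover
\[ \good=\bigcup_{(r_0,M,z,n)}E_{r_0,M,z,n},\qquad E_{r_0,M,z,n}:=\set{x\in\good\cap K_n\cap B_{r_0/8}^2(z):\sup_{0<r\le r_0}\media_{B_r^2(x)}\abs{\nabla\Psi}\,d\mathcal{L}^2\le M}, \]
indexed by $(r_0,M,z,n)\in\Q_{>0}\times\Q_{>0}\times\Q^2\times\N$. Each $E_{r_0,M,z,n}$ is clearly Lebesgue measurable.

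Second, on each $E=E_{r_0,M,z,n}$ I would show that $\restr{\Psi}{E}$ is $O(M)$-Lipschitz, via the standard Poincar\'e telescoping argument. For $x,y\in E$, the constraint $E\subseteq B_{r_0/8}^2(z)$ forces $\rho:=\abs{x-y}\le r_0/4$; setting $r:=2\rho$, the balls $B_r^2(x),B_r^2(y)$ both sit inside $B_{2r}^2(x)\subseteq B_{r_0}^2(x)$. Since $x$ is a Lebesgue point for $\Psi$,
\[ \abs{\Psi(x)-\Psi_{B_r^2(x)}}\le\sum_{k\ge 0}\abs{\Psi_{B_{2^{-k}r}^2(x)}-\Psi_{B_{2^{-k-1}r}^2(x)}}\le C\sum_{k\ge 0}2^{-k}r\media_{B_{2^{-k}r}^2(x)}\abs{\nabla\Psi}\,d\mathcal{L}^2\le CMr, \]
by applying Poincar\'e's inequality on each ball. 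The symmetric estimate holds at $y$, and $\abs{\Psi_{B_r^2(x)}-\Psi_{B_r^2(y)}}$ is controlled by comparing both averages to $\Psi_{B_{2r}^2(x)}$ (which contains both $B_r^2(x)$ and $B_r^2(y)$), invoking Poincar\'e once more at scale $2r\le r_0$. Combining yields $\abs{\Psi(x)-\Psi(y)}\le C'M\abs{x-y}$.

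I do not expect any serious obstacle beyond the one already highlighted: arranging the decomposition so that \emph{every} point of $\good$, not merely a full-measure subset, is assigned to a Lipschitz piece. The scale-truncated maximal function is the natural device for this, since its finiteness at $x$ follows directly from the convergence $\media_{B_r^2(x)}\abs{\nabla\Psi}\,d\mathcal{L}^2\to\abs{\nabla\Psi(x)}$ that is part of the Lebesgue-point property.
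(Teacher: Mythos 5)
Your proof is correct, and it follows the same overall skeleton as the paper's (a countable decomposition of $\good$ by a quantitative pointwise condition that holds at \emph{every} Lebesgue point, followed by a two-point Lipschitz estimate on each piece), but the mechanism is genuinely different. The paper defines its pieces by a first-order oscillation bound on $\Psi$ itself, namely $F_j:=\{x\in\good\cap\Omega_{1/j}:\media_{B_r^2(x)}\abs{\Psi(y)-\Psi(x)}\,dy\le jr$ for $0<r\le j^{-1}\}$, verifies $\good=\bigcup_j F_j$ by invoking the approximate differentiability of $W^{1,1}$ maps at joint Lebesgue points of $\Psi$ and $\nabla\Psi$ (\cite[Theorem~6.1]{evans}), and then gets the Lipschitz bound by a short two-ball comparison that needs no further input. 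You instead control the scale-truncated maximal function of $\abs{\nabla\Psi}$ — whose finiteness at each point of $\good$ follows from the mere convergence $\media_{B_r^2(x)}\abs{\nabla\Psi}\,d\mathcal{L}^2\to\abs{\nabla\Psi(x)}$ — and recover the Lipschitz estimate by telescoping the $(1,1)$-Poincar\'e inequality over dyadic scales, using the Lebesgue-point property of $\Psi$ only to identify $\Psi(x)$ with $\lim_k\Psi_{B_{2^{-k}r}^2(x)}$. Your route is the classical Calder\'on--Zygmund/Liu-type argument and uses slightly less about the Lebesgue points of $\nabla\Psi$ (only finiteness of the limit of averages of $\abs{\nabla\Psi}$, not full approximate differentiability), at the cost of invoking Poincar\'e at every scale; the paper's route is shorter once the differentiability theorem is quoted. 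One bookkeeping point you should make explicit: the definition of $E_{r_0,M,z,n}$ must guarantee $\bar B_{r_0}^2(x)\subseteq\Omega$ for all $x$ in the set (e.g.\ by taking $K_n=\{x\in\Omega:\dist(x,\de\Omega)\ge 1/n,\ \abs{x}\le n\}$ and only admitting indices with $r_0<1/n$), since otherwise the averages $\media_{B_r^2(x)}\abs{\nabla\Psi}\,d\mathcal{L}^2$ appearing in the defining supremum are not all well defined; this is easily arranged and does not affect the argument.
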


	\begin{proof}
		We set $F_j:=\big\{x\in\good\cap\Omega_{1/j}:\media_{B_r^2(x)}\abs{\Psi(y)-\Psi(x)}\,dy\le jr\text{ for }0<r\le j^{-1}\big\}$ (where $\Omega_\delta:=\{x\in\Omega:\dist(x,\de\Omega)>\delta\}$), which is a Lebesgue measurable set. By \cite[Theorem~6.1]{evans} (and its proof), for all $x\in\good$
		\[ \begin{split} \media_{B_r^2(x)}\abs{\Psi(y)-\Psi(x)}\,dy&\le r\abs{\nabla\Psi(x)}+\media_{B_r^2(x)}\abs{\Psi(y)-\Psi(x)-\ang{\nabla\Psi(x),y-x}}\,dy \\
		&=r\abs{\nabla\Psi(x)}+o(r). \end{split} \]
		We infer that $\good=\bigcup_{j\ge 1} F_j$. Assume now that $x,x'\in F_j$ are distinct and $\abs{x-x'}\le\frac{1}{2j}$. Let $r:=\abs{x-x'}$ and notice that $B_r^2(x)\subseteq B_{2r}^2(x')$. We can estimate
		\[ \begin{split} \abs{\Psi(x)-\Psi(x')}&\le\media_{B_r^2(x)}\abs{\Psi(y)-\Psi(x)}\,dy
		+\media_{B_r^2(x)}\abs{\Psi(y)-\Psi(x')}\,dy \\
		&\le\media_{B_r^2(x)}\abs{\Psi(y)-\Psi(x)}\,dy+4\media_{B_{2r}^2(x')}\abs{\Psi(y)-\Psi(x)}\,dy \\
		&\le jr+8jr=9j\abs{x-x'} \end{split} \]
		(since $2r\le j^{-1}$). The result follows once we split each $F_j$ into countably many subsets of diameter at most $\frac{1}{2j}$.
	\end{proof}

	\begin{lemmaen}[small diameter for suitable slices]\label{slicing}
		Assume $\Psi\in W^{1,2}(B_{(1+\tau)r}^2(0),\R^\envdim)$, with $r,\tau>0$. Then there is a measurable subset $E\subseteq(r,(1+\tau)r)$ of positive measure such that, for all $r'\in E$, $\restr{\Psi}{\de B_{r'}^2(0)}$ has an absolutely continuous representative whose image satisfies
		\[ \diam\Psi(\de B_{r'}^2(0))\le\bigg(\frac{2\pi(1+\tau)}{\tau}\int_{B_{(1+\tau)r}^2(0)}\abs{\nabla\Psi}^2\,d\mathcal{L}^2\bigg)^{1/2}. \]
	\end{lemmaen}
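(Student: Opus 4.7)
The plan is to work in polar coordinates $(\rho,\theta)$ on the annulus $B_{(1+\tau)r}^2(0)\setminus \bar B_r^2(0)$ and combine a standard Fubini / ACL argument with averaging over radii. A preliminary observation is that for $\Psi\in W^{1,2}$, in polar coordinates the weak gradient satisfies
\[ \abs{\nabla\Psi}^2=\abs{\de_\rho\Psi}^2+\rho^{-2}\abs{\de_\theta\Psi}^2\ge\rho^{-2}\abs{\de_\theta\Psi}^2, \]
and the ACL characterization of Sobolev maps (applied in polar coordinates) shows that for a.e.\ radius $\rho\in(0,(1+\tau)r)$ the map $\theta\mapsto\Psi(\rho\cos\theta,\rho\sin\theta)$ has an absolutely continuous representative on $S^1$, with weak tangential derivative coinciding with the restriction of $\de_\theta\Psi$.

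Next I would set $g(\rho):=\int_0^{2\pi}\abs{\de_\theta\Psi(\rho\cos\theta,\rho\sin\theta)}^2\,d\theta$. Writing the Dirichlet energy in polar coordinates and discarding the radial part yields
\[ \int_r^{(1+\tau)r}\rho^{-1}g(\rho)\,d\rho\le\int_{B_{(1+\tau)r}^2(0)}\abs{\nabla\Psi}^2\,d\mathcal{L}^2=:I. \]
Multiplying through by $(1+\tau)r$ and using $\rho\le(1+\tau)r$ on the integration interval gives $\int_r^{(1+\tau)r}g(\rho)\,d\rho\le(1+\tau)r\cdot I$. By Chebyshev's inequality, the set
\[ E:=\set{\rho'\in(r,(1+\tau)r):g(\rho')\le\tfrac{1+\tau}{\tau}I} \]
has positive measure. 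Intersecting $E$ with the full-measure set of radii at which the trace on $\de B_{\rho'}^2(0)$ is absolutely continuous (in the sense above) still leaves a set of positive measure.

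Finally, for any $\rho'\in E$ and any two parameter values $\theta_1,\theta_2\in[0,2\pi]$, one has
\[ \abs{\Psi(\rho'\cos\theta_1,\rho'\sin\theta_1)-\Psi(\rho'\cos\theta_2,\rho'\sin\theta_2)}\le\int_0^{2\pi}\abs{\de_\theta\Psi}\,d\theta\le\sqrt{2\pi}\,g(\rho')^{1/2}, \]
by Cauchy--Schwarz. Combined with the bound $g(\rho')\le\tfrac{1+\tau}{\tau}I$ obtained above, this yields
\[ \diam\Psi(\de B_{\rho'}^2(0))\le\pa{\frac{2\pi(1+\tau)}{\tau}\int_{B_{(1+\tau)r}^2(0)}\abs{\nabla\Psi}^2\,d\mathcal{L}^2}^{1/2}, \]
which is the claim. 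The argument is routine throughout; the only points requiring mild care are the polar-coordinate version of ACL for the tangential trace (standard but worth citing) and keeping the constants in the Chebyshev step consistent.
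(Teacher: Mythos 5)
Your argument is correct and is essentially the paper's own proof: a.e.\ absolute continuity of the circular traces via the Fubini/ACL characterization of $W^{1,2}$, an averaging argument over the radii in $(r,(1+\tau)r)$ to select a slice whose tangential energy is controlled by $\frac{1+\tau}{\tau}I$, and Cauchy--Schwarz to bound the diameter by the length of the image curve. The one point to tighten is the selection of $E$: Chebyshev alone only gives $\mathcal{L}^1\big(\big\{g>\tfrac{1+\tau}{\tau}I\big\}\big)\le\tau r$, which is not strictly smaller than the length of the interval, so you should instead argue (as the paper implicitly does) that $g$ cannot exceed the bound on its average almost everywhere --- if $g>\tfrac{1+\tau}{\tau}I$ a.e.\ on $(r,(1+\tau)r)$ then $\int_r^{(1+\tau)r}g\,d\rho>(1+\tau)rI$, contradicting your estimate.
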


	\begin{proof}
		For a.e. $r'\in (r,(1+\tau)r)$ the function $\theta\mapsto\Psi(r'\cos\theta,r'\sin\theta)$ has an absolutely continuous representative, whose weak derivative is $r'\ang{\nabla\Psi(r'\cos\theta,r'\sin\theta),(-\sin\theta,\cos\theta)}$ (see \cite[Theorem~4.21]{evans}). Moreover,
		\[ \media_r^{r+\tau r}\int_{\de B_t^2(0)}\abs{\nabla\Psi}^2\,d\mathcal{H}^1\,dt\le\frac{1}{\tau r}\int_{B_{(1+\tau)r}^2(0)}\abs{\nabla\Psi}^2\,d\mathcal{L}^2. \]
		Hence, for a set of radii $r'$ of positive measure, the inner integral is less or equal than the right-hand side, giving
		\[ \bigg(\int_{\de B_{r'}^2(0)}\abs{\nabla\Psi}\,d\mathcal{H}^1\bigg)^2\le 2\pi r'\int_{\de B_{r'}^2(0)}\abs{\nabla\Psi}^2\,d\mathcal{H}^1\le\frac{2\pi(1+\tau)}{\tau}\int_{B_{(1+\tau)r}^2(0)}\abs{\nabla\Psi}^2\,d\mathcal{L}^2. \qedhere \]
	\end{proof}

	\begin{lemmaen}[vanishing first order on suitable slices]\label{slicingleb}
		Assume that $0$ is a Lebesgue point for $\Psi\in W^{1,2}(B_R^2(0),\R^\envdim)$ and $\nabla\Phi$, as well as $\Psi(0)=\nabla\Psi(0)=0$. Then, for any $\tau,\epsilon>0$ and any $0<r\le\bar r$, we can select $r'\in(r,(1+\tau)r)$ such that $\restr{\Psi}{\de B_{r'}^2(0)}$ has an absolutely continuous representative with
		\[ \max_{y\in S^1}\abs{\Psi(r'y)}\le\epsilon r, \]
		%(up to redefining $\restr{\Psi}{\de B_{r'}^2(0)}$ on an $\mathcal{H}^1$-negligible subset),
		for a suitable $\obar r=\obar r(\Psi,\tau,\epsilon)$.
	\end{lemmaen}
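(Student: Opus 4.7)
The plan is to combine a Fubini/averaging argument over radii in the annulus $B_{(1+\tau)r}^2(0)\setminus\bar B_r^2(0)$ with the one-dimensional Sobolev embedding $W^{1,2}(S^1)\inject C^0(S^1)$, in the same spirit as the proof of Lemma \ref{slicing}, but now exploiting the hypothesis that $0$ is a Lebesgue point with $\Psi(0)=0$ and $\nabla\Psi(0)=0$.

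First, from the Lebesgue-point assumptions at $0$ together with the $L^2$-differentiability recalled in \cite[Theorem~6.1]{evans}, I would extract the decay estimates
\[ \int_{B_s^2(0)}\abs{\nabla\Psi}^2\,d\mathcal{L}^2=o(s^2),\qquad \int_{B_s^2(0)}\abs{\Psi}^2\,d\mathcal{L}^2=o(s^4)\quad\text{as }s\to 0, \]
where the implicit rates depend only on $\Psi$. Applying Fubini on the annulus, these give
\[ \int_r^{(1+\tau)r}\!\pa{t^{-1}\!\int_{\de B_t^2(0)}\abs{\Psi}^2\,d\mathcal{H}^1+t\!\int_{\de B_t^2(0)}\abs{\nabla\Psi}^2\,d\mathcal{H}^1}dt\le r^{-1}\!\int_{B_{(1+\tau)r}^2(0)}\abs{\Psi}^2\,d\mathcal{L}^2+(1+\tau)r\!\int_{B_{(1+\tau)r}^2(0)}\abs{\nabla\Psi}^2\,d\mathcal{L}^2=o(r^3). \]

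Averaging on the interval of length $\tau r$, and intersecting with the full-measure set of radii (from Lemma \ref{slicing} and \cite[Theorem~4.21]{evans}) for which $\restr{\Psi}{\de B_{r'}^2(0)}$ admits an absolutely continuous representative, I would select $r'\in(r,(1+\tau)r)$ satisfying
\[ (r')^{-1}\int_{\de B_{r'}^2(0)}\abs{\Psi}^2\,d\mathcal{H}^1+r'\int_{\de B_{r'}^2(0)}\abs{\nabla\Psi}^2\,d\mathcal{H}^1\le\frac{o(r^2)}{\tau}. \]
Parametrizing the circle by $\gamma(\theta):=\Psi(r'\cos\theta,r'\sin\theta)$, this inequality amounts, up to universal constants, to a bound on $\norm{\gamma}_{W^{1,2}(S^1)}^2$. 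The embedding $W^{1,2}(S^1)\inject C^0(S^1)$ then yields
\[ \max_{y\in S^1}\abs{\Psi(r'y)}^2\le C\tau^{-1}o(r^2), \]
and the proof is completed by choosing $\bar r=\bar r(\Psi,\tau,\epsilon)$ small enough that the right-hand side is at most $\epsilon^2 r^2$ for every $r\le\bar r$.

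There is essentially no obstacle here: the lemma is a quantitative transfer of $L^2$-differentiability at $0$ from solid balls to circles via slicing. The only mild point of care is to ensure that the good radius produced by the averaging step lies simultaneously in the full-measure subset of $(r,(1+\tau)r)$ where absolute continuity on $\de B_{r'}^2(0)$ holds, which is immediate since both conditions hold on sets of full measure in the averaging interval.
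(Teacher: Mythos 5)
Your overall slicing strategy is sound, but there is a genuine gap in the very first step: the decay $\int_{B_s^2(0)}\abs{\nabla\Psi}^2\,d\mathcal{L}^2=o(s^2)$ does not follow from the hypothesis that $0$ is a Lebesgue point of $\nabla\Psi$ with $\nabla\Psi(0)=0$. A Lebesgue point, in the sense used throughout the paper (i.e. with respect to $L^1$-averages), only yields
\[ \int_{B_s^2(0)}\abs{\nabla\Psi}\,d\mathcal{L}^2=o(s^2), \]
and for an $L^2$ function this does not imply the corresponding decay of the $L^2$-average: a tall, sparsely supported spike accumulating at the origin can keep $\media_{B_s^2(0)}\abs{\nabla\Psi}\,d\mathcal{L}^2\to 0$ while $\media_{B_s^2(0)}\abs{\nabla\Psi}^2\,d\mathcal{L}^2$ stays bounded away from $0$. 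Your argument then feeds the quantity $r'\int_{\de B_{r'}^2(0)}\abs{\nabla\Psi}^2\,d\mathcal{H}^1$ into the embedding $W^{1,2}(S^1)\inject C^0(S^1)$, so without the $o(s^2)$ decay of the \emph{squared} gradient the final bound degenerates. The distinction matters here because the lemma is applied at points that are only known to be ($L^1$-)Lebesgue points of $\nabla\Phi$ (for instance at points of $\goodrk$ in the proof of Theorem \ref{conicalreg}); upgrading the hypothesis to an $L^2$-Lebesgue point would still cover a.e. point but would not prove the lemma as stated and used.

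The fix is to avoid the $L^2$-Sobolev embedding on the circle altogether. Keep the Fubini/averaging step for the term $t^{-1}\int_{\de B_t^2(0)}\abs{\Psi}^2\,d\mathcal{H}^1$ (your decay $\int_{B_s^2(0)}\abs{\Psi}^2\,d\mathcal{L}^2=o(s^4)$ from \cite[Theorem~6.1]{evans} is legitimate, since its proof only uses the $L^1$-Lebesgue point property of $\nabla\Psi$ through the Poincar\'e inequality), but replace the gradient term by $r\int_{\de B_t^2(0)}\abs{\nabla\Psi}\,d\mathcal{H}^1$, whose average over $t\in(r,(1+\tau)r)$ is $o(r^2)/\tau$ by the $L^1$ decay above. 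On the selected good circle, bound the sup by the elementary inequality
\[ \norm{\Psi}_{L^\infty(\de B_{r'}^2(0))}\le\abs{\media_{\de B_{r'}^2(0)}\Psi\,d\mathcal{H}^1}+\int_{\de B_{r'}^2(0)}\abs{\nabla\Psi}\,d\mathcal{H}^1\le\frac{\norm{\Psi}_{L^2(\de B_{r'}^2(0))}}{\sqrt{2\pi r'}}+\norm{\nabla\Psi}_{L^1(\de B_{r'}^2(0))}, \]
i.e. the mean value plus the total variation along the circle, instead of the $W^{1,2}(S^1)\inject C^0(S^1)$ embedding. With this modification your selection argument (including the intersection with the full-measure set of radii giving absolute continuity) goes through and yields the stated conclusion.
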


	\begin{proof}
		We can assume $\epsilon<1$. By \cite[Theorem~6.1]{evans} and its proof we have
		\begin{equation}\label{eq:o4est} \int_{B_r^2(0)}|\Psi|^2\,d\mathcal{L}^2=o(r^4) \end{equation}
		(since $1^*=2$) and, as a consequence,
		\[ \begin{split} &\media_r^{r+\tau r}\int_{\de B_t^2(0)}((2\pi t)^{-1}|\Psi|^2+r|\nabla\Psi|)\,d\mathcal{H}^1\,dt \\
		&\le \frac{1}{2\pi\tau r^2}\int_{B_{(1+\tau)r}^2(0)}|\Psi|^2\,d\mathcal{L}^2+\frac{1}{\tau}\int_{B_{(1+\tau)r}^2(0)}|\nabla\Psi|\,d\mathcal{L}^2=o(r^2). \end{split} \]
		Hence, if $r$ is small enough, there exists some $r'\in(r,r+\tau r)$ such that
		\[ (2\pi r')^{-1}\norm{\Psi}_{L^2(\de B_{r'}^2(0))}^2+r\norm{\nabla\Psi}_{L^1(\de B_{r'}^2(0))}\le\frac{\epsilon^2 r^2}{4} \]
		and $\restr{\Psi}{\de B_{r'}^2(0)}$ is absolutely continuous (a.e.), with derivative given by the chain rule.
		The elementary inequality
		\[ \norm{\Psi}_{L^\infty(\de B_{r'}^2(0))}\le\bigg|\media_{\de B_{r'}^2(0)}\Psi\,d\mathcal{H}^1\bigg|+\int_{\de B_{r'}^2(0)}\abs{\nabla\Psi}\,d\mathcal{H}^1\le\frac{\norm{\Psi}_{L^2(\de B_{r'}^2(0))}}{\sqrt{2\pi r'}}+\norm{\nabla\Psi}_{L^1(\de B_{r'}^2(0))} \]
		gives the desired result.
	\end{proof}

	\begin{lemmaen}[strong subconvergence on a.e. slice]\label{equibded}
		If $\Psi_k\weakto\Psi_\infty$ in $W^{1,2}(B_R^2(0),\R^\envdim)$, then for a.e. $r'\in(0,R)$ there exists a subsequence $(k_i)$ such that
		$\restr{\Psi_\infty}{\de B_{r'}^2(0)}$ and $\restr{\Psi_{k_i}}{\de B_{r'}^2(0)}$ have absolutely continuous representatives and
		\[ \restr{\Psi_{k_i}}{\de B_{r'}^2(0)}\to\restr{\Psi_\infty}{\de B_{r'}^2(0)}\quad\text{in }L^\infty. \]
	\end{lemmaen}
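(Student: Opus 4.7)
The plan is to use Fubini's theorem in polar coordinates, together with the one-dimensional Sobolev embedding $W^{1,2}(S^1)\hookrightarrow C^{0,1/2}(S^1)$, to reduce the statement to a compactness argument on almost every circle.

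First, since $\Psi_k\weakto\Psi_\infty$ in $W^{1,2}(B_R^2(0),\R^\envdim)$, the sequence $\norm{\nabla\Psi_k}_{L^2}$ is uniformly bounded and, by the Rellich--Kondrachov theorem, $\Psi_k\to\Psi_\infty$ strongly in $L^2(B_R^2(0),\R^\envdim)$. Writing each integral in polar coordinates, the functions
\[ r\mapsto\int_{\de B_r^2(0)}|\nabla\Psi_k|^2\,d\mathcal{H}^1,\qquad r\mapsto\int_{\de B_r^2(0)}|\Psi_k-\Psi_\infty|^2\,d\mathcal{H}^1 \]
are integrable in $r\in(0,R)$, with integrals bounded uniformly in $k$ in the first case and infinitesimal in the second.

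Next I would apply Fatou's lemma and a standard diagonal extraction to find a subsequence $(k_i)$ and a full-measure set $E\subseteq(0,R)$ such that, for every $r'\in E$, both of the following hold:
\[ \liminf_{i\to\infty}\int_{\de B_{r'}^2(0)}|\nabla\Psi_{k_i}|^2\,d\mathcal{H}^1<\infty,\qquad \int_{\de B_{r'}^2(0)}|\Psi_{k_i}-\Psi_\infty|^2\,d\mathcal{H}^1\to 0. \]
Shrinking $E$ if necessary, by \cite[Theorem~4.21]{evans} we can further assume that for each $r'\in E$ the restrictions $\restr{\Psi_{k_i}}{\de B_{r'}^2(0)}$ and $\restr{\Psi_\infty}{\de B_{r'}^2(0)}$ admit absolutely continuous representatives whose weak derivative is given by the chain rule; the bound on $\restr{\nabla\Psi_{k_i}}{\de B_{r'}^2(0)}$ in $L^2$ passes to the liminf and so $\restr{\Psi_\infty}{\de B_{r'}^2(0)}\in W^{1,2}(\de B_{r'}^2(0))$ as well.

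Fix now such an $r'\in E$ and extract a further subsequence (still denoted $(k_i)$) along which $\int_{\de B_{r'}^2(0)}|\nabla\Psi_{k_i}|^2\,d\mathcal{H}^1$ is bounded. The restrictions $\restr{\Psi_{k_i}}{\de B_{r'}^2(0)}$ are then uniformly bounded in $W^{1,2}(\de B_{r'}^2(0),\R^\envdim)$: the derivative bound is by construction, while the $L^2$ bound of the restrictions themselves follows from the $L^2$ convergence to $\restr{\Psi_\infty}{\de B_{r'}^2(0)}$. By the one-dimensional Sobolev embedding $W^{1,2}(\de B_{r'}^2(0))\hookrightarrow C^{0,1/2}(\de B_{r'}^2(0))$, this sequence is equibounded and equi-H\"older-continuous of exponent $1/2$. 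Arzel\`a--Ascoli then yields a subsequence that converges uniformly; since the $L^2(\de B_{r'}^2(0))$ convergence to $\restr{\Psi_\infty}{\de B_{r'}^2(0)}$ was already in place, the uniform limit must coincide with $\restr{\Psi_\infty}{\de B_{r'}^2(0)}$, proving the claim.

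No substantial obstacle is expected: the only delicate point is the bookkeeping of the two successive diagonal extractions (one to secure $L^2$ convergence and the liminf bound on a.e.\ slice, another to upgrade the liminf to a genuine bound along a subsequence depending on the chosen $r'$), but both are standard applications of Fatou's lemma and Egoroff/diagonal arguments.
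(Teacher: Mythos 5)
Your proof is correct and follows essentially the same route as the paper: Fatou's lemma over the radial slicing to get a uniformly bounded subsequence in $W^{1,2}(\de B_{r'}^2(0))$ for a.e. $r'$, followed by the compact embedding into $C^0$ and identification of the uniform limit. The only (harmless) variation is that you identify the limit via Rellich plus strong $L^2$ convergence on a.e.\ slice, whereas the paper uses the weak continuity of the trace operator.
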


	\begin{proof}
		We observe that $\restr{\Psi_\infty}{\de B_{r'}^2(0)}$ coincides with the trace of $\Psi_\infty$ on $\de B_{r'}^2(0)$ for a.e. $r'\in(0,R)$: actually, this happens whenever $\mathcal{H}^1$-a.e. point on $\de B_{r'}^2(0)$ is a Lebesgue point of $\Psi_\infty$ (see \cite[Theorem~5.7]{evans}); the same holds for $\Psi_k$. Now, by Fatou's lemma,
		\[ \begin{split} &\int_0^R\liminf_{k\to\infty}\int_{\de B_{r'}^2(0)}(\abs{\Psi_k}^2+\abs{\nabla\Psi_k}^2)\,d\mathcal{H}^1\,dr' \\
		&\le\liminf_{k\to\infty}\int_0^R\int_{\de B_{r'}^2(0)}(\abs{\Psi_k}^2+\abs{\nabla\Psi_k}^2)\,d\mathcal{H}^1\,dr' \\
		&=\liminf_{k\to\infty}\norm{\Psi_k}_{W^{1,2}}^2<\infty. \end{split} \]
		Hence, for a.e. $r'\in(0,R)$,
		\[ \liminf_{k\to\infty}\int_{\de B_{r'}^2(0)}(\abs{\Psi_k}^2+\abs{\nabla\Psi_k}^2)\,d\mathcal{H}^1<\infty, \]
		which means that there exists a subsequence $(k_i)$ such that
		\[ \sup_i\int_{\de B_{r'}^2(0)}(\abs{\Psi_{k_i}}^2+\abs{\nabla\Psi_{k_i}}^2)\,d\mathcal{H}^1<\infty. \]
		We can also assume that $\restr{\Psi_\infty}{\de B_{r'}^2(0)}$ equals the trace, that it has a $W^{1,2}$ representative with weak derivative given by the chain rule and that the analogous statements hold also for $\Psi_{k_i}$.

		In particular, the sequence $\pa{\restr{\Psi_{k_i}}{\de B_{r'}^2(0)}}$ is bounded in $W^{1,2}(\de B_{r'}^2(0),\R^\envdim)$: by the compact embedding into $C^0(\de B_{r'}^2(0),\R^\envdim)$ we deduce that (up to subsequences) $\restr{\Psi_{k_i}}{\de B_{r'}^2(0)}\to f$ in $L^\infty$, for some $f\in C^0(\de B_{r'}^2(0),\R^\envdim)$. By the weak continuity of the trace, we have $\restr{\Psi_{k_i}}{\de B_{r'}^2(0)}\weakto\restr{\Psi_\infty}{\de B_{r'}^2(0)}$ in $L^2(\de B_{r'}^2(0),\R^\envdim)$, hence $f=\restr{\Psi_\infty}{\de B_{r'}^2(0)}$ $\mathcal{H}^1$-a.e.
	\end{proof}

	\begin{lemmaen}[almost planar conformal matrices]\label{cptconfineq}
		Let $\mathcal{C}\subseteq\R^{\envdim\times 2}$ denote the set of matrices $M$ with $\sum_{i=1}^\envdim M_{ij}M_{ik}=\frac{|M|^2}{2}\delta_{jk}$, where $|M|$ is the Hilbert--Schmidt norm of $M$ ($\mathcal{C}$ can be identified with the set of linear weakly conformal maps $\R^2\to\R^\envdim$). For any $\tau>0$ there exists $C=C(\tau,\envdim)$ such that
		\[ |M_{11}M_{21}+M_{12}M_{22}|+|M_{11}^2+M_{12}^2-J(M)|+|M_{21}^2+M_{22}^2-J(M)|\le\tau|M|^2+C\sum_{i=3}^\envdim\sum_{j=1}^2 M_{ij}^2 \]
		for all $M\in\mathcal{C}$, where $J(M):=\abs{M_{11}M_{22}-M_{12}M_{21}}$.
	\end{lemmaen}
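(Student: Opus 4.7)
Both sides of the inequality are homogeneous of degree two in $M$, so up to rescaling it suffices to prove the statement under the normalization $|M|=1$. Denote by $L(M)$ the left-hand side and by $\varepsilon(M)^{2}:=\sum_{i=3}^{\envdim}\sum_{j=1}^{2}M_{ij}^{2}$ the ``non-planar'' contribution. The approach I would take is to argue by contradiction and compactness, with the key algebraic input being the symmetry between columns and rows for a square conformal matrix.

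Suppose the statement fails for some $\tau>0$. Then there exists a sequence $M^{(k)}\in\mathcal{C}$ with $|M^{(k)}|=1$ and
\[
L(M^{(k)})>\tau+k\,\varepsilon(M^{(k)})^{2}.
\]
Since $|M^{(k)}|=1$ the values $L(M^{(k)})$ are uniformly bounded, forcing $\varepsilon(M^{(k)})\to 0$. The set $\mathcal{C}$ is defined by polynomial equations and is hence closed, so its intersection with the unit sphere of $\R^{\envdim\times 2}$ is compact. Extracting a subsequence, $M^{(k)}\to M^{\infty}\in\mathcal{C}$ with $|M^{\infty}|=1$ and $M^{\infty}_{ij}=0$ for every $i\ge 3$.

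The crux of the argument is then to check that $L(M^{\infty})=0$. The top $2\times 2$ block $\tilde M^{\infty}$ of $M^{\infty}$ inherits the conformality condition: its columns are orthogonal and both of norm $\ell:=|M^{\infty}|/\sqrt{2}>0$, which is the same as $(\tilde M^{\infty})^{T}\tilde M^{\infty}=\ell^{2}I$. Since $\tilde M^{\infty}$ is square and invertible, the inverse reads $(\tilde M^{\infty})^{-1}=\ell^{-2}(\tilde M^{\infty})^{T}$, and therefore $\tilde M^{\infty}(\tilde M^{\infty})^{T}=\ell^{2}I$ as well. Hence the rows of $\tilde M^{\infty}$ are orthogonal and of common norm $\ell$, and $|\det\tilde M^{\infty}|=\ell^{2}$. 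This forces all three terms in $L(M^{\infty})$ to vanish, and by continuity of $L$ we get a contradiction with $L(M^{(k)})>\tau$.

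The only conceptually delicate point is the elementary identity, for a square matrix, between ``orthogonal columns of equal length'' and ``orthogonal rows of equal length'', which is what propagates the (exact) conformality of $M^{\infty}$ in its column description to the row quantities appearing in $L$. A quantitative version of the estimate is also possible: using the singular-value decomposition of $\tilde M$ one can express $|r_{1}\cdot r_{2}|$ and $|r_{j}|^{2}-J$ as explicit functions of $(\sigma_{1}-\sigma_{2})(\sigma_{1}+\sigma_{2})=\sigma_{1}^{2}-\sigma_{2}^{2}$, and the latter is bounded by the off-diagonal/diagonal-difference norm of $\tilde M^{T}\tilde M-\ell^{2}I$, which is itself $O(\varepsilon(M)^{2})$ thanks to the two identities $|a_{1}\cdot a_{2}|\le\varepsilon(M)^{2}/2$ and $||a_{1}|^{2}-|a_{2}|^{2}|\le\varepsilon(M)^{2}$ (where $a_{1},a_{2}$ denote the columns of $\tilde M$) coming directly from $M\in\mathcal{C}$. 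This would yield the stronger statement with $\tau=0$ and a universal constant $C$, but the compactness route above gives exactly what the lemma asks with essentially no computation.
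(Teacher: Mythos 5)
Your argument is correct and is essentially the paper's proof: normalize by homogeneity, argue by contradiction along a sequence, use compactness of $\mathcal{C}\cap\{|M|=1\}$ to extract a limit $M^\infty$ with vanishing lower rows, and observe that exact planar conformality (which the paper phrases as $\tilde M^\infty=\left(\begin{smallmatrix}a & \mp b\\ b & \pm a\end{smallmatrix}\right)$, you as the column/row duality for square conformal matrices) kills the left-hand side. The quantitative SVD remark at the end is a correct bonus but not needed.
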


	\begin{proof}
		Assume by contradiction that, for a sequence $(M^k)\subseteq\mathcal{C}$, we have
		\[ \begin{split} &|M_{11}^k M_{21}^k+M_{12}^k M_{22}^k|+|(M_{11}^k)^2+(M_{12}^k)^2-J(M^k)|+|(M_{21}^k)^2+(M_{22}^k)^2-J(M^k)| \\
		&>\tau|M^k|^2+k\sum_{i=3}^\envdim\sum_{j=1}^2 (M_{ij}^k)^2. \end{split} \]
		By homogeneity we can assume $|M^k|=1$ for all $k$. As a consequence, $\sum_{i=3}^\envdim\sum_{j=1}^2 (M_{ij}^k)^2\to 0$. So the sequence has a limit point $M^\infty\in\R^{\envdim\times 2}$ satisfying
		\begin{equation}\label{eq:plconf} M^\infty\in\mathcal{C},\qquad M_{ij}^\infty=0\quad\text{for }i=3,\dots,\envdim\text{ and }j=1,2, \end{equation}
		\[ |M_{11}^\infty M_{21}^\infty+M_{12}^\infty M_{22}^\infty|+|(M_{11}^\infty)^2+(M_{12}^\infty)^2-J(M^\infty)|+|(M_{21}^\infty)^2+(M_{22}^\infty)^2-J(M^\infty)|\ge\tau. \]
		But conditions \eqref{eq:plconf} force $\begin{pmatrix}M_{11}^\infty & M_{12}^\infty \\ M_{21}^\infty & M_{22}^\infty\end{pmatrix}=\begin{pmatrix}a & \mp b \\ b & \pm a\end{pmatrix}$ for some $a,b\in\R$,
		so the left-hand side of the last inequality vanishes, yielding the desired contradiction.
	\end{proof}

	\begin{lemmaen}[$\bm{L^{1,\infty}}$-variance estimate]\label{oneinfty}
		For any $\epsilon>0$ there exists a $C=C(\epsilon)$ such that
		\[ \begin{split} &\big\|f-(f)_{B_{r/2}^2(0)}\big\|_{L^{1,\infty}(B_{r/2}^2(0))}\le \epsilon\big\|f-(f)_{B_{3r/4}^2(0)}\big\|_{L^1(B_{3r/4}^2(0))} \\
		&+C\sup\Big\{\Big|\int_{B_r^2(0)} f\nabla\phi\,d\mathcal{L}^2\Big|\,;\ \phi\in C^\infty_c(B_r^2(0)),\norm{\nabla\phi}_{L^\infty}\le 1\Big\}, \end{split} \]
		for all $f\in L^1(B_r^2(0))$, where $(f)_{B_s^2(0)}:=\media_{B_s^2(0)}f\,d\mathcal{L}^2$.
	\end{lemmaen}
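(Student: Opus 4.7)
The plan is to argue by contradiction, using compactness of the embedding $BV \hookrightarrow L^1$ in the plane.

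First, by a scaling argument I can reduce to the case $r=1$. Moreover, the estimate is invariant under replacing $f$ with $f-(f)_{B_{3/4}^2(0)}$: the left-hand side is unchanged (since adding a constant to $f$ shifts $(f)_{B_{1/2}^2(0)}$ by the same constant), the $L^1$-norm on the right becomes $\|f\|_{L^1(B_{3/4}^2(0))}$, and the sup on the right is unchanged since constants pair to zero against $\nabla\phi$ for $\phi \in C^\infty_c$. So I may assume $(f)_{B_{3/4}^2(0)}=0$ and prove
\[ \big\|f-(f)_{B_{1/2}^2(0)}\big\|_{L^{1,\infty}(B_{1/2}^2(0))}\le \epsilon\|f\|_{L^1(B_{3/4}^2(0))}+C(\epsilon)\,[f], \]
where $[f]:=\sup\{|\int_{B_1^2(0)}f\nabla\phi\,d\mathcal{L}^2|:\phi\in C^\infty_c(B_1^2(0)),\|\nabla\phi\|_{L^\infty}\le 1\}$.

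Suppose towards contradiction that for some $\epsilon>0$ no such constant exists. Then for each $k$ there is an $f_k\in L^1(B_1^2(0))$ with $(f_k)_{B_{3/4}^2(0)}=0$ and
\[ \big\|f_k-(f_k)_{B_{1/2}^2(0)}\big\|_{L^{1,\infty}(B_{1/2}^2(0))}>\epsilon\|f_k\|_{L^1(B_{3/4}^2(0))}+k\,[f_k]. \]
Normalizing so that the left-hand side equals $1$, this yields the two bounds
\[ \|f_k\|_{L^1(B_{3/4}^2(0))}\le\tfrac{1}{\epsilon},\qquad [f_k]\le\tfrac{1}{k}. \]
The quantity $[f_k]$ is exactly the total variation $|\nabla f_k|(B_1^2(0))$ when $\nabla f_k$ is a finite measure, and in general bounds it from below; hence $f_k$ is bounded in $BV(B_{3/4}^2(0))$.

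By Rellich's compactness theorem for $BV$ in two dimensions, up to a subsequence $f_k\to f_\infty$ in $L^1(B_{3/4}^2(0))$ for some $f_\infty\in BV(B_{3/4}^2(0))$. Passing to the limit in the distributional pairing, for every $\phi\in C^\infty_c(B_{3/4}^2(0))\subseteq C^\infty_c(B_1^2(0))$ with $\|\nabla\phi\|_{L^\infty}\le 1$ we get $|\int f_\infty\nabla\phi\,d\mathcal{L}^2|=\lim_{k\to\infty}|\int f_k\nabla\phi\,d\mathcal{L}^2|\le\lim_{k\to\infty}1/k=0$. Thus $\nabla f_\infty=0$ as a distribution on $B_{3/4}^2(0)$, so $f_\infty$ is constant on $B_{3/4}^2(0)$ and therefore $f_\infty-(f_\infty)_{B_{1/2}^2(0)}\equiv 0$. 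Using $L^1$-convergence and the trivial embedding $L^1\hookrightarrow L^{1,\infty}$,
\[ \big\|f_k-(f_k)_{B_{1/2}^2(0)}\big\|_{L^{1,\infty}(B_{1/2}^2(0))}\le\big\|f_k-(f_k)_{B_{1/2}^2(0)}\big\|_{L^1(B_{1/2}^2(0))}\longrightarrow 0, \]
contradicting the normalization. This proves the lemma.

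The main (and really only) obstacle is verifying the correct functional-analytic setup: the quantity $[f]$ controls the distributional gradient so that the sequence $f_k$ is bounded in $BV(B_{3/4}^2(0))$, allowing the use of Rellich compactness; and one needs the elementary bound $\|\cdot\|_{L^{1,\infty}}\le\|\cdot\|_{L^1}$ to pass from the strong $L^1$ convergence to convergence in the weaker $L^{1,\infty}$-quasinorm at the end.
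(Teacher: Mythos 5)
The argument breaks down at the compactness step. The quantity $[f]=\sup\{|\int f\nabla\phi\,d\mathcal{L}^2|:\phi\in C^\infty_c,\ \|\nabla\phi\|_{L^\infty}\le 1\}$ is not the total variation of $f$ and does not control it. For the total variation one tests $\int f\operatorname{div}X$ against vector fields with $\|X\|_{L^\infty}\le 1$; here the tests are gradients $\nabla\phi$ and the constraint $\|\nabla\phi\|_{L^\infty}\le 1$ sits one derivative higher than it would need to. Indeed $[f]\le\|f\|_{L^1}$ trivially, so for $f_n(x)=\sin(nx_1)$ one has $[f_n]=O(1)$ while $|Df_n|(B_1^2(0))\sim n$ and $(f_n)$ admits no subsequence converging strongly in $L^1$. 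Hence your sequence $f_k$ need not be bounded in $BV$, Rellich compactness is unavailable, and the limit $f_\infty$ cannot be produced. (In the application of this lemma inside Lemma \ref{ninfty}, the relevant functions $Q_k$ are counting functions of preimages with no a priori $BV$ regularity, so this is not a removable technicality.)

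What the hypothesis really encodes, via Hahn--Banach, is that $\partial_i f=\sum_j\partial_j g_{ij}$ for finite measures $g_{ij}$ whose mass is controlled by the supremum --- a double-divergence structure on $\nabla f$, much weaker than $\nabla f$ being a measure. This also explains why the left-hand side carries the $L^{1,\infty}$ quasinorm rather than the $L^1$ norm your argument would otherwise deliver: recovering $f$ from the $g_{ij}$ involves operators of the type $(1-\Delta)^{-1}\nabla^2$ acting on measures, which are only of weak type $(1,1)$. The paper's proof follows exactly this route: it extracts the $g_{ij}$ by Hahn--Banach, writes $(1-\Delta)(\chi f)$ in terms of $f$, the $g_{ij}$ and derivatives of a cutoff, inverts $(1-\Delta)$ via H\"ormander--Mikhlin multipliers to obtain an $L^{1,\infty}$ remainder of size $O(1)$, and runs a compactness argument only on the remaining smooth commutator term (Lemma \ref{oneinftyaux}), where weak-$*$ compactness of measures suffices.
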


	\begin{proof} By scaling-invariance and homogeneity, we can assume that $r=1$ and that the supremum in the right-hand side equals $1$. Thanks to Hahn--Banach theorem applied to the functional
		\[ (\de_1\phi,\de_2\phi)\mapsto\int_{B_1^2(0)} f\de_i\phi\,d\mathcal{L}^2,\quad\text{on }\set{(\de_1\phi,\de_2\phi)\mid\phi\in C_c^\infty(B_1^2(0))}\subseteq C_c^0(B_1^2(0))^2, \]
		we can find finite Borel measures $g_{ij}\in\mathcal{M}(B_1^2(0))$ (for $i,j=1,2$) with
		\begin{align}\label{eq:sistmis} \norm{g_{ij}}(B_1^2(0))\le 1,\qquad\de_i f=\sum_j\de_j g_{ij}\quad\text{in }\mathcal{D}'(B_1^2(0)). \end{align}
		By convolution with a mollifier $\rho_\delta$, we can assume $f,g_{ij}\in C^\infty(B_{3/4}^2(0))$ and that \eqref{eq:sistmis} holds in the ball $B_{3/4}^2(0)$: once we show that \eqref{eq:sistmis}, together with the requirement that $\big|\int_{B_{3/4}^2(0)} f\nabla\phi\,d\mathcal{L}^2\big|\le 1$ whenever $\phi\in C^\infty_c(B_{3/4}^2(0))$ and $\norm{\nabla\phi}_{L^\infty}\le 1$, imply
		\begin{align}\label{eq:diffthesis} \big\|f-(f)_{B_{1/2}^2(0)}\big\|_{L^{1,\infty}(B_{1/2}^2(0))}\le \epsilon\big\|f-(f)_{B_{3/4}^2(0)}\big\|_{L^1(B_{3/4}^2(0))}+C \end{align}
		for the mollified $f$, the result follows for the original function by letting $\delta\to 0$ (notice that the above requirement holds if $\delta$ is small enough).
		We choose a cut-off function $\chi\in C^\infty_c(B_{3/4}^2(0))$ such that $\chi=1$ on a neighborhood of $\obar B_{1/2}^2(0)$. Notice that
		\[ \de_i(\chi f)=\sum_j\de_j(\chi g_{ij})-\sum_j\de_j\chi g_{ij}+\de_i\chi f, \]
		so we get the following distributional identity in $\R^2$:
		\[ \begin{split} (1-\Delta)(\chi f)&=(1-\Delta)\chi f-\sum_{i,j}\de_{ij}^2(\chi g_{ij})+\sum_{i,j}\de_{ij}^2\chi g_{ij}+\sum_{i,j}\de_j\chi\de_i g_{ij}-\sum_i\de_i\chi\de_i f \\
		&=(1-\Delta)\chi f-\sum_{i,j}\de_{ij}^2(\chi g_{ij})+\sum_{i,j}\de_{ij}^2\chi g_{ij}+\sum_{i,j}\de_j\chi\de_i g_{ij}-\sum_{i,j}\de_i\chi\de_j g_{ij} \\
		&=(1-\Delta)\chi f-\sum_{i,j}\de_{ij}^2(\chi g_{ij})+\sum_{i,j}\de_{ij}^2\chi g_{ij}+\sum_{i,j}\de_i(\de_j\chi g_{ij})-\sum_{i,j}\de_j(\de_i\chi g_{ij}). \end{split} \]
		Applying $(1-\Delta)^{-1}$ to both sides and observing that the operators
		$(1-\Delta)^{-1}$, $(1-\Delta)^{-1}\nabla$ and $(1-\Delta)^{-1}\nabla^2$ correspond to H\"ormander--Mikhlin Fourier multipliers, we obtain
		\[ \chi f=(1-\Delta)^{-1}((1-\Delta)\chi f)+r, \]
		where the Schwartz function $r$ satisfies $\norm{r}_{L^{1,\infty}(\R^2)}\le C'$ for some absolute constant $C'$.
		The desired estimate \eqref{eq:diffthesis} now follows from Lemma \ref{oneinftyaux} below.
	\end{proof}

	\begin{lemmaen}[$\bm{L^1}$-estimate for a commutator]\label{oneinftyaux}
		For any $\epsilon>0$ there exists a $C=C(\epsilon)$ such that
		\[ \begin{split} &\big\|(1-\Delta)^{-1}((1-\Delta)\chi f)-(f)_{B_{1/2}^2(0)}\big\|_{L^1(B_{1/2}^2(0))}\le \epsilon\big\|f-(f)_{B_{3/4}^2(0)}\big\|_{L^1(B_{3/4}^2(0))} \\
		&+C\sup\Big\{\Big|\int_{B_{3/4}^2(0)} f\nabla\phi\,d\mathcal{L}^2\Big|\,;\  \phi\in C^\infty_c(B_{3/4}^2(0)),\norm{\nabla\phi}_{L^\infty}\le 1\Big\}, \end{split} \]
		for all $f\in C^\infty(B_{3/4}^2(0))\cap L^1(B_{3/4}^2(0))$.
	\end{lemmaen}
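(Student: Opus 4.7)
The crucial observation is that, as tempered distributions on $\R^2$, one has $(1-\Delta)^{-1}((1-\Delta)\chi f)=\chi f$: indeed, $\chi f\in L^1(\R^2)$ has compact support, hence defines a tempered distribution, and the Fourier multipliers $1+|\xi|^2$ and $(1+|\xi|^2)^{-1}$ are mutually inverse on $\mathcal S'(\R^2)$. Since $\chi\equiv 1$ on a neighbourhood of $\bar B_{1/2}^2(0)$, the left-hand side of the claim coincides with $\norm{f-(f)_{B_{1/2}^2(0)}}_{L^1(B_{1/2}^2(0))}$, and the lemma is equivalent to the Poincar\'e-with-slack inequality
\[
\norm{f-(f)_{B_{1/2}^2(0)}}_{L^1(B_{1/2}^2(0))}\le \epsilon\,\norm{f-(f)_{B_{3/4}^2(0)}}_{L^1(B_{3/4}^2(0))}+C(\epsilon)\,V(f),
\]
where $V(f)$ denotes the supremum appearing in the statement.

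I would prove this by contradiction and compactness. Negating the inequality and normalising so that $(f_n)_{B_{3/4}^2(0)}=0$ and $\norm{f_n}_{L^1(B_{3/4}^2(0))}=1$, one obtains a sequence $(f_n)$ with $\norm{f_n-(f_n)_{B_{1/2}^2(0)}}_{L^1(B_{1/2}^2(0))}\ge\epsilon_0$ for some $\epsilon_0>0$, and, since the left-hand side is bounded by $2$, necessarily $V(f_n)\le 2/n\to 0$. The first step is to show that $(f_n)$ is equi-integrable in $L^1(B_{3/4}^2(0))$: if mass of $|f_n|$ concentrated at some point $x_0\in\bar B_{3/4}^2(0)$, one would construct $\phi\in C^\infty_c(B_{3/4}^2(0))$ with $\norm{\nabla\phi}_{L^\infty}\le 1$ whose derivative $\de_1\phi$ transitions sharply between $+1$ and $-1$ across a line through $x_0$ at a scale finer than the concentration; the resulting lower bound $|\int f_n\nabla\phi|\ge c>0$ would contradict $V(f_n)\to 0$. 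Dunford--Pettis would then yield a subsequence $f_n\weakto f_\infty$ weakly in $L^1$; from $V(f_n)\to 0$ one reads $\nabla f_\infty=0$ in $\mathcal D'(B_{3/4}^2(0))$, so $f_\infty$ is constant, and the preservation of the mean-zero condition under weak $L^1$ convergence yields $f_\infty\equiv 0$.

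The final step is to upgrade weak $L^1$ convergence to strong convergence in $L^1(B_{1/2}^2(0))$. Testing the functional $V(f_n)$ against $\phi(y)=\rho_\delta(x-y)$ gives, for $\delta<1/4$ and $x\in B_{1/2}^2(0)$,
\[
|\nabla(f_n*\rho_\delta)(x)|\le V(f_n)\,\norm{\nabla\rho_\delta}_{L^\infty}\le C\,V(f_n)\,\delta^{-3},
\]
so for each fixed $\delta$ the mollified functions $f_n*\rho_\delta$ become asymptotically constant on $B_{1/2}^2(0)$, with the constant going to $0$ by the weak convergence of the previous step. Combining this with a uniform bound on the mollification residual $\sup_n\norm{f_n-f_n*\rho_\delta}_{L^1(B_{1/2}^2(0))}$ as $\delta\to 0$ produces $\norm{f_n-(f_n)_{B_{1/2}^2(0)}}_{L^1(B_{1/2}^2(0))}\to 0$, the desired contradiction.

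The hard part will be that last uniform mollification estimate. Equi-integrability, which is what Dunford--Pettis needs, does not by itself imply the uniform $L^1$-equicontinuity of translations required to run Kolmogorov--Riesz--Fr\'echet; it must be coupled with the quantitative smallness $V(f_n)\to 0$ by a secondary concentration argument applied to the differences $f_n-f_n(\cdot-h_n)$ for putatively bad increments $h_n\to 0$, so as to rule out fine-scale oscillations. Closing this loop—essentially, bootstrapping from the dual control on $\nabla f_n$ and tightness of mass to genuine $L^1$-pre-compactness—is the real content of the argument.
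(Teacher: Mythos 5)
Your opening reduction rests on a misreading of the statement. In this paper the expression $(1-\Delta)\chi f$ denotes $\big((1-\Delta)\chi\big)\cdot f$, i.e.\ the fixed function $\chi-\Delta\chi\in C^\infty_c(B_{3/4}^2(0))$ multiplied by $f$, and \emph{not} the operator $(1-\Delta)$ applied to the product $\chi f$. This parsing is forced by the proof of Lemma \ref{oneinfty}, where the identity $(1-\Delta)(\chi f)=(1-\Delta)\chi f-\sum_{i,j}\de_{ij}^2(\chi g_{ij})+\dots$ and the conclusion $\chi f=(1-\Delta)^{-1}((1-\Delta)\chi f)+r$ with a nontrivial remainder $r$ would be vacuous under your reading; it is also forced within the present lemma, where $(1-\Delta)\chi f$ must be an $L^1$ density. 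Consequently $(1-\Delta)^{-1}((1-\Delta)\chi f)$ does \emph{not} equal $\chi f$: on $B_{1/2}^2(0)$ it is the convolution of the compactly supported $L^1$ function $((1-\Delta)\chi)f$ with an $L^1$ Bessel-type kernel $K$, hence a genuinely smoothed version of $f$. The lemma is therefore not equivalent to the Poincar\'e-with-slack inequality you substitute for it, and the smoothing is precisely what makes the intended argument soft: after normalizing, the measures $f_n\mathcal{L}^2$ subconverge weakly-$*$, the hypothesis on the gradient pairing identifies the limit as $c\,\mathcal{L}^2$, and convolution with the fixed $L^1$ kernel $K$ upgrades weak-$*$ convergence of $((1-\Delta)\chi)f_n\,\mathcal{L}^2$ to strong $L^1$ convergence of the left-hand side. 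No equi-integrability, Dunford--Pettis, or $L^1$-precompactness of $(f_n)$ itself is needed, and none is available.

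Even for the statement you substituted, the proof is not complete, as you acknowledge: the passage from $V(f_n)\to 0$ and $L^1$-boundedness to strong convergence in $L^1(B_{1/2}^2(0))$ is exactly the step you label ``the real content of the argument'' and leave open. The preliminary equi-integrability claim is also unsubstantiated as written: against a single-signed bump of mass concentrating at $x_0$, a test function whose derivative $\de_1\phi$ switches from $+1$ to $-1$ across a line through $x_0$ at a scale finer than the concentration pairs to approximately \emph{zero}, not to a quantity bounded below; such a construction is adapted to dipoles, not to single-signed concentration. (What does exclude a single-signed bump is a $\phi$ with $\nabla\phi$ essentially constant on the bump, combined with the observation that $\int_{B_{3/4}^2(0)}\nabla\phi\,d\mathcal{L}^2=0$ so the subtracted mean contributes nothing.) Since the first reduction is invalid for the lemma as intended, and the compactness loop for your reformulation is not closed, the proposal does not establish the result.
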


	\begin{proof}
		We can assume $(f)_{B_{3/4}^2(0)}=0$ and $\norm{f}_{L^1(B_{3/4}^2(0))}=1$.
		If the statement is false, we can find a sequence $(f_n)$ of such functions with
		\begin{equation}\label{eq:nooneinftyaux} \begin{split} &\big\|(1-\Delta)^{-1}((1-\Delta)\chi f_n)-(f_n)_{B_{1/2}^2(0)}\big\|_{L^1(B_{1/2}^2(0))} \\
		&\ge \epsilon+n\sup\Big\{\Big|\int_{B_{3/4}^2(0)} f_n\nabla\phi\,d\mathcal{L}^2\Big|\,;\ \phi\in C^\infty_c(B_{3/4}^2(0)),\norm{\nabla\phi}_{L^\infty}\le 1\Big\}. \end{split} \end{equation}
		We remark that, for any $h\in C^\infty_c(B_{3/4}^2(0))$,
		\begin{equation}\label{eq:kernbessel} (1-\Delta)^{-1}h=K*h\quad\text{on }B_{1/2}^2(0),\qquad K:=1_{B_2^2(0)}\mathcal{F}^{-1}((1+4\pi^2|\xi|^2)^{-1})\in L^1(\R^2). \end{equation}
		In particular, thanks to \eqref{eq:kernbessel}, the left-hand side of \eqref{eq:nooneinftyaux} is bounded by a constant and we deduce $\int_{B_{3/4}^2(0)} f_n\nabla\phi\,d\mathcal{L}^2\to 0$ for all $\phi\in C^\infty_c(B_{3/4}^2(0))$. There exists a subsequence $(f_{n_k})$ such that $f_{n_k}\mathcal{L}^2\weakstarto\mu$, for some $\mu\in\mathcal{M}(B_{3/4}^2(0))$. From the identity $\int_{B_{3/4}^2(0)}\nabla\phi\,d\mu=0$ for all $\phi\in C^\infty_c(B_{3/4}^2(0))$ we infer $\mu=c\uno_{B_{3/4}^2(0)}\mathcal{L}^2$ (with $c$ a real constant).
		Hence, letting
		\[ h_k:=(1-\Delta)\chi f_{n_k},\quad h_\infty:=c(1-\Delta)\chi, \]
		we have $h_k\mathcal{L}^2\weakstarto h_\infty\mathcal{L}^2$ in $\mathcal{M}(\R^2)$ and, since $\mu(\de B_{1/2}^2(0))=0$, we deduce that $(f_{n_k})_{B_{1/2}^2(0)}\to c$.
		We also have
		\begin{equation}\label{eq:convcpt} \psi*h_k\to\psi*h_\infty\quad\text{in }C^0(\R^2), \end{equation}
		for any $\psi\in C^0_c(B_2^2(0))$: if this were not true, up to further subsequences we could find $\delta>0$ and $(x_k)\subseteq B_3^2(0)$ such that
		\begin{equation}\label{eq:convnoncpt} \abs{\psi*h_k(x_k)-\psi*h_\infty(x_k)}\ge\delta \end{equation}
		and we could also assume that $x_k\to x_\infty\in\bar B_3^2(0)$; but $\psi(x_k-\cdot)\to\psi(x_\infty-\cdot)$ uniformly, so
		\[ \psi*h_k(x_k)=\int_{\R^2}\psi(x_k-\cdot)\,d(h_k\mathcal{L}^2)
		\to\int_{\R^2}\psi(x_\infty-\cdot)\,d(h_\infty\mathcal{L}^2)=\psi*h_\infty(x_\infty), \]
		while trivially $\psi*h_\infty(x_k)\to\psi*h_\infty(x_\infty)$, contradicting \eqref{eq:convnoncpt} for $k$ large enough.

		Moreover, $K*h_k\to K*h_\infty$ in $L^1(\R^2)$, as is readily seen from \eqref{eq:convcpt} by approximating $K$ with functions $\psi\in C^0_c(B_2^2(0))$ in the $L^1$-topology.
		Using \eqref{eq:kernbessel}, it follows that
		\[ 1_{B_{1/2}^2(0)}(1-\Delta)^{-1}h_k\to 1_{B_{1/2}^2(0)}(1-\Delta)^{-1}h_\infty= 1_{B_{1/2}^2(0)}(1-\Delta)^{-1}(c(1-\Delta)\chi)=c1_{B_{1/2}^2(0)}  \]
		in $L^1(\R^2)$. However, for $k$ large enough, this contradicts the inequality
		\[ \big\|(1-\Delta)^{-1}((1-\Delta)\chi f_{n_k})-(f_{n_k})_{B_{1/2}^2(0)}\big\|_{L^1(B_{1/2}^2(0))}\ge \epsilon. \qedhere \]
	\end{proof}

	\nocite{*}
	\bibliographystyle{plainnat}

\end{document}